\newcommand{\Mod}[1]{\ (\mathrm{mod}\ #1)}
\g@addto@macro \normalsize {%
 \setlength\abovedisplayskip{7pt}%
 \setlength\belowdisplayskip{6pt}%
}
\newtheorem{thm}[equation]{Theorem}
\newtheorem{lem}[equation]{Lemma}
\newtheorem{cor}[equation]{Corollary}
\newtheorem{prop}[equation]{Proposition}
\newtheorem{prob}[equation]{Problem}
\theoremstyle{remark}
\newtheorem{rem}[equation]{Remark}
\newtheorem{fact}[equation]{Fact}
\newtheorem{example}[equation]{Example}
\newtheorem{defn}[equation]{Definition}
\numberwithin{equation}{section}
\newcommand{\gb}{\beta}
\newcommand{\ga}{\alpha}
\newcommand{\gD}{\Delta}
\newcommand{\gt}{\theta}
\newcommand{\gS}{\Sigma}
\newcommand{\eps}{\varepsilon}
\newcommand{\fa}{{\mathfrak a}}             
\newcommand{\fb}{{\mathfrak b}}
\newcommand{\fg}{{\mathfrak g}}
\newcommand{\fh}{{\mathfrak h}}
\newcommand{\fk}{{\mathfrak k}}
\newcommand{\fm}{{\mathfrak m}}
\newcommand{\fn}{{\mathfrak n}}
\newcommand{\fp}{{\mathfrak p}}
\newcommand{\ft}{{\mathfrak t}}
\newcommand{\fs}{{\mathfrak s}}
\newcommand{\f}{\mathfrak}
\newcommand{\R}{\mathbb{R}}          
\newcommand{\C}{\mathbb{C}}          
\newcommand{\Z}{\mathbb{Z}}
\newcommand{\Ad}{\mathrm{Ad}}
\newcommand{\Cal}{\mathcal}
\newcommand{\Hom}{\operatorname{Hom}}
\newcommand{\Ind}{\mathrm{Ind}}
\newcommand{\Sol}{\mathrm{Sol}}
\newcommand{\CSol}{\Cal{S}ol}
\newcommand{\ESol}{\EuScript{S}\mathrm{ol}}
\newcommand{\Pol}{\mathrm{Pol}}
\newcommand{\To}{\longrightarrow}
\newcommand{\Diff}{\mathrm{Diff}}
\newcommand{\Irr}{\mathrm{Irr}}
\newcommand{\triv}{\mathrm{triv}}
\newcommand{\fin}{\mathrm{fin}}
\newcommand{\D}{\Cal{D}}
\newcommand{\wrho}{\widetilde{\rho}}
\newcommand{\wSL}{\widetilde{SL}}
\newcommand{\wA}{\widetilde{A}}
\newcommand{\dpin}{d\pi_n}
\newcommand{\xid}{\mathrm{id}_{\xi}}
\newcommand{\id}{\mathrm{id}}
\newcommand{\pp}{\textnormal{\mbox{\smaller($+$,$+$)}}}
\newcommand{\pmi}{\textnormal{\mbox{\smaller($+$,$-$)}}}
\newcommand{\mip}{\textnormal{\mbox{\smaller($-$,$+$)}}}
\newcommand{\mm}{\textnormal{\mbox{\smaller($-$,$-$)}}}
\newcommand{\sgn}{\mathrm{sgn}}
\newcommand{\Sylv}{\mathrm{Sylv}}
\newcommand{\Cay}{\mathrm{Cay}}
\newcommand{\Spec}{\mathrm{Spec}}
\providecommand*{\donothing}[1]{}
\begin{document}

\baselineskip=16pt
\tabulinesep=1.2mm


\title[]{
Classification of $K$-type formulas for the Heisenberg ultrahyperbolic 
operator $\square_s$ for $\wSL(3,\R)$ and
tridiagonal determinants for local Heun functions
 }

\author{Toshihisa Kubo}
\author{Bent {\O}rsted}

\address{Faculty of Economics, 
Ryukoku University,
67 Tsukamoto-cho, Fukakusa, Fushimi-ku, Kyoto 612-8577, Japan}
\email{toskubo@econ.ryukoku.ac.jp}

\address{Department of Mathematics, 
Aarhus University,
Ny Munkegade 118 DK-8000 Aarhus C Denmark}
\email{orsted@imf.au.dk}
\subjclass[2020]{
22E46, 
17B10, 
05B20, 
33C05, 
33C45, 
33E30, 
}
\keywords{
intertwining differential operator,
Heisenberg ultrahyperbolic operator,
Peter--Weyl theorem for solution spaces,
$K$-type solution,
polynomial solution,
hypergeometric differential equation,
Heun's differential equation,
tridiagonal determinant,
Sylvester determinant,
Cayley continuant,
palindromic property.
}

\date{\today}

\maketitle


\begin{abstract} 
The $K$-type formulas of the space 
of $K$-finite solutions to the Heisenberg ultrahyperbolic equation
$\square_sf=0$
for the non-linear group $\wSL(3,\R)$ 
are classified. 
This completes a previous study of Kable 
for the linear group $SL(m,\R)$ in the case of $m=3$, as well as 
generalizes our earlier results on a certain second order differential operator.
As a by-product we also show several properties of certain
sequences $\{P_j(x;y)\}_{j=0}^\infty$ and $\{Q_j(x;y)\}_{j=0}^\infty$
of tridiagonal determinants, whose generating functions are given by
local Heun functions.
In particular, 
it is shown that 
these sequences satisfy
a certain arithmetic-combinatorial property, 
which we refer to as a \emph{palindromic property}.
We further show that 
classical sequences of
Cayley continuants  
$\{\Cay_j(x;y)\}_{j=0}^\infty$ 
and Krawtchouk polynomials
$\{\Cal{K}_j(x;y)\}_{j=0}^\infty$ 
also admit this property.
In the end a new proof of Sylvester's formula
for certain tridiagonal determinant $\Sylv(x;n)$ is provided
from a representation theory point of view.
\end{abstract}

\setcounter{tocdepth}{1}
\tableofcontents

 
\section{Introduction}\label{sec:intro}

The representation theory of 
a reductive Lie group $G$  
is intimately tied to the analysis on corresponding flag manifolds
$G/P$, where $P$ is a parabolic subgroup. For a representation 
of $P$, one considers the corresponding homogeneous
vector bundle over the flag manifold $G/P$ 
and the space of sections either in
the smooth sense or in some square integrable sense. 
Understanding the structure
of this space of sections is crucial, for example, whether there are interesting
subspaces invariant under the left regular action of $G$. This might for example
happen if there is given a $G$-invariant differential equation on the space of
sections, so that its solutions form an invariant subspace. In order to analyze
such a space of solutions, it is convenient to use the analogue of Fourier series,
namely, via the maximal compact subgroup $K$ in $G$; here $G = KP$, so the
flag manifold $G/P$ 
is also  a homogeneous space for $K$, and sections can be described
according to how they transform under the action of $K$.

In the case that we consider in this paper, 
there will be a one-parameter family $\square_s$
of natural invariant differential operators 
and explicit spaces of solutions.
It turns out that these will be related to classical function theory, namely,
hypergeometric functions and local Heun functions, and
some of the relevant identities analogous to 
classical tridiagonal determinants of Sylvester type and Cayley type.
Further, the representations obtained from the solution space to the 
differential equation $\square_sf=0$ 
include ones, which are to be thought of as minimal representations (in some sense).
We provide a new aspect on a connection
between the representation theory of reductive groups,
ordinary differential equations in the complex domains,
and sequences of polynomials.

The aim of this paper is threefold. The first is the classification of $K$-type 
formulas of the space  
of $K$-finite solutions to the Heisenberg ultrahyperbolic equation
$\square_sf=0$ for $\wSL(3,\R)$.
The second is a study of certain sequences 
$\{P_k(x;y)\}_{k=0}^\infty$ and $\{Q_k(x;y)\}_{k=0}^\infty$
of tridiagonal determinants arising from the study of the $K$-finite solutions
to $\square_sf=0$. 
The third is an application of the arguments for
$\{P_k(x;y)\}_{k=0}^\infty$ and $\{Q_k(x;y)\}_{k=0}^\infty$
to classical sequences of Cayley continuants
$\{\Cay_k(x;y)\}_{k=0}^\infty$ and Krowtchouk polynomials 
$\{\Cal{K}_k(x;y)\}_{k=0}^\infty$.
We describe these three topics in detail now.

\subsection{Heisenberg ultrahyperbolic operator $\square_s$}
For a moment,
let $\fg=\f{sl}(m,\C)$ with $m \geq 3$
and 
take a real form $\fg_0$ of $\fg$ such that 
there exists a parabolic subalgebra 
$\fp_0=\fm_0 \oplus \fa_0 \oplus \fn_0$ 
with Heisenberg nilpotent radical $\fn_0$,
namely, 
$\dim_\R [\fn_0, \fn_0] =1$. 
The Heisenberg condition on $\fn_0$ 
forces $\fg_0$ to be either $\f{sl}(m,\R)$ or $\f{su}(p,q)$ with $p+q=m$.
We write $\fp=\fm \oplus \fa \oplus \fn$ for 
the complexification of $\fp_0=\fm_0 \oplus \fa_0 \oplus \fn_0$.
We write $\Cal{U}(\fg)$ for the universal enveloping algebra of the complexified
Lie algebra $\fg = \fg_0 \otimes_{\R}\C$.

In \cite{Kable12C}, 
under the framework of $\fg$ and $\fp$ as above, 
Kable introduced a one-parameter family 
$\square_s$ of differential operators  with $s\in \C$ 
as an example of conformally invariant systems
(\cite{BKZ08, BKZ09}).
The operator $\square_s$ is referred to
as the \emph{Heisenberg ultrahyperbolic operator} (\cite{Kable12C}).
For instance, for $m=3$, it is defined as
\begin{equation}\label{eqn:HUO}
\square_s=R((XY+YX)+s[X,Y]),
\end{equation}
where $R$ denotes the infinitesimal right translation
and $X, Y$ are certain nilpotent elements in $\f{sl}(3,\R)$
(see \eqref{eqn:XY}).
The differential operator 
$\square_s$ for $m=3$ in \eqref{eqn:HUO}
in particular recovers the second order 
differential operator studied in \cite{KuOr19} as 
the case of $s=0$. 
Some algebraic and analytic properties of
the Heisenberg ultrahyperbolic operator 
$\square_s$ as well as its generalizations are investigated  
in \cite{Kable12B, Kable12C, Kable15, Kable18}
for a linear group $SL(m,\R)$.

From a viewpoint of intertwining operators,
the Heisenberg ultrahyperbolic operator $\square_s$
is an intertwining differential operator
between parabolically induced representations for $G \supset P$,
where $G$ and $P$ are Lie groups with Lie algebras
$\fg_0$ and $\fp_0$, respectively.
Thus the space $\CSol(\square_s)$ 
of smooth solutions to the equation $\square_s f =0$ 
in the induced representation 
is a subrepresentation of $G$,
and also the space $\CSol(\square_s)_K$ of $K$-finite solutions 
is a $(\fg, K)$-module.
We then consider the following problem.

\begin{prob}\label{prob:1}
Classify the $K$-type formulas for $\CSol(\square_s)_K$.
\end{prob}

In \cite{Kable12C}, 
an attempt for this direction 
was made for $G=SL(m,\R)$.
In the paper, 
Kable introduced a notion of \emph{$\EuScript{H}$-modules}
and developed an algebraic theory for them.
Although the theory is powerful to compute
the dimension of the $\fk\cap \fm$-invariant subspace of 
the space of $K$-type solutions for each $K$-type in $\CSol(\square_s)_K$,
the determination of the explicit $K$-type formulas 
was not achieved. 

In this paper, we consider $G=\wSL(3,\R)$,
the universal covering group of $SL(3,\R)$.
By making use of a technique from our earlier paper \cite{KuOr19},
we successfully classified the $K$-type formulas
of $\CSol(\square_s)_K$ for $\wSL(3,\R)$.
Our method is different from the algebraic theory used in \cite{Kable12C};
we rather utilize differential equations.
In order to describe our results in more detail we next introduce some notation.

\subsection{$K$-type formulas}

For the rest of this introduction let $G=\wSL(3,\R)$ with  Lie algebra 
$\fg_0$. Fix a minimal parabolic subgroup $B$ of $G$ 
with Langlands decomposition
$B=MAN$.  Here the subgroup $M$ is isomorphic to 
the quaternion group $Q_8$ of order 8.
Let $K$ be a maximal compact subgroup of $G$ so that $G=KAN$
is an Iwasawa decomposition of $G$. We have $K \simeq SU(2) \simeq Spin(3)$.

Let $\Irr(M)$ and $\Irr(K)$ denote the set of equivalence classes of irreducible representations of $M$ and $K$, respectively. 
As $M \simeq Q_8$, the set $\Irr(M)$ may be given as
\begin{equation*}
\Irr(M)=\{\pp, \,  \pmi, \,  \mip,\,  \mm, \, \mathbb{H}\},
\end{equation*}
where $(\pm,\pm)$ are some characters 
(see Section \ref{subsec:IrrM} for the definition)
and $\mathbb{H}$ stands for
the unique two-dimensional genuine representation of $M$.
The character $\pp$ is, for instance, the trivial character.
Let $\left(n/2\right)$ denote the irreducible 
finite-dimensional representation of $K \simeq Spin(3)$ with 
dimension $n+1$. Then we have 
\begin{equation*}
\Irr(K) = \left\{\left(n/2\right): n\in \Z_{\geq 0}\right\}.
\end{equation*}

For $\sigma \in \Irr(M)$ and a character $\lambda$ of $A$,
we write
\begin{equation*}
I(\sigma,\lambda) = \Ind_B^G(\sigma\otimes (\lambda+\rho) \otimes \mathbb{1})
\end{equation*}
for the representation of $G$ induced from the representation
$\sigma\otimes (\lambda+\rho) \otimes \mathbb{1}$ of $B=MAN$,
where $\rho$ is half the sum of the positive roots corresponding to $B$.
We realize the induced representation $I(\sigma,\lambda)$
on the space of smooth sections for a $G$-equivariant homogeneous 
vector bundle over $G/B$. 
Let $\Diff_G(I(\sigma_1,\lambda_1), I(\sigma_2,\lambda_2))$ 
denote the space of intertwining differential operators
from
$I(\sigma_1,\lambda_1)$ to $I(\sigma_2,\lambda_2)$.

Let $\fg$ denote the complexified Lie algebra of $\fg_0 = \f{sl}(3,\R)$.
Since $\fg= \f{sl}(3,\C)$, the Heisenberg ultrahyperbolic operator 
$\square_s$ is given as in \eqref{eqn:HUO}. We then set 
\begin{equation}\label{eqn:Ds_intro}
D_s:=(XY+YX)+s[X,Y] \in \Cal{U}(\fg),
\end{equation}
so that $\square_s = R(D_s)$.
It follows from Proposition \ref{prop:square} that
we have
\begin{equation*}
R(D_s) \in \Diff_G(I(\pp, -\wrho(s)), I(\mm,\wrho(-s))),
\end{equation*}
where $\wrho(s)$ is a certain weight determined by 
$\wrho:=\rho/2$ (see \eqref{eqn:wrho(s)}).
It is further shown that, for $\sigma \in \Irr(M)$, 
\begin{equation*}
R(D_s) \otimes \id_\sigma
\in \Diff_G(I(\sigma, -\wrho(s)), I(\mm\otimes \sigma,\wrho(-s))),
\end{equation*}
where $\id_\sigma$ denotes the identity map on $\sigma$.

For notational convenience we consider 
$R(D_{\bar{s}})$ in place of $R(D_s)$,
where $\bar{s}$ denotes the complex conjugate of $s \in \C$
(see Section \ref{subsec:HUO} for the details).
 We define
\begin{align}\label{eqn:SolDs0}
\CSol(s;\sigma)&:=
\text{the space of smooth solutions to 
$(R(D_{\bar{s}}) \otimes \id_\sigma)f=0$},\nonumber\\
\CSol(s;\sigma)_K&:=
\text{the space of $K$-finite solutions to 
$(R(D_{\bar{s}}) \otimes \id_\sigma)f=0$}.
\end{align}

It follows from
a Peter--Weyl theorem
for the solution space (Theorem \ref{thm:PW1}) 
that the $(\fg, K)$-module $\CSol(s;\sigma)_K$
decomposes as
\begin{equation}\label{eqn:PWSL_intro}
\CSol(s;\sigma)_K
\simeq 
\bigoplus_{n\in \Z_{\geq 0}}
(n/2) \otimes 
\mathrm{Hom}_{M}\left(
\Sol(s;n), \sigma\right),
\end{equation}
where
$\Sol(s;n)$ is the space of $K$-type solutions to 
$D_s$ (without complex conjugation on $s$)  
on the $K$-type $(n/2)=(\delta, V_{(n/2)}) \in \Irr(K)$,
that is, 
\begin{equation}\label{eqn:Solsa}
\Sol(s;n)=\{ v \in V_{(n/2)}:d\delta(D^\flat_s)v=0\}.
\end{equation}
(see \eqref{eqn:PWSL} and \eqref{eqn:SolK3}).
Here $d\delta$ is the differential of $\delta$ and 
$D^\flat_s$ denotes the compact model of $D_s$ (see Definition \ref{def:ub}).
Then the $K$-type decomposition $\CSol(s;\sigma)_K$
for $(s, \sigma) \in \C \times \Irr(M)$ is explicitly given as follows.

\begin{thm}\label{thm:K-type0}
The following conditions on $(\sigma, s) \in \Irr(M) \times \C$ 
are equivalent.

\begin{enumerate}[{\normalfont (i)}]
\item $\CSol(s;\sigma)\neq \{0\}$.
\item One of the following conditions holds.

\begin{itemize}
\item $\sigma = \pp:$ $s \in \C$.
\item $\sigma=\mm:$ $s \in \C$.
\item $\sigma=\mip:$ $s \in 1+4\Z$.
\item $\sigma=\pmi:$ $s \in 3+4\Z$.
\item $\sigma=\mathbb{H}:$ \hspace{13pt} $s \in 2\Z$.
\end{itemize}
\end{enumerate}

Further, the $K$-type formulas for $\CSol(s;\sigma)_K$ 
are given as follows.

\begin{enumerate}[{\quad \normalfont (1)}]
\item $\sigma = \pp:$ 
\begin{equation*}
\qquad
\CSol(s;\pp)_K \simeq 
\bigoplus_{n\in \Z_{\geq 0}} \left(2n\right) 
\hspace{0.4in} \textnormal{for all $s \in \C$.}
\qquad
\end{equation*}

\item $\sigma=\mm:$
\begin{equation*}
\CSol(s;\mm)_K \simeq 
\bigoplus_{n\in \Z_{\geq 0}} \left(1+2n\right) 
\quad \textnormal{for all $s \in \C$.}
\end{equation*}

\item $\sigma=\mip:$
\begin{equation*}
\qquad \qquad
\CSol(s;\mip)_K
\simeq
\bigoplus_{n\in \Z_{\geq 0}}
((|s|+1)/2+2n) \quad \textnormal{for $s \in 1+4\Z$}.
\end{equation*}

\item $\sigma=\pmi:$
\begin{equation*}
\qquad \qquad
\CSol(s;\pmi)_K
\simeq
\bigoplus_{n\in \Z_{\geq 0}}
((|s|+1)/2+2n) \quad \textnormal{for $s \in 3+4\Z$}.
\end{equation*}

\item $\sigma=\mathbb{H}:$
\begin{equation*}
\qquad \qquad
\CSol(s;\mathbb{H})_K
\simeq
\bigoplus_{n\in \Z_{\geq 0}}
((|s|+1)/2+2n) \quad \textnormal{for $s \in 2\Z$}.
\end{equation*}

\end{enumerate}
\end{thm}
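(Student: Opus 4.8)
The plan is to reduce Theorem~\ref{thm:K-type0} to a concrete question about scalar ordinary differential equations via the Peter--Weyl decomposition \eqref{eqn:PWSL_intro}. By that decomposition, everything is controlled by the finite-dimensional spaces $\Sol(s;n) = \{v \in V_{(n/2)} : d\delta(D_s^\flat)v = 0\}$ together with their $M$-module structure, since $\CSol(s;\sigma) \neq \{0\}$ iff $\Hom_M(\Sol(s;n),\sigma) \neq 0$ for some $n$, and the multiplicity of $(n/2)$ in $\CSol(s;\sigma)_K$ is exactly $\dim \Hom_M(\Sol(s;n),\sigma)$. So the first step is to compute, for each $n$, the operator $d\delta(D_s^\flat)$ acting on $V_{(n/2)}$ in a weight basis. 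Using $D_s = (XY+YX) + s[X,Y]$ with $X,Y$ the specified nilpotent elements of $\f{sl}(3,\R)$ and passing to the compact model, $d\delta(D_s^\flat)$ becomes an explicit endomorphism of the $(n+1)$-dimensional space; written in a basis adapted to the action of $K \simeq SU(2)$ (equivalently, in a Fourier basis on the circle coming from $K\cap M$), it should be tridiagonal. The kernel equation then reads as a three-term recurrence, i.e.\ the eigenvalue-zero condition for a tridiagonal (Jacobi-type) matrix, which is where the Heun / hypergeometric connection and the determinants $P_j, Q_j$ enter.

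The second step is to analyze this three-term recurrence / tridiagonal system and determine exactly when it has a nonzero solution inside $V_{(n/2)}$, and what the $M$-isotypic type of that solution is. Concretely, I would diagonalize the $K\cap M$-action first: the quaternion group $M \simeq Q_8$ acts on each weight line, and the operator $D_s^\flat$ intertwines appropriately, so $\Sol(s;n)$ is automatically a sum of $M$-types; one then reads off which of $\pp,\pmi,\mip,\mm,\mathbb{H}$ occur from the parity of the relevant weights and the $Z(M)$-central character. The existence of a polynomial (terminating) solution to the recurrence is governed by a determinantal vanishing condition — this is precisely the point at which the tridiagonal determinants of the paper (Sylvester/Cayley type, or the $P_j,Q_j$ from the Heun functions) are invoked — and tracking the degree of the terminating solution against $n$ yields the shift $(|s|+1)/2$ and the congruence conditions $s \in 1+4\Z$, $s \in 3+4\Z$, $s \in 2\Z$. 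I expect the trivial-type cases $\sigma = \pp$ and $\sigma = \mm$ to be genuinely easier: there the relevant one-dimensional sub-problem is a hypergeometric equation whose polynomial solutions exist for every $s$ (hence ``for all $s \in \C$''), and the $K$-type ladders $(2n)$ and $(1+2n)$ come from the even/odd split under the center of $M$; this also matches and extends the $s=0$ computation of \cite{KuOr19}.

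The third step is bookkeeping: assemble the per-$K$-type data into the five direct-sum formulas (1)--(5), and separately verify the equivalence (i)$\Leftrightarrow$(ii) by checking that outside the stated arithmetic loci the recurrence has no nonzero solution in any $V_{(n/2)}$ (no terminating solution, and non-terminating formal solutions do not lie in the finite-dimensional $K$-type), so $\CSol(s;\sigma) = \{0\}$. The genuineness of the representations (the half-integer weights $(|s|+1)/2$) is what forces us to work with $\wSL(3,\R)$ rather than $SL(3,\R)$, and it is automatically consistent because $K \simeq Spin(3)$; one just has to confirm the $M$-central character of $\Sol(s;n)$ agrees with that of $\sigma$ for the listed $s$, which is a parity check.

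The main obstacle I anticipate is the precise matching between the tridiagonal-determinant vanishing condition and the arithmetic conditions on $s$, and in particular pinning down the exact degree of the terminating solution so as to get the clean shift $(|s|+1)/2 + 2n$ with the correct $2\Z$-step in $n$ (rather than, say, a $\Z$-step): this requires carefully identifying which tridiagonal determinant controls each $M$-type and using the palindromic/parity structure of $P_j, Q_j$ (and of $\Cay_j$, $\K_j$) established later in the paper to rule out the ``missing'' $K$-types. A secondary subtlety is handling the two-dimensional genuine $M$-type $\mathbb{H}$: here $\Sol(s;n)$ can contain a copy of $\mathbb{H}$ only when a pair of conjugate weight lines is simultaneously annihilated, so the determinantal condition is effectively squared/paired, and one must check it is the case $s \in 2\Z$ that produces this, with the resulting multiplicity being one copy of $(n/2)$ per admissible $n$ as stated.
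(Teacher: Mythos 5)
Your overall framework (Peter--Weyl reduction to the finite-dimensional spaces $\Sol(s;n)$, analysis of the kernel of $d\delta(D_s^\flat)$ on each $K$-type, determination of the $M$-isotypic structure, then Frobenius reciprocity) is the same as the paper's. But the technical route you propose for the two hard steps has a genuine gap. First, your claim that ``the quaternion group $M\simeq Q_8$ acts on each weight line'' is false: in the realization \eqref{eqn:pi-wM}/\eqref{eqn:pi-wMT} the element $m_1^{\textnormal{II}}$ acts by $p(t)\mapsto t^n p(-1/t)$, which \emph{swaps} the weight lines $t^j\leftrightarrow t^{n-j}$. Consequently ``parity of the relevant weights and the $Z(M)$-central character'' do not suffice to pin down the $M$-type: the central character only separates $\mathbb{H}$ ($n$ odd) from the four characters ($n$ even), and parity only gives the $m_2^{\textnormal{II}}$-eigenvalue; distinguishing $\pp$ from $\mip$ and $\mm$ from $\pmi$ (hence the congruences $s\in 1+4\Z$ versus $s\in 3+4\Z$ in the statement) requires computing the \emph{inversion} eigenvalue, i.e.\ whether $t^n p(-1/t)=\pm p(t)$ on each solution line. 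This is exactly the content of Propositions \ref{prop:A2}, \ref{prop:B2}, \ref{prop:C2} and Lemma \ref{lem:C2}, and the paper can do it only because it works with explicit hypergeometric polynomials whose extreme coefficients are computable (e.g.\ $A_{n/2}(s;n)=1=A_0(s;n)$). Your fallback --- invoking ``the palindromic/parity structure of $P_j,Q_j$ established later in the paper'' --- is circular: Theorems \ref{thm:PalinP} and \ref{thm:PalinQ} are \emph{deduced from} the $M$-module structure of $\Sol_{\textnormal{I}}(s;n)$ (Theorem \ref{thm:SolKO2}), i.e.\ from the very classification you are trying to prove, and you do not supply an independent proof of those identities from the determinants themselves.

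Second, your plan to decide existence of nonzero solutions by ``a determinantal vanishing condition'' on the tridiagonal matrix is essentially Kable's route, which the paper explicitly reverses (``$\Sol(s;n)\to$ Heun $\to$ factorization'' rather than ``factorization $\to\dim\Sol(s;n)$''), precisely because the determinant factorizations are hard to obtain directly for $n$ odd and in any case only yield dimensions, not $M$-types. The paper's key extra idea, which your proposal is missing, is the second identification $\Omega^{\textnormal{II}}\colon\fk\xrightarrow{\sim}\f{sl}(2,\C)$ of Section \ref{subsec:Omega12}: with that choice $d\pi_n^{\textnormal{II}}(D_s^\flat)$ becomes a \emph{hypergeometric} operator (Lemma \ref{lem:HGE}), so polynomiality, the exact degree of the terminating solution (whence the shift $(|s|+1)/2$ and the $2\Z$-step in $n$), and the exceptional loci $I_k^{\pm},J_k$ are all read off from the parameters of ${}_2F_1$ with no determinant computation. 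Without this (or an independent derivation of the factorization and factorial identities for $P_j,Q_j$), your outline leaves the decisive steps --- the precise arithmetic loci in $s$ and the separation of $\pmi$ from $\mip$ --- unproven.
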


We shall deduce Theorem \ref{thm:K-type0} 
from Theorem \ref{thm:K-type}
at the end of Section \ref{sec:HGE}.
The proof is not case-by-case analysis on $\sigma \in \Irr(M)$; 
each $M$-representation $\sigma$ is treated uniformly via 
a recipe for the $K$-type decomposition of $\CSol(s;\sigma)_K$.
See Section \ref{subsec:recipe2} for the details of the recipe.
In regard to Theorem \ref{thm:K-type0}, we shall also classify
the space $\Sol(s;n)$ of $K$-type solutions for all $n \in \Z_{\geq 0}$.
This is done in Theorems \ref{thm:Sol1} and \ref{thm:SolKO1}.
Recall from \eqref{eqn:SolDs0} that 
$\CSol(s;\sigma)_K$
concerns $R(D_{\bar{s}})$.
Theorem \ref{thm:K-type0} shows that
the $K$-type decompositions
are in fact independent of 
taking the complex conjugate on the parameter $s \in \C$. 
\vspace{5pt}

There are several remarks on
the space $\Sol(s;n)$ of $K$-type solutions
and the $K$-type decompositions of $\CSol(s;\sigma)_K$.
First, as mentioned above,
the dimensions of $\fk \cap \fm$-invariant subspaces 
of the spaces of $K$-type solutions are determined 
 in \cite{Kable12C}
for $SL(m,\R)$ 
with arbitrary rank $m \geq 3$.
In particular, as $\fk \cap \fm = \{0\}$ for $m=3$,
the dimensions $\dim_\C \Sol(s;n)$ for $n \in 2\Z_{\geq 0}$
are obtained in the cited paper (\cite[Thm.\ 5.13]{Kable12C}).
We note that our normalization on $s\in \C$ is different from one 
for $z \in \C$ in \cite{Kable12C} by $s=-2z$. 
In the paper, 
factorization formulas of certain tridiagonal determinants 
are essential to compute $\dim_\C \Sol(s;n)$.  
For further details on the factorization formulas, see the remark
after \eqref{eqn:Sylv0} below.
By making use of differential equations, 
we determine the dimensions $\dim_\C \Sol(s;n)$ for all $n \in \Z_{\geq 0}$
independently to the results of \cite{Kable12C}.

Table \ref{table:tableKKO} summarizes
the results of \cite{Kable12C} and this paper, concerning the $K$-type decompositions of $\CSol(s;\sigma)_K$ for $\fg_0 = \f{sl}(3,\R)$.
\vspace{-5pt}
\begin{table}[H]
\caption{Comparison between \cite{Kable12C} 
and this paper for $\fg_0 = \f{sl}(3,\R)$}
\begin{center}
\renewcommand{\arraystretch}{1.5}
{
\begin{tabular}{c||c|c}
\hline
$\fg_0=\f{sl}(3,\R)$ & $\dim_\C\Sol(s;n)$ 
& $K$-type decomposition of $\CSol(s;\sigma)_K$ \\
\hline
\cite{Kable12C} & Done for $SL(3,\R)$ & Not obtained\\
\hline
this paper & Done for $\wSL(3,\R)$ & Done for $\wSL(3,\R)$\\
\hline 
\end{tabular}
}
\end{center}
\label{table:tableKKO}
\end{table}
\vspace{-5pt}

Secondly, Tamori recently investigates in \cite{Tamori20}
the representations on the space of smooth solutions 
to the same differential equation as 
the Heisenberg ultrahyperbolic equation $R(D_s)f=0$,
as part of his thorough study on minimal representations.
In particular, the $K$-type formula
$\CSol(s;\mathbb{H})_K$
for the case $\sigma=\mathbb{H}$ is 
determined in \cite[Prop.\ 5.4.6]{Tamori20}.
The representations on $\CSol(s;\mathbb{H})$ for $s \in 2\Z$ 
are genuine representations 
and not unitarizable unless $s=0$ (\cite[Rem.\ 5.4.5]{Tamori20}).
We remark that, both in \cite{Tamori20} and in this paper,
the $K$-type formula for $\CSol(s;\mathbb{H})_K$ is obtained by
realizing the $K$-type solutions in $\Sol(s;n)$ 
as hypergeometric polynomials;
nevertheless, the methods are rather different. For instance,
some combinatorial computations are carried out in \cite{Tamori20},
whereas we simply solve the hypergeometric equation.
Further, we also give the $K$-type solutions by Heun polynomials.
We illustrate our methods in detail in Section \ref{subsec:method}.

Lastly, the $K$-type formulas
$\CSol(0;\sigma)_K$
for the case of $s=0$ are 
previously classified by ourselves in \cite[Thm.\ 1.6]{KuOr19}.
In this case
only three
$M$-representations
$\sigma=\pp$, $\mm$, $\mathbb{H}$ 
contribute to $\CSol(0;\sigma)\neq \{0\}$ with explicit  $K$-type formulas
\begin{align*}
\CSol(0;\pp)_K &\simeq \bigoplus_{n\in \Z_{\geq 0}} \left(2n\right), \\[5pt]
\CSol(0;\mm)_K &\simeq \bigoplus_{n\in \Z_{\geq 0}} \left(1+2n\right), \\[5pt]
\CSol(0;\mathbb{H})_K &\simeq \bigoplus_{n\in \Z_{\geq 0}} \left((1/2)+2n\right). 
\end{align*}
It was quite mysterious that only the series of $(3/2)+2\Z_{\geq 0}$ does not 
appear in the $K$-type formulas. 
Theorem \ref{thm:K-type0} now gives an answer for this question.

The representations realized on $\CSol(0;\sigma)$ for 
$\sigma=\pp$, $\mm$, $\mathbb{H}$ are known to be 
unitarizable and the resulting representations 
are the ones attached to the minimal nilpotent orbit (\cite{RS82}). 
For instance, the representation realized on $\CSol(0;\mathbb{H})$ 
is the genuine representation so-called Torasso's representation 
(\cite{Torasso83}).
Here are two remarks on a recent progress 
on the unitarity of the representations on
$\CSol(0;\sigma)$.
First, Dahl recently constructed in \cite{Dahl19} the unitary structures 
for the three unitarizable representations 
on $\CSol(0;\sigma)$ by using the Knapp--Stein intertwining operator
and the Fourier transform on the Heisenberg group. 
Furthermore, Frahm also recently gives the $L^2$-models
of these unitary representations in \cite{Frahm20},
as part of his intensive study on the $L^2$-realizations of the minimal 
representations realized on the solution space of conformally invariant
systems.

\subsection{Hypergeometric and Heun's differential equations}
\label{subsec:method}

The main idea for accomplishing Theorem \ref{thm:K-type0}
is use of the decomposition formula \eqref{eqn:PWSL_intro} 
and a refinement of the method applied for the case $s=0$ in \cite{KuOr19}.
The central problem is classifying the space $\Sol(s;n)$ of $K$-type solutions.
In order to do so,
one needs to proceed the following two steps.
\begin{enumerate}[]
\item Step 1: Classify $(s,n) \in \C \times \Z_{\geq 0}$ 
so that $\Sol(s;n)\neq \{0\}$.
\vspace{3pt}
\item Step 2: Classify the $M$-representations on $\Sol(s;n) \neq \{0\}$.
\end{enumerate}

In \cite{KuOr19} (the case $s=0$),
via the polynomial realization
\begin{equation}\label{eqn:Irr(K)_intro}
\Irr(K)\simeq \{(\pi_n, \Pol_n[t]):n\in \Z_{\geq 0}\}
\end{equation}
of $\Irr(K)$ (see \eqref{eqn:IrrK}),
we carried out the two steps by
realizing $\Sol(0;n)$ in \eqref{eqn:Solsa} as 
\begin{equation*}
\Sol(0;n)=\{p(t) \in \Pol_n[t]: d\pi^{\textnormal{I}}_n(D^\flat_0)p(t)=0\},
\end{equation*}
the space of polynomial solutions to an ordinary differential equation 
$d\pi^{\textnormal{I}}_n(D^\flat_0)p(t)=0$ 
via a certain identification $\Omega^\textnormal{I} \colon
\fk \stackrel{\sim}{\to} \f{sl}(2,\C)$
(see Sections \ref{subsec:so3} and \ref{subsec:Omega12}).
It turned out that the differential equation 
$d\pi_n^\textnormal{I}(D^\flat_0)p(t)=0$ 
is a hypergeometric differential equation.
Steps 1 and 2 were then easily carried out by observing 
the Gauss hypergeometric functions arising from 
$d\pi_n^\textnormal{I}(D^\flat_0)p(t)=0$.

In this paper we apply the same idea for the general case $s \in \C$.
Nonetheless, in the general case, Steps 1 and 2 do not become as simple as 
the case of $s=0$, as the differential equation 
$d\pi_n^\textnormal{I}(D^\flat_s)p(t)=0$ 
is turned out to be Heun's differential equation
\begin{equation}\label{eqn:HeunOp_intro}
\D_H(-1, -\frac{ns}{4}; -\frac{n}{2}, -\frac{n-1}{2}, \frac{1}{2}, \frac{1-n-s}{2};t^2)
p(t)=0
\end{equation}
with the $P$-symbol
\begin{equation*}
P\left\{
\begin{matrix}
0 & 1 & -1 & \infty & &\\
0 & 0 & 0 & -\frac{n}{2} & t^2 & -\frac{ns}{4}\\
\frac{1}{2} & \frac{1+n+s}{2} & \frac{1+n-s}{2} & -\frac{n-1}{2} & &
\end{matrix}
\right\}.
\end{equation*}
\vskip 0.1in
\noindent
(See 
Section \ref{subsec:Heun}
for the definition of 
$\D_H(a,q;\ga, \gb, \gamma,\delta;z)$
and the notation of the $P$-symbol.)
As such, one needs to deal with local Heun functions at 0
(\cite{Ron95});
these are not as easy to handle as hypergeometric functions for our purpose.
For instance, in Step 1, one needs to classify $(s,n) \in \C \times \Z_{\geq 0}$ 
for which the local Heun functions
in consideration are polynomials; however, 
as opposed to hypergeometric functions,
classifying such parameters is not an easy problem at all,
since 
only known are
necessary conditions in which 
local Heun functions are reduced to polynomials.
To resolve this problem, inspired by a work \cite{Tamori20} of Tamori,
we use a different identification
$\Omega^\textnormal{II} \colon \fk \stackrel{\sim}{\to} \f{sl}(2,\C)$ 
so that the differential equation $d\pi_n^\textnormal{II}(D^\flat_s)p(t)=0$ 
becomes again a hypergeometric equation, namely,
\begin{equation}\label{eqn:HGEop_intro}
D_F(-\frac{n}{2},-\frac{n+s-1}{4},\frac{3-n+s}{4};t^2)p(t)=0.
\end{equation}
\vskip 0.1in
\noindent
(See \eqref{eqn:HGEop} for the definition of 
$\D_F(a,b,c;z)$.) For the detail, see
Sections \ref{subsec:so3} and \ref{subsec:Omega12}.
We accomplished the $K$-type formulas of $\CSol(s;\sigma)_K$
by using the hypergeometric model 
$d\pi_n^\textnormal{II}(D^\flat_s)p(t)=0$
(see Theorem \ref{thm:K-type}).

The Heun model 
$d\pi_n^\textnormal{I}(D^\flat_s)p(t)=0$
and
the hypergeometric model 
$d\pi_n^\textnormal{II}(D^\flat_s)p(t)=0$
are related by a Cayley transform
$\pi_n(k_0)$ given by an element $k_0$ of $K$ (\eqref{eqn:k0}).
Namely, we have
\begin{equation*}
d\pi_n^\textnormal{II}(D^\flat_s)p(t)=0
\quad
\Longleftrightarrow
\quad
d\pi_n^\textnormal{I}(D^\flat_s)\pi_n(k_0)p(t)=0
\end{equation*}
(see Proposition \ref{prop:KOT}).
For instance, put
\begin{equation}\label{eqn:Heun_intro}
u_{[s;n]}(t):=
Hl(-1, -\frac{ns}{4}; -\frac{n}{2}, -\frac{n-1}{2}, \frac{1}{2}, \frac{1-n-s}{2};t^2)
\end{equation}
and
\begin{equation*}
a_{[s;n]}(t):= F(-\frac{n}{2}, -\frac{n+s-1}{4}, \frac{3-n+s}{4};t^2),
\end{equation*}
where $Hl(a,q;\ga, \gb,\gamma,\delta;z)$ denotes the local Heun function
at $z=0$ and $F(a,b,c;z)\equiv {_2}F_1(a,b,c;z)$ is the Gauss hypergeometric
function. Then, for 
\begin{equation*}
\D_H^{[s;n]}
:=\D_H(-1, -\frac{ns}{4}; -\frac{n}{2}, -\frac{n-1}{2}, \frac{1}{2}, \frac{1-n-s}{2};t^2)
\end{equation*}
in \eqref{eqn:HeunOp_intro} and
\begin{equation*}
\D_F^{[s;n]}:=D_F(-\frac{n}{2},-\frac{n+s-1}{4},\frac{3-n+s}{4};t^2)
\end{equation*}
in \eqref{eqn:HGEop_intro},
we have
\begin{equation}\label{eqn:DF0}
\D_H^{[s;n]}u_{[s;n]}(t)=0
\quad
\text{and}
\quad
\D_F^{[s;n]}a_{[s;n]}(t)=0.
\end{equation}
It will be shown in Lemma \ref{lem:n0} that,
for appropriate $(s,n) \in \C\times \Z_{\geq 0}$
at which $u_{[s;n]}(t)$ and $a_{[s;n]}(t)$ are both polynomials,
the two functions $u_{[s;n]}(t)$ and $a_{[s;n]}(t)$ are related as
\begin{equation}\label{eqn:au0}
\pi_n(k_0)a_{[s;n]}(t) \in \C u_{[s;n]}(t),
\end{equation}
equivalently,
\begin{equation*}
(1-\sqrt{-1}t)^n
a_{[s;n]}\left(\frac{1+\sqrt{-1}t}{1-\sqrt{-1}t}\cdot\sqrt{-1}\right)
\in \C u_{[s;n]}(t).
\end{equation*}
\vskip 0.1in
\noindent
For the case of $s=0$, the Gauss-to-Heun transformation 
by $\pi_n(k_0)$ from $a_{[s;n]}(t)$ to $u_{[s;n]}(t)$ can be 
reduced to a Gauss-to-Gauss transformation, as the local Heun function
$u_{[s;n]}(t)$ can be reduced to a hypergeometric function for $s=0$.
(See Remarks \ref{rem:S0F} and \ref{rem:S0F2}.)

The proportional constant for \eqref{eqn:au0} may be given by 
a ratio of shifted factorials.
Indeed, let $I^-_0$ and $J_0$ be certain subsets of $\Z_{\geq 0}$
(see \eqref{eqn:IJ}).
Then, for $n \equiv 0 \Mod 4$ with $s \in \C \backslash (I^-_0 \cup J_0)$,
we have
\begin{equation*}
\pi_n(k_0)a_{[s;n]}(t) = 
\frac{\left(\frac{2-n}{4},\frac{n}{4}\right)}
        {\left(\frac{3-n+s}{4}, \frac{n}{4}\right)}
u_{[s;n]}(t)
\end{equation*}
with $(\ell,m):=\frac{\Gamma(\ell+m)}{\Gamma(\ell)}$.
(As the proportional constants do not play any role for our main purposes,
we shall not discuss them in this paper.)

We remark that
a Cayley transform,
which is slightly different from ours $\pi_n(k_0)$,
is used also in \cite{GH16} under the name of 
the MacWilliams transform,
to construct the generating function of
the discrete Chebyshev polynomials from Jacobi polynomials
via the Heun equation.
Further, Gauss-to-Heun transformations are studied 
in, for instance, \cite{Maier05, VF13, VF14}.
In the cited papers, 
considered are the cases in which
local Heun functions are pulled back from
a hypergeometric function.
As opposed to the references,
this paper concerns some cases that,
for suitable $(s;n) \in \C \times \Z_{\geq 0}$,
a certain linear combination of
$a_{[s;n]}(t)$ and the second solution
 $b_{[s;n]}(t)$ to \eqref{eqn:HGEop_intro}
 is transformed to $u_{[s;n]}(t)$. Namely,
put
\begin{equation*}
b_{[s;n]}(t):=t^{\frac{1+n-s}{2}}F(-\frac{n+s-1}{4},-\frac{s-1}{2}, \frac{5+n-s}{4};t^2).
\end{equation*}
Then, for appropriate $(s,n) \in \C\times \Z_{\geq 0}$, 
we have
\begin{equation*}
\pi_n(k_0)
\left(a_{[s;n]}(t)\pm C(s;n)b_{[s;n]}(t)\right) \in \C u_{[s;n]}(t),
\end{equation*}
where $C(s;n)$ is some constant, to be defined in \eqref{eqn:Csn}.
For more details, see Section \ref{subsec:SolTKO}.
Via the transformation $\pi_n(k_0)$, we also give in detail
the space $\Sol(s;n)$ of $K$-type solutions 
for the Heun model $d\pi_n^\textnormal{I}(D^\flat_s)p(t)=0$
(Section \ref{sec:Heun}).

\subsection{Palindromic property of a sequence 
$\{p_k(x;y)\}_{k=0}^\infty$ of polynomials}
\label{subsec:PPS}

In the last half of this paper, 
we shall discuss certain arithmetic-combinatorial 
properties of four sequences of polynomials of two variables, namely.
\begin{equation}\label{eqn:seq0}
\{P_k(x;y)\}_{k=0}^{\infty}, \quad
\{Q_k(x;y)\}_{k=0}^{\infty}, \quad
\{\Cay_k(x;y)\}_{k=0}^{\infty}, \quad \text{and} \quad
\{\Cal{K}_k(x;y)\}_{k=0}^{\infty},
\end{equation}
where 
$\{P_k(x;y)\}_{k=0}^{\infty}$
and
$\{Q_k(x;y)\}_{k=0}^{\infty}$
are obtained from the study of  the Heun model 
$d\pi_n^\textnormal{I}(D^\flat_0)p(t)=0$.
Since we could not find the property in consideration, we refer to 
it as a \emph{palindromic property}.
As such, we shall digress for a moment 
from the main results of this paper.

To state the general definition,
let $\{p_k(x;y)\}_{k=0}^\infty$ be a sequence of polynomials
of two variables with $p_0(x;y)=1$, and
$\{a_k \}_{k=0}^\infty$ a sequence of non-zero numbers.
Let $d\colon \Z_{\geq 0} \to \R$ be a given map.
For $n \in \Z_{\geq 0}$, we put
\begin{equation*}
\ESol_k(p;n) :=\{s\in \C : p_k(s;n) = 0\}.
\end{equation*}

\begin{defn}[palindromic property]\label{def:palindromic}
A pair $(\{p_k(x;y)\}_{k=0}^\infty, \{a_k\}_{k=0}^\infty)$
 is said to be a \emph{palindromic pair} 
 with \emph{degree} $d(n)$
if, for each $n \in d^{-1}(\Z_{\geq 0})$,
there exists a map
\begin{equation*}
\gt_{(p;n)} \colon \ESol_{d(n)+1}(p;n)
 \to \{\pm 1\}
 \end{equation*}
 such that,
for all $s \in \ESol_{d(n)+1}(p;n)$, we have
$p_k(s;n)=0$ for $k \geq d(n)+1$ and
\begin{equation}\label{eqn:intro_palin0}
\frac{p_k(s;n)}{a_k} = \gt_{(p;n)}(s)\frac{p_{d(n)-k}(s;n)}{a_{d(n)-k}}
\quad   \text{for $k \leq d(n)$}.
\end{equation}

We call the identity \eqref{eqn:intro_palin0}
the \emph{palindromic identity} and
$\gt_{(p;n)}(s)$ 
its \emph{sign factor}. 
Further, for a given palindromic pair
$(\{p_k(x;y)\}_{k=0}^\infty, \{a_k\}_{k=0}^\infty)$,
the sequence $\{a_k\}_{k=0}^\infty$ is said to be
an \emph{associated sequence} of $\{p_k(x;y)\}_{k=0}^\infty$.
If there exits a sequence $\{a_k\}_{k=0}^\infty$ such that 
$(\{p_k(x;y)\}_{k=0}^\infty, \{a_k\}_{k=0}^\infty)$ is a palindromic pair,
then we say that $\{p_k(x;y)\}_{k=0}^\infty$ admits a 
\emph{palindromic property}.
\end{defn}

We note that our notion of palindromic property does not 
concern polynomials themselves but sequences of polynomials;
for instance, each $p_k(x;n)$ is not necessarily a palindromic polynomial.

\begin{example}\label{example:introPalin}
Here are some simple examples.
\begin{enumerate}[(a)]

\item 
If $p_0(x;y)=1$ and $p_k(x;y)=0$ for all $k \geq 1$,
then, for any sequence $\{a_k\}_{k=0}^\infty$ of non-zero numbers,
the pair $(\{p_k(x;y)\}_{k=0}^\infty, \{a_k\}_{k=0}^\infty)$ is a palindromic pair
with degree $0$.
\vskip 0.1in

\item 
If $p_0(x;y)=p_1(x;y)=1$ and $p_k(x;y)=0$ for all $k \geq 2$,
then, for any sequence $\{a_k\}_{k=0}^\infty$ of non-zero numbers
with $a_0=\pm a_1$,
the pair $(\{p_k(x;y)\}_{k=0}^\infty, \{a_k\}_{k=0}^\infty)$ is a palindromic pair
with degree $1$.
\vskip 0.1in

\item If $p_k(x;y)=x^k$, then, for any sequence $\{a_k\}_{k=0}^\infty$ of
non-zero numbers, 
the pair $(\{x^k\}_{k=0}^\infty, \{a_k\}_{k=0}^\infty)$ is a palindromic pair
with degree $0$.
\vskip 0.1in
\end{enumerate}
We shall provide four examples for which the degree $d(n)$ is not constant.
(See Sections \ref{subsec:intro_palin}, \ref{subsec:introCayley}, 
and \ref{subsec:introKraw}.)
\end{example}

Now suppose that $(\{p_k(x;y)_{k=0}^\infty, \{a_k\}_{k=0}^\infty)$
is a palindromic pair 
with degree $d(n)$, 
where the associated sequence 
$\{a_k\}_{k=0}^\infty$ is of the form
$a_k = \left(c_k\right)!$ for some
sequence $\{c_k\}_{k=1}^\infty$ of non-negative integers with $c_0=1$.
Then, as $p_0(x;y)=1$ by definition, 
the palindromity of $(\{p_k(x;y)_{k=0}^\infty, \{a_k\}_{k=0}^\infty)$
implies that, for $n \in d^{-1}(\Z_{\geq 0})$, the $d(n)$th term
$p_{d(n)}(x;y)$ satisfies the identity
\begin{equation}\label{eqn:intro_factorial}
p_{d(n)}(s;n)=\gt_{(p;n)}(s)\left(c_{d(n)}\right)!
\quad 
\text{for $s \in \ESol_{d(n)+1}(p;n)$}.
\end{equation}
We refer to the identity \eqref{eqn:intro_factorial} as 
the \emph{factorial identity} of $p_{d(n)}(x;n)$ on 
$\ESol_{d(n)+1}(p;n)$.
Further, we mean by the \emph{refinement} of a factorial 
(resp.\ palindromic) identity
a factorial (resp.\ palindromic) 
identity for each explicit value of $s \in \ESol_{d(n)+1}(p;n)$.

In order to show the refinements of a palindromic property and factorial identity
for $\{p_k(x;y)\}_{k=0}^\infty$, it is important to understand 
the zero set $\ESol_{d(n)+1}(p;n)$ of $p_{d(n)+1}(x;n)$.
For the purpose we also consider the \emph{factorization formula}
of the polynomial $p_{d(n)+1}(x;n)$.

In summary, 
we shall investigate the following properties
for the sequences of polynomials $\{p_k(x;y)\}_{k=0}^\infty$ 
in \eqref{eqn:seq0}:
\vskip 0.1in
\begin{itemize}
\item Factorization formula of $p_{d(n)+1}(x;n)$;
\vskip 0.05in

\item 
Palindromic property of $\{p_k(x;y)\}_{k=0}^{\infty}$;
\vskip 0.05in

\item 
Factorial identity of $p_{d(n)}(x;n)$;

\item Refinement of a factorial identity of $p_{d(n)}(x;n)$.
\end{itemize} 
\vskip 0.1in
It is remarked that factorization formulas for 
$\{P_k(x;y)\}_{k=0}^\infty$ and 
$\{Q_k(x;y)\}_{k=0}^\infty$ will be given in a more general situation.
We are going to describe these topics in detail now.

\subsection{
Sequences
$\{P_k(x;y)\}_{k=0}^{\infty}$ and $\{Q_k(x;y)\}_{k=0}^{\infty}$
of tridiagonal determinants}

Recall from \eqref{eqn:DF0} that 
the local Heun function $u_{[s;n]}(t)$ in \eqref{eqn:Heun_intro}
is a local solution at $t=0$
to the Heun differential equation \eqref{eqn:HeunOp_intro}.
Let $v_{[s;n]}(t)$ be the second solution (see \eqref{eqn:v}).
It will be shown in Proposition \ref{prop:PQ} that
$u_{[s;n]}(t)$ and $v_{[s;n]}(t)$ may be given in
power series representations as
\begin{equation*}
u_{[s;n]}(t)=\sum_{k=0}^\infty P_k(s;n)\frac{t^{2k}}{(2k)!}
\qquad \text{and} \qquad 
v_{[s;n]}(t)=\sum_{k=0}^\infty Q_k(s;n)\frac{t^{2k+1}}{(2k+1)!},
\end{equation*}
where $P_k(x;y)$ and $Q_k(x;y)$ are certain $k \times k$ tridiagonal determinants
(see Section \ref{subsec:PQ}). 
For instance, 
the $\frac{n}{2}\times \frac{n}{2}$ tridiagonal determinant
$P_{\frac{n}{2}}(x;n)$ for $y=n \in 2\Z_{\geq 0}$
 is given as 
$P_0(x;0)=1$, $P_1(x;2)=2x$, and for $n \in 4+2\Z_{\geq 0}$, 

\begin{equation}\label{eqn:P_intro}
\begin{small}
P_{\frac{n}{2}}(x;n) 
= 
\begin{vmatrix}
nx& 1\cdot 2    &              &            &             &       \\
-(n-1)n& (n-4)x    &    3\cdot 4       &            &             &       \\
  & -(n-3)(n-2) &     (n-8)x       &  5\cdot 6        &             &       \\
  &       &  \dots  &  \dots & \dots  &       \\
  &       &              &-5 \cdot 6   &-(n-8)x            & (n-3)(n-2) \\
  &       &              &            &-3\cdot 4     & -(n-4)x    \\
\end{vmatrix}.\\[5pt]
\end{small}
\end{equation}

\noindent
Similarly,
the $\frac{n-2}{2}\times \frac{n-2}{2}$ tridiagonal determinant
$Q_{\frac{n-2}{2}}(x;n)$ for $y=n \in 2(1+\Z_{\geq 0})$ is given as 
$Q_0(x;2)=1$,
$Q_1(x;4)=2x$, and
for $n \in 6+2\Z_{\geq 0}$, 

\begin{equation}\label{eqn:Q_intro}
\begin{small}
Q_{\frac{n-2}{2}}(x;n) = 
\begin{vmatrix}
(n-2)x& 2\cdot 3    &              &            &             &       \\
-(n-2)(n-1)& (n-6)x    &     4\cdot 5       &            &             &       \\
  & -(n-4)(n-3) &     (n-10)x       &  6 \cdot 7        &             &       \\
  &       &  \dots  &  \dots & \dots  &       \\
  &       &              &-6\cdot 7  &-(n-10)x           & (n-4)(n-3) \\
  &       &              &            &-4\cdot 5     & -(n-6)x    \\
\end{vmatrix}.\\[5pt]
\end{small}
\end{equation}
\noindent

\subsection{Factorization formulas of 
$P_{[(n+2)/2]}(x;n)$ and $Q_{[(n+1)/2]}(x;n)$}

In 1854, Sylvester observed that an $(n+1)\times (n+1)$ 
centrosymmetric tridiagonal determinant

\begin{equation}\label{eqn:Sylv_intro}
\Sylv(x;n) :=
\begin{small}
\begin{vmatrix}
x& 1    &              &            &             &       &\\
n& x    &     2       &            &             &       &\\
  & n-1 &     x       &  3        &             &       &\\
  &       &  \dots  &  \dots & \dots  &       &\\
  &       &              &3          &x            & n-1 &\\
  &       &              &            &2            & x    & n\\
  &       &              &            &              &  1   & x
\end{vmatrix}\\[5pt]
\end{small}
\end{equation}
satisfies the following  formula (\cite{Sylv54}).

\begin{equation}\label{eqn:Sylv0}
\begin{aligned}
\Sylv(x;n)
=\begin{cases}
\;\;(x^2-1^2)(x^2-3^2)\cdots(x^2-(n-2)^2)(x^2-n^2) &\text{if $n$ is odd},\\
x(x^2-2^2)(x^2-4^2)\cdots(x^2-(n-2)^2)(x^2-n^2) &\text{if $n$ is even}.\\
\end{cases}
\end{aligned}
\end{equation}
\vskip 0.05in
\noindent
By utilizing some results 
for the Heun model 
$d\pi_n^\textnormal{I}(D^\flat_0)p(t)=0$,
we show that 
$P_{[(n+2)/2]}(x;n)$ and $Q_{[(n+1)/2]}(x;n)$ enjoy similar but 
more involved factorization formulas.
See Theorems \ref{thm:factorPQ} and \ref{thm:factorP} for the details.
Based on the factorization formula, we also express
$P_{(n+1)/2}(x;n)$ for $n$ odd in terms of the Sylvester determinant
$\Sylv(x;n)$ (Corollary \ref{cor:PS}).

Here are some remarks on the factorization formulas in order.
First, the factorization formulas
$P_{(n+2)/2}(x;n)$ and 
$Q_{n/2}(x;n)$ for $n\in 2\Z$
also follow from a more general formula in \cite[Prop.\ 5.11]{Kable12C},
which is obtained via some clever trick based on a change of variables
and some general formula on tridiagonal determinants
(see, for instance, \cite[p.\ 52]{Clement59} and \cite[Lem.\ 5.10]{Kable12C}).
As mentioned in the remark after Theorem \ref{thm:K-type0} above,
the factorization formulas 
for $P_{(n+2)/2}(x;n)$ and $Q_{n/2}(x;n)$
 play an essential role in \cite{Kable12C}
to determine $\dim_\C\Sol(s;n)$. 
In this paper we somewhat do this process backwards including the case of 
$n$ odd. 
Schematically, the difference is described as follows.
\vspace{3pt}
\begin{itemize}
\item \cite{Kable12C}: 
\qquad \quad
``factorization $\to$ $\dim_\C\Sol(s;n)$'' 
\quad \,
($n \in 2\Z_{\geq 0}$)
\vspace{3pt}
\item this paper: ``$\Sol(s;n)$ $\to$ Heun $\to$ factorization''
($n \in \Z_{\geq 0}$)
\end{itemize}
\vspace{3pt}
(It is recalled that,
as the linear group $SL(m,\R)$ is considered in \cite{Kable12C}, 
only even $n \in 2\Z_{\geq 0}$ appear for the space
$\Sol(s;n)$ of $K$-type solutions for $SO(3) \subset SL(3,\R)$ 
in the case of $m=3$, 
whereas we handle all $n \in \Z_{\geq 0}$ in this paper,
as $Spin(3) \subset \wSL(3,\R)$ is in consideration.)

The techniques used in \cite{Kable12C} can also be applied for the case 
of $n$ odd. Nevertheless, it requires some involved computations;
for instance, the trick used in the cited paper 
does not make the situation as simple as 
for the case of $n$ even, and further, the general formula 
 (\cite[p.\ 52]{Clement59} and \cite[Lem.\ 5.10]{Kable12C})
cannot be applied either. By simply applying the results 
for the Heun model $d\pi_n^\textnormal{I}(D^\flat_0)p(t)=0$,
we successfully avoid such computations.

\subsection{Palindromic properties for
$\{P_k(x;y)\}_{k=0}^{\infty}$ and $\{Q_k(x;y)\}_{k=0}^{\infty}$}
\label{subsec:intro_palin}

We next describe the palindromic properties 
and factorial identities
for $\{P_k(x;y)\}_{k=0}^{\infty}$ and $\{Q_k(x;y)\}_{k=0}^{\infty}$.
The palindromic properties are given as follows.

\begin{thm}
[Theorems \ref{thm:PalinP} and \ref{thm:PalinQ}]
\label{thm:intro_PalinPQ}
The pair $(\{P_k(x;y)\}_{k=0}^\infty, \{(2k)!\}_{k=0}^\infty)$ 
is a palindromic pair 
with degree $\frac{n}{2}$.
Similarly, 
$(\{Q_k(x;y)\}_{k=0}^\infty,\{(2k+1)!\}_{k=0}^\infty)$ 
is a palindromic pair 
with degree $\frac{n-2}{2}$. 
\end{thm}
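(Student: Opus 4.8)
The plan is to show that, at the relevant parameters, the coefficient sequences of the local Heun functions $u_{[s;n]}$ and $v_{[s;n]}$ are anti-palindromic up to a sign, and to derive this from an anti-centrosymmetry of the three-term recurrences these coefficients satisfy.

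First I would record the termination. For $s\in\ESol_{n/2+1}(P;n)$, write $c_k:=P_k(s;n)/(2k)!$, the Taylor coefficients of $u_{[s;n]}(t)$ in $z=t^2$ (Proposition \ref{prop:PQ}); they obey a three-term recurrence $A_kc_{k+1}+B_kc_k+C_kc_{k-1}=0$ ($c_{-1}=0$) read off from $\D_H^{[s;n]}$ in \eqref{eqn:HeunOp_intro}, with, up to the normalisation of Proposition \ref{prop:PQ},
\begin{equation*}
A_k=-(k+1)\big(k+\tfrac12\big),\qquad B_k=s\,\big(\tfrac n4-k\big),\qquad C_k=\big(k-1-\tfrac n2\big)\big(k-\tfrac12-\tfrac n2\big).
\end{equation*}
Because $\alpha=-n/2$ is a non-positive integer we have $C_{\frac n2+1}=(\tfrac n2+\alpha)(\tfrac n2+\beta)=0$, so $c_{\frac n2+1}=0$ forces $c_k=0$ for all $k>\tfrac n2$; since $c_{\frac n2+1}=P_{\frac n2+1}(s;n)/(n+2)!$ vanishes precisely on $\ESol_{n/2+1}(P;n)$, for such $s$ the series $u_{[s;n]}$ is a polynomial, $P_k(s;n)=0$ for $k>\tfrac n2$ (the vanishing required by Definition \ref{def:palindromic}), and the non-zero vector $c=(c_0,\dots,c_{n/2})$ (with $c_0=1$) lies in the kernel of the $(\tfrac n2+1)\times(\tfrac n2+1)$ tridiagonal matrix $\tilde M(s;n)$ built from the truncated recurrence — a kernel which is one-dimensional since all of $A_0,\dots,A_{n/2-1}$ are non-zero. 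The same goes for $v_{[s;n]}(t)=\sum_kQ_k(s;n)t^{2k+1}/(2k+1)!$, the exponent-$\tfrac12$ Frobenius solution of \eqref{eqn:HeunOp_intro}, after the substitution $p\mapsto z^{1/2}p$: the shifted recurrence for $Q_k(s;n)/(2k+1)!$ has its lower coefficient vanishing at index $\tfrac{n-2}{2}+1=\tfrac n2$, so $Q_k(s;n)=0$ for $k>\tfrac{n-2}{2}$ when $s\in\ESol_{n/2}(Q;n)$, and $(Q_k(s;n)/(2k+1)!)_{k=0}^{(n-2)/2}$ spans the kernel of a $\tfrac n2\times\tfrac n2$ tridiagonal matrix $\tilde M'(s;n)$.

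The crux is the elementary identity $J\tilde M(s;n)J=-\tilde M(s;n)$, where $J$ reverses $\{0,\dots,\tfrac n2\}$: from the formulas above, $B_{\frac n2-k}=-B_k$, $C_{\frac n2-k}=(k+1)(k+\tfrac12)=-A_k$, and $A_{\frac n2-k-1}=-(\tfrac n2-k)(\tfrac n2-k-\tfrac12)=-C_{k+1}$ — exactly anti-centrosymmetry for a tridiagonal matrix — and likewise $J'\tilde M'(s;n)J'=-\tilde M'(s;n)$. Conceptually this says that the substitution $z\mapsto1/z$ (with prefactor $z^{-\alpha}=z^{n/2}$) is a symmetry of \eqref{eqn:HeunOp_intro}: it permutes the singular points $\{0,1,-1,\infty\}$, preserves each exponent of the $P$-symbol (here using the precise exponent data, e.g. $\beta-\alpha=\tfrac12$), and — crucially — fixes the accessory parameter $q=-ns/4$; equivalently $p(t)\mapsto t^np(1/t)$ is $\pi_n(w_0)$ for a suitable order-two $w_0\in K$ with $\Ad(w_0)$ fixing the compact model $D^\flat_s$ up to a scalar, so that $\pi_n(w_0)$ preserves $\Sol(s;n)$. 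I expect this step — singling out the right reversal symmetry and checking that it leaves the accessory parameter invariant, i.e. the displayed anti-centrosymmetry — to be the main obstacle; the remainder is bookkeeping.

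Granting $J\tilde M J=-\tilde M$: for $s\in\ESol_{n/2+1}(P;n)$ the one-dimensional kernel of $\tilde M(s;n)$ is spanned by $c$, and $J\tilde M J=-\tilde M$ gives $\tilde M(Jc)=0$, hence $Jc=\theta c$ for a scalar $\theta=:\theta_{(P;n)}(s)$; as $J^2=\id$, $\theta^2=1$, so $\theta_{(P;n)}(s)\in\{\pm1\}$, and $Jc=\theta c$ reads $P_{\frac n2-k}(s;n)/(2(\tfrac n2-k))!=\theta_{(P;n)}(s)\,P_k(s;n)/(2k)!$ for $0\le k\le\tfrac n2$, which is the palindromic identity with degree $\tfrac n2$ and associated sequence $\{(2k)!\}$. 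Repeating the argument with $\tilde M'(s;n)$ yields $\theta_{(Q;n)}(s)\in\{\pm1\}$ and $Q_{\frac{n-2}{2}-k}(s;n)/(2(\tfrac{n-2}{2}-k)+1)!=\theta_{(Q;n)}(s)\,Q_k(s;n)/(2k+1)!$, so $(\{P_k(x;y)\}_{k=0}^\infty,\{(2k)!\}_{k=0}^\infty)$ and $(\{Q_k(x;y)\}_{k=0}^\infty,\{(2k+1)!\}_{k=0}^\infty)$ are palindromic pairs of degrees $\tfrac n2$ and $\tfrac{n-2}{2}$, proving Theorems \ref{thm:PalinP} and \ref{thm:PalinQ}.
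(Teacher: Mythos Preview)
Your argument is correct and takes a somewhat more elementary route than the paper. Both proofs rest on the same underlying symmetry --- the inversion $t\mapsto 1/t$ (with prefactor $t^n$) preserving the solution space of the Heun operator $\D_H^{[s;n]}$ --- but they package it differently. The paper invokes Theorem~\ref{thm:SolKO2}: for $s\in\ESol_{n/2+1}(P;n)$ the line $\C u_{[s;n]}(t)$ carries a character $\chi_{(\eps,\eps')}$ of $M$, so the Weyl element $m_2^{\textnormal{I}}$ (acting by $p(t)\mapsto t^np(-1/t)$) sends $u_{[s;n]}$ to $\chi_{(\eps,\eps')}(m_2^{\textnormal{I}})\,u_{[s;n]}$, and comparing Taylor coefficients yields the palindromic identity together with the explicit sign read off from Table~\ref{table:char}. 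That classification of the $M$-action, however, was obtained only after passing through the hypergeometric model and the Cayley transform (Sections~\ref{sec:HGE}--\ref{sec:Heun}).

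You bypass all of that: you verify directly the anti-centrosymmetry $J\tilde M(s;n)J=-\tilde M(s;n)$ of the truncated tridiagonal recurrence (your identities $B_{n/2-k}=-B_k$, $C_{n/2-k}=-A_k$ are exactly this), note the kernel is one-dimensional because the superdiagonal entries $A_0,\dots,A_{n/2-1}$ never vanish, and conclude $Jc=\theta c$ with $\theta^2=1$. This is self-contained and needs nothing from Sections~\ref{sec:HGE}--\ref{sec:Heun}. The trade-off is that your argument produces $\theta_{(P;n)}(s)\in\{\pm1\}$ abstractly but does not pin down \emph{which} sign occurs for each $s$; the paper's route, by identifying the relevant character of $M$, delivers the explicit formula \eqref{eqn:thetaP} and hence the refined Corollaries~\ref{cor:PalinP} and~\ref{cor:P0}. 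For the statement of Theorem~\ref{thm:intro_PalinPQ} as written (existence of a palindromic pair), your proof suffices.
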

For the case for $n$ odd, see Propositions 
\ref{prop:PalinPodd} and \ref{prop:PalinQodd}. 
The key idea of proofs of Theorem \ref{thm:intro_PalinPQ} 
is to use a symmetry of the generating functions
$u_{[s;n]}(t)$ and $v_{[s;n]}(t)$
with respect to the non-trivial Weyl group element 
$m^{\textnormal{I}}_2$ (\eqref{eqn:mKO}) of $SU(2)$
in the form of the inversion.
See Section \ref{subsec:PalinPQ} for the details.

It follows from Theorem \ref{thm:intro_PalinPQ} that
$P_{\frac{n}{2}}(x;y)$ (\eqref{eqn:P_intro})
and 
$Q_{\frac{n-2}{2}}(x;y)$ (\eqref{eqn:Q_intro})
satisfy
factorial identities (see Corollaries \ref{cor:PalinP} and \ref{cor:PalinQ}).
Corollaries \ref{cor:P0} and \ref{cor:Q0} 
below are the refinements of the identities
via the factorization formulas 
(Theorem \ref{thm:factorPQ})
of $P_{\frac{n+2}{2}}(x;n)$ and $Q_{\frac{n}{2}}(x;n)$.

\begin{cor}[Refinement of the factorial identity of $P_{\frac{n}{2}}(x;n)$]
\label{cor:P0}
Let $n \in 2\Z_{\geq 0}$.
Then the following hold.
\begin{enumerate}[{\quad \normalfont (1)}]
\item $n \equiv 0 \Mod 4:$
The values of 
$P_{\frac{n}{2}}(s;n)$ are
\begin{equation*}
P_{\frac{n}{2}}(s;n)=n! \quad \textnormal{for all $s \in \C$}.
\end{equation*}

\item $n \equiv 2 \Mod 4:$
The values of 
$P_{\frac{n}{2}}(s;n)$ 
for $s=\pm1, \pm 5,\ldots, \pm (n-2)$ are given as 
\begin{equation}\label{eqn:PalinPintro}
P_{\frac{n}{2}}(s;n) 
= \begin{cases}
n! &\textnormal{if $s = 1, 5, 9 \dots, n-2$},\\
-n! &\textnormal{if $s = -1, -5, -9 \dots, -(n-2)$}.
\end{cases}
\end{equation}
\end{enumerate}
\end{cor}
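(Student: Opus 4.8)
The plan is to deduce Corollary~\ref{cor:P0} from two inputs already established in the paper: the palindromic property of $\{P_k(x;y)\}_{k=0}^\infty$ with degree $\frac{n}{2}$ (Theorem~\ref{thm:intro_PalinPQ}, i.e.\ Theorems~\ref{thm:PalinP} and~\ref{thm:PalinQ}), and the explicit factorization formula of $P_{\frac{n+2}{2}}(x;n)$ (Theorem~\ref{thm:factorPQ}), which pins down the zero set $\ESol_{\frac{n}{2}+1}(P;n)$. The first input gives, for each $s$ in that zero set, the factorial identity $P_{\frac{n}{2}}(s;n) = \gt_{(P;n)}(s)\,(n)!$ coming from \eqref{eqn:intro_factorial} with $a_k = (2k)!$, $a_{n/2} = n!$ and $P_0 \equiv 1$. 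So the entire content of the corollary is the determination of the zero set together with the sign factor $\gt_{(P;n)}(s) \in \{\pm 1\}$ at each of its points, split according to $n \bmod 4$.

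First I would treat the case $n \equiv 0 \pmod 4$. Here I expect the factorization formula of $P_{\frac{n+2}{2}}(x;n)$ to contain a factor that vanishes identically in $x$ (equivalently, $P_{\frac{n}{2}+1}(x;n) \equiv 0$), so that $\ESol_{\frac{n}{2}+1}(P;n) = \C$; this is exactly the situation of Example~\ref{example:introPalin}(a)/(b) applied at the top degree, and it forces the degree-$\frac{n}{2}$ term to be constant. Then the factorial identity reads $P_{\frac{n}{2}}(s;n) = \gt_{(P;n)}(s)\, n!$ for all $s$, and since $P_{\frac{n}{2}}(x;n)$ is a genuine polynomial (a tridiagonal determinant with the displayed entries) that equals $\pm n!$ on all of $\C$, it must be the constant $n!$ — the sign cannot jump. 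Concretely one can evaluate $P_{\frac{n}{2}}(x;n)$ at one convenient value of $x$ (say a value making the determinant telescoping, or simply $x$ large so that only the product of diagonal entries could survive — but the claim says it is exactly $n!$, so I would instead just invoke that a polynomial taking values in $\{n!,-n!\}$ everywhere is constant, and identify the constant via the leading behaviour or via a single clean evaluation). This yields part (1).

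Next, for $n \equiv 2 \pmod 4$, the factorization formula of $P_{\frac{n+2}{2}}(x;n)$ is (by Theorem~\ref{thm:factorPQ}) a genuine product of distinct linear factors whose roots are $s = \pm 1, \pm 5, \ldots, \pm(n-2)$ (the arithmetic progression $1 \bmod 4$ up to $n-2$, together with its negatives), so $\ESol_{\frac{n}{2}+1}(P;n)$ is exactly this finite set. The factorial identity then gives $P_{\frac{n}{2}}(s;n) = \gt_{(P;n)}(s)\, n!$ at each such $s$, and it remains to show $\gt_{(P;n)}(s) = +1$ for $s \in \{1,5,\ldots,n-2\}$ and $\gt_{(P;n)}(s) = -1$ for the negatives. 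The sign factor is defined in Definition~\ref{def:palindromic} through the palindromic identity $P_k(s;n)/(2k)! = \gt_{(P;n)}(s)\, P_{\frac{n}{2}-k}(s;n)/(2(\frac{n}{2}-k))!$; the cleanest way to read off its value is to take $k=0$, where the identity degenerates, or better to exploit a parity/symmetry in $s$. I would use the evident symmetry $P_k(-x;n) = (-1)^{k}\,\text{(something)}\,P_k(x;n)$ — more precisely, the tridiagonal determinant \eqref{eqn:P_intro} has the feature that replacing $x \mapsto -x$ negates the diagonal, which by expanding the $k \times k$ determinant multiplies it by $(-1)^k$ times a factor depending only on the off-diagonal pattern; tracking this through shows $\gt_{(P;n)}(-s) = -\gt_{(P;n)}(s)$, reducing the problem to the single sign $\gt_{(P;n)}(1) = +1$. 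That last normalization I would verify by a direct evaluation of $P_{\frac{n}{2}}(1;n)$, or equivalently by checking the $k=1$ instance of the palindromic identity at $s=1$ (where $P_1(1;n) = n$ from \eqref{eqn:P_intro} and one compares with $P_{\frac{n}{2}-1}(1;n)$ via the known recursion for the $P_k$).

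The main obstacle is the sign bookkeeping in the second case: pinning down $\gt_{(P;n)}(s)$ for each specific $s$ in the progression $1,5,\ldots,n-2$, as opposed to merely knowing $P_{\frac{n}{2}}(s;n) = \pm n!$. I expect this to follow from the behaviour of the generating function $u_{[s;n]}(t)$ under the Weyl element $m_2^{\textnormal{I}}$ used to prove the palindromic property (Section~\ref{subsec:PalinPQ}): for these parameter values $u_{[s;n]}(t)$ is a Heun polynomial of degree $n$ in $t$, and the inversion $t \mapsto \pm 1/t$ (or the precise $m_2^{\textnormal{I}}$-action) sends it to a scalar multiple of itself, with the scalar being precisely $\gt_{(P;n)}(s)$; evaluating that scalar amounts to comparing the leading and constant coefficients of the polynomial, which are $P_{\frac{n}{2}}(s;n)/n!$ and $P_0(s;n)=1$ respectively. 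So the sign is $+1$ or $-1$ according to whether the degree-$n$ Heun polynomial is, up to its top coefficient, palindromic or anti-palindromic in $t$, and the sign flip under $s \mapsto -s$ is the statement that $u_{[-s;n]}$ is obtained from $u_{[s;n]}$ by the substitution $t \mapsto \sqrt{-1}\,t$ (visible from the $P$-symbol, which is symmetric under $s \mapsto -s$ after such a rescaling). Assembling these observations gives \eqref{eqn:PalinPintro} and completes the proof.
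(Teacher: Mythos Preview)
Your overall strategy---deduce the factorial identity from the palindromic property (Theorem~\ref{thm:PalinP}) together with the factorization formula for $P_{\frac{n+2}{2}}(x;n)$ (Theorem~\ref{thm:factorPQ})---matches the paper exactly. The paper's proof is, however, a two-line affair: Corollary~\ref{cor:PalinP} already records $P_{\frac{n}{2}}(s;n)=n!$ for all $s$ when $n\equiv 0\pmod 4$, and $P_{\frac{n}{2}}(s;n)=\sgn(s)\,n!$ for $s\in I_2^+\cup I_2^-$ when $n\equiv 2\pmod 4$; Corollary~\ref{cor:P0} then just makes the set $I_2^+\cup I_2^-=\{\pm 1,\pm 5,\dots,\pm(n-2)\}$ explicit. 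The sign factor $\gt_{(P;n)}(s)$ is not something to be worked out afresh at this stage---it was already computed in the proof of Theorem~\ref{thm:PalinP} as the character value $\chi_{(\eps,\eps')}(m_2^{\textnormal{I}})$, read off from the $M$-representation classification in Theorem~\ref{thm:SolKO2} and Table~\ref{table:char}.

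Your proposal re-derives pieces of this and leaves one genuine gap. For $n\equiv 0\pmod 4$, your ``bounded polynomial must be constant'' argument is fine but unnecessary: setting $k=\frac{n}{2}$ in \eqref{eqn:PPu0} immediately gives $P_{\frac{n}{2}}(s;n)/n!=P_0(s;n)/0!=1$, and no extra normalization is needed. For $n\equiv 2\pmod 4$, your use of Lemma~\ref{lem:PQs} to get $\gt_{(P;n)}(-s)=-\gt_{(P;n)}(s)$ (since $n/2$ is odd) is correct and nice, but the base case $\gt_{(P;n)}(1)=+1$ is not established by any of the methods you list. The ``compare leading and constant coefficients'' argument is circular: the leading coefficient \emph{is} $P_{\frac{n}{2}}(s;n)/n!$, which is precisely the unknown sign. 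The $k=1$ instance of the palindromic identity only trades $P_1(1;n)=n$ for the equally unknown $P_{\frac{n}{2}-1}(1;n)$. And ``direct evaluation of $P_{\frac{n}{2}}(1;n)$'' is a genuine $(n/2)\times(n/2)$ determinant computation that you have not carried out. The paper avoids all of this: for $s\in I_2^+$ the $M$-character on $\C u_{[s;n]}(t)$ is $\mip$, and for $s\in I_2^-$ it is $\pmi$ (Theorem~\ref{thm:SolKO2}); since $\chi_{\mip}(m_2^{\textnormal{I}})=1$ and $\chi_{\pmi}(m_2^{\textnormal{I}})=-1$, the sign factor is $\sgn(s)$ on the nose.
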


\begin{cor}[Refinement of the factorial identity
of $Q_{\frac{n-2}{2}}(x;n)$]
\label{cor:Q0}
Let $n \in 2(1+\Z_{\geq 0})$.
Then the following hold.
\begin{enumerate}[{\quad \normalfont (1)}]
\item 
$n \equiv 0 \Mod 4:$ 
The values of
$Q_{\frac{n-2}{2}}(s;n)$
for $s=\pm3, \pm 7,\ldots, \pm (n-1)$ are given as
\begin{equation}\label{eqn:PalinQintro}
Q_{\frac{n-2}{2}}(s;n) 
= \begin{cases}
(n-1)! &\textnormal{if $s = 3, 7, 11, \ldots, n-1$},\\
-(n-1)! &\textnormal{if $s=-3, -7, -11, \ldots, -(n-1)$}.
\end{cases}
\end{equation}

\item $n \equiv 2 \Mod 4:$ 
The values of
$Q_{\frac{n-2}{2}}(s;n)$ are
\begin{equation*}
Q_{\frac{n-2}{2}}(s;n) = (n-1)! 
\quad \textnormal{for all $s \in \C$.}
\end{equation*}
\end{enumerate}
\end{cor}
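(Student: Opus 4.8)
The plan is to read the statement off from two ingredients already available: the explicit factorization of the determinant $Q_{\frac{n}{2}}(x;n)$ provided by Theorem~\ref{thm:factorPQ}, and the factorial identity produced by the palindromic property of Theorem~\ref{thm:PalinQ} (Corollary~\ref{cor:PalinQ}). Since $(\{Q_k(x;y)\}_{k=0}^\infty,\{(2k+1)!\}_{k=0}^\infty)$ is a palindromic pair of degree $\frac{n-2}{2}$, specializing the palindromic identity \eqref{eqn:intro_palin0} to $k=0$ and using $Q_0\equiv 1$ gives
\begin{equation*}
Q_{\frac{n-2}{2}}(s;n)=\gt_{(Q;n)}(s)\,(n-1)! \qquad\text{for every }s\in\ESol_{\frac{n}{2}}(Q;n).
\end{equation*}
Hence it remains to (a) identify the zero set $\ESol_{\frac{n}{2}}(Q;n)$ of $Q_{\frac{n}{2}}(x;n)$, and (b) pin down the sign factor $\gt_{(Q;n)}(s)\in\{\pm1\}$ on that set.

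For (a) I invoke Theorem~\ref{thm:factorPQ}. When $n\equiv 2\Mod 4$ it gives $Q_{\frac{n}{2}}(x;n)\equiv 0$, so $\ESol_{\frac{n}{2}}(Q;n)=\C$ and the displayed identity holds for \emph{all} $s$; as the left-hand side is a polynomial and the right-hand side takes only the two values $\pm(n-1)!$, the polynomial $Q_{\frac{n-2}{2}}(x;n)$ is constant, equal to $\pm(n-1)!$. To fix the sign I evaluate at $x=0$: then $Q_{\frac{n-2}{2}}(0;n)$ is a tridiagonal determinant with vanishing diagonal and even size $\frac{n-2}{2}$, so the three-term recursion collapses and it equals the product of the odd-indexed off-diagonal pair products; a short telescoping of the factors $(2i)(2i+1)(n-2i)(n-2i+1)$ over the residue classes modulo $4$ shows this product is $(n-1)!$. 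Thus $\gt_{(Q;n)}\equiv +1$ and part~(2) follows. When $n\equiv 0\Mod 4$, Theorem~\ref{thm:factorPQ} gives $Q_{\frac{n}{2}}(x;n)=c_n\prod_{j=1}^{n/4}\bigl(x^2-(4j-1)^2\bigr)$ with $c_n\neq 0$, so $\ESol_{\frac{n}{2}}(Q;n)=\{\pm 3,\pm 7,\ldots,\pm(n-1)\}$.

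For (b) in the case $n\equiv 0\Mod 4$: negating the diagonal of a $k\times k$ tridiagonal determinant while keeping the ($x$-free) off-diagonal products multiplies its value by $(-1)^k$, whence $Q_k(-x;n)=(-1)^kQ_k(x;n)$; since $\frac{n-2}{2}$ is odd, the values at $s=-3,-7,\ldots,-(n-1)$ follow from those at $s=3,7,\ldots,n-1$, so it suffices to prove $\gt_{(Q;n)}(s)=+1$ for $s=4j-1$, $1\le j\le n/4$. Now $\gt_{(Q;n)}(s)$ is exactly the top coefficient of the degree-$(n-1)$ polynomial solution $v_{[s;n]}(t)=\sum_{k}Q_k(s;n)\tfrac{t^{2k+1}}{(2k+1)!}=t+\cdots+\gt_{(Q;n)}(s)\,t^{n-1}$ of the Heun equation \eqref{eqn:HeunOp_intro}. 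To compute it I would pass to the hypergeometric model via the Cayley transform $\pi_n(k_0)$: by the description of $\Sol(s;n)$ in Sections~\ref{subsec:SolTKO} and \ref{sec:Heun}, $v_{[s;n]}(t)$ is, up to a scalar, the $\pi_n(k_0)$-image of an explicit combination of the hypergeometric polynomial solutions $a_{[s;n]}(t)$ and $b_{[s;n]}(t)$ of \eqref{eqn:HGEop_intro}, and extracting the coefficient of $t^{n-1}$ from that image expresses $\gt_{(Q;n)}(4j-1)$ as a ratio of shifted factorials (values of the Gamma function), which one checks is positive. Equivalently, $\gt_{(Q;n)}(s)$ is the eigenvalue of the Weyl element $m^{\textnormal{I}}_2$ — acting by the inversion that drives the proof of Theorem~\ref{thm:PalinQ} — on the one-dimensional space $\C v_{[s;n]}(t)$, which is read off from the explicit solution.

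The main obstacle is precisely step~(b) for $n\equiv 0\Mod 4$: determining the exact sign of the leading coefficient of $v_{[s;n]}(t)$. The palindromic identity and the three-term recursion for the $Q_k$ are mutually consistent but never isolate this sign by themselves; one needs the independent input above — the Cayley-transform/hypergeometric computation, or the $m^{\textnormal{I}}_2$-eigenvalue — to see that the relevant ratio of Gamma values is positive exactly at $s=3,7,\ldots,n-1$ (and negative at their negatives).
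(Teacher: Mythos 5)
Your skeleton is the paper's: the factorial identity $Q_{\frac{n-2}{2}}(s;n)=\gt_{(Q;n)}(s)(n-1)!$ on $\ESol_{\frac{n}{2}}(Q;n)$ coming from the palindromic pair, plus an explicit description of that zero set ($\C$ for $n\equiv 2\Mod 4$, and $I_0^+\cup I_0^-=\{\pm 3,\pm 7,\ldots,\pm(n-1)\}$ for $n\equiv 0\Mod 4$, whether read off from Theorem~\ref{thm:factorPQ} or from Corollary~\ref{cor:PQ}). The one place you diverge is in treating the sign factor as still undetermined. In the paper it is not: Theorem~\ref{thm:PalinQ} (via \eqref{eqn:thetaQ}) already asserts $\gt_{(Q;n)}(s)=\sgn(s)$ for $n\equiv 0\Mod 4$ and $\gt_{(Q;n)}(s)=1$ for $n\equiv 2\Mod 4$, the sign being the eigenvalue $\chi_{(\eps,\eps')}(m_2^{\textnormal{I}})$ of the $M$-character on $\C v_{[s;n]}(t)$ computed in Theorem~\ref{thm:SolKO2} (ultimately Propositions~\ref{prop:B2} and \ref{prop:C2} transported by the Cayley transform). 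Citing Corollary~\ref{cor:PalinQ} therefore closes the argument in two lines, which is exactly what the paper does.

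Of the extra work you undertake, the $n\equiv 2\Mod 4$ part is a genuinely different and pleasant elementary argument: constancy of $Q_{\frac{n-2}{2}}(x;n)$ from the two-valued range, then evaluation at $x=0$, where the even-size zero-diagonal tridiagonal determinant collapses to $\prod_{j\ \mathrm{odd}}(2j)(2j+1)(n-2j)(n-2j+1)=(n-1)!$; I checked this telescoping and it is correct. The $n\equiv 0\Mod 4$ part, however, is where your proposal has a real gap as written: after the (correct) reduction to $s=3,7,\ldots,n-1$ via $Q_k(-x;y)=(-1)^kQ_k(x;y)$ and odd $\tfrac{n-2}{2}$, the claim that the leading coefficient of $v_{[s;n]}(t)$ "is a ratio of shifted factorials which one checks is positive" is never executed, and there is no shortcut analogous to the $x=0$ evaluation since the identity only holds on the finite zero set. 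The content you would need to supply there is precisely the paper's character computation (the $m_2^{\textnormal{I}}$-eigenvalue on $\C v_{[s;n]}(t)$ being $-1$ on $I_0^+$ and $+1$ on $I_0^-$, which combined with the oddness of $v_{[s;n]}$ yields $\sgn(s)$); you correctly name this mechanism but do not carry it out, so as a standalone proof the positive-$s$ sign in part (1) remains unproved.
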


We shall give proofs of Corollaries \ref{cor:P0} and \ref{cor:Q0} 
in Sections \ref{subsubsec:PalinP} and \ref{subsubsec:PalinQ}.

\subsection{Sequence $\{\Cay_k(x;y)\}_{k=0}^\infty$ of Cayley continuants}
\label{subsec:introCayley}
In 1858,
four years after the observation of Sylvester on \eqref{eqn:Sylv0},
Cayley considered in \cite{Cayley58} 
a sequence $\{\Cay_k(x;y)\}_{k=0}^\infty$
of $k \times k$ tridiagonal determinants $\Cay_k(x;y)$
and expressed each $\Cay_k(x;n)$ in terms of 
the Sylvester determinant $\Sylv(x;n)$ 
(see, for instance, \cite{Cayley58, MT05} and \cite[p.\ 429]{Muir06})).
The first few terms are given as
\begin{equation*}
\Cay_0(x;y)=1,\quad
\Cay_1(x;y)=x, \quad
\Cay_2(x;y)=
\begin{vmatrix}
x & 1\\
y & x
\end{vmatrix}, \quad
\Cay_3(x;n)=
\begin{small}
\begin{vmatrix}
x & 1 &\\
y & x &2\\
   &y-1 &x
\end{vmatrix}, \quad
\end{small}
\ldots.
\end{equation*} 
\noindent
The $(n+1)$th term $\Cay_{n+1}(x;n)$ with $y=n$ is nothing but
the Sylvester determinant
$\Cay_{n+1}(x;n)=\Sylv(x;n)$.
Further,
the $n$th term $\Cay_{n}(x;n)$ may be thought of as
the ``almost'' Sylvester determinant,
as it is given as
\begin{equation}\label{eqn:Cayn}
\Cay_{n}(x;n) =
\begin{small}
\begin{vmatrix}
x& 1    &              &            &             &      \\
n& x    &     2       &            &             &       \\
  & n-1 &     x       &  3        &             &       \\
  &       &  \dots  &  \dots & \dots  &       \\
  &       &              &3          &x            & n-1 \\
  &       &              &            &2            & x    
\end{vmatrix}\\[5pt]
\end{small}
\end{equation}
\vskip 0.05in
\noindent
(compare \eqref{eqn:Cayn} with \eqref{eqn:Sylv_intro}).
Following \cite{MT05}, we refer to each
$\Cay_k(x;y)$ 
as a Cayley continuant. 

As part of the third aim of this paper,
we shall show 
the palindromic property of
the Cayley continuants 
$\{\Cay_k(x;y)\}_{k=0}^\infty$ as follows.

\begin{thm}
[Theorem \ref{thm:recip}]
\label{thm:intro_PalinC}
The pair
$(\{\Cay_k(x;y)\}_{k=0}^\infty, \{k!\}_{k=0}^\infty)$
is a palindromic pair 
with degree $n$.
\end{thm}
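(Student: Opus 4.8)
The plan is to exploit the exponential generating function of $\{\Cay_k(x;n)\}_{k=0}^{\infty}$ in the index $k$ together with its behaviour under the inversion $t\mapsto 1/t$; this is the elementary counterpart of the inversion symmetry of the local Heun functions $u_{[s;n]}(t)$ and $v_{[s;n]}(t)$ that drives the palindromic properties of $\{P_k(x;y)\}_{k=0}^{\infty}$ and $\{Q_k(x;y)\}_{k=0}^{\infty}$.

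First I would record the three-term recurrence obtained by expanding the tridiagonal determinant along its last row,
\[
\Cay_k(x;y)=x\,\Cay_{k-1}(x;y)-(k-1)(y-k+2)\,\Cay_{k-2}(x;y)\qquad (k\geq 2),
\]
with $\Cay_0(x;y)=1$ and $\Cay_1(x;y)=x$. Specializing $y=n$, dividing by $k!$, and setting $d_k(x):=\Cay_k(x;n)/k!$ turns this into $k\,d_k=x\,d_{k-1}-(n-k+2)\,d_{k-2}$. For the generating function $f(t;x):=\sum_{k\geq 0}d_k(x)\,t^{k}$ the recurrence is then equivalent to the first order linear ODE
\[
(1-t^{2})\,\partial_t f(t;x)=(x-nt)\,f(t;x),\qquad f(0;x)=1,
\]
whose normalized solution is $f(t;x)=(1+t)^{(n+x)/2}(1-t)^{(n-x)/2}$.

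Next I would identify the zero set $\ESol_{n+1}(\Cay;n)$. Since $\Cay_{n+1}(x;n)=\Sylv(x;n)$, Sylvester's formula \eqref{eqn:Sylv0} gives $\ESol_{n+1}(\Cay;n)=\{\,n-2j:0\leq j\leq n\,\}$. For $s=n-2j$ in this set the exponents $(n+s)/2=n-j$ and $(n-s)/2=j$ are non-negative integers, so $f(t;s)$ is an honest polynomial of degree $n$; equivalently $\Cay_k(s;n)=0$ for every $k\geq n+1$, which also follows at once from the recurrence, since it forces $\Cay_{n+2}(x;n)=x\,\Cay_{n+1}(x;n)$ and hence, by induction, $\Cay_k(s;n)=0$ for $k\geq n+1$. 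Applying the inversion $t\mapsto 1/t$ to the polynomial $f(t;s)$ yields
\[
t^{n}f(1/t;s)=(t+1)^{n-j}(t-1)^{j}=(-1)^{j}\,f(t;s),
\]
and comparing the coefficient of $t^{k}$ on the two sides for $0\leq k\leq n$ gives
\[
\frac{\Cay_{n-k}(s;n)}{(n-k)!}=(-1)^{j}\,\frac{\Cay_k(s;n)}{k!}.
\]
Hence $(\{\Cay_k(x;y)\}_{k=0}^{\infty},\{k!\}_{k=0}^{\infty})$ is a palindromic pair with degree $d(n)=n$ and sign factor $\gt_{(\Cay;n)}(s)=(-1)^{(n-s)/2}$, which is the claim.

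Everything above is routine; the only point needing mild care is the index bookkeeping when passing from the determinantal recurrence to the ODE (and a check of the small cases $n=0,1$), so I do not expect a genuine obstacle here. The one mildly surprising feature is that, although the generating functions attached to $\{P_k\}$ and $\{Q_k\}$ are genuine local Heun functions, the one attached to $\{\Cay_k\}$ degenerates to the elementary product $(1+t)^{(n+x)/2}(1-t)^{(n-x)/2}$; this makes the palindromic property transparent and, as a by-product, exhibits $\Cay_k(n-2j;n)/k!$ as the symmetric Krawtchouk number $[t^{k}](1+t)^{n-j}(1-t)^{j}$.
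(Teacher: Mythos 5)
Your proof is correct and follows essentially the same route as the paper: Theorem \ref{thm:recip} is proved there by taking the generating function $g^C_{[x;n]}(t)=(1+t)^{(n+x)/2}(1-t)^{(n-x)/2}$ from Proposition \ref{prop:genCay8}, identifying $\ESol_{n+1}(\Cay;n)$ via Sylvester's formula, and comparing coefficients in the inversion identity $t^n g^C_{[s;n]}(1/t)=(-1)^{(n-s)/2}g^C_{[s;n]}(t)$, exactly as you do. The only cosmetic difference is that you obtain the generating function directly from the determinantal three-term recurrence and the resulting first-order ODE, whereas the paper packages the same recurrence computation inside Proposition \ref{prop:genCay8} together with an $\f{sl}(2,\C)$ interpretation.
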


For the palindromic identity, 
see Theorem \ref{thm:recip}.
As for the cases of 
$\{P_k(x;y)\}_{k=0}^\infty$ and 
$\{Q_k(x;y)\}_{k=0}^\infty$,
we prove Theorem \ref{thm:intro_PalinC} 
by observing a symmetry of the generating function
of the Cayley continuants $\{\Cay_k(x;y)\}_{k=0}^\infty$
with respect to the inversion
(see Section \ref{subsec:PalinC}).

Theorem \ref{thm:intro_PalinC} implies that
the almost Sylvester determinant
$\Cay_n(x;y)$ (\eqref{eqn:Cayn}) also satisfies a factorial identity.
Corollary \ref{cor:Cay0} below is
the refinement of the factorial identity 
(see Theorem \ref{thm:recip}) of $\Cay_n(x;n)$
on $\ESol_{n+1}(\Cay;n)$
via Sylvester's factorization formula \eqref{eqn:Sylv0}
of $\Sylv(x;n)=\Cay_{n+1}(x;n)$.

\begin{cor}
[Refinement of the factorial identity of $\Cay_n(x;n)$]
\label{cor:Cay0}
For $n$ even, 
the values of the almost Sylvester determinant $\Cay_{n}(s;n)$
for $s = 0, \pm 2, \dots, \pm (n-2), \pm n$
are given as follows.

\begin{itemize}
\item
$n\equiv 0 \Mod 4:$
\begin{equation*}
\Cay_n(s;n) 
= \begin{cases}
n! &\textnormal{if $s = 0, \pm 4, \dots, \pm (n-4), \pm n$},\\
-n! &\textnormal{if $s=\pm 2, \pm 6, \dots, \pm(n-2)$}.
\end{cases}
\end{equation*}
\vskip 0.1in

\item
$n\equiv 2 \Mod 4:$ 
\begin{equation*}
\Cay_n(s;n) 
= \begin{cases}
-n! &\textnormal{if $s= 0, \pm 4, \dots, \pm (n-4), \pm n$},\\
n! &\textnormal{if $s=\pm 2, \pm 6, \dots, \pm(n-2)$}.
\end{cases}
\end{equation*}
\end{itemize}
\vskip 0.1in
\noindent
Similarly, 
for $n$ odd, 
the values of $\Cay_{n}(s;n)$
for $s = \pm1, \pm 3, \dots, \pm (n-2), \pm n$
are given as follows.

\begin{itemize}
\item
$n\equiv 1 \Mod 4:$ 
\begin{equation*}
\Cay_n(s;n) 
= \begin{cases}
n! &\textnormal{if $s =1, -3, 5, \dots, -(n-2),n $},\\
-n! &\textnormal{if $s=-1, 3, -5, \dots, n-2,-n$}.
\end{cases}
\end{equation*}

\item
$n\equiv 3 \Mod 4:$
\begin{equation*}
\Cay_n(s;n) 
= \begin{cases}
-n! &\textnormal{if $s =1, -3, 5, \dots, n-2,-n $},\\
n! &\textnormal{if $s=-1, 3, -5, \dots, -(n-2),n$}.
\end{cases}
\end{equation*}
\end{itemize}
\end{cor}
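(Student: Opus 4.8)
The plan is to combine the palindromic identity for $\{\Cay_k(x;y)\}_{k=0}^\infty$ from Theorem \ref{thm:intro_PalinC} (Theorem \ref{thm:recip}) with Sylvester's factorization formula \eqref{eqn:Sylv0} for $\Sylv(x;n)=\Cay_{n+1}(x;n)$. First I would identify the zero set $\ESol_{n+1}(\Cay;n)$: since $\Cay_{n+1}(x;n)=\Sylv(x;n)$, formula \eqref{eqn:Sylv0} shows that for $n$ even this set is $\{0,\pm2,\pm4,\dots,\pm n\}$ and for $n$ odd it is $\{\pm1,\pm3,\dots,\pm n\}$. By Theorem \ref{thm:intro_PalinC} the pair $(\{\Cay_k(x;y)\}_{k=0}^\infty,\{k!\}_{k=0}^\infty)$ is palindromic with degree $n$, so the factorial identity \eqref{eqn:intro_factorial} gives $\Cay_n(s;n)=\gt_{(\Cay;n)}(s)\cdot n!$ for every $s\in\ESol_{n+1}(\Cay;n)$. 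Thus the corollary reduces entirely to determining the sign factor $\gt_{(\Cay;n)}(s)\in\{\pm1\}$ at each such $s$.

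To compute the sign factors I would evaluate the palindromic identity \eqref{eqn:intro_palin0} at a convenient index, most naturally $k=0$, which reads $1=\Cay_0(s;n)/0!=\gt_{(\Cay;n)}(s)\,\Cay_n(s;n)/n!$, recovering the factorial identity but not yet pinning down the sign. The sign must instead be extracted from the explicit form of $\gt_{(\Cay;n)}(s)$ recorded in Theorem \ref{thm:recip}; I expect that theorem to express $\gt_{(\Cay;n)}(s)$ as $(-1)^{m}$ where $m$ indexes which element of the arithmetic progression $\ESol_{n+1}(\Cay;n)$ equals $s$ — concretely, writing $s$ as the $(m+1)$st smallest (or using $|s|$ together with the residue of $n$ mod $4$). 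Substituting the four residue classes $n\equiv 0,1,2,3\pmod 4$ and walking through the progression of admissible $s$-values then produces exactly the four displayed sign patterns. The bookkeeping is where care is needed: for $n$ even the values $s$ and $-s$ carry the same sign (the listed patterns group $\pm2,\pm6,\dots$ together), whereas for $n$ odd the sign alternates along $1,-3,5,-7,\dots$, so one must track the signed value of $s$, not just $|s|$, and the overall sign flips between $n\equiv1$ and $n\equiv3$ (and between $n\equiv0$ and $n\equiv2$) because of the extra factor coming from the parity of $(n+2)/2$ or equivalently from the position of $0$ versus $\pm n$ in the progression.

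The main obstacle is purely combinatorial rather than conceptual: correctly reading off $\gt_{(\Cay;n)}(s)$ from Theorem \ref{thm:recip} and matching it to the particular enumeration of $\ESol_{n+1}(\Cay;n)$ used in the statement, keeping the $n\bmod 4$ cases and the sign of $s$ straight. One clean way to organize this is to fix a reference value where $\Cay_n(s;n)$ can be computed directly — for instance $s=n$, where the tridiagonal determinant \eqref{eqn:Cayn} can be evaluated by a short recursion or by specializing known Cayley-continuant identities — use it to anchor $\gt_{(\Cay;n)}(n)$, and then propagate the sign to all other $s\in\ESol_{n+1}(\Cay;n)$ via the explicit formula for the sign factor. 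I would present the argument for one representative residue class in full and indicate that the remaining three are identical up to replacing the anchoring sign, since Sylvester's formula \eqref{eqn:Sylv0} and the palindromic identity already supply all the structural input and only the final sign needs this case check.
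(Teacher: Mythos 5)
Your proposal is correct and follows essentially the same route as the paper: identify $\ESol_{n+1}(\Cay;n)$ from Sylvester's factorization of $\Cay_{n+1}(x;n)=\Sylv(x;n)$, then read off the values from the factorial identity of Theorem \ref{thm:recip}. The only superfluous step is your proposed anchoring computation at $s=n$ — Theorem \ref{thm:recip} already records the sign factor explicitly as $\gt_{(\Cay;n)}(s)=(-1)^{(n-s)/2}$, so the four sign patterns follow by direct substitution with no further case analysis needed.
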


We shall give a proof of Corollary \ref{cor:Cay0} at the end of 
Section \ref{subsec:PalinC}.
\vskip 0.1in

It has been more than 160 years since Cayley continuants 
$\{\Cay_k(x;y)\}_{k=0}^\infty$
were introduced in \cite{Cayley58}. 
We therefore suppose that 
Theorem \ref{thm:intro_PalinC} and Corollary \ref{cor:Cay0} 
are already in the literature.
Yet, in contrast to a large variety of 
a proof for Sylvester's formula \eqref{eqn:Sylv0} 
(see the remark after Theorem \ref{thm:factorP}),
we could not find them at all.
It will be quite surprising if the classical identities in
Corollary \ref{cor:Cay0} have not been found over a century.

It is also remarked that Sylvester's formula \eqref{eqn:Sylv0} readily follows
from the representation theory of $\f{sl}(2,\C)$ without any computation.
Nonetheless, it seems not in the literature either.
We shall then provide a proof of \eqref{eqn:Sylv0}
from a representation theory point of view
(see Section \ref{sec:appendixB}).
The idea of the proof is in principle the same as
the one discussed in Section \ref{subsec:method}
for the classification of $K$-type formulas.
Furthermore, by applying the idea for determining the generating
functions of $\{P(x;y)\}_{k=0}^\infty$ and $\{Q(x;y)\}_{k=0}^\infty$,
we also give in Proposition \ref{prop:genCay8}
another proof for 
Sylvester's formula \eqref{eqn:Sylv0} as well as
the generating function of 
the Cayley continuants $\{\Cay_k(x;y)\}_{k=0}^\infty$.
 (Thus we give two different 
proofs of Sylvester's formula \eqref{eqn:Sylv0} in this paper.)

\subsection{Sequence $\{\Cal{K}_k(x;y)\}_{k=0}^\infty$ of Krawtchouk polynomials}
\label{subsec:introKraw}
For $k \in \Z_{\geq 0}$, 
let $\Cal{K}_k(x;y)$ be a polynomial of homogeneous degree $k$ 
such that
$\Cal{K}_k(x;n)$ for $y=n \in \Z_{\geq 0}$ is a Krawtchouk polynomial
in the sense of \cite[p.\ 137]{MS77}, 
namely,
\begin{equation}\label{eqn:introKraw}
\Cal{K}_k(x;y) =\sum_{j=0}^k(-1)^j \binom{x}{j}\binom{y-x}{k-j}
\end{equation}
(see Definition \ref{def:Kraw}). 
In this paper we also call the polynomials $\Cal{K}_k(x;y)$ of two variables
Krawtchouk polynomials.
From the observation of 
the generating functions of $\Cay_k(x;y)$ and $\Cal{K}_k(x;y)$, 
it is immediate that 
\begin{equation}\label{eqn:CK0}
\Cal{K}_k(x;y) = \frac{\Cay_k(y-2x;y)}{k!}
\end{equation}
(see \eqref{eqn:generateU} and \eqref{eqn:genKraw}).
In the last part of this paper, via the identity \eqref{eqn:CK0}, 
we deduce the palindromic property of the Krawtchouk polynomials  
$\{\Cal{K}_k(x;y)\}_{k=0}^\infty$ from that of the Cayley continuants 
 $\{\Cay_k(x;y)\}_{k=0}^\infty$ as follows.

\begin{thm}
[Theorem \ref{thm:PalinK}]
\label{thm:intro_PalinK0}
The pair
$(\{\Cal{K}_k(x;y)\}_{k=0}^\infty, \{1\}_{k=0}^\infty)$
is a palindromic pair
with degree $n$.
\end{thm}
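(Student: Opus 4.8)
The plan is to obtain the result by transporting the palindromic property of the Cayley continuants (Theorem~\ref{thm:intro_PalinC}) through the identity \eqref{eqn:CK0}, i.e. $\Cal{K}_k(x;y) = \Cay_k(y-2x;y)/k!$. First I would record two elementary consequences of \eqref{eqn:CK0}. Since the affine map $\iota_n\colon s \mapsto n-2s$ is a bijection of $\C$ onto itself, for every $k$ and every $n\in\Z_{\geq 0}$ one has $\Cal{K}_k(s;n)=0$ if and only if $\Cay_k(n-2s;n)=0$ (using $k!\neq 0$); hence $\iota_n$ restricts to a bijection $\ESol_k(\Cal{K};n) \xrightarrow{\sim} \ESol_k(\Cay;n)$. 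Also $\Cal{K}_0(x;y)=1$, since the sum in \eqref{eqn:introKraw} reduces to the single term $\binom{x}{0}\binom{y-x}{0}=1$; so $\{\Cal{K}_k(x;y)\}_{k=0}^\infty$ is a legitimate first coordinate for a palindromic pair and $\{1\}_{k=0}^\infty$ is a sequence of non-zero numbers.

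Next, with $d(n)=n$ for all $n\in\Z_{\geq 0}$ (the same degree function as for the Cayley continuants), I would define $\gt_{(\Cal{K};n)}(s) := \gt_{(\Cay;n)}(\iota_n(s)) = \gt_{(\Cay;n)}(n-2s)$ for $s \in \ESol_{n+1}(\Cal{K};n)$; this is well defined because $\iota_n$ carries $\ESol_{n+1}(\Cal{K};n)$ onto $\ESol_{n+1}(\Cay;n) = \Dom(\gt_{(\Cay;n)})$, and it takes values in $\{\pm 1\}$ because $\gt_{(\Cay;n)}$ does. Now fix $n$ and $s \in \ESol_{n+1}(\Cal{K};n)$, and put $w := n-2s \in \ESol_{n+1}(\Cay;n)$. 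For $k \geq n+1$, Theorem~\ref{thm:intro_PalinC} gives $\Cay_k(w;n)=0$, hence $\Cal{K}_k(s;n) = \Cay_k(w;n)/k! = 0$. For $k \leq n$, the palindromic identity for $\{\Cay_k\}$ with associated sequence $\{k!\}$ says $\Cay_k(w;n)/k! = \gt_{(\Cay;n)}(w)\,\Cay_{n-k}(w;n)/(n-k)!$, and dividing by the relevant factorials via \eqref{eqn:CK0} this is exactly
\begin{equation*}
\frac{\Cal{K}_k(s;n)}{1} = \gt_{(\Cay;n)}(w)\,\frac{\Cal{K}_{n-k}(s;n)}{1} = \gt_{(\Cal{K};n)}(s)\,\frac{\Cal{K}_{n-k}(s;n)}{1},
\end{equation*}
which is the palindromic identity \eqref{eqn:intro_palin0} for $(\{\Cal{K}_k(x;y)\}_{k=0}^\infty, \{1\}_{k=0}^\infty)$ with degree $n$. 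This proves the theorem.

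I do not expect a genuine obstacle here; the only point needing care — and the one that makes the statement come out so cleanly — is the bookkeeping of the two associated sequences. The factor $k!$ in \eqref{eqn:CK0} is precisely the associated sequence $\{k!\}_{k=0}^\infty$ of the Cayley continuants, so in passing from $\Cay_k$ to $\Cal{K}_k$ it cancels on both sides of the palindromic identity, and the Krawtchouk polynomials inherit the constant associated sequence $\{1\}_{k=0}^\infty$; the sign factor is merely pulled back along $s\mapsto n-2s$. As an alternative one could reprove the statement from scratch by the same generating-function/inversion-symmetry argument used for the Cayley continuants in Section~\ref{subsec:PalinC}, applied now to the generating function of $\{\Cal{K}_k(x;y)\}_{k=0}^\infty$ recorded in \eqref{eqn:genKraw}; but routing through \eqref{eqn:CK0} avoids repeating that computation.
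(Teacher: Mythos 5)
Your proof is correct, and it is essentially the route the paper itself advertises: Section \ref{subsec:introKraw} states that the palindromic property of $\{\Cal{K}_k(x;y)\}_{k=0}^\infty$ is deduced from that of the Cayley continuants via \eqref{eqn:CK0}, and your bookkeeping of the substitution $s\mapsto n-2s$ (which turns the Cayley sign factor $(-1)^{(n-w)/2}$ into $(-1)^s$, matching Theorem \ref{thm:PalinK}) and of the cancelling factorials is exactly what that deduction requires. The only cosmetic difference is that the paper's formal proof of Theorem \ref{thm:PalinK} says ``the proof is the same as that of Theorem \ref{thm:recip}'', i.e.\ it repeats the inversion-symmetry argument directly on the generating function \eqref{eqn:genKraw} --- the alternative you mention at the end --- but both arguments are equally valid and give the same conclusion.
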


See Theorem \ref{thm:PalinK} for the palindromic identity.
As the associated sequence $\{1\}_{k=0}^\infty$ for 
$\{\Cal{K}_k(x;y)\}_{k=0}^\infty$ is of the form
$1 = 1!$ for all $k$, the $n$th term
$\Cal{K}_n(x;y)$ satisfies a factorial identity.
Corollary \ref{cor:Kraw0} below is the refinement of 
the factorial identity 
of $\Cal{K}_n(x;n)$ on $\ESol_{n+1}(\Cal{K},n)$
(see Theorem \ref{thm:PalinK}).

\begin{cor}
[Refinement of the factorial identity of $\Cal{K}_n(x;n)$]
\label{cor:Kraw0}
Given $n \in \Z_{\geq 0}$,
the values of $\Cal{K}_{n}(s;n)$
for $s = 0, 1, 2,\dots, n$ are given as follows.
\begin{equation}\label{eqn:factorialKraw0}
\Cal{K}_n(s;n) 
= \begin{cases}
1 &\textnormal{if $s = 0,\, 2,\, 4,\, \dots,\, n_{\textrm{even}}$},\\
-1 &\textnormal{if $s=1,\, 3,\, 5,\, \dots,\, n_{\textrm{odd}}$},
\end{cases}
\end{equation}
where $n_{\textrm{odd}}$ and $n_{\textrm{even}}$ are defined as
\begin{equation*}
n_{\textnormal{odd}}:=
\begin{cases}
n & \textnormal{if $n$ is odd},\\
n-1 & \textnormal{if $n$ is even}
\end{cases}
\qquad
\textnormal{and}
\qquad
n_{\textnormal{even}}:=
\begin{cases}
n-1 & \textnormal{if $n$ is odd},\\
n & \textnormal{if $n$ is even}.
\end{cases}
\end{equation*}
\end{cor}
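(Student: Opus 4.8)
The plan is to combine the factorial identity furnished by Theorem~\ref{thm:PalinK} with an explicit description of the zero set $\ESol_{n+1}(\Cal{K};n)$ and of the associated sign factor $\gt_{(\Cal{K};n)}$.

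\emph{Step 1: identify $\ESol_{n+1}(\Cal{K};n)$.} Using \eqref{eqn:CK0} together with $\Cay_{n+1}(x;n)=\Sylv(x;n)$, I would write $\Cal{K}_{n+1}(x;n)=\Sylv(n-2x;n)/(n+1)!$. By Sylvester's formula \eqref{eqn:Sylv0} the polynomial $z\mapsto\Sylv(z;n)$ has degree $n+1$, and its $n+1$ distinct zeros are $z\in\{\pm1,\pm3,\dots,\pm n\}$ for $n$ odd and $z\in\{0,\pm2,\dots,\pm n\}$ for $n$ even. In both cases $n-z$ is even, and the substitution $z=n-2s$ puts these zeros in bijection with $s\in\{0,1,\dots,n\}$; hence $\ESol_{n+1}(\Cal{K};n)=\{0,1,\dots,n\}$.

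\emph{Step 2: reduce to a sign computation.} By Theorem~\ref{thm:PalinK} the pair $(\{\Cal{K}_k(x;y)\}_{k=0}^\infty,\{1\}_{k=0}^\infty)$ is a palindromic pair with degree $n$. Since $\Cal{K}_0\equiv 1$ and the associated sequence is constantly $1$, the palindromic identity \eqref{eqn:intro_palin0} taken at $k=0$ reads $1=\gt_{(\Cal{K};n)}(s)\,\Cal{K}_n(s;n)$ for every $s\in\ESol_{n+1}(\Cal{K};n)$; hence $\Cal{K}_n(s;n)=\gt_{(\Cal{K};n)}(s)\in\{\pm1\}$ for $s=0,1,\dots,n$, and it only remains to pin down the sign.

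\emph{Step 3: determine the sign.} The quickest route is to evaluate \eqref{eqn:introKraw} directly at $(x,y)=(s,n)$ with $0\le s\le n$: since $\binom{s}{j}=0$ for $j>s$ and $\binom{n-s}{n-j}=0$ for $j<s$, only the term $j=s$ survives and $\Cal{K}_n(s;n)=(-1)^s$, which is exactly \eqref{eqn:factorialKraw0} once $n_{\mathrm{even}}$ and $n_{\mathrm{odd}}$ are unwound. In keeping with the ``refinement via Sylvester'' viewpoint, one may instead use $\Cal{K}_n(s;n)=\Cay_n(n-2s;n)/n!$: since $n-2s\in\ESol_{n+1}(\Cay;n)$ exactly when $0\le s\le n$, Corollary~\ref{cor:Cay0} evaluates $\Cay_n(n-2s;n)$, and tracking the residue of $n-2s$ modulo $4$ through the four cases there again recovers $(-1)^s$. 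The hard part, such as it is, will be the bookkeeping in Step~3 along the second route: matching the four cases of Corollary~\ref{cor:Cay0} (indexed by $n \bmod 4$) against the affine substitution $z=n-2s$ and checking that the extreme indices land precisely as \eqref{eqn:factorialKraw0} asserts. Via the direct binomial route there is essentially no obstacle, and Corollary~\ref{cor:Kraw0} becomes a consistency check on both Sylvester's formula and Corollary~\ref{cor:Cay0}.
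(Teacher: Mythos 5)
Your proposal is correct and follows essentially the same route as the paper: the paper's proof likewise identifies $\ESol_{n+1}(\Cal{K};n)=\{0,1,\dots,n\}$ via the factorization of $\Cal{K}_{n+1}(x;n)$ (Proposition \ref{prop:Kraw1}, which is exactly your Step 1 through \eqref{eqn:CK0} and Sylvester's formula) and then applies the factorial identity \eqref{eqn:Kfactorial}. The only difference is that \eqref{eqn:Kfactorial} already carries the sign $(-1)^s$ (obtained in Theorem \ref{thm:PalinK} from the inversion symmetry of the generating function), whereas you re-derive that sign by the direct binomial evaluation — an elementary alternative the paper itself points out in the remark following Corollary \ref{cor:Kraw0}.
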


We shall give a proof of Corollary \ref{cor:Kraw0} at the end of 
Section \ref{subsec:PalinK} as a corollary of Theorem \ref{thm:PalinK}. 
We remark that the identities 
\eqref{eqn:factorialKraw0} can also be easily 
shown directly from the definition \eqref{eqn:introKraw}
by an elementary observation.

The Krawtchouk polynomials $\{\Cal{K}_k(x;y)\}_{k=0}^\infty$ 
are also a classical object at least for the case of $y=n \in \Z_{\geq 0}$.
Therefore Theorem \ref{thm:intro_PalinK0} 
may be a known as, for instance, an exercise problem; 
nonetheless, we could not find it in the literature.

\subsection{Summary of the data for palindromic properties}
To close this introduction, we summarize some data 
on the palindromic property of the sequence
$\{p_k(x;y)\}_{k=0}^\infty$
of polynomials in \eqref{eqn:seq0}.
For the associated sequence $\{a_k\}_{k=0}^\infty$ 
of $\{p_k(x;y)\}_{k=0}^\infty$,
we write
\begin{equation*}
g_{[x;y]}(t) := \sum_{k=0}^\infty p_k(x;y) \frac{t^{b_k}}{a_k},
\end{equation*}
\vskip 0.05in
\noindent
where $\{b_k\}_{k=0}^\infty$ is some sequence of non-negative integers.
Table \ref{table:tableCPQ} exhibits
the generating function $g_{[x;y]}(t)$, 
the associated sequence $\{a_k\}_{k=0}^\infty$,
the sequence $\{b_k\}_{k=0}^\infty$ of exponents,
the degree $d(n)$,
and 
the sign factor $\gt_{(p;n)}(s)$
for $\{p_k(x;y)\}_{k=0}^\infty$
in \eqref{eqn:seq0}.
(For the sign factors $\gt_{(P;n)}(s)$ and $\gt_{(Q;n)}(s)$, see \eqref{eqn:thetaP} 
and \eqref{eqn:thetaQ}, respectively.)
\vspace{-10pt}
\begin{table}[H]
\caption{Summary on palindromic properties}
\begin{center}
\renewcommand{\arraystretch}{1.5}
{
\begin{tabular}{c||c|c|c|c|c}
\hline
$p_k(x;y)$ &$g_{[x;y]}(t)$& $a_k$ & $b_k$ & $d(n)$
& $\gt_{(p;n)}(s)$\\
\hline
$\Cal{K}_k(x;y)$ & $(1+t)^{y-x}(1-t)^x$ & $1$ &  $k$ & $n$
& $(-1)^s$\\
\hline
$\Cay_k(x;y)$ & $(1+t)^{\frac{y+x}{2}}(1-t)^{\frac{y-x}{2}}$ & $k!$ &  $k$ & $n$
& $(-1)^{\frac{n-s}{2}}$\\
\hline
$P_k(x;y)$ & $u_{[x;y]}(t)$ \eqref{eqn:u} & $(2k)!$ & $2k$ &\, $\frac{n}{2}$ \quad  & \eqref{eqn:thetaP} \\
\hline 
$Q_k(x;y)$ &$v_{[x;y]}(t)$ \eqref{eqn:v} & $(2k+1)!$ & $2k+1$ & $\frac{n-2}{2}$ & \eqref{eqn:thetaQ}\\
\hline 
\end{tabular}
}
\end{center}
\label{table:tableCPQ}
\end{table}

\vspace{-10pt}

\subsection{Organization}
We now outline the rest of this paper.
This paper consists of ten sections including this introduction.
In Section \ref{sec:PW}, we overview a general framework
established in \cite{KuOr19} for
the Peter--Weyl theorem \eqref{eqn:PWSL_intro} 
for the space of $K$-finite solutions to intertwining differential operators. 
In Section \ref{sec:SL3}, 
we collect necessary notation and normalizations for $\wSL(3,\R)$.
Two identifications 
$\Omega^J\colon \fk \stackrel{\sim}{\to} \f{sl}(2,\C)$
for $J = \textnormal{I}, \textnormal{II}$ are discussed in this section.

The purpose of Section \ref{sec:HO} is to recall from \cite{Kable12C}
the Heisenberg 
ultrahyperebolic operator $\square_s = R(D_s)$ for $\wSL(3,\R)$ and 
to study the associated differential equation $d\pi_n^J(D^\flat_s)p(t)=0$.
In this section we identify the equation $d\pi_n^J(D^\flat_s)p(t)=0$ with
Heun's differential equation ($J=\textnormal{I}$) 
and the hypergeometric differential equation ($J=\textnormal{II}$)
via the identifications 
$\Omega^J\colon \fk \stackrel{\sim}{\to} \f{sl}(2,\C)$.
At the end of this section we give a recipe 
to classify the space $\Sol(s;n)$ of $K$-type solutions 
to  $\square_s = R(D_s)$. 

In accordance with the recipe given in Section \ref{sec:HO},
we show the $K$-type decompositions
of $\CSol(s;\sigma)_K$ in the hypergeometric model 
$d\pi_n^{\textnormal{II}}(D^\flat_s)p(t)=0$ in Section \ref{sec:HGE}. 
This is accomplished in Theorem \ref{thm:K-type}.
We then convert the results in the hypergeometric 
model 
$d\pi_n^{\textnormal{II}}(D^\flat_s)p(t)=0$
to the Heun model 
$d\pi_n^{\textnormal{I}}(D^\flat_s)p(t)=0$
by a Cayley transform $\pi_n(k_0)$
in Section \ref{sec:Heun}.

Sections \ref{sec:PQ} and \ref{sec:U}
are devoted to applications to sequences of polynomials.
In Section \ref{sec:PQ}, by utilizing
the results in the Heun model 
$d\pi_n^{\textnormal{I}}(D^\flat_s)p(t)=0$, 
we investigate two sequences 
$\{P_k(x;n)\}_{k=0}^{\infty}$ and $\{Q_k(x;n)\}_{k=0}^{\infty}$ 
of tridiagonal determinants.
The main results of this sections are factorization formulas 
(Theorems \ref{thm:factorPQ} and \ref{thm:factorP}) and 
palindromic properties (Theorems \ref{thm:PalinP} and \ref{thm:PalinQ}).
We also show an expression of $P_{\frac{n+1}{2}}(x;n)$ for $n$ odd
in terms of the Sylvester determinant $\Sylv(x;n)$ (Corollary \ref{cor:PS}).
In Section \ref{sec:U}, we show the palindromic properties 
for Cayley continuants 
$\{\Cay_k(x;y)\}_{k=0}^\infty$ and 
Krawtchouk polynomials $\{\Cal{K}_k(x;y)\}_{k=0}^\infty$. 
These are achieved in Theorems \ref{thm:recip} and \ref{thm:PalinK},
respectively.

The last two sections are appendices.
In order to study
$\{P_k(x;n)\}_{k=0}^{\infty}$ and $\{Q_k(x;n)\}_{k=0}^{\infty}$,
we use some general facts on 
the coefficients of a power series expression of local Heun functions.
We collect those facts in Section \ref{sec:appendix}.
In Section \ref{sec:appendixB}, for future possible convenience, 
we give a proof of Sylvester's formula \eqref{eqn:Sylv0}
from an $\f{sl}(2,\C)$ point of view.


\section{Peter--Weyl theorem for the space of $K$-finite solutions}\label{sec:PW}

The aim of this section is to recall from \cite[Sect.\ 2]{KuOr19}
a general framework established in \cite{KuOr19}.
In particular, we give a Peter--Weyl theorem for the space 
of $K$-finite solutions to intertwining differential operators
between parabolically induced representations. 
This is done in Theorem \ref{thm:PW1}.

\subsection{General framework}
Let $G$ be a reductive Lie group with Lie algebra $\fg_0$. 
Choose a Cartan involution $\gt: \fg_0 \to \fg_0$ 
and write $\fg_0 = \fk_0 \oplus \fs_0$
for the Cartan decomposition of $\fg_0$ with respect to $\gt$. 
Here $\fk_0$ and $\fs_0$ stand for the $+1$ and $-1$ eigenspaces of $\gt$, respectively.
We take maximal abelian subspaces $\fa^{\min}_0 \subset \fs_0$
and $\ft_0^{\min} \subset \fm_0^{\min} := Z_{\fk_0}(\fa_0^{\min})$
such that $\fh_0 :=\fa_0^{\min}  \oplus  \ft_0^{\min}$ 
is a Cartan subalgebra of $\fg_0$.

For a real Lie algebra $\f{y}_0$,
we denote by $\f{y}$ the complexification of $\f{y}_0$.
For instance, 
the complexifications of  
$\fg_0$, $\fh_0$, $\fa_0^{\min}$, and $\fm^{\min}_0$
are denoted by $\fg$, $\fh$, $\fa$, and $\fm^{\min}$, respectively.
We write $\Cal{U}(\f{y})$ for the universal enveloping algebra 
of a Lie algebra $\f{y}$.

Let $\gD \equiv \gD(\fg, \fh)$ be the set of roots 
with respect to the Cartan subalgebra $\fh$
and $\gS \equiv \gS(\fg_0, \fa_0)$ denote
the set of restricted roots with respect to $\fa_0$.
We choose a positive system $\gD^+$ and $\gS^+$ in such a way that
$\gD^+$ and $\gS^+$ are compatible. 
We denote by $\rho$ for half the sum of the positive roots.

Let $\fn^{\min}_0$ be the nilpotent subalgebra of 
$\fg_0$ corresponding to $\gS^+$,
so that  $\fp^{\min}_0:=\fm^{\min}_0 \oplus \fa^{\min}_0 \oplus \fn^{\min}_0$ 
is a Langlands decomposition of a minimal parabolic subalgebra 
$\fp_0^{\min}$ of $\fg_0$. 
Fix a standard parabolic subalgebra $\fp_0 \supset \fp_0^{\min}$ with 
Langlands decomposition $\fp_0 = \fm_0 \oplus \fa_0 \oplus \fn_0$.
Let $P$ be a parabolic subalgebra of $G$ with Lie algebra $\fp_0$.
We write $P=MAN$ for the Langlands decomposition of $P$ corresponding to 
$\fp_0=\fm_0\oplus \fa_0 \oplus \fn_0$.

For $\mu \in \fa^* \simeq \Hom_\R(\fa_0,\C)$,
we define a one-dimensional representation $\C_{\mu}$ of $A$ as
$a \mapsto e^{\mu}(a):= e^{\mu(\log a)}$ for $a\in A$.
Then, for a finite-dimensional representation 
$W_\sigma = (\sigma, W)$ of $M$ and weight $\lambda \in \fa^*$,
we define an $MA$-representation $W_{\sigma,\lambda}$ as
\begin{equation*}
W_{\sigma,\lambda}:=W_\sigma \otimes \C_{\lambda}.
\end{equation*}
(We note that the definition  
of $W_{\sigma,\lambda}$ was slightly different from the one 
in \cite{KuOr19}.)
As usual, by letting $N$ act on 
$W_{\sigma,\lambda}$ trivially, we regard
$W_{\sigma,\lambda}$ as a representation of $P$.
We identify 
the Fr\' echet space 
$C^\infty\left(G/P, \mathcal{W}_{\sigma,\lambda}\right)$ 
of smooth sections for the
$G$-equivariant homogeneous vector bundle
$\mathcal{W}_{\sigma,\lambda}:=
G\times_{P}W_{\sigma,\lambda} \to G/P$
as $C^\infty\left(G/P, \mathcal{W}_{\sigma,\lambda}\right) 
\simeq (C^\infty(G)\otimes W_{\sigma,\lambda})^P$, that is,
\begin{align*}
C^\infty\left(G/P, \mathcal{W}_{\sigma,\lambda}\right) 
\simeq \left\{f\in C^\infty(G)\otimes W_{\sigma,\lambda}: 
f(gman) = \sigma(m)^{-1}e^{-\lambda}(a)f(g)\; 
\text{for all $man \in MAN$}
\right\},
\end{align*}
where $G$ acts by left translation.
Then we realize a parabolically induced representation
\begin{equation}\label{eqn:Ind}
I_P(\sigma, \lambda) 
:= \text{Ind}_P^G(\sigma\otimes(\lambda+\rho)\otimes \mathbb{1})
\end{equation}
of $G$ on 
$C^\infty\left(G/P, \mathcal{W}_{\sigma,\lambda+\rho}\right)$.

For pairs $(\sigma, \lambda), (\eta,\nu)$
of finite-dimensional representations $\sigma,\eta$ of $M$
and weights $\lambda,\nu \in \fa^*$, we write
$\mathrm{Hom}_G(I_P(\sigma, \lambda), I_P(\eta,\nu))$ 
for the space of intertwining operators from
$I_P(\sigma, \lambda)$ to $I_P(\eta, \nu)$.
Then we set
\begin{equation*}
\mathrm{Diff}_{G}
(I_P(\sigma, \lambda), I_P(\eta,\nu)):=
\mathrm{Diff}
(I_P(\sigma, \lambda), I_P(\eta,\nu)) 
\cap
\mathrm{Hom}_G
(I_P(\sigma, \lambda), I_P(\eta,\nu)),
\end{equation*}
where 
$\mathrm{Diff}
(I_P(\sigma, \lambda), I_P(\eta,\nu)) $
is the space of differential operators from 
$I_P(\sigma, \lambda)$ to $I_P(\eta, \nu)$.

Let $\Irr(M)_{\fin}$ be the set of equivalence classes of
irreduicible
finite-dimensional representations of $M$.
As $M$ is not connected in general,
we write $M_0$ for the identity component of $M$.
Let $\Irr(M/M_0)$ denote the set of 
irreducible representations of  the component group $M/M_0$.
Via the surjection $M \twoheadrightarrow M/M_0$, 
we regard $\Irr(M/M_0)$ as a subset of $\Irr(M)_{\fin}$.
Let $\xid$ denote the identity map on $\xi \in \Irr(M/M_0)$.
Lemma \ref{lem:tensor} below shows that the tensored operator 
$\D\otimes \id_\xi$ to $\D \in \mathrm{Diff}_{G}
(I_P(\sigma, \lambda), I_P(\eta,\nu)$
is also an intertwining differential operator.

\begin{lem}[{\cite[Lems.\ 2.17, 2.21]{KuOr19}}]\label{lem:tensor}
Let $\D \in 
\Diff_G(I_P(\sigma, \lambda), I_P(\eta,\nu))$. 
For any $\xi \in \Irr(M/M_0)$, we have
\begin{equation*}
\D \otimes \id_{\xi} 
\in 
\Diff_G(I_P(\sigma\otimes \xi, \lambda), I_P(\eta\otimes \xi,\nu)).
\end{equation*}
\end{lem}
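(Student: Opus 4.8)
The plan is to reduce the statement to the special case where $\xi$ is the trivial representation of the component group, which is vacuous, and to leverage the fact that tensoring by a representation pulled back from $M/M_0$ commutes with the relevant structure. First I would recall the precise definition of a differential operator between the bundles $\mathcal{W}_{\sigma,\lambda+\rho}$ and $\mathcal{W}_{\eta,\nu+\rho}$: in the $F$-picture (or in the ``non-compact'' picture over the big cell), such an operator is given by a polynomial coefficient matrix, and its $G$-intertwining property is equivalent to a first-order covariance condition under the opposite nilradical $\bar{\fn}$ together with $MA$-equivariance, by the standard duality $\Diff_G(I_P(\sigma,\lambda), I_P(\eta,\nu)) \cong \Hom_{(\fg, P)}(\ldots)$ — this is exactly the machinery of \cite{KuOr19} that the excerpt is citing. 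The key observation is that the operator $\D \otimes \id_\xi$ acts as $\D$ on the ``function'' part and does nothing on the $\xi$-part, so its symbol lives in $\Cal{U}(\bar{\fn}) \otimes \Hom(W_\sigma \otimes W_\xi, W_\eta \otimes W_\xi)$ and factors through $\Hom(W_\sigma, W_\eta) \otimes \id_{W_\xi}$.

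The main steps I would carry out: (1) Write $\D$ via its characterizing element in $\Hom_{MA}\bigl(W_{\sigma,\lambda}, \Cal{U}(\bar{\fn}) \otimes W_{\eta,\nu}\bigr)$ satisfying the $\bar{\fn}$-covariance relation that encodes $G$-intertwining. (2) Observe that $\xi \in \Irr(M/M_0)$ is trivial on $M_0$, hence $d\xi = 0$ on $\fm$, and $\xi$ is trivial on $A$ and $N$; therefore the element attached to $\D \otimes \id_\xi$ is simply $(\text{element of }\D) \otimes \id_{W_\xi}$. (3) Check that the $MA$-equivariance for $\D \otimes \id_\xi$ follows from that of $\D$ together with $\sigma \otimes \xi$ and $\eta \otimes \xi$ being genuine $M$-representations (here one uses that $\xi$ descends from the component group so $(\sigma \otimes \xi)|_{M_0} = \sigma|_{M_0}^{\oplus \dim \xi}$ transforms compatibly). (4) Check the $\bar{\fn}$-covariance relation: since $\bar{\fn} \subset \fg$ acts trivially on the $W_\xi$-factor (it is part of the Levi data that $\xi$ ignores), the relation for $\D \otimes \id_\xi$ is literally the relation for $\D$ tensored with the identity on $W_\xi$, hence holds. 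Conclude that $\D \otimes \id_\xi \in \Diff_G(I_P(\sigma \otimes \xi, \lambda), I_P(\eta \otimes \xi, \nu))$.

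The hard part, or rather the only point requiring genuine care, will be step (3): verifying that $\sigma \otimes \xi$ and $\eta \otimes \xi$ are the ``correct'' $M$-representations to index the target bundles, i.e. that the tensor-by-$\xi$ operation is compatible with the non-connectedness of $M$. Concretely, one must confirm that the $M$-equivariance condition $f(mg) \mapsto (\sigma \otimes \xi)(m) f(g)$ interacts with $\D$ correctly even though $\D$ was only assumed $M$-equivariant for $\sigma, \eta$; this works precisely because $\xi$ factors through $M/M_0$ and so its action commutes with everything coming from $\fg$ (which only sees $M_0$ infinitesimally). Since this is already asserted as \cite[Lems.\ 2.17, 2.21]{KuOr19}, in the present paper the ``proof'' is really a pointer: I would state that the claim is immediate from the cited lemmas of \cite{KuOr19} applied verbatim, perhaps with one sentence recalling why $\D \otimes \id_\xi$ is still a differential operator (its coefficients are those of $\D$, block-tensored with $\id_{W_\xi}$) and still $G$-intertwining (the $\bar{\fn}$-covariance is unchanged). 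No serious obstacle is expected; the content is entirely in the earlier general framework.
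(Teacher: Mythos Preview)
Your proposal is correct and matches the paper's treatment: the paper gives no proof at all for this lemma, simply citing \cite[Lems.\ 2.17, 2.21]{KuOr19} in the lemma header and moving on. Your sketch of the underlying argument (that $\xi$ factors through $M/M_0$, so $d\xi = 0$ on $\fm$ and the $\bar{\fn}$-covariance condition is unchanged upon tensoring with $\id_\xi$) is sound and is indeed the content of the cited lemmas, but for the present paper the entire ``proof'' is the pointer to \cite{KuOr19}.
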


If the $M$-representation $\sigma$ is the trivial character
$\sigma=\chi_\triv$, then Lemma \ref{lem:tensor} in particular shows that
differential operator 
$\D \in \Diff_G(I_P(\chi_{\triv}, \lambda), I_P(\eta,\nu))$ 
yields 
\begin{equation}\label{eqn:dxi}
\D \otimes \id_{\xi} 
\in \Diff_G(I_P(\xi, \lambda), I_P(\eta\otimes \xi,\nu))
\quad
\text{for  $\xi \in \Irr(M/M_0)$}.
\end{equation}

\subsection{Peter--Weyl theorem for $\Cal{S}ol_{(u; \lambda)}(\xi)_K$}
For a later purpose,
we specialize 
the representations
$\sigma, \eta$ of $M$ 
to be $\sigma= \chi_{\triv}$ and $\eta = \chi$,
where $\chi$ is some character of $M$. 
In this situation, it follows from
\eqref{eqn:dxi} that, for
$\D \in \Diff_G(I_P(\chi_{\triv}, \lambda), I_P(\chi,\nu))$ 
and $\xi \in \Irr(M/M_0)$,
we have
\begin{equation*}
\D \otimes \id_{\xi} 
\in \Diff_G(I_P(\xi, \lambda), I_P(\chi\otimes \xi,\nu)).
\end{equation*}

By the duality theorem between intertwining differential operators
and homomorphisms between generalized Verma modules 
 (see, for instance, \cite{CS90, KP1, Kostant75}),
any intertwining differential operator
$\Cal{D} \in \Diff_G(I_P(\chi_\triv, \lambda), I_P(\chi, \nu))$ 
is of the form $\Cal{D} = R(u)$
for some $u \in \Cal{U}(\bar \fn)$, where $R$ denotes the infinitesimal 
right translation of $\Cal{U}(\fg)$ and 
$\bar \fn$ is
the opposite nilpotent radical to $\fn$, namely,
\begin{equation*}
\Diff_G(I_P(\chi_\triv, \lambda), I_P(\chi, \nu))
=\{\text{$R(u)$ for some $u \in \Cal{U}(\bar{\fn})$}\}. 
\end{equation*}
In particular,
intertwining differential operators $\Cal{D}$ are
determined by the complex Lie algebra $\fg$ and independent 
of its real forms.

It follows from
\cite[Lem.\ 2.1]{Kable11} and \cite[Lem.\ 2.24]{KuOr19}
that there exists a $(\Cal{U}(\fk \cap \fm), K\cap M)$-isomorphism
\begin{equation}\label{eqn:gk}
\mathcal{U}(\fk)\otimes_{\mathcal{U}(\fk \cap \fm)}  \C_{\chi_\triv}
\stackrel{\sim}{\To}
\mathcal{U}(\fg) \otimes_{\mathcal{U}(\fp)} 
\C_{\chi_\triv,-(\lambda+\rho)},
\quad u\otimes \mathbb{1}_{\chi_\triv} \mapsto u
\otimes (\mathbb{1}_{\chi_\triv}
\otimes \mathbb{1}_{-(\lambda+\rho)}), 
\end{equation}
where $K\cap M$ acts on 
$\mathcal{U}(\fk)\otimes_{\mathcal{U}(\fk \cap \fm)}  \C_{\chi_\triv}$
and 
$\mathcal{U}(\fg) \otimes_{\mathcal{U}(\fp)} 
\C_{\chi_\triv,-(\lambda+\rho)}$ 
diagonally.
Via the isomorphism \eqref{eqn:gk}, we define a \emph{compact model}
of $u^\flat \in \Cal{U}(\fk)$ of $u \in \Cal{U}(\bar{\fn})$ as follows.

\begin{defn}\label{def:ub}
For $R(u) \in \Diff_G(I_P(\chi_\triv, \lambda), I_P(\chi, \nu))$ with
$u \in \Cal{U}(\bar{\fn})$, 
we denote by $u^\flat \in \Cal{U}(\fk)$ 
an element of $\Cal{U}(\fk)$ such that the identity
\begin{equation}\label{eqn:ub}
u^\flat \otimes (\mathbb{1}_{\chi_\triv} \otimes \mathbb{1}_{-(\lambda+\rho)})
=
u \otimes (\mathbb{1}_{\chi_\triv} \otimes \mathbb{1}_{-(\lambda+\rho)})
\end{equation} 
holds in $\Cal{U}(\fg) \otimes_{\Cal{U}(\fp)}
\C_{\chi_\triv, -(\lambda+\rho)}$.
\end{defn}

\begin{rem}\label{rem:cpt}
We remark that compact model $u^\flat$
is not unique; 
any choice of $u^\flat$ satisfying the identity \eqref{eqn:ub} is acceptable.
\end{rem}

Let $\Irr(K)$ be the set of equivalence classes 
of irreducible representations of the maximal compact subgroup $K$
with Lie algebra $\fk_0$.
For $V_\delta:=(\delta, V) \in \Irr(K)$ and $u \in \Cal{U}(\bar \fn)$,
we define a subspace $\Sol_{(u)}(\delta)$ of $V_\delta$ by
\begin{equation}\label{eqn:SolK}
\Sol_{(u)}(\delta):= \{v \in V_\delta :
d\delta(\tau( u^\flat) )v = 0 \}.
\end{equation}
Here $d\delta$ denotes the differential of $\delta$ and 
$\tau$ denotes the conjugation $\tau\colon \fg \to \fg$ 
with respect to the real form $\fg_0$, that is, 
$\tau(X_1+\sqrt{-1}X_2) = X_1 - \sqrt{-1}X_2$
for $X_1, X_2 \in \fg_0$.

\begin{lem}
[{\cite[Lem.\ 2.41]{KuOr19}}]
\label{lem:SolKO}
The space $\Sol_{(u)}(\delta)$ is 
a $K\cap M$-representation.
\end{lem}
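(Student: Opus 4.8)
The plan is to prove the statement by showing that the operator $d\delta(\tau(u^\flat))$ on $V_\delta$ is $(K\cap M)$-equivariant up to twisting by the character $\chi$; its kernel, which is exactly $\Sol_{(u)}(\delta)$, is then automatically stable under $\delta(K\cap M)$. The single input needed is a covariance property of the compact model: there is a representative $u^\flat$ satisfying \eqref{eqn:ub} for which $\Ad(m)u^\flat = \chi(m)\,u^\flat$ in $\Cal{U}(\fk)$ for every $m\in K\cap M$.

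First I would extract this covariance from the hypothesis that $R(u)$ is $G$-intertwining. By the duality between intertwining differential operators and homomorphisms of generalized Verma modules (the same duality already invoked before Definition \ref{def:ub}), the condition $R(u)\in\Diff_G(I_P(\chi_\triv,\lambda),I_P(\chi,\nu))$ forces the element $u\otimes(\mathbb{1}_{\chi_\triv}\otimes\mathbb{1}_{-(\lambda+\rho)})$ to transform under the $M$-action on $\Cal{U}(\fg)\otimes_{\Cal{U}(\fp)}\C_{\chi_\triv,-(\lambda+\rho)}$ by the character $\chi$. Transporting this back through the $(\Cal{U}(\fk\cap\fm),K\cap M)$-isomorphism \eqref{eqn:gk}, which intertwines the diagonal $K\cap M$-actions and carries $u^\flat\otimes\mathbb{1}_{\chi_\triv}$ to $u\otimes(\mathbb{1}_{\chi_\triv}\otimes\mathbb{1}_{-(\lambda+\rho)})$, I obtain $(\Ad(m)u^\flat)\otimes\mathbb{1}_{\chi_\triv}=\chi(m)(u^\flat\otimes\mathbb{1}_{\chi_\triv})$ for $m\in K\cap M$, that is, $\Ad(m)u^\flat\equiv\chi(m)u^\flat$ modulo the left ideal $\Cal{U}(\fk)(\fk\cap\fm)$. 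Averaging $u^\flat$ against $\chi^{-1}$ over the compact group $K\cap M$ — the result is again a valid compact model, by the same $K\cap M$-equivariance of \eqref{eqn:gk} — upgrades this congruence to the exact identity.

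With the covariance in hand the rest is a short computation. Since $m\in K\subset G$ preserves the real form $\fg_0$, the automorphism $\Ad(m)$ of $\Cal{U}(\fg)$ commutes with the conjugation $\tau$, and since $|\chi(m)|=1$ this gives $\Ad(m^{-1})\tau(u^\flat)=\tau(\Ad(m^{-1})u^\flat)=\overline{\chi(m)^{-1}}\,\tau(u^\flat)=\chi(m)\,\tau(u^\flat)$ for $m\in K\cap M$. Hence, for all $v\in V_\delta$,
\begin{equation*}
d\delta(\tau(u^\flat))\,\delta(m)v=\delta(m)\,d\delta\bigl(\Ad(m^{-1})\tau(u^\flat)\bigr)v=\chi(m)\,\delta(m)\,d\delta(\tau(u^\flat))v,
\end{equation*}
so $d\delta(\tau(u^\flat))v=0$ implies $d\delta(\tau(u^\flat))\delta(m)v=0$; thus $\delta(m)\Sol_{(u)}(\delta)\subseteq\Sol_{(u)}(\delta)$ for every $m\in K\cap M$, which is the claim.

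I expect the first step to be the main obstacle: translating ``$R(u)$ is $G$-intertwining'' into the $K\cap M$-covariance of the \emph{compact} model $u^\flat$. This requires invoking the generalized-Verma-module duality in the precise form giving $M$-semiinvariance of $u\otimes\mathbb{1}$, tracking the two diagonal $K\cap M$-actions carefully through the isomorphism \eqref{eqn:gk}, and using the averaging trick to replace an arbitrary compact model (well defined only modulo $\Cal{U}(\fk)(\fk\cap\fm)$, cf.\ Remark \ref{rem:cpt}) by a genuinely $(\fk\cap\fm)$-equivariant representative. Once the covariance is available, the conclusion follows from the single $\Ad$--$\tau$ computation above.
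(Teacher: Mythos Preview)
The paper does not supply its own proof of this lemma; it simply cites \cite[Lem.\ 2.41]{KuOr19}. So there is nothing in the present paper to compare your argument against directly.

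That said, your argument is sound. The key step---deducing from the intertwining property of $R(u)$ that $u\otimes\mathbb{1}$ transforms under the diagonal $K\cap M$-action by $\chi$, then pulling this back through the $(K\cap M)$-equivariant isomorphism \eqref{eqn:gk} to get $\Ad(m)u^\flat\equiv\chi(m)u^\flat$ modulo $\Cal{U}(\fk)(\fk\cap\fm)$---is exactly the right mechanism, and the averaging over the compact group $K\cap M$ is a clean way to upgrade the congruence to an exact identity (note $\Ad(K\cap M)$ preserves the finite-dimensional filtration pieces of $\Cal{U}(\fk)$, so the integral is unproblematic). The final computation, using that $\Ad(m)$ commutes with $\tau$ for $m\in K$ and that $|\chi(m)|=1$ on the compact group $K\cap M$, is correct.

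One point worth making explicit: your averaging replaces $u^\flat$ by a possibly different representative satisfying \eqref{eqn:ub}. You are relying on Remark~\ref{rem:cpt}, which declares any such choice acceptable, to justify this. Strictly speaking the kernel $\Sol_{(u)}(\delta)$ could in principle depend on the representative (two choices differ by an element of $\Cal{U}(\fk)(\fk\cap\fm)$, which need not annihilate all of $V_\delta$), so what you have really shown is that the lemma holds for the $\chi$-covariant choice of compact model. This is consistent with the paper's conventions and is all that is needed downstream; in the paper's main application one has $\fk\cap\fm=\{0\}$, so the issue does not arise.
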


We set
\begin{equation*}
\Sol^{\fk \cap \fm}_{(u)}(\delta) 
:=\Sol_{(u)}(\delta)\cap V_\delta^{\fk \cap \fm},
\end{equation*}
where $V^{\fk \cap \fm}_\delta$ is the subspace of 
$\fk\cap \fm$-invariant vectors of $V_\delta$.
Clearly
$\Sol^{\fk \cap \fm}_{(u)}(\delta)$ is a $K\cap M$-subrepresentation
of $\Sol_{(u)}(\delta)$.
Further, since $\fk \cap \fm$ acts on $\xi \in \Irr(M/M_0)$ trivially,
we have 
\begin{equation}\label{eqn:hom}
\mathrm{Hom}_{K\cap M}\left(
\Sol_{(u)}(\delta), \xi\right) \neq \{ 0\}
\quad 
\text{if and only if}
\quad
\mathrm{Hom}_{K\cap M}\left(
\Sol^{\fk \cap \fm}_{(u)}(\delta), \xi\right) \neq \{ 0\}
\end{equation}
via the composition of maps
$K \cap M \hookrightarrow M \twoheadrightarrow M/M_0$.

Let $I_P(\chi_{\triv}, \lambda)_K$ denote the $(\fg,K)$-module consisting of 
the $K$-finite vectors of $I_P(\chi_{\triv}, \lambda)$.
Then, given 
$R(u) \in \Diff_G(I_P(\chi_\triv, \lambda), I_P(\chi, \nu))$
and 
$\xi \in \Irr(M/M_0)$,
we set
\begin{align*}
\Cal{S}ol_{(u; \lambda)}(\xi)_K
&:=\{f \in I_P(\xi, \lambda)_K : (R(u) \otimes \xid) f = 0\}.
\end{align*}

For the sake of future convenience,
we now give a slight modification of
a Peter--Weyl theorem 
\cite[Thm.\ 1.2]{KuOr19}
for $\Cal{S}ol_{(u; \lambda)}(\xi)_K$,
although such a modification is not necessary for the main objective of this paper.
We remark that this modified version is a more direct generalization of 
the argument given after the proof of \cite[Thm.\ 2.6]{Kable12C} than \cite[Thm.\ 1.2]{KuOr19}.

\begin{thm}
[Peter--Weyl theorem for $\Cal{S}ol_{(u; \lambda)}(\xi)_K$]
\label{thm:PW1}
Let $R(u) \in \Diff_G(I_P(\chi_\triv, \lambda), I_P(\chi, \nu))$
and $\xi \in \Irr(M/M_0)$.
Then the $(\fg, K)$-module $\Cal{S}ol_{(u; \lambda)}(\xi)_K$
can be decomposed as a $K$-representation as
\begin{equation}\label{eqn:K-Sol}
\Cal{S}ol_{(u; \lambda)}(\xi)_K
\simeq \bigoplus_{\delta \in \Irr(K)}
V_\delta \otimes 
\mathrm{Hom}_{K\cap M}\left(
\Sol^{\fk \cap \fm}_{(u)}(\delta), \xi\right).
\end{equation}
\end{thm}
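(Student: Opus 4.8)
The plan is to push the whole question down to the maximal compact subgroup $K$, decompose by $K$-types via Peter--Weyl, and identify the action of $R(u)\otimes\xid$ on each $K$-type through the compact model $u^\flat$ of Definition \ref{def:ub}. First I would pass to the compact picture: since $G=KP$ and $K\cap P=K\cap M$, the flag variety $G/P$ is $K$-equivariantly identified with $K/(K\cap M)$, and under this identification $\mathcal{W}_{\xi,\lambda+\rho}\to G/P$ restricts to $K\times_{K\cap M}W_\xi\to K/(K\cap M)$ (the $A$-character $e^{\lambda+\rho}$ is irrelevant on $K$). Hence $I_P(\xi,\lambda)|_K\simeq \Ind_{K\cap M}^{K}(\xi)$, and on $K$-finite vectors the classical Peter--Weyl theorem together with Frobenius reciprocity gives the $K$-module isomorphism
\begin{equation*}
I_P(\xi,\lambda)_K\;\simeq\;\bigoplus_{\delta\in\Irr(K)}V_\delta\otimes\Hom_{K\cap M}(V_\delta,\xi),
\end{equation*}
which is precisely the picture recalled from \cite[Sect.\ 2]{KuOr19}; the slightly different normalization of $W_{\sigma,\lambda}$ adopted here only shifts the $A$-parameter and does not affect the restriction to $K$.

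Next I would describe the effect of $R(u)\otimes\xid$ on each $K$-isotypic component. Since $u\in\Cal{U}(\bar{\fn})$, Definition \ref{def:ub} and the $(\Cal{U}(\fk\cap\fm),K\cap M)$-isomorphism \eqref{eqn:gk} let me replace $u$ by a compact model $u^\flat\in\Cal{U}(\fk)$: evaluating a $K$-finite section of the bundle for $\chi_\triv$ against the $K$-type $\delta$ and applying $R(u)\otimes\xid$, the operator becomes the one induced on the multiplicity slot by $d\delta(\tau(u^\flat))$, which is exactly the endomorphism appearing in the definition \eqref{eqn:SolK} of $\Sol_{(u)}(\delta)$. (Here one uses that $\chi$ is a character, so both source and target are $W_\xi$-valued, only the $K\cap M$-action being twisted; this is the same mechanism as in the remark after the proof of \cite[Thm.\ 2.6]{Kable12C} and as in the proof of \cite[Thm.\ 1.2]{KuOr19}.) Feeding this through the Peter--Weyl decomposition and using complete reducibility of $V_\delta$ over the compact group $K\cap M$ — so that the cokernel of $d\delta(\tau(u^\flat))$ is $K\cap M$-isomorphic to its kernel $\Sol_{(u)}(\delta)$, which is a genuine $K\cap M$-module by Lemma \ref{lem:SolKO} — one arrives at
\begin{equation*}
\CSol_{(u;\lambda)}(\xi)_K\;\simeq\;\bigoplus_{\delta\in\Irr(K)}V_\delta\otimes\Hom_{K\cap M}\!\big(\Sol_{(u)}(\delta),\xi\big).
\end{equation*}

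Finally I would refine $\Sol_{(u)}(\delta)$ to its $\fk\cap\fm$-invariants. Because $\xi\in\Irr(M/M_0)$ is trivial on $(K\cap M)_0$ and the averaging projection of a $K\cap M$-module onto its $(K\cap M)_0$-fixed vectors is $K\cap M$-equivariant (as $(K\cap M)_0\trianglelefteq K\cap M$), every $K\cap M$-map out of $\Sol_{(u)}(\delta)$ factors through $\Sol^{\fk\cap\fm}_{(u)}(\delta)$, so restriction along the inclusion $\Sol^{\fk\cap\fm}_{(u)}(\delta)\hookrightarrow\Sol_{(u)}(\delta)$ induces an isomorphism
\begin{equation*}
\Hom_{K\cap M}\!\big(\Sol_{(u)}(\delta),\xi\big)\;\simeq\;\Hom_{K\cap M}\!\big(\Sol^{\fk\cap\fm}_{(u)}(\delta),\xi\big),
\end{equation*}
refining \eqref{eqn:hom}. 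Substituting into the previous display yields \eqref{eqn:K-Sol}.

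I expect the middle step to be the only real obstacle: pinning down precisely how $R(u)\otimes\xid$ acts on a single $K$-type, i.e. matching the three realizations of the module — the $\bar{\fn}$-picture on $I_P$, the $\Cal{U}(\fg)\otimes_{\Cal{U}(\fp)}\C$-picture of Definition \ref{def:ub}, and the compact picture on $K/(K\cap M)$ — and checking that the conjugation $\tau$ and the ordering of the factors in $\Cal{U}(\fk)$ fall into place, so that the operator on the multiplicity slot is governed by $d\delta(\tau(u^\flat))$ exactly as in \eqref{eqn:SolK}. This identification is available from \cite{KuOr19}; once it is granted, the remaining two steps — the Peter--Weyl bookkeeping and the $\fk\cap\fm$-refinement — are routine.
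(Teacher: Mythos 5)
Your proposal is correct and follows essentially the same route as the paper: the paper's proof simply cites \cite[Thm.\ 1.2]{KuOr19} for the decomposition with $\Sol_{(u)}(\delta)$ in the multiplicity slot (the content of your first two paragraphs, which you re-derive rather than cite) and then passes to $\Sol^{\fk\cap\fm}_{(u)}(\delta)$ via the equivalence \eqref{eqn:hom}. Your final paragraph, showing that restriction along $\Sol^{\fk\cap\fm}_{(u)}(\delta)\hookrightarrow\Sol_{(u)}(\delta)$ is an isomorphism on $\Hom_{K\cap M}(\,\cdot\,,\xi)$ because $\xi$ is trivial on $(K\cap M)_0$, is exactly the mechanism the paper compresses into that equivalence.
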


\begin{proof}
It follows from \cite[Thm.\ 1.2]{KuOr19} that
the space $\Cal{S}ol_{(u; \lambda)}(\xi)_K$ can be decomposed as
\begin{equation*}
\Cal{S}ol_{(u; \lambda)}(\xi)_K
\simeq \bigoplus_{\delta \in \Irr(K)}
V_\delta \otimes 
\mathrm{Hom}_{K\cap M}\left(
\Sol_{(u)}(\delta), \xi\right).
\end{equation*}
Now the equivalence \eqref{eqn:hom} concludes the theorem.
\end{proof}

When $G$ is a split real group
and $P=MAN$ is a minimal parabolic subgroup of $G$,
we have $K\cap M = M$, $M/M_0 =M$, and $\fk \cap \fm = \{0\}$.
Thus in this case Theorem \ref{thm:PW1} is  simplified as follows.

\begin{cor}\label{cor:Sol}
Suppose that $G$ is split real and 
$P=MAN$ is minimal parabolic subgroup of $G$.
Let $R(u)\in \mathrm{Diff}_{G}
(I_P(\chi_\triv, \lambda), I_P(\chi,\nu))$
and  $\sigma \in \Irr(M)$.
Then the $(\fg, K)$-module
$\Cal{S}ol_{(u; \lambda)}(\sigma)_K$
can be decomposed as
\begin{equation}\label{eqn:split}
\Cal{S}ol_{(u; \lambda)}(\sigma)_K
\simeq 
\bigoplus_{\delta \in \Irr(K)}
V_\delta \otimes 
\mathrm{Hom}_{M}\left(
\Sol_{(u)}(\delta), \sigma\right)
\end{equation}
\end{cor}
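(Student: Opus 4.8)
The plan is to derive Corollary~\ref{cor:Sol} directly from Theorem~\ref{thm:PW1} by specializing to the split-real, minimal-parabolic setting and checking that the hypotheses of the theorem simplify as claimed. First I would recall the standard structural facts: when $G$ is split real and $P=MAN$ is a minimal parabolic subgroup, the subgroup $M=Z_K(A)$ is finite (in the linear case it is an elementary abelian $2$-group, and in general for covering groups it is still finite), so its identity component $M_0$ is trivial. Consequently $M/M_0 = M$ and $\Irr(M/M_0)=\Irr(M)$, which means that the parameter $\xi\in\Irr(M/M_0)$ appearing in Theorem~\ref{thm:PW1} ranges over all of $\Irr(M)$; we rename it $\sigma$. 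Moreover, since $M$ is finite, its Lie algebra $\fm_0$ is zero, hence $\fk\cap\fm=\{0\}$, and since $M\subset K$ we have $K\cap M = M$.

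Next I would feed these identifications into the conclusion of Theorem~\ref{thm:PW1}. Because $\fk\cap\fm=\{0\}$, every vector of $V_\delta$ is $\fk\cap\fm$-invariant, so $V_\delta^{\fk\cap\fm}=V_\delta$ and therefore
\begin{equation*}
\Sol^{\fk\cap\fm}_{(u)}(\delta) = \Sol_{(u)}(\delta)\cap V_\delta^{\fk\cap\fm} = \Sol_{(u)}(\delta).
\end{equation*}
Combining this with $K\cap M = M$ and $\xi=\sigma$, the direct sum in \eqref{eqn:K-Sol} becomes
\begin{equation*}
\Cal{S}ol_{(u;\lambda)}(\sigma)_K \simeq \bigoplus_{\delta\in\Irr(K)} V_\delta \otimes \Hom_M\bigl(\Sol_{(u)}(\delta),\sigma\bigr),
\end{equation*}
which is precisely \eqref{eqn:split}. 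One should also note that the hypothesis $R(u)\in\Diff_G(I_P(\chi_\triv,\lambda),I_P(\chi,\nu))$ guarantees, via Lemma~\ref{lem:tensor} (equivalently \eqref{eqn:dxi}), that $R(u)\otimes\id_\sigma$ is a well-defined intertwining differential operator for every $\sigma\in\Irr(M)=\Irr(M/M_0)$, so the left-hand side $\Cal{S}ol_{(u;\lambda)}(\sigma)_K$ makes sense for all $\sigma\in\Irr(M)$.

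I do not anticipate a genuine obstacle here: the corollary is a routine specialization, and the only point requiring a word of care is the assertion that $M$ is finite (so that $\fk\cap\fm=0$ and $M/M_0=M$). For a split real reductive group with minimal parabolic $P=MAN$ this is standard, and it persists under passing to a finite or infinite cover since $M$ in a connected cover of a split group is still a finite extension of the center; in any case it holds in the concrete situation $G=\wSL(3,\R)$ of this paper, where $M\simeq Q_8$. Once that structural input is in place, the proof is simply a substitution into Theorem~\ref{thm:PW1}, so the write-up can be kept to a few lines.
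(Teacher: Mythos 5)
Your proposal matches the paper's own treatment: the corollary is obtained there by simply recording that in the split, minimal-parabolic case one has $K\cap M=M$, $M/M_0=M$, and $\fk\cap\fm=\{0\}$, and then substituting into Theorem \ref{thm:PW1} exactly as you do. The extra word you spend justifying finiteness of $M$ is fine but not needed beyond what the paper already asserts.
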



\section{Specialization to $(\wSL(3,\R),B)$}\label{sec:SL3}

The purpose of this section is to specialize 
the general theory discussed in Section \ref{sec:PW} 
to a pair $(\wSL(3,\R), B)$, where $B$ is a minimal parabolic subgroup of 
$\wSL(3,\R)$.
In this section
we in particular give a recipe for computing $K$-type formulas
of the space $\Cal{S}ol_{(u; \lambda)}(\sigma)_K$ 
of $K$-finite solutions
for $R(u)\in \mathrm{Diff}_{G}(I_B(\chi_\triv, \lambda), I_B(\chi,\nu))$.
It is described in Section \ref{subsec:recipe}. 

\subsection{Notation and normalizations}
\label{subsec:SL3}

We begin by recalling from \cite[Sect.\ 4.1]{KuOr19} 
the notation and normalizations for $\wSL(3,\R)$.
Let $G= \widetilde{SL}(3,\R)$ with Lie algebra $\fg_0 = \f{sl}(3,\R)$.
Fix a Cartan involution $\gt\colon \fg_0 \to \fg_0$ such that
$\gt(U)=-U^t$. We write $\fk_0$ and $\fs_0$ for  the $+1$ and $-1$ eigenspaces of $\gt$, respectively, so that
$\fg_0= \fk_0 \oplus \fs_0$ is a Cartan decomposition of $\fg_0$.
We put $\fa_0:=\text{span}_\R\{E_{ii}-E_{i+1, i+1}: i=1, 2\}$
and  $\fn_0:=\text{span}_\R\{E_{12}, E_{23}, E_{13}\}$,
where $E_{ij}$ denote matrix units.
Then $\fb_0:=\fa_0 \oplus \fn_0$ is a minimal parabolic subalgebra of $\fg_0$.

Let $K$, $A$, and $N$ be the analytic subgroups of $G$ with Lie algebras 
$\fk_0$, $\fa_0$, and $\fn_0$, respectively.
Then $G=K AN$ is an Iwasawa decomposition of $G$.
We write $M = Z_{K}(\fa_0)$, so that
$B:=MAN$ is a minimal parabolic subgroup of $G$ 
with Lie algebra $\fb_0$.

We denote by $\fg$ the complexification of the Lie algebra $\fg_0$ of $G$.
A similar convention is employed also for subgroups of $G$;
for instance, $\mathfrak{b}= \mathfrak{a} \oplus \mathfrak{n}$ 
is a Borel subalgebra of $\fg= \f{sl}(3,\C)$.
We write $\bar{\fn}$ for the nilpotent radical opposite to $\fn$.

Let $\Delta\equiv \Delta(\mathfrak{g},\mathfrak{a})$ denote
the set of roots of $\mathfrak{g}$ with respect to $\mathfrak{a}$. 
We denote by $\Delta^+$ and $\Pi$ 
the positive system corresponding to $\fb$ and 
the set of simple roots of $\gD^+$, respectively.
We have $\Pi=\{\eps_1-\eps_2, \eps_2-\eps_3\}$,
where $\eps_j$ are the dual basis of $E_{jj}$ for $j=1, 2, 3$.
The root spaces $\fg_{\eps_1-\eps_2}$ and $\fg_{\eps_2-\eps_3}$ 
are then given as $\fg_{\eps_1-\eps_2} = \C E_{12}$ 
and $\fg_{\eps_2-\eps_3}=\C E_{23}$.
We write $\rho$ for half the sum of the positive roots, namely,
$\rho=\eps_1 - \eps_3$.

We define $X, Y \in \fg$ as
\begin{equation}\label{eqn:XY}
X = 
\begin{small}
\begin{pmatrix}
0 & 0 & 0\\
1 & 0 & 0\\
0 & 0 & 0
\end{pmatrix}
\end{small}
\quad \text{and} \quad
Y = 
\begin{small}
\begin{pmatrix}
0 & 0 & 0\\
0 & 0 & 0\\
0 & 1 & 0
\end{pmatrix}.
\end{small}
\end{equation}
Then $X$ and $Y$ are root vectors for
$-(\eps_1-\eps_2)$ and $-(\eps_2-\eps_3)$, respectively.
The opposite nilpotent radical $\bar{\fn}$ is thus given as
$\bar{\fn}=\text{span}\{X, Y, [X,Y]\}$.

\subsection{Two identifications for
$\f{so}(3,\C) \simeq \f{sl}(2,\C)$}
\label{subsec:so3}

As $\fk_0=\f{so}(3)\simeq \f{su}(2)$, we have $\fk= \f{so}(3,\C) \simeq \f{sl}(2,\C)$.
For later applications 
we consider
two identifications of $\f{so}(3,\C)$ with $\f{sl}(2,\C)$:
\begin{equation*}
\Omega^J \colon \f{so}(3,\C) \stackrel{\sim}{\To} \f{sl}(2,\C)
\quad
\text{for $J= \textnormal{I}, \textnormal{II}$}.
\end{equation*}
Loosely speaking, these identifications are described as follows.

\begin{enumerate}
\item 
Identification 
$\Omega^{\textnormal{I}}\colon \f{so}(3,\C) \stackrel{\sim}{\to} \f{sl}(2,\C)$:
This is an identification
via a Lie algebra isomorphism 
$\Omega^{\textnormal{I}}_0\colon \f{so}(3) \stackrel{\sim}{\to} \f{u}(2)$.
Schematically, we have
\begin{equation*}
\xymatrix@R=1pc{
\f{so}(3,\C) \ar@{.>}[r]_{\sim}^{\Omega^{\textnormal{I}}} & \f{sl}(2,\C)\\
\f{so}(3)
\ar[u]^{\otimes_{\R}\C\;}
\ar[r]^{\sim}_{\Omega^{\textnormal{I}}_0} & \f{su}(2)\ar[u]_{\; \otimes_{\R}\C}
}
\end{equation*}

\item Identification 
$\Omega^{\textnormal{II}}\colon \f{so}(3,\C) \stackrel{\sim}{\to} \f{sl}(2,\C)$:
 This is an identification independent of $\f{su}(2)$. Schematically, we have 
\begin{equation*}
\xymatrix@R=1pc{
\f{so}(3,\C)\ar[r]_{\sim}^{\Omega^{\textnormal{II}}} & \f{sl}(2,\C)\\
\f{so}(3)\ar[u]^{\otimes_{\R}\C\;} & \f{su}(2)
}
\end{equation*}
\end{enumerate}

Let $E_+$, $E_-$, and $E_0$ be the elements of $\f{sl}(2,\C)$ defined as
\begin{equation}\label{eqn:sl2E}
E_+:=
\begin{pmatrix}
0 & 1\\
0 & 0
\end{pmatrix},
\quad
E_-:=
\begin{pmatrix}
0 & 0\\
1 & 0
\end{pmatrix},
\quad
E_0:=
\begin{pmatrix}
1 & 0\\
0 & -1
\end{pmatrix}.
\end{equation}
\noindent
We now describe the identifications 
$\Omega^{\textnormal{I}}$ and $\Omega^{\textnormal{II}}$ 
in detail separately.

\subsubsection{Identification $\fk \simeq \f{sl}(2,\C)$ via $\Omega^{\textnormal{I}}$}

We start with the identification
$\Omega^{\textnormal{I}}\colon \fk \stackrel{\sim}{\to} \f{sl}(2,\C)$.
First observe that 
$\fk_0=\f{so}(3)$ is spanned by the three matrices
\begin{equation*}
B_1:=
\begin{small}
\begin{pmatrix}
0 & 0 & -1\\
0 & 0 & 0\\
1 & 0 &0
\end{pmatrix},
\end{small}
\quad
B_2:=
\begin{small}
\begin{pmatrix}
0 & 0 & 0\\
0 & 0 & -1\\
0 & 1 &0
\end{pmatrix},
\end{small} 
\quad
B_3:=
\begin{small}
\begin{pmatrix}
0 & -1 & 0\\
1 & 0 & 0\\
0 & 0 &0
\end{pmatrix}
\end{small}
\end{equation*}
with commutation relations
\begin{equation}\label{eqn:commB}
[B_1,B_2]=B_3,
\quad
[B_1,B_3]=-B_2, 
\quad
\text{and}
\quad
[B_2, B_3] = B_1.
\end{equation}
On the other hand, the Lie algebra $\f{su}(2)$ is spanned by 
\begin{equation*}
A_1:=\begin{pmatrix}
\sqrt{-1} & 0\\
0 & -\sqrt{-1}
\end{pmatrix},\quad
A_2:=
\begin{pmatrix}
0 & 1\\
-1 & 0
\end{pmatrix},\quad
A_3:=
\begin{pmatrix}
0 & \sqrt{-1}\\
\sqrt{-1} & 0
\end{pmatrix}
\end{equation*}
with commutation relations
\begin{equation*}
[A_1, A_2] = 2A_3,
\quad
[A_1, A_3] = -2 A_2, 
\quad
\text{and}
\quad
[A_2, A_3] = 2A_1.
\end{equation*}
Then one may identify $\fk_0$ with $\f{su}(2)$ via the linear map
\begin{equation}\label{eqn:omega}
\Omega_0^\textnormal{I}\colon \fk_0 \stackrel{\sim}{\To} \f{su}(2),
\quad B_j \mapsto \tfrac{1}{2}A_j \quad \text{for $j=1, 2, 3$}.
\end{equation}

Let $Z_+, Z_-, Z_0$ be the elements of $\fk=\f{so}(3,\C)$ defined as
\begin{equation}\label{eqn:Z}
Z_+ := B_2-\sqrt{-1}B_3,
\quad
Z_-:=-(B_2 + \sqrt{-1}B_3),
\quad
Z_0 := [Z_+, Z_-] = -2\sqrt{-1}B_1.
\end{equation}
Then we have
\begin{equation}\label{eqn:kZ}
\fk = \text{span}\{Z_+,\, Z_-,\, Z_0\}. 
\end{equation}
\vskip 0.05in
\noindent
As $A_2-\sqrt{-1}A_3 =2E_+$ and
$-(A_2 + \sqrt{-1}A_3)= 2E_-$,
the isomorphism \eqref{eqn:omega} yields
a Lie algebra isomorphism
\begin{equation}\label{eqn:omegaKO}
\Omega^{\textnormal{I}}\colon \fk \stackrel{\sim}{\To} \f{sl}(2,\C),
\quad
Z_j \mapsto E_j \quad \text{for $j=+, - , 0$}.
\end{equation}

\subsubsection{Identification $\fk \simeq \f{sl}(2,\C)$ via $\Omega^{\textnormal{II}}$}

We next discuss the identification
$\Omega^{\textnormal{II}}\colon \fk \stackrel{\sim}{\To} \f{sl}(2,\C)$.
Let $W_+$, $W_-$, and $W_0$ be the elements of $\f{so}(3,\C)$ defined by
\begin{equation}\label{eqn:W}
W_+:=B_1+\sqrt{-1}B_3,
\quad
W_-:=-B_1+\sqrt{-1}B_3,
\quad
W_0:= [W_+, W_-] =-2\sqrt{-1}B_2.
\end{equation}
Then we have
\begin{equation}\label{eqn:kW}
\fk = \text{span}\{W_+,\, W_-,\, W_0\}.
\end{equation}
It follows from the commutation relations \eqref{eqn:commB} 
that the triple $\{W_+,\, W_-,\, W_0\}$ forms an $\f{sl}(2)$-triple, namely,
$[W_+, W_-] =W_0$, 
$[W_0. W_+]=2W_+$, 
and 
$[W_0,W_-]=-2W_-$.
Therefore,
$\fk$ may be identified with $\f{sl}(2,\C)$ via the Lie algebra isomorphism
\begin{equation}\label{eqn:omegaT}
\Omega^{\textnormal{II}}\colon 
\fk \stackrel{\sim}{\To} \f{sl}(2,\C),
\quad 
W_j \mapsto E_j \quad \text{for $j=+, - , 0$}.
\end{equation}

\subsection{A realization of the subgroup $M=Z_K(\fa_0)$}

As $K$ is isomorphic to $SU(2)$,
we realize $M =Z_K(\fa_0)$ as a subgroup of $SU(2)$ via the isomorphism
\eqref{eqn:omegaKO}. To do so,
first observe that the adjoint action $\Ad$ of $SU(2)$ on $\f{su}(2)$ 
yields a two-to-one covering map 
$SU(2)\twoheadrightarrow \Ad(SU(2))\simeq SO(3)$
on $SO(3)$. 
We realize $\Ad(SU(2))$ as a matrix group with respect to the ordered basis
$\{A_2, A_1, A_3\}$ of $\f{su}(2)$ in such a way that 
the elements $\pm m_j^{\textnormal{I}} \in SU(2)$ with
\begin{equation}\label{eqn:mKO}
m^{\textnormal{I}}_0:=
\begin{pmatrix}
1 & 0\\
0 & 1\\
\end{pmatrix},\;
m^{\textnormal{I}}_1:=
\begin{pmatrix}
\sqrt{-1} & 0\\
0 & -\sqrt{-1}\\
\end{pmatrix},\;
m^{\textnormal{I}}_2:=
\begin{pmatrix}
0 & 1 \\
-1 & 0
\end{pmatrix},\;
m^{\textnormal{I}}_3:=
\begin{pmatrix}
0 & \sqrt{-1} \\
\sqrt{-1} & 0
\end{pmatrix}
\end{equation}
are mapped to $\pm m_j^{\textnormal{I}}  \mapsto m_j \in SO(3)$ for $j = 0, 1, 2, 3$, where
\begin{equation*}
\mbox{\smaller$
m_0=
\begin{small}
\begin{pmatrix}
1& 0 & 0\\
0 & 1 & 0\\
0 & 0 & 1
\end{pmatrix},
\end{small}
\;
m_1=
\begin{small}
\begin{pmatrix}
-1& 0 & 0\\
0 & 1 & 0\\
0 & 0 & -1
\end{pmatrix},
\end{small}
\;
m_2=
\begin{small}
\begin{pmatrix}
1& 0 & 0\\
0 & -1 & 0\\
0 & 0 & -1
\end{pmatrix},
\end{small}
\;
m_3=
\begin{small}
\begin{pmatrix}
-1& 0 & 0\\
0 & -1 & 0\\
0 & 0 & 1
\end{pmatrix}
\end{small}
$}.
\end{equation*}
\vskip 0.05in

One can easily check that the 
map $\pm m^{\textnormal{I}}_j \mapsto m_j$ respects 
the Lie algebra isomorphism 
$\Omega^{\textnormal{I}} \colon \fk \stackrel{\sim}{\to} \f{sl}(2,\C)$ in \eqref{eqn:omegaKO}
(\cite[Lem.\ 4.6]{KuOr19}).
Namely, for $Z\in \fk$, we have 
\begin{equation*}
\Omega^{\textnormal{I}}(\Ad(m_j)Z)=\Ad(m^{\textnormal{I}}_j)\Omega^{\textnormal{I}}(Z)
\quad
\text{for $j=0,1,2,3$}.
\end{equation*}
\vskip 0.05in
\noindent
As $Z_{SO(3)}(\fa_0)=\{m_0, m_1, m_2, m_3\}$,
we then realize $M$ as a subgroup of $SU(2)$ as
\begin{equation*}
M=\left\{
\pm m^{\textnormal{I}}_0,\;
\pm m^{\textnormal{I}}_1,\;
\pm m^{\textnormal{I}}_2,\;
\pm m^{\textnormal{I}}_3
\right\}.
\end{equation*}
\noindent
The subgroup $M$ is isomorphic to 
the quaternion group $Q_8$, a non-commutative group of order $8$.

\subsection{Irreducible representations $\Irr(M)$ of $M$}\label{subsec:IrrM}

In order to compute a $K$-type formula via \eqref{eqn:split},
we next discuss the sets $\Irr(M)$ and $\Irr(K)$ of 
equivalence classes of irreducible representations of $M$ and $K$, respectively.
We first consider $\Irr(M)$ via 
the isomorphism \eqref{eqn:omegaKO}.
As $M$ is isomorphic to the quaternion group $Q_8$, 
the set $\Irr(M)$ consists of four characters and one 2-dimensional irreducible 
representation.
For $\eps, \eps' \in \{\pm\}$,
we define a character $\chi^{SO(3)}_{(\eps,\eps')}\colon Z_{SO(3)}(\fa_0) 
\to \{\pm 1\}$ of $Z_{SO(3)}(\fa_0)$  as
\begin{equation*}
\chi^{SO(3)}_{(\eps, \eps')}
(\mathrm{diag}(a_1, a_2, a_3))
:=|a_1|_{\eps}\;|a_3|_{\eps'},
\end{equation*}
where
$|a|_+:=|a|$ and $|a|_-:=a$.
Via the character $\chi^{SO(3)}_{(\eps, \eps')}$ of $Z_{SO(3)}(\fa_0)$,
we define a character $\chi_{(\eps, \eps')}\colon M \to \{\pm 1\}$ of
$M$ as
\begin{equation}\label{eqn:charM}
\chi_{(\eps,\eps')}(\pm m^{\textnormal{I}}_j) := \chi_{(\eps, \eps')}(m_j) 
\quad
\textnormal{for\; $j=0, 1, 2, 3$}.
\end{equation}
We often abbreviate $\chi_{(\eps,\eps')}$ as $(\eps,\eps')$. 
The character $\pp$, for instance, is the trivial character of $M$.
Table \ref{table:char} illustrates the character table for 
$(\eps,\eps') =\chi_{(\eps,\eps')}$.
The set $\Irr(M)$ may then be described as follows:
\begin{equation}\label{eqn:char}
\Irr(M)= \{ \pp, \, \pmi, \,  \mip,\,  \mm, \,  \mathbb{H}\},
\end{equation}
where $\mathbb{H}$ is the unique genuine $2$-dimensional representation 
of $M \simeq Q_8$.

\begin{table}[t]
\caption{Character table for $(\eps, \eps')$ for $m_j^{\textnormal{I}}$}
\begin{center}
\renewcommand{\arraystretch}{1.2} 
{
\begin{tabular}{|c|c|c|c|c|}
\hline
& $\pm m_0^{\textnormal{I}}$ 
& $\pm m_1^{\textnormal{I}}$ 
& $\pm m_2^{\textnormal{I}}$ 
& $\pm m_3^{\textnormal{I}}$\\
\hline
$\pp$ & $1$ & $1$ & $1$ & $1$ \\
\hline
$\pmi$ & $1$  & $-1$  & $-1$ & $1$\\
\hline
$\mip$ & $1$ & $-1$ & $1$ & $-1$\\
\hline
$\mm$ & $1$ & $1$ & $-1$ & $-1$\\
\hline
\end{tabular}
}
\end{center}
\label{table:char}
\end{table}%

\subsection{Irreducible representations $\Irr(K)$ of $K$}
\label{subsec:K}

We next consider a polynomial realization of $\Irr(K)$.
Let $\Pol[t]$ be the space of polynomials of one variable $t$ with complex coefficients
and set 
\begin{equation*}
\Pol_n[t] := \{p(t) \in \Pol[t] : \deg p(t) \leq n\}.
\end{equation*}
Then we realize the set $\Irr(K)$ of equivalence classes of 
irreducible representations of $K \simeq SU(2)$ as
\begin{equation}\label{eqn:IrrK}
\Irr(K) \simeq \{(\pi_n, \Pol_n[t]) : n \in \Z_{\geq 0}\},
\end{equation}
where the representation $\pi_n$ of $SU(2)$ on $\Pol_n[t]$ is defined as
\begin{equation}\label{eqn:pin}
\left(\pi_n(g)p\right)(t) := (ct+d)^n p\left(\frac{at+b}{ct+d}\right)
\quad \text{for} \quad
g = \begin{pmatrix}
a & b\\ c& d
\end{pmatrix}^{-1}.
\end{equation}
It follows from \eqref{eqn:pin} that
the elements
$m_j^{\textnormal{I}}$ defined in \eqref{eqn:mKO}
act on $\Pol_n[t]$ via $\pi_n$ as
\begin{equation}\label{eqn:pi-wM}
m_1^{\textnormal{I}}\colon p(t) \mapsto  (\sqrt{-1})^n p(-t); \;\;
m_2^{\textnormal{I}}\colon p(t) \mapsto t^np\left(-\frac{1}{t}\right); \;\;
m_3^{\textnormal{I}}\colon p(t) \mapsto (-\sqrt{-1}t)^n p\left(\frac{1}{t}\right).
\end{equation}

Let $d\pi_n$ be the differential of the representation $\pi_n$.
As usual we extend $d\pi_n$ complex-linearly to $\f{sl}(2,\C)$ and also naturally to
the universal enveloping algebra $\Cal{U}(\f{sl}(2,\C))$.
It follows from \eqref{eqn:sl2E} and \eqref{eqn:pin} 
that $E_+$, $E_-$, and $E_0$ act on $\Pol_n[t]$ via $\dpin$ as
\begin{equation}\label{eqn:Epm}
\dpin(E_+) = -\frac{d}{dt}, \quad
\dpin(E_-) = -nt + t^2\frac{d}{dt},\quad \text{and} \quad 
\dpin(E_0) = -2t\frac{d}{dt}+n.
\end{equation}

\subsection{Peter--Weyl theorem for $\Cal{S}ol_{(u; \lambda)}(\sigma)_K$
with the polynomial realization of $\Irr(K)$}
\label{subsec:PWpoly}

Let $\Omega \colon \fk \stackrel{\sim}{\to} \f{sl}(2,\C)$ 
be an identification of $\fk$ with $\f{sl}(2,\C)$ such as \eqref{eqn:omegaKO} and 
\eqref{eqn:omegaT}. 
Then elements $F \in \Cal{U}(\fk)$ can act on 
$\Pol_n[t]$ via $\dpin$ as $\dpin(\Omega(F))$.
To simplify the notation we write
\begin{equation}\label{eqn:OF}
\dpin^\Omega(F)=\dpin(\Omega(F))
\quad
\text{for $F \in \Cal{U}(\fk)$}.
\end{equation}
Then, for $R(u) \in \Diff_G(I(\pp, \lambda), I(\chi, \nu))$
with $\pp$ the trivial character of $M$, we set 
\begin{equation}\label{eqn:SolK2}
\Sol_{(u)}(n):=\{ p(t) \in \Pol_n[t]:\dpin^{\Omega}\big(\tau(u^\flat)\big)p(t)=0\},
\end{equation}
where $u^\flat \in \Cal{U}(\fk)$ is a compact model of $u\in\Cal{U}(\bar{\fn})$
(see Definition \ref{def:ub}).
Then the $K$-type decomposition of $\Cal{S}ol_{(u; \lambda)}(\sigma)_K$ in \eqref{eqn:split} 
with the polynomial realization 
\eqref{eqn:IrrK} of $\Irr(K)$ becomes
\begin{equation}\label{eqn:split2}
\Cal{S}ol_{(u; \lambda)}(\sigma)_K
\simeq 
\bigoplus_{n\geq 0}
\Pol_n[t] \otimes 
\mathrm{Hom}_{M}\left(
\Sol_{(u)}(n), \sigma\right).
\end{equation}

We remark that
since $K \cap M = M$ and $\fm=\{0\}$, 
the $(\Cal{U}(\fk\cap \fm), K\cap M)$-isomorphism \eqref{eqn:gk} 
reduces to an $M$-isomorphism
$\mathcal{U}(\fk)\otimes \C_{\tiny{\pp}}
\stackrel{\sim}{\To}
\mathcal{U}(\fg) \otimes_{\mathcal{U}(\fb)} \C_{\tiny{\pp},-\lambda}$.
In particular a compact model $u^\flat \in \Cal{U}(\fk)$ 
of $u \in \Cal{U}(\bar{\fn})$ is indeed unique in the present situation
(see Remark \ref{rem:cpt}).

\subsection{Recipe for determining the $K$-type formula for 
$\Cal{S}ol_{(u; \lambda)}(\sigma)_K$}
\label{subsec:recipe}
For later convenience we summarize a recipe for computing the $K$-type
formula for $\Cal{S}ol_{(u; \lambda)}(\sigma)_K$ via \eqref{eqn:split2}.
Let $R(u) \in \Diff_G(I(\pp,\lambda), I(\chi,\nu))$.
\vskip 0.1in

\noindent
\textsf{Step 0:}
Choose an identification $\Omega \colon \fk \simeq \f{sl}(2,\C)$.
\vskip 0.1in

\noindent
\textsf{Step 1:}
Find the compact model $u^\flat$ of $u$ (see Definition \ref{def:ub}).
\vskip 0.1in

\noindent
\textsf{Step 2:}
Find the explicit formula of the differential operator 
$\dpin^\Omega\big(\tau(u^\flat)\big)$.
\vskip 0.1in

\noindent
\textsf{Step 3:}
Solve the differential equation
$\dpin^\Omega\big(\tau(u^\flat)\big)p(t)=0$
and
classify $n \in \Z_{\geq 0}$ such that $\Sol_{(u)}(n)\neq \{0\}$.
\vskip 0.1in

\noindent
\textsf{Step 4:}
For  $n \in \Z_{\geq 0}$ with $\Sol_{(u)}(n)\neq \{0\}$,
classify the $M$-representations on 
$\Sol_{(u)}(n)$.
\vskip 0.1in

\noindent
\textsf{Step 5:}
Given $\sigma \in \Irr(M)$,
classify $n \in \Z_{\geq 0}$ with $\Sol_{(u)}(n)\neq \{0\}$ such that
\begin{equation*}
\mathrm{Hom}_{M}\left(
\Sol_{(u)}(n),\, \sigma\right) \neq \{0\}.
\end{equation*}


\section{Heisenberg ultrahyperbolic operator for $\wSL(3,\R)$}
\label{sec:HO}

The aim of this section is to discuss 
a certain second order differential operator called 
the Heisenberg ultrahyperbolic operator $R(D_s)$.
In this section we apply to $R(D_s)$
the theory described in Section \ref{sec:SL3}
for arbitrary intertwining differential operators $R(u)$
for $\wSL(3,\R)$.
We continue the notation and normalizations 
from Section \ref{sec:SL3}.

\subsection{Heisenberg ultrahyperbolic operator $R(D_s)$}
\label{subsec:HUO}

We start with the definition of 
the Heisenberg ultrahyperbolic operator $R(D_s)$ for $\wSL(3,\R)$.
As $R(D_s)$ being an intertwining differential operator
(see Proposition \ref{prop:square} below),
it is defined for the complex Lie algebra $\fg=\f{sl}(3,\C)$.

Let $X$ and $Y$ be the elements of $\bar{\fn}$ defined in \eqref{eqn:XY}.
For $s \in \C$, we define $D_s$ as
\begin{equation*}
D_s := (XY+YX)+ s[X,Y] \in \Cal{U}(\fg).
\end{equation*}
Then the second order differential operator
\begin{equation*}
R(D_s)=R\big((XY+YX) + s[X,Y] \big)
\end{equation*}
is called the \emph{Heisenberg ultrahyperbolic operator} 
for $\fg = \f{sl}(3,\C)$.
For the general definition 
for $\f{sl}(m,\C)$ with arbitrary rank $m \geq 3$,
see \cite[Sect.\ 3]{Kable12C}.

Recall from Section \ref{subsec:SL3} that the simple roots 
$\ga, \gb \in \Pi$ are realized as $\ga=\eps_1-\eps_2$ and $\gb=\eps_2-\eps_3$.
Then, for $s \in \C$, we set
\begin{equation}\label{eqn:wrho(s)}
\wrho(s):=\wrho - s \wrho^{\perp} 
\end{equation}
with
\begin{equation*}
\wrho:=\frac{1}{2}(\eps_1-\eps_3)
\quad
\text{and}
\quad
\wrho^{\perp}:=\frac{1}{2}(\eps_1+\eps_3).
\end{equation*}

Remark that as $\wrho^{\perp} \notin \fa^*$,
we have $\wrho(s) \notin \fa^*$ unless $s=0$.
For $\sigma \in \Irr(M)$, we write
\begin{equation}\label{eqn:IndB}
I(\sigma,\wrho(s))=I_B(\sigma,\wrho(s)\vert_{\fa^*})
\end{equation}
for the parabolically induced representation
$I_B(\sigma,\wrho(s)\vert_{\fa^*})$
defined as in \eqref{eqn:Ind}.
Proposition \ref{prop:square} below shows that
the operator $R(D_s)$ is indeed an intertwining differential operator
between parabolically induced representations.

\begin{prop}\label{prop:square}
We have
\begin{equation*}
R(D_s)
\in 
\Diff_G(I(\pp, -\wrho(s)), 
I(\mm, \wrho(-s)).
\end{equation*}
Consequently, for $\sigma \in \Irr(M)$, we have
\begin{equation*}
R(D_s) \otimes \id_\sigma 
\in 
\Diff_G(I(\sigma, -\wrho(s)), 
I(\mm \otimes \sigma, \wrho(-s)).
\end{equation*}
\end{prop}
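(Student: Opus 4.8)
The plan is to verify that $R(D_s)$ is an intertwining differential operator by the standard criterion: a right-invariant differential operator $R(u)$ with $u \in \Cal{U}(\bar\fn)$ intertwines $I(\chi_\triv, -\wrho(s))$ and $I(\chi, \wrho(-s))$ precisely when the corresponding map on the generalized Verma modules is a $\fg$-homomorphism, equivalently when $u \otimes \mathbb{1}$ is a highest-weight vector (annihilated by $\fn$) of the appropriate weight in $\Cal{U}(\fg) \otimes_{\Cal{U}(\fp)} \C_{\chi_\triv, -(-\wrho(s)+\rho)}$. So first I would set up the weights: the source induced representation $I(\pp, -\wrho(s))$ corresponds to inducing the $\fb$-character with $\fa$-weight $-\wrho(s)\vert_{\fa^*} + \rho$, and I need to check that applying $D_s = (XY+YX) + s[X,Y]$ lands in the target, i.e. that $D_s$ has the correct $\fa$-weight and transforms under $M$ by the character $\mm$.

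Second, I would compute the $\fh$-weight of $D_s$. Since $X$ is a root vector for $-(\eps_1-\eps_2)=-\ga$ and $Y$ for $-(\eps_2-\eps_3)=-\gb$, each of $XY$, $YX$, and $[X,Y]$ has $\fh$-weight $-\ga-\gb = -(\eps_1-\eps_3) = -\rho$. This forces the relation between the inducing parameters: the shift by $-\rho$ together with the $\rho$-normalization in \eqref{eqn:Ind} produces exactly the passage from $-\wrho(s)$ to $\wrho(-s)$ on $\fa^*$; here I would use $\wrho(-s)\vert_{\fa^*} = \wrho\vert_{\fa^*} + s\,\wrho^\perp\vert_{\fa^*}$ and $-\wrho(s)\vert_{\fa^*} = -\wrho\vert_{\fa^*} + s\,\wrho^\perp\vert_{\fa^*}$, whose difference is $2\wrho\vert_{\fa^*} = \rho\vert_{\fa^*}$, matching the $-\rho$ weight drop of $D_s$ after the two $\rho$-shifts. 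Third, and this is the real content, I would check the $\fn$-annihilation condition, i.e. that $E_{12}\cdot D_s$ and $E_{23}\cdot D_s$ lie in $\Cal{U}(\fg)\fn + \Cal{U}(\fg)(\fm\cap\ker) + \Cal{U}(\fg)(\fa + \rho\text{-twist})$ modulo $\Cal{U}(\fg)\fp$ acting on the cyclic vector — concretely, compute $[\,E_{12}, D_s\,]$ and $[\,E_{23}, D_s\,]$ in $\Cal{U}(\fg)$ and show the result, acting on $\mathbb{1}$, vanishes for the specific choice of $s$-dependence; the coefficient $s$ in front of $[X,Y]$ is exactly what is needed to kill the obstruction terms, which is why $\square_s$ is an intertwining operator for \emph{every} $s$. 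I expect this bracket computation — tracking the $\f{sl}(3)$ commutators $[E_{12},X]$, $[E_{12},Y]$, $[E_{12},[X,Y]]$, etc., and similarly for $E_{23}$ — to be the main obstacle, though it is a finite and explicit $3\times 3$ matrix calculation; alternatively one can cite \cite[Sect.\ 3]{Kable12C}, where $\square_s$ is constructed as a conformally invariant system, and simply restrict that result from $SL(m,\R)$ to $m=3$ and lift to the cover $\wSL(3,\R)$ (the intertwining property being a statement about the complex Lie algebra $\fg$, it is insensitive to the covering).

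Finally, for the $M$-equivariance I would observe that $M$ acts on $D_s$ through $\Ad$, and since each of $XY+YX$ and $[X,Y]$ has weight $-\rho = -(\eps_1-\eps_3)$, the action of the component group $M\simeq Q_8$ on the line $\C D_s$ is given by a character; evaluating on the generators $m_j^{\textnormal{I}}$ using the explicit diagonal matrices $m_j$ and Table \ref{table:char} identifies this character as $\mm$ (it is the character by which $\Ad(m_j)$ scales $E_{12}E_{23}$-type monomials of weight $-(\eps_1-\eps_3)$). Then the consequence for general $\sigma \in \Irr(M)$ is immediate from Lemma \ref{lem:tensor}: tensoring the operator $R(D_s) \in \Diff_G(I(\pp,-\wrho(s)), I(\mm,\wrho(-s)))$ with $\id_\sigma$ yields an element of $\Diff_G(I(\sigma, -\wrho(s)), I(\mm\otimes\sigma, \wrho(-s)))$, using that $\pp\otimes\sigma = \sigma$. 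This completes the plan; the only delicate point is the explicit commutator verification in step three, everything else being bookkeeping with weights and the $Q_8$ character table.
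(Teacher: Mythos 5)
Your proposal is correct and, in its citation branch, coincides with the paper's actual proof: the paper simply invokes \cite[Lem.\ 3.2]{Kable12C} (together with \cite[Lem.\ 6.4]{KuOr19}) for the first assertion and Lemma \ref{lem:tensor} for the second, exactly as you suggest. Your weight bookkeeping, the $Q_8$-character computation identifying $\mm$, and the reduction of the second claim to $\pp\otimes\sigma=\sigma$ are all accurate accounts of what those cited results establish, so the fuller sketch is sound even though the paper does not carry out the commutator verification itself.
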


\begin{proof}
The first assertion readily follows from \cite[Lem.\ 3.2]{Kable12C} and \cite[Lem.\ 6.4]{KuOr19}.
Lemma \ref{lem:tensor} then concludes the second.
\end{proof}

Our aim is to compute the branching law 
of the space $\CSol_{(D_{s};-\wrho(s))}(\sigma)_K$
of $K$-finite solutions to $(R(D_s) \otimes \id_\sigma)f=0$.
The $K$-type formula in \eqref{eqn:split2s} 
with $(u;\lambda) = (D_{s};-\wrho(s))$ gives
\begin{equation}\label{eqn:split2s}
\CSol_{(D_{s};-\wrho(s))}(\sigma)_K
\simeq 
\bigoplus_{n\geq 0}
\Pol_n[t] \otimes 
\mathrm{Hom}_{M}\left(
\Sol_{(D_s)}(n), \sigma\right)
\end{equation}
with
\begin{equation*}
\Sol_{(D_s)}(n)
=\{ p(t) \in \Pol_n[t]:\dpin^\Omega\big(\tau(D_{s}^\flat)\big)p(t)=0\}.
\end{equation*}
\vskip 0.05in

Observe that, for $X$ and $Y$ in \eqref{eqn:XY}, 
we have $X, Y \in \fg_0=\f{sl}(3,\R)$.
Thus $\tau(D_s^\flat)$ for $D_s = (XY+YX) + s[X,Y]$ is simply given as
\begin{equation}\label{eqn:tds}
\tau(D^\flat_s) =\tau(D_s)^\flat = D^\flat_{\bar{s}}\,,
\end{equation}
where $\bar{s}$ denotes the complex conjugate of $s\in \C$.
Then we write 
\begin{equation*}
\Sol(s;n):=\Sol_{(D_{\bar{s}})}(n)
\end{equation*}
so that
\begin{align}\label{eqn:SolK3}
\Sol(s;n)
&=\{ p(t) \in \Pol_n[t]:\dpin^\Omega\big(\tau(D_{\bar{s}}^\flat)\big)p(t)=0\} \nonumber\\
&=\{ p(t) \in \Pol_n[t]:\dpin^\Omega\big(D_s^\flat\big)p(t)=0\}.
\end{align}
Similarly, we put
\begin{equation*}
\CSol(s;\sigma)_K := \CSol_{(D_{\bar{s}};-\wrho(\bar{s}))}(\sigma)_K.
\end{equation*}
Then 
the $K$-type formula \eqref{eqn:split2s} yields
\begin{equation}\label{eqn:PWSL}
\CSol(s;\sigma)_K
\simeq 
\bigoplus_{n\geq 0}
\Pol_n[t] \otimes 
\mathrm{Hom}_{M}\left(
\Sol(s;n), \sigma\right).
\end{equation}
Hereafter we consider the branching law of $\CSol(s;\sigma)_K$
instead of $\CSol_{(D_{s};-\wrho(s))}(\sigma)_K$.

In order to solve the equation 
$\dpin^\Omega\big(D_s^\flat\big)p(t)=0$
in \eqref{eqn:SolK3},
we next find the compact model $D_s^\flat$ of $D_s$, 
which is an element
of  $\Cal{U}(\fk)$ such that
\begin{equation*}
D_s^\flat \otimes (\mathbb{1}_{{\tiny \pp}} \otimes \mathbb{1}_{\wrho(s)-\rho})
=D_s \otimes (\mathbb{1}_{{\tiny \pp}} \otimes \mathbb{1}_{\wrho(s)-\rho})
\end{equation*} 
in $\Cal{U}(\fg) \otimes_{\Cal{U}(\fb)}\C_{{\tiny \pp},\, \wrho(s)-\rho}$. 
For $X, Y \in \bar{\fn}$ in \eqref{eqn:XY}, 
define $X^\flat, Y^\flat \in \fk$ as
\begin{equation*}\label{eqn:XYb}
X^\flat:=X + \gt(X)
\quad
\text{and}
\quad
Y^\flat:=Y+\gt(Y),
\end{equation*}
where $\gt$ is the Cartan involution defined as $\gt(U)=-U^t$.

\begin{lem}\label{lem:ub}
We have
\begin{equation}\label{eqn:Db}
D^\flat_s = 
X^\flat Y^\flat+Y^\flat X^\flat+s[X^\flat, Y^\flat]  \in \Cal{U}(\fk).
\end{equation}
\end{lem}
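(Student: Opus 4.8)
The plan is to verify directly the defining identity \eqref{eqn:ub} of the compact model, working inside the generalized Verma module $\Cal{U}(\fg)\otimes_{\Cal{U}(\fb)}\C_{\pp,\,\wrho(s)-\rho}$ and exploiting that its canonical generator $v_0:=\mathbb{1}_{\pp}\otimes\mathbb{1}_{\wrho(s)-\rho}$ is annihilated by $\fn$. Before the computation, the one preliminary point is that the proposed element actually lies in $\Cal{U}(\fk)$: since $\gt$ is an involution, $\gt(X^\flat)=\gt(X)+X=X^\flat$ and likewise $\gt(Y^\flat)=Y^\flat$, so $X^\flat,Y^\flat\in\fk_0\subset\fk$, and hence $X^\flat Y^\flat+Y^\flat X^\flat+s[X^\flat,Y^\flat]\in\Cal{U}(\fk)$.

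Next I would record the root-theoretic placement of the relevant vectors. By the normalizations of Section \ref{subsec:SL3}, $X$ and $Y$ are root vectors for $-\ga$ and $-\gb$ with $\ga=\eps_1-\eps_2$ and $\gb=\eps_2-\eps_3$; since $\gt$ sends a restricted root vector for $\mu$ to one for $-\mu$, we get $\gt(X)\in\fg_{\ga}\subset\fn$ and $\gt(Y)\in\fg_{\gb}\subset\fn$, so in particular $\gt(X)v_0=\gt(Y)v_0=0$.

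Then the computation is short. First $Y^\flat v_0=Yv_0+\gt(Y)v_0=Yv_0$, whence $X^\flat Y^\flat v_0=XYv_0+\gt(X)Yv_0=XYv_0+[\gt(X),Y]v_0+Y\gt(X)v_0=XYv_0+[\gt(X),Y]v_0$. The cross term $[\gt(X),Y]$ lies in $\fg_{\ga-\gb}$, and $\ga-\gb=\eps_1-2\eps_2+\eps_3$ is not a root of $\f{sl}(3,\C)$, so $[\gt(X),Y]=0$ and $X^\flat Y^\flat v_0=XYv_0$. By the symmetric argument, using that $\gb-\ga$ is likewise not a root, $Y^\flat X^\flat v_0=YXv_0$; consequently $[X^\flat,Y^\flat]v_0=X^\flat Y^\flat v_0-Y^\flat X^\flat v_0=[X,Y]v_0$. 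Adding up, $D_s^\flat v_0=(XY+YX+s[X,Y])v_0=D_s v_0$, which is precisely the identity \eqref{eqn:ub} characterizing the compact model (Definition \ref{def:ub}); together with $D_s^\flat\in\Cal{U}(\fk)$ this proves the lemma.

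The only thing resembling an obstacle is the bookkeeping that the cross terms $[\gt(X),Y]$ and $[\gt(Y),X]$ vanish, and this is merely the elementary observation that $\ga\pm\gb$ yields no root of $\f{sl}(3,\C)$ apart from $\ga+\gb=\eps_1-\eps_3$, the latter sitting in $\fn$ and hence also killing $v_0$. It is worth noting that the argument never uses the specific weight $\wrho(s)-\rho$, so the same element serves as a compact model for every inducing character.
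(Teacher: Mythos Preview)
Your proof is correct and follows the same approach as the paper: the paper's proof consists of the single sentence that one can easily verify the defining identity in $\Cal{U}(\fg)\otimes_{\Cal{U}(\fb)}\C_{\pp,\,\wrho(s)-\rho}$, and you have carried out precisely this verification in detail, tracking the vanishing of the $\fn$-terms and of the cross brackets $[\gt(X),Y]$, $[\gt(Y),X]$.
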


\begin{proof}
One can easily verify that
\begin{align*}\label{eqn:ub}
(X^\flat Y^\flat+Y^\flat X^\flat+s[X^\flat, Y^\flat] )\otimes 
(\mathbb{1}_{{\tiny \pp}} \otimes \mathbb{1}_{\wrho(s)-\rho})
=D_s \otimes (\mathbb{1}_{{\tiny \pp}} \otimes \mathbb{1}_{\wrho(s)-\rho})
\end{align*} 
in $\Cal{U}(\fg) \otimes_{\Cal{U}(\fb)}\C_{{\tiny \pp}, \, \wrho(s)-\rho}$.
\end{proof}

\subsection{Relationship between
$\dpin^{\textnormal{I}}(D^\flat_s)$ and $\dpin^{\textnormal{II}}(D^\flat_s)$}
\label{subsec:Omega12}

In Section \ref{subsec:so3}, we discussed
two identifications of $\fk$ with $\f{sl}(2,\C)$, namely,
\begin{equation}\label{eqn:KOTO}
\Omega^{\textnormal{I}} \colon \fk \stackrel{\sim}{\to} \f{sl}(2,\C)
\quad
\text{and}
\quad
\Omega^{\textnormal{II}} \colon \fk \stackrel{\sim}{\to} \f{sl}(2,\C).
\end{equation}
Thus compact model $D^\flat_s$ 
may act on $\Pol_n[t]$ via $\dpin$ as
\begin{equation}\label{eqn:OI}
\dpin(\Omega^{\textnormal{I}}(D^\flat_s))
\quad
\text{and}
\quad
\dpin(\Omega^{\textnormal{II}}(D^\flat_s)).
\end{equation}
As for the notation 
$\dpin^\Omega(F) 
=\dpin(\Omega(F))$ 
in \eqref{eqn:OF}, 
we abbreviate \eqref{eqn:OI} as
\begin{equation*}
d\pi_n^J(D^\flat_s) = d\pi_n(\Omega^J(D^\flat_s))
\quad 
\text{for $J\in\{\textnormal{I}, \textnormal{II}\}$}.
\end{equation*}
\noindent
We next discuss a relationship between 
$\dpin^{\textnormal{I}}(D^\flat_s)$ and $\dpin^{\textnormal{II}}(D^\flat_s)$.
\vskip 0.2in

We begin with the expression of $\Omega^J(D^\flat_s)$ in terms of 
the $\f{sl}(2)$-triple $\{E_+, \, E_-,\, E_0\}$ in \eqref{eqn:sl2E}.
First, recall from \eqref{eqn:kZ} and \eqref{eqn:kW} that 
$\fk$ can be given as
\begin{equation*}
\fk = \text{span} \{ Z_+,\, Z_-,\, Z_0\} 
= \text{span} \{ W_+,\, W_-,\, W_0\},
\end{equation*}
where $\{ Z_+,\, Z_-,\, Z_0\}$ and 
$\{ W_+,\, W_-,\, W_0\}$ are the $\f{sl}(2)$-triples of $\fk$ 
defined in \eqref{eqn:Z} and \eqref{eqn:W}, respectively.

\begin{lem}\label{lem:DbZW}
The compact model 
$D^\flat_s = X^\flat Y^\flat+Y^\flat X^\flat+s[X^\flat, Y^\flat]$ is expressed as
\begin{align}
D^\flat_s 
&=\frac{\sqrt{-1}}{2}\left( (Z_++Z_-)(Z_+-Z_-)-(s-1)Z_0\right) \label{eqn:DbZ}\\
&=\frac{1}{2}\left((W_++W_-)W_0-(s-1)(W_+-W_-)\right). \label{eqn:DbW}
\end{align}
\end{lem}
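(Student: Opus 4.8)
The plan is to push both identities down to the basis $\{B_1,B_2,B_3\}$ of $\fk_0=\f{so}(3)$ and then substitute. First I would compute the compact models explicitly from the definitions $X^\flat=X+\gt(X)=X-X^t$ and $Y^\flat=Y+\gt(Y)=Y-Y^t$. Since $X=E_{21}$ and $Y=E_{32}$ in matrix-unit notation by \eqref{eqn:XY}, this gives $X^\flat=E_{21}-E_{12}=B_3$ and $Y^\flat=E_{32}-E_{23}=B_2$. Using $[B_3,B_2]=-B_1$ from \eqref{eqn:commB}, one then has $[X^\flat,Y^\flat]=-B_1$, so
\begin{equation*}
D^\flat_s=X^\flat Y^\flat+Y^\flat X^\flat+s[X^\flat,Y^\flat]=B_3B_2+B_2B_3-sB_1\in\Cal{U}(\fk).
\end{equation*}
Normal-ordering with $B_2B_3=B_3B_2+[B_2,B_3]=B_3B_2+B_1$, this becomes $D^\flat_s=2B_3B_2-(s-1)B_1$, which will serve as the common target for both right-hand sides.

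Next, for \eqref{eqn:DbZ} I would read off from \eqref{eqn:Z} that $Z_++Z_-=-2\sqrt{-1}\,B_3$, $Z_+-Z_-=2B_2$, and $Z_0=-2\sqrt{-1}\,B_1$; substituting into the claimed expression gives
\begin{equation*}
\tfrac{\sqrt{-1}}{2}\bigl((Z_++Z_-)(Z_+-Z_-)-(s-1)Z_0\bigr)
=\tfrac{\sqrt{-1}}{2}\bigl(-4\sqrt{-1}\,B_3B_2+2\sqrt{-1}(s-1)B_1\bigr)
=2B_3B_2-(s-1)B_1,
\end{equation*}
which is exactly $D^\flat_s$. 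Likewise, for \eqref{eqn:DbW} I would use \eqref{eqn:W} to get $W_++W_-=2\sqrt{-1}\,B_3$, $W_+-W_-=2B_1$, and $W_0=-2\sqrt{-1}\,B_2$, whence
\begin{equation*}
\tfrac12\bigl((W_++W_-)W_0-(s-1)(W_+-W_-)\bigr)
=\tfrac12\bigl(4B_3B_2-2(s-1)B_1\bigr)
=2B_3B_2-(s-1)B_1=D^\flat_s.
\end{equation*}

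There is no genuinely hard step here; the computation is elementary once $X^\flat=B_3$, $Y^\flat=B_2$ are identified. The only points requiring care are the non-commutativity of $B_2$ and $B_3$ inside $\Cal{U}(\fk)$ — so that the product $(Z_++Z_-)(Z_+-Z_-)$ (and similarly $(W_++W_-)W_0$) produces $B_3B_2$ rather than $B_2B_3$, which is precisely why the factor $(s-1)$ rather than $s$ appears after normal-ordering — and the bookkeeping of powers of $\sqrt{-1}$. As a sanity check, both right-hand sides manifestly lie in $\Cal{U}(\fk)$, as the statement requires, and specializing $s=0$ recovers the compact model of the second-order operator treated in \cite{KuOr19}.
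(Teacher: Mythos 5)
Your proof is correct: the identifications $X^\flat=B_3$, $Y^\flat=B_2$, the normal-ordering $D^\flat_s=2B_3B_2-(s-1)B_1$, and the evaluation of both right-hand sides in the $B_i$ basis all check out. This is essentially the same computation as the paper's proof, which instead writes $X^\flat=\tfrac{\sqrt{-1}}{2}(Z_++Z_-)=-\tfrac{\sqrt{-1}}{2}(W_++W_-)$ and $Y^\flat=\tfrac{1}{2}(Z_+-Z_-)=\tfrac{\sqrt{-1}}{2}W_0$ and substitutes directly into $D^\flat_s$; the two arguments differ only in the direction of the substitution.
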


\begin{proof}

A direct computation shows that $X^\flat$ and $Y^\flat$ are expressed 
in terms of $\{ Z_+,\, Z_-,\, Z_0\}$ and $\{ W_+,\, W_-,\, W_0\}$ as
\begin{alignat*}{2}
X^\flat
&=\frac{\sqrt{-1}}{2}(Z_++Z_-)
&&=-\frac{\sqrt{-1}}{2}(W_++W_-),\\
Y^\flat
&=\frac{1}{2}(Z_+-Z_-)
&&=\frac{\sqrt{-1}}{2}W_0.
\end{alignat*}
By substituting these expressions into $D^\flat_s$,
one obtains the lemma. 
\end{proof}

\begin{lem}\label{lem:DbO}
The elements 
$\Omega^{\textnormal{I}}(D^\flat_s)$ and $\Omega^{\textnormal{II}}(D^\flat_s)$ 
in $\Cal{U}(\f{sl}(2,\C))$ are given as
\begin{align}
\Omega^{\textnormal{I}}(D^\flat_s)
&=\frac{\sqrt{-1}}{2}\left( (E_++E_-)(E_+-E_-)-(s-1)E_0\right),
\label{eqn:oKODb}\\[1pt]
\Omega^{\textnormal{II}}(D^\flat_s)
&=\frac{1}{2}\left((E_++E_-)E_0-(s-1)(E_+-E_-)\right). \label{eqn:oTDb}
\end{align}
\end{lem}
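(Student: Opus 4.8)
The plan is to exploit that $\Omega^{\textnormal{I}}$ and $\Omega^{\textnormal{II}}$ are \emph{Lie algebra isomorphisms} $\fk \stackrel{\sim}{\to} \f{sl}(2,\C)$, as fixed in \eqref{eqn:omegaKO} and \eqref{eqn:omegaT}. Each therefore extends uniquely to an algebra isomorphism $\Cal{U}(\fk) \stackrel{\sim}{\to} \Cal{U}(\f{sl}(2,\C))$, which we still denote by $\Omega^{\textnormal{I}}$ and $\Omega^{\textnormal{II}}$. Being an algebra homomorphism, each extension is multiplicative and sends brackets to brackets, hence commutes with any fixed ordered polynomial expression in the generators. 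So the whole lemma is a transport-of-structure statement, and the real content has already been done in Lemma \ref{lem:DbZW}.

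First I would apply $\Omega^{\textnormal{I}}$ to the expression \eqref{eqn:DbZ} for $D^\flat_s$. Since \eqref{eqn:DbZ} is written purely in terms of the $\f{sl}(2)$-triple $\{Z_+, Z_-, Z_0\}$ and $\Omega^{\textnormal{I}}(Z_j) = E_j$ for $j = +, -, 0$, multiplicativity of the enveloping-algebra extension gives \eqref{eqn:oKODb} verbatim. Second, I would apply $\Omega^{\textnormal{II}}$ to the expression \eqref{eqn:DbW}, which is written purely in terms of $\{W_+, W_-, W_0\}$; since $\Omega^{\textnormal{II}}(W_j) = E_j$, the identical argument yields \eqref{eqn:oTDb}.

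There is essentially no obstacle: once the two closed forms of $D^\flat_s$ in the $Z$- and $W$-triples (Lemma \ref{lem:DbZW}) are in hand, Lemma \ref{lem:DbO} is formal. The only point worth a word is ordering: \eqref{eqn:DbZ} and \eqref{eqn:DbW} are specific ordered products in $\Cal{U}(\fk)$, and an algebra homomorphism carries such an ordered product to the corresponding ordered product of images, so no normal-ordering or commutator correction is needed and the coefficients (including the $\sqrt{-1}$'s and the factor $(s-1)$) are unchanged.
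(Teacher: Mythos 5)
Your proposal is correct and coincides with the paper's own (one-line) proof, which likewise deduces the lemma by applying the isomorphisms \eqref{eqn:omegaKO} and \eqref{eqn:omegaT} to the two expressions of Lemma \ref{lem:DbZW}. Your added remark that the extensions to $\Cal{U}(\fk)$ preserve ordered products, so no commutator corrections arise, is a fair point the paper leaves implicit.
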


\begin{proof}
This follows from \eqref{eqn:omegaKO}, \eqref{eqn:omegaT}, and Lemma \ref{lem:DbZW}.
\end{proof}

We set 
\begin{equation}\label{eqn:k0}
k_0:=\frac{1}{\sqrt{2}}
\begin{pmatrix}
\sqrt{-1} & 1\\
-1 & -\sqrt{-1}
\end{pmatrix} \in SU(2).
\end{equation}

\begin{prop}\label{prop:KOT}
We have
\begin{equation}\label{eqn:KOT}
\Omega^{\textnormal{I}}(D^\flat_s) = \Ad(k_0)\Omega^{\textnormal{II}}(D^\flat_s).
\end{equation}
Consequently, for $p(t) \in \Pol_n[t]$, the following are equivalent:
\begin{enumerate}[{\normalfont (i)}]
\item $d\pi_n^{\textnormal{II}}(D^\flat_s)p(t)=0$;
\item $d\pi_n^{\textnormal{I}}(D^\flat_s)\pi_n(k_0)p(t)=0$.
\end{enumerate}
\end{prop}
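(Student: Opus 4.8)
The plan is to establish the conjugation identity \eqref{eqn:KOT} at the Lie-algebra level, transport it to $\Cal{U}(\f{sl}(2,\C))$, and then read off the equivalence of the two differential equations via the usual intertwining property of $d\pi_n$. First I would note that $\Omega^{\textnormal{I}}$ and $\Omega^{\textnormal{II}}$ are both Lie algebra isomorphisms $\fk \stackrel{\sim}{\to} \f{sl}(2,\C)$, so $\phi := \Omega^{\textnormal{I}} \circ (\Omega^{\textnormal{II}})^{-1}$ is an automorphism of $\f{sl}(2,\C)$; it extends uniquely to an algebra automorphism of $\Cal{U}(\f{sl}(2,\C))$, again denoted $\phi$. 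Since $D^\flat_s \in \Cal{U}(\fk)$ and both $\Omega^{J}$ extend to algebra isomorphisms of the enveloping algebras, we have $\Omega^{\textnormal{I}}(D^\flat_s) = \phi\big(\Omega^{\textnormal{II}}(D^\flat_s)\big)$ tautologically. Thus \eqref{eqn:KOT} reduces to the claim $\phi = \Ad(k_0)$ on $\f{sl}(2,\C)$, since an automorphism of a Lie algebra is determined on the enveloping algebra by its values on the Lie algebra.

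To prove $\phi = \Ad(k_0)$ I would just check it on the basis $\{E_+, E_-, E_0\}$. Using $(\Omega^{\textnormal{II}})^{-1}(E_j) = W_j$, I would re-express $W_+, W_-, W_0$ in terms of $Z_+, Z_-, Z_0$ from the definitions \eqref{eqn:Z} and \eqref{eqn:W} (equivalently, solve for $B_1, B_2, B_3$ in the $Z$-basis and substitute), and then apply $\Omega^{\textnormal{I}}$ via \eqref{eqn:omegaKO} to get $\phi(E_j) = \Omega^{\textnormal{I}}(W_j)$ as an explicit three-term combination of $E_+, E_-, E_0$. On the other side I would compute $\Ad(k_0)E_j = k_0 E_j k_0^{-1}$ directly from \eqref{eqn:k0} (here $\det k_0 = 1$, so $k_0^{-1}$ is the adjugate) and compare. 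Agreement for $j = +, -, 0$ gives $\phi = \Ad(k_0)$; feeding this into $\phi\big(\Omega^{\textnormal{II}}(D^\flat_s)\big)$ with the closed forms \eqref{eqn:oKODb} and \eqref{eqn:oTDb} of Lemma \ref{lem:DbO} confirms \eqref{eqn:KOT}. If one prefers to bypass the abstract step, it suffices to substitute $\Ad(k_0)E_\pm$ and $\Ad(k_0)E_0$ into the right-hand side of \eqref{eqn:oTDb} and verify directly that the result equals \eqref{eqn:oKODb}.

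For the equivalence of (i) and (ii), I would invoke the standard fact that $d\pi_n(\Ad(g)u) = \pi_n(g)\,d\pi_n(u)\,\pi_n(g)^{-1}$ for $g \in SU(2)$ and $u \in \Cal{U}(\f{sl}(2,\C))$; both sides are algebra homomorphisms $\Cal{U}(\f{sl}(2,\C)) \to \operatorname{End}(\Pol_n[t])$ agreeing on $\f{sl}(2,\C)$, where it is the infinitesimal form of conjugation by $g$. Taking $g = k_0$ and $u = \Omega^{\textnormal{II}}(D^\flat_s)$, identity \eqref{eqn:KOT} yields $d\pi_n^{\textnormal{I}}(D^\flat_s)\,\pi_n(k_0) = \pi_n(k_0)\,d\pi_n^{\textnormal{II}}(D^\flat_s)$ as operators on $\Pol_n[t]$. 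Hence, for $p(t) \in \Pol_n[t]$, we have $d\pi_n^{\textnormal{I}}(D^\flat_s)\pi_n(k_0)p(t) = \pi_n(k_0)\,d\pi_n^{\textnormal{II}}(D^\flat_s)p(t)$, which vanishes if and only if $d\pi_n^{\textnormal{II}}(D^\flat_s)p(t)=0$, because $\pi_n(k_0)$ is invertible; this is precisely (i) $\Leftrightarrow$ (ii).

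The only real work is the finite linear-algebra computation of $\Ad(k_0)$ on $E_+, E_-, E_0$ and the matching of the resulting quadratic expressions in $E_\pm, E_0$. There is no conceptual obstacle here, since all automorphisms of $\f{sl}(2,\C)$ are inner and everything else is functoriality; the one point to watch is the bookkeeping of the factors of $\sqrt{-1}$ and the signs coming from \eqref{eqn:Z}, \eqref{eqn:W}, and \eqref{eqn:k0}.
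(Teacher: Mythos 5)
Your proposal is correct and follows essentially the same route as the paper: the paper likewise verifies the conjugation identity on the generators (checking $E_++E_-=-\Ad(k_0)(E_++E_-)$ and $E_+-E_-=\sqrt{-1}\,\Ad(k_0)E_0$, then substituting into the closed forms of Lemma \ref{lem:DbO}), and deduces the equivalence of (i) and (ii) from $d\pi_n^J(D^\flat_s)=d\pi_n(\Omega^J(D^\flat_s))$ together with the invertibility of $\pi_n(k_0)$. The only cosmetic difference is that you phrase the key step as the slightly stronger statement $\Omega^{\textnormal{I}}=\Ad(k_0)\circ\Omega^{\textnormal{II}}$ on all of $\fk$, which is true and reduces to the same finite check.
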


\begin{proof}
One can easily check that 
\begin{align*}
E_++E_-&=-\Ad(k_0)(E_++E_-),\\
E_+-E_-&=\sqrt{-1}\Ad(k_0)E_0.
\end{align*}
Now the identity \eqref{eqn:KOT} follows from Lemma \ref{lem:DbO}.
Since $d\pi_n^J(D^\flat_s) = d\pi_n(\Omega^J(D^\flat_s))$
for $J \in \{\textnormal{I}, \textnormal{II}\}$,
the equivalence between (i) and (ii) readily follows from \eqref{eqn:KOT}.
\end{proof}

For $J\in\{\textnormal{I}, \textnormal{II}\}$, we write
\begin{equation}\label{eqn:TKO}
\Sol_J(s;n)=\{p(t) \in \Pol_n[t] : d\pi^J_n(D^\flat_s)p(t)=0\}.
\end{equation}
In place of $\Sol(s;n)$ with $\Sol_J(s;n)$,
the $K$-type decomposition \eqref{eqn:PWSL} becomes
\begin{equation}\label{eqn:PWSL2}
\CSol(s;\sigma)_K
\simeq 
\bigoplus_{n\geq 0}
\Pol_n[t] \otimes 
\mathrm{Hom}_{M}\left(
\Sol_J(s;n), \sigma\right).
\end{equation}
\noindent

It follows from Proposition \ref{prop:KOT} that 
$\pi_n(k_0)$ yields a linear isomorphism
\begin{equation}\label{eqn:pik0}
\pi_n(k_0)\colon \Sol_{\textnormal{II}}(s;n) 
\stackrel{\sim}{\To} \Sol_{\textnormal{I}}(s;n).
\end{equation}
For $m^{\textnormal{I}}_j \in M$ for $j=0,1,2,3$ defined in \eqref{eqn:mKO}, 
we define 
$m^{\textnormal{II}}_j \in M$ as
\begin{equation}\label{eqn:mTKO}
m^{\textnormal{II}}_j:=k_0^{-1}m^{\textnormal{I}}_jk_0
\end{equation}
so that the following diagram commutes.
\begin{equation*}
\xymatrix{
\Sol_{\textnormal{II}}(s;n) 
\ar@{->}^{\pi_n(k_0)}_{\sim}[r]
\ar@{.>}_{\pi_n(m^{\textnormal{II}}_j)}[d]
&
\Sol_{\textnormal{I}}(s;n)
\ar@{->}^{\pi_n(m^{\textnormal{I}}_j)}[d]
\\
\Sol_{\textnormal{II}}(s;n) 
\ar@{->}_{\pi_n(k_0)}^{\sim}[r]
& 
\Sol_{\textnormal{I}}(s;n)
}
\end{equation*}
A direct computation shows that
$m^{\textnormal{II}}_j$ are given as follows.
\begin{equation}\label{eqn:mT}
m^{\textnormal{II}}_0=
\begin{pmatrix}
1 & 0\\
0 & 1\\
\end{pmatrix},\;
m^{\textnormal{II}}_1=
\begin{pmatrix}
0 & 1 \\
-1 & 0
\end{pmatrix},\;
m^{\textnormal{II}}_2=
\begin{pmatrix}
\sqrt{-1} & 0\\
0 & -\sqrt{-1}\\
\end{pmatrix},\;
m^{\textnormal{II}}_3=
-
\begin{pmatrix}
0 & \sqrt{-1} \\
\sqrt{-1} & 0
\end{pmatrix}.
\end{equation}
By \eqref{eqn:mKO} and \eqref{eqn:mT}, we have
\begin{equation}\label{eqn:mTKO2}
m_0^{\textnormal{II}}=m_0^{\textnormal{I}},\quad
m_1^{\textnormal{II}}=m_2^{\textnormal{I}},\quad
m_2^{\textnormal{II}}=m_1^{\textnormal{I}},\quad
m_3^{\textnormal{II}}=-m_3^{\textnormal{I}}.
\end{equation}
It then follows from \eqref{eqn:pi-wM} that
$m_j^{\textnormal{II}}$ act on $\Pol_n[t]$ via $\dpin$ as
\begin{equation}\label{eqn:pi-wMT}
m_1^{\textnormal{II}}\colon p(t) \mapsto t^np\left(-\frac{1}{t}\right); \;\;
m_2^{\textnormal{II}}\colon p(t) \mapsto  (\sqrt{-1})^n p(-t); \;\;
m_3^{\textnormal{II}}\colon p(t) \mapsto (\sqrt{-1}t)^n p\left(\frac{1}{t}\right).
\end{equation}
Table \ref{table:char2} 
is the character table for $(\eps, \eps')$ with $m_j^{\textnormal{II}}$.

\begin{table}[t]
\caption{Character table for $(\eps, \eps')$ for $m_j^{\textnormal{II}}$}
\begin{center}
\renewcommand{\arraystretch}{1.2} 
{
\begin{tabular}{|c|c|c|c|c|}
\hline
& $\pm m_0^{\textnormal{II}}$ 
& $\pm m_1^{\textnormal{II}}$ 
& $\pm m_2^{\textnormal{II}}$ 
& $\pm m_3^{\textnormal{II}}$\\
\hline
$\pp$ & $1$ & $1$ & $1$ & $1$ \\
\hline
$\pmi$ & $1$  & $-1$  & $-1$ & $1$\\
\hline
$\mip$ & $1$ & $-1$ & $1$ & $-1$\\
\hline
$\mm$ & $1$ & $1$ & $-1$ & $-1$\\
\hline
\end{tabular}
}
\end{center}
\label{table:char2}
\end{table}%

\subsection{Differential equation $\dpin^{J}(D^\flat_s)f(t)=0$ for $J \in \{\textnormal{I}, \textnormal{II}\}$}\label{subsec:HGEandHeun}

As indicated in the recipe in Section \ref{subsec:recipe},
to determine the $K$-type decomposition of $\CSol(s;n)_K$,
it is crucial to determine the space $\Sol_J(s;n)$ of $K$-type solutions,
for which one needs to find polynomial solutions $p(t) \in \Pol_n[t]$
to differential equations $\dpin^{J}(D^\flat_s)f(t)=0$
for $J \in \{\textnormal{I}, \textnormal{II}\}$. 
For this purpose we next investigate
these differential equations.

\subsubsection{Differential equation $\dpin^{\textnormal{I}}(D^\flat_s)f(t)=0$}

We start with the differential equation $\dpin^{\textnormal{I}}(D^\flat_s)f(t)=0$.
The explicit formula of $\dpin^{\textnormal{I}}(D^\flat_s)$ is given as follows.

\begin{lem}\label{lem:dpinKO}
We have
\begin{equation}\label{eqn:dpinKO}
-2\sqrt{-1}\, d\pi_n^{\textnormal{I}}(D^\flat_s)
=
(1-t^4)\frac{d^2}{dt^2}+2\left((n-1)t^2+s\right)t\frac{d}{dt} -n\left((n-1)t^2+s\right).
\end{equation}
\end{lem}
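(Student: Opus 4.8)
The plan is to feed the closed formula for $\Omega^{\textnormal{I}}(D^\flat_s)$ from Lemma \ref{lem:DbO} into the explicit action \eqref{eqn:Epm} of the $\f{sl}(2)$-triple $\{E_+,E_-,E_0\}$ on $\Pol_n[t]$. By \eqref{eqn:oKODb} we have
\begin{equation*}
\Omega^{\textnormal{I}}(D^\flat_s)=\frac{\sqrt{-1}}{2}\bigl((E_++E_-)(E_+-E_-)-(s-1)E_0\bigr),
\end{equation*}
and since $-2\sqrt{-1}\cdot\tfrac{\sqrt{-1}}{2}=1$, applying the algebra homomorphism $d\pi_n$ gives
\begin{equation*}
-2\sqrt{-1}\,d\pi_n^{\textnormal{I}}(D^\flat_s)=d\pi_n(E_++E_-)\,d\pi_n(E_+-E_-)-(s-1)\,d\pi_n(E_0).
\end{equation*}
First I would record, from \eqref{eqn:Epm}, the two first-order operators
\begin{align*}
d\pi_n(E_++E_-)&=(t^2-1)\frac{d}{dt}-nt,\\
d\pi_n(E_+-E_-)&=-(t^2+1)\frac{d}{dt}+nt,
\end{align*}
together with $d\pi_n(E_0)=-2t\frac{d}{dt}+n$.

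The main step is then to compose $d\pi_n(E_++E_-)$ with $d\pi_n(E_+-E_-)$ as differential operators, being careful about non-commutativity: the outer operator also differentiates the coefficients of the inner one, which is where the cross contributions $-2t(t^2-1)\frac{d}{dt}$ and $n(t^2-1)$ appear. Collecting terms by order of derivative, the second-order part becomes $-(t^2-1)(t^2+1)=1-t^4$; the first-order coefficient is $t\bigl((n-2)(t^2-1)+n(t^2+1)\bigr)=2t((n-1)t^2+1)$; and the zeroth-order term is $n(t^2-1)-n^2t^2=-n((n-1)t^2+1)$, so that
\begin{equation*}
d\pi_n(E_++E_-)\,d\pi_n(E_+-E_-)=(1-t^4)\frac{d^2}{dt^2}+2t\bigl((n-1)t^2+1\bigr)\frac{d}{dt}-n\bigl((n-1)t^2+1\bigr).
\end{equation*}

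Finally I would subtract $(s-1)\,d\pi_n(E_0)=-2(s-1)t\frac{d}{dt}+(s-1)n$; this replaces the constant $1$ inside $(n-1)t^2+1$ by $s$ in both the first-order and the zeroth-order coefficients, yielding exactly $(1-t^4)\frac{d^2}{dt^2}+2((n-1)t^2+s)t\frac{d}{dt}-n((n-1)t^2+s)$, which is \eqref{eqn:dpinKO}. The only delicate point is the bookkeeping in the operator composition — in particular not forgetting to differentiate the coefficients of the inner operator — but this is a short finite computation; alternatively one could verify \eqref{eqn:dpinKO} by evaluating both sides on the monomials $1,t,\dots,t^n$ that span $\Pol_n[t]$, which avoids even this subtlety at the cost of a case check in $n$.
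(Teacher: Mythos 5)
Your proposal is correct and follows exactly the paper's own route: the paper's proof likewise starts from $-2\sqrt{-1}\,\Omega^{\textnormal{I}}(D^\flat_s)=(E_++E_-)(E_+-E_-)-(s-1)E_0$ (Lemma \ref{lem:DbO}) and then invokes "a direct computation with \eqref{eqn:Epm}," which is precisely the operator composition you carry out. Your intermediate expressions (the composed second-, first-, and zeroth-order coefficients) all check out, so you have simply made explicit the computation the paper leaves to the reader.
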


\begin{proof}
Recall from \eqref{eqn:oKODb} that we have 
\begin{equation*}
-2\sqrt{-1}\, \Omega^{\textnormal{I}}(D^\flat_s)
=(E_++E_-)(E_+-E_-)-(s-1)E_0.
\end{equation*}
Now the proposed identity follows from a direct computation with \eqref{eqn:Epm}.
\end{proof}

In order to study the differential equation $\dpin^{\textnormal{I}}(D^\flat_s)f(t)=0$,
we set 
\begin{equation}\label{eqn:HeunOp}
\D_H(a,q;\ga, \gb, \gamma,\delta;z)
:=\frac{d^2}{dz^2}+\left(\frac{\gamma}{z} + \frac{\delta}{z-1}
+\frac{\eps}{z-a}\right)\frac{d}{dz}
+\frac{\ga\gb z-q}{z(z-1)(z-a)},
\end{equation}
where $a, q, \ga, \gb, \gamma, \delta, \eps$ are complex parameters
with $a \neq 0, 1$ and $\gamma + \delta+\eps = \ga + \gb +1$.
Then the differential equation
\begin{equation}\label{eqn:Heun0}
\D_H(a,q;\ga,\gb,\gamma,\delta;z)f(z)=0
\end{equation}
is called \emph{Heun's differential equation}. 
For a brief account of the equation \eqref{eqn:Heun0}, 
see Section \ref{subsec:Heun}.
The differential equation 
$\dpin^{\textnormal{I}}(D^\flat_s)f(t)=0$ can be identified 
with Heun's differential equation as in Lemma \ref{lem:Heun} below.
(We are grateful to Hiroyuki Ochiai for pointing it out.)

\begin{lem}\label{lem:Heun}
We have
\begin{equation*}
-\frac{d\pi_n^{\textnormal{I}}(D^\flat_s)}{4}
=
\D_H(-1, -\frac{ns}{4}; -\frac{n}{2}, -\frac{n-1}{2}, \frac{1}{2}, \frac{1-n-s}{2};t^2).
\end{equation*}
\end{lem}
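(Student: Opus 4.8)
The plan is to derive this by a direct change of variables from the explicit second-order operator recorded in Lemma~\ref{lem:dpinKO}, followed by a parameter count against the Heun normal form. First I would set $L := -2\sqrt{-1}\,d\pi_n^{\textnormal{I}}(D^\flat_s)$, so that by Lemma~\ref{lem:dpinKO} one has $L = (1-t^4)\frac{d^2}{dt^2}+2t\bigl((n-1)t^2+s\bigr)\frac{d}{dt}-n\bigl((n-1)t^2+s\bigr)$. Since $L$ is built out of even powers of $t$ together with $t\frac{d}{dt}$ and $\frac{d^2}{dt^2}$, the substitution $z=t^2$ is the natural one: using $\frac{d}{dt}=2t\frac{d}{dz}$ and $\frac{d^2}{dt^2}=4z\frac{d^2}{dz^2}+2\frac{d}{dz}$, one rewrites $L$ as the operator in $z$
\[
L=4z(1-z^2)\frac{d^2}{dz^2}+\bigl((4n-6)z^2+4sz+2\bigr)\frac{d}{dz}-n\bigl((n-1)z+s\bigr).
\]
Its leading coefficient factors as $4z(1-z^2)=-4\,z(z-1)(z+1)$, whose zeros $z=0,1,-1$ are precisely the three finite singular points of a Heun equation with $a=-1$; dividing through by this leading coefficient (and recombining the resulting scalar with the factor $-2\sqrt{-1}$ in the definition of $L$) brings the equation into monic Heun form.

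It then remains to read off the six parameters. From the partial-fraction expansion of the coefficient of $\frac{d}{dz}$, the local exponents at $z=0,1,-1$ give $\gamma=\tfrac{1}{2}$, $\delta=\tfrac{1-n-s}{2}$ and $\eps=\tfrac{1+s-n}{2}$. The numerator of the zeroth-order term is $\tfrac{n}{4}\bigl((n-1)z+s\bigr)$, which matches $\ga\gb\,z-q$ with $\ga\gb=\tfrac{n(n-1)}{4}$ and accessory parameter $q=-\tfrac{ns}{4}$; combined with the Fuchs relation $\ga+\gb=\gamma+\delta+\eps-1=\tfrac{1}{2}-n$ this forces $\{\ga,\gb\}=\{-\tfrac{n}{2},-\tfrac{n-1}{2}\}$. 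As a consistency check I would verify $\gamma+\delta+\eps=\ga+\gb+1$, both sides being $\tfrac{3}{2}-n$. Assembling these identifications gives $-\tfrac{1}{4}\,d\pi_n^{\textnormal{I}}(D^\flat_s)=\D_H(-1,-\tfrac{ns}{4};-\tfrac{n}{2},-\tfrac{n-1}{2},\tfrac{1}{2},\tfrac{1-n-s}{2};t^2)$, as claimed.

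The argument is essentially mechanical once Lemma~\ref{lem:dpinKO} is available. The points demanding care are the chain rule for the second derivative under $z=t^2$, which contributes the extra first-order term $2\frac{d}{dz}$; the bookkeeping of the overall normalising scalar so that the constant $-\tfrac{1}{4}$ comes out correctly; and, above all, the matching of the accessory parameter $q$, which—unlike the local exponents—is not visible from the Riemann scheme and is the step most prone to arithmetic error.
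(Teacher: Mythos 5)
Your proposal is correct and follows essentially the same route as the paper, whose entire proof is ``a direct computation with a change of variables $z=t^2$'' applied to Lemma \ref{lem:dpinKO}; your partial-fraction identification of $\gamma$, $\delta$, $\eps$ and of $q$, $\ga\gb$ all checks out, including the Fuchs relation. The one loose point --- shared with the paper's own statement --- is the overall scalar: passing to the monic Heun form requires dividing by the non-constant leading coefficient $4z(1-z^2)$, so the prefactor $-\tfrac{1}{4}$ should be understood as an identification of the two differential equations (equality of solution spaces) rather than a literal operator identity, since the actual ratio of the two operators is a nonvanishing function of $t$ rather than a constant.
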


\begin{proof}
This follows from a direct computation with a change of variables $z=t^2$ 
on \eqref{eqn:dpinKO}.
\end{proof}

By Lemma \ref{lem:Heun},
to determine the space 
$\Sol_{\textnormal{I}}(s;n)$ of $K$-type solutions to
$\dpin^{\textnormal{I}}(D^\flat_s)f(t)=0$, 
it suffices to find polynomial solutions to the Heun equation
\begin{equation}\label{eqn:Heun1}
\D_H(-1, -\frac{ns}{4}; -\frac{n}{2}, -\frac{n-1}{2}, \frac{1}{2}, \frac{1-n-s}{2};t^2)
f(t)=0.
\end{equation}

Let $Hl(a,q;\ga, \gb,\gamma,\delta;z)$ denote the power series solution 
$Hl(a,q;\ga, \gb,\gamma,\delta;z)=\sum_{r=0}^\infty c_rz^r$ with $c_0=1$
to \eqref{eqn:Heun0} at $z=0$ (\cite{Ron95}).
We set
\begin{align}
&u_{[s;n]}(t):=
Hl(-1, -\frac{ns}{4}; -\frac{n}{2}, -\frac{n-1}{2}, \frac{1}{2}, \frac{1-n-s}{2};t^2),
\label{eqn:u}
\\[3pt]
&v_{[s;n]}(t):=
tHl(-1, -\frac{(n-2)s}{4}; -\frac{n-1}{2}, -\frac{n-2}{2}, \frac{3}{2}, \frac{1-n-s}{2};t^2).
\label{eqn:v}
\end{align}
\noindent
It follows from \eqref{eqn:HeunSol} in Section \ref{sec:appendix} that
$u_{[s;n]}(t)$ and $v_{[s;n]}(t)$ are linearly independent solutions at $t=0$
to the Heun equation \eqref{eqn:Heun1}. 
Therefore we have
\begin{equation}\label{eqn:KOuv}
\Sol_{\textnormal{I}}(s;n) \subset \C u_{[s;n]}(t) \oplus \C v_{[s;n]}(t).
\end{equation}
We shall classify the parameters $(s,n) \in \C \times \Z_{\geq 0}$ such that
$u_{[s;n]}(t), v_{[s;n]}(t) \in \Sol_{\textnormal{I}}(s;n)$
in Section \ref{sec:Heun} (see Propositions \ref{prop:upoly} and \ref{prop:vpoly}).

\begin{rem}\label{rem:S0F}
Let $F(a,b,c;z)\equiv {}_2F_1(a,b,c;z)$ denote the Gauss hypergeometric function.
In \cite[(3.8)]{Maier05}, Maier found the following Heun-to-Gauss reduction formula:
\begin{equation}\label{eqn:Maier}
Hl(-1,0;\ga,\gb, \gamma, \frac{\ga+\gb-\gamma+1}{2};t)
=F(\frac{\ga}{2},\frac{\gb}{2},\frac{\gamma+1}{2};t^2).
\end{equation}
We remark that the case of $s=0$ for $\dpin^{\textnormal{I}}(D^\flat_0)f(t)=0$ 
gives special cases of this identity.
Indeed, it follows from \cite[Lem.\ 7.3]{KuOr19} that
a change of variables $z=t^4$ yields the identity
\begin{equation*}
\frac{\sqrt{-1}}{32t^2}d\pi_n(D^\flat_0) 
=
\D_F(-\frac{n}{4}, -\frac{n-1}{4}, \frac{3}{4};t^4),
\end{equation*}
where $\D_F(a,b,c;z)$ denotes the differential operator
\begin{equation}\label{eqn:HGEop}
\D_F(a,b,c;z)=z(1-z)\frac{d^2}{dz^2}+(c-(a+b+1)z)\frac{d}{dz}-ab
\end{equation}
such that 
\begin{equation*}
\D_F(a,b,c;z)f(z)=0
\end{equation*}
is the hypergeometric differential equation.
Thus the equation $\dpin^{\textnormal{I}}(D^\flat_0)f(t)=0$ is also equivalent to
the hypergeometric equation
\begin{equation}\label{eqn:Gauss}
\D_F(-\frac{n}{4}, -\frac{n-1}{4}, \frac{3}{4};t^4)f(t)=0. 
\end{equation}
Then \eqref{eqn:Heun1} and \eqref{eqn:Gauss} yield
the following Heun-to-Gauss reductions:
\begin{align}
Hl(-1, 0; -\frac{n}{2}, -\frac{n-1}{2}, \frac{1}{2}, -\frac{n-1}{2};t)
&=F(-\frac{n}{4}, -\frac{n-1}{4}, \frac{3}{4};t^2);\label{eqn:HG1}\\[5pt]
Hl(-1, 0; -\frac{n-1}{2}, -\frac{n-2}{2}, \frac{3}{2}, -\frac{n-1}{2};t) \label{eqn:HG2}
&=F(-\frac{n-1}{4}, -\frac{n-2}{4}, \frac{5}{4};t^2).
\end{align}
\end{rem}

\subsubsection{Differential equation $\dpin^{\textnormal{II}}(D^\flat_s)f(t)=0$}
We next consider 
$\dpin^{\textnormal{II}}(D^\flat_s)f(t)=0$.
The equation $\dpin^{\textnormal{II}}(D^\flat_s)f(t)=0$ can be identified with
the hypergeometric equation $\D_F(a,b,c;z)f(z)=0$ as follows.

\begin{lem}\label{lem:HGE}
We have
\begin{equation*}
\frac{d\pi_n^{\textnormal{II}}(D^\flat_s)}{16t}
=
D_F(-\frac{n}{2},-\frac{n+s-1}{4},\frac{3-n+s}{4};t^2).
\end{equation*}
\end{lem}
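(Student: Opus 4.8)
The plan is a direct computation, entirely parallel to the proof of Lemma~\ref{lem:Heun}. Starting from the expression \eqref{eqn:oTDb} for $\Omega^{\textnormal{II}}(D^\flat_s)$ in terms of the $\f{sl}(2)$-triple $\{E_+,E_-,E_0\}$, I would apply $d\pi_n$ using the explicit action \eqref{eqn:Epm}. Since $d\pi_n$ is an algebra homomorphism on $\Cal{U}(\f{sl}(2,\C))$, the factor $(E_++E_-)E_0$ must be realized as the composition $d\pi_n(E_++E_-)\circ d\pi_n(E_0)$ in that order; this is the only point where non-commutativity enters. First one records
\[
d\pi_n(E_++E_-)=(t^2-1)\frac{d}{dt}-nt,\qquad
d\pi_n(E_+-E_-)=-(1+t^2)\frac{d}{dt}+nt,\qquad
d\pi_n(E_0)=-2t\frac{d}{dt}+n,
\]
and then composes and collects terms to obtain
\[
d\pi_n^{\textnormal{II}}(D^\flat_s)
=t(1-t^2)\frac{d^2}{dt^2}
+\frac{1}{2}\bigl((3n+s-3)t^2+(1-n+s)\bigr)\frac{d}{dt}
-\frac{n(n+s-1)}{2}\,t .
\]
A convenient sanity check at this stage: this operator is odd with respect to $t\mapsto -t$, i.e.\ it sends polynomials in $t^2$ to $t$ times polynomials in $t^2$; this is exactly the property needed for $d\pi_n^{\textnormal{II}}(D^\flat_s)/t$ to descend to a well-defined operator in the variable $z=t^2$.

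Next I would perform the change of variables $z=t^2$. Writing $f(t)=g(z)$ one has $\frac{d}{dt}\mapsto 2t\frac{d}{dz}$ and $\frac{d^2}{dt^2}\mapsto 4z\frac{d^2}{dz^2}+2\frac{d}{dz}$; substituting these and dividing out the overall factor of $t$ that then appears, one is left with an operator of the form $\alpha\, z(1-z)\frac{d^2}{dz^2}+(\text{linear in }z)\frac{d}{dz}+(\text{constant})$. Comparing these three coefficients with the form $z(1-z)\frac{d^2}{dz^2}+(c-(a+b+1)z)\frac{d}{dz}-ab$ of $\D_F(a,b,c;z)$ in \eqref{eqn:HGEop}, the coefficient of $\frac{d^2}{dz^2}$ fixes the overall scalar, and the remaining two coefficients force
\[
a+b=\frac{1-3n-s}{4},\qquad ab=\frac{n(n+s-1)}{8},\qquad c=\frac{3-n+s}{4},
\]
whence $\{a,b\}=\{-\tfrac{n}{2},-\tfrac{n+s-1}{4}\}$, which is precisely the identification in the statement.

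I do not expect any genuine obstacle: the computation is mechanical. The two places that require care are (i) the operator ordering when expanding $d\pi_n\bigl((E_++E_-)E_0\bigr)$, and (ii) keeping track of the Jacobian factors in the substitution $z=t^2$; the parity check above is a useful guard against sign slips in both. As an independent cross-check one may invoke Proposition~\ref{prop:KOT}: since $\Omega^{\textnormal{I}}(D^\flat_s)=\Ad(k_0)\Omega^{\textnormal{II}}(D^\flat_s)$, the operator $d\pi_n^{\textnormal{II}}(D^\flat_s)$ is conjugate by $\pi_n(k_0)$ to $d\pi_n^{\textnormal{I}}(D^\flat_s)$, which by Lemma~\ref{lem:Heun} is a constant multiple of the Heun operator; the Cayley transform induced by $k_0$ in \eqref{eqn:k0} moves the singular points $\{0,1,-1,\infty\}$ of that Heun equation, and tracking the exponents through this Möbius change of variable yields the same hypergeometric parameters. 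I would present the direct computation as the proof and relegate this conjugacy remark to a side comment.
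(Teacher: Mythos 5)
Your proposal is correct and follows exactly the paper's own proof: the paper likewise computes from \eqref{eqn:oTDb} and \eqref{eqn:Epm} that $2\,d\pi_n^{\textnormal{II}}(D^\flat_s)=2(1-t^2)t\tfrac{d^2}{dt^2}+((s+3n-3)t^2+(s-n+1))\tfrac{d}{dt}-n(s+n-1)t$ — identical to your intermediate formula — and then concludes by the substitution $z=t^2$. The one point worth reconciling is the overall normalization: pushing your formula through the change of variables yields $d\pi_n^{\textnormal{II}}(D^\flat_s)/(4t)=\D_F(-\tfrac{n}{2},-\tfrac{n+s-1}{4},\tfrac{3-n+s}{4};t^2)$ rather than the stated $/(16t)$, a scalar discrepancy that is immaterial for the application (only the kernel of the operator is used) but that your "coefficient of $\tfrac{d^2}{dz^2}$ fixes the scalar" step would surface.
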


\begin{proof}
First, it follows from \eqref{eqn:oTDb} and \eqref{eqn:Epm} that
$\dpin^{\textnormal{II}}(D^\flat_s)$ is given as
\begin{equation}\label{lem:dpiT}
2\,\dpin^{\textnormal{II}}(D^\flat_s)=2(1-t^2)t\frac{d^2}{dt^2}+((s+3n-3)t^2+(s-n+1))\frac{d}{dt}
-n(s+n-1)t.
\end{equation}
Then a direct computation with a change of variable $z=t^2$ concludes the lemma.
\end{proof}

Similar to the equation $\dpin^{\textnormal{I}}(D^\flat_s)f(t)=0$,
Lemma \ref{lem:HGE} shows that, 
to determine the space $\Sol_{\textnormal{II}}(s;n) $ 
of $K$-type solutions to $\dpin^{\textnormal{II}}(D^\flat_s)f(t)=0$, 
it suffices to find polynomial solutions to
the hypergeometric equation
\begin{equation}\label{eqn:Gauss2}
D_F(-\frac{n}{2},-\frac{n+s-1}{4},\frac{3-n+s}{4};t^2)f(t)=0.
\end{equation}
We set
\begin{align*}
&a_{[s;n]}(t):= F(-\frac{n}{2}, -\frac{n+s-1}{4}, \frac{3-n+s}{4};t^2),\\[3pt]
&b_{[s;n]}(t):=t^{\frac{1+n-s}{2}}F(-\frac{n+s-1}{4},-\frac{s-1}{2}, \frac{5+n-s}{4};t^2).
\end{align*}
Then $a_{[s;n]}(t)$ and $b_{[s;n]}(t)$ form fundamental set of solutions to
\eqref{eqn:Gauss2} for suitable parameters $(s,n) \in \C \times \Z_{\geq 0}$ 
for which
$a_{[s;n]}(t)$ and $b_{[s;n]}(t)$ are well-defined.
Therefore we have
\begin{equation}\label{eqn:Tab}
\Sol_{\textnormal{II}}(s;n) \subset \C a_{[s;n]}(t) \oplus \C b_{[s;n]}(t).
\end{equation}
The precise conditions of $(s,n)$ such that
$a_{[s;n]}(t), b_{[s;n]}(t) \in \Sol_{\textnormal{II}}(s;n)$
will be investigated in Section \ref{sec:SolT}.

\subsection{Recipe for the $K$-type decomposition of $\CSol(s;\sigma)_K$}
\label{subsec:recipe2}

In Section \ref{subsec:recipe},
we gave a general recipe to determine the $K$-type formula
 \eqref{eqn:split2} of $\CSol_{(u;\lambda)}(\sigma)_K$.
We modify the recipe in such a way that it will fit well
 for $\CSol(s;\sigma)_K$ with 
 \begin{equation*}
\CSol(s;\sigma)_K
\simeq 
\bigoplus_{n\geq 0}
\Pol_n[t] \otimes 
\mathrm{Hom}_{M}\left(
\Sol_J(s;n), \sigma\right).
\end{equation*}
\noindent

We first fix an identification 
\begin{equation*}
\Omega^J \colon \fk \stackrel{\sim}{\To} \f{sl}(2,\C)\;\;
\text{for $J \in \{\textnormal{I}, \textnormal{II}\}$}.
\end{equation*}

\noindent
Then the $K$-type formula of $\CSol(s;\sigma)_K$ may be determined 
in the following three steps.
\vskip 0.1in

\noindent
\textsf{Step A:}
Classify $(s,n) \in \C \times \Z_{\geq 0}$ such that 
$\Sol_J(s;n)\neq \{0\}$.
It follows from \eqref{eqn:KOuv} and \eqref{eqn:Tab} that 
this is indeed equivalent to classifying $(s,n) \in \C \times \Z_{\geq 0}$
such that 
\begin{itemize}
\item $u_{[s;n]}(t) \in \Pol_n[t]$ or $v_{[s;n]}(t)\in \Pol_n[t]$ for $J=\textnormal{I}$;
\vskip 0.1in
\item $a_{[s;n]}(t)\in \Pol_n[t]$ or $b_{[s;n]}(t)\in \Pol_n[t]$ for $J=\textnormal{II}$.
\end{itemize}

\vskip 0.1in

\noindent
\textsf{Step B:}
For  $(s,n) \in \C\times \Z_{\geq 0}$ with 
$\Sol_J(s;n)\neq \{0\}$,
classify the $M$-representations on $\Sol_J(s;n)$.
\vskip 0.1in

\noindent
\textsf{Step C:}
Given $\sigma \in \Irr(M)$,
classify $(s,n) \in \C\times \Z_{\geq 0}$ with 
$\Sol_J(s;n)\neq \{0\}$ such that
\begin{equation*}
\mathrm{Hom}_{M}\left(
\Sol_J(s;n),\, \sigma\right) \neq \{0\}.
\end{equation*}
\vskip 0.1in

In Sections \ref{sec:HGE} and \ref{sec:Heun},
we shall proceed Steps A, B, and C for
$J=\textnormal{II}$ and $J=\textnormal{I}$, respectively.


\section{Hypergeometric model 
$\dpin^{\textnormal{II}}(D^\flat_s)f(t)=0$}\label{sec:HGE}

The aim of this section is to 
classify the $K$-type formulas for $\CSol(s;\sigma)_K$ 
by using the hypergeometric model $d\pi^{\textnormal{II}}_n(D^\flat_s)f(t)=0$.
The decomposition formulas 
are achieved in Theorem \ref{thm:K-type}.

\subsection{The classification of $\Sol_{\textnormal{II}}(s;n)$}
\label{sec:SolT}

As Step A of the recipe in Section \ref{subsec:recipe2},
we first wish to classify $(s,n) \in \C \times \Z_{\geq 0}$ such that
$\Sol_{\textnormal{II}}(s;n)\neq \{0\}$, where
\begin{equation*}
\Sol_{\textnormal{II}}(s;n)=\{p(t) \in \Pol_n[t] : 
d\pi^\textnormal{II}_n(D^\flat_s)p(t)=0\}.
\end{equation*}
Recall from \eqref{eqn:Tab} that we have
\begin{equation*}
\Sol_{\textnormal{II}}(s;n) \subset \C a_{[s;n]}(t) \oplus \C b_{[s;n]}(t)
\end{equation*}
with
\begin{align}
&a_{[s;n]}(t) = F(-\frac{n}{2}, -\frac{n+s-1}{4}, \frac{3-n+s}{4};t^2),
\label{eqn:an}\\[3pt]
&b_{[s;n]}(t)=t^{\frac{1+n-s}{2}}
F(-\frac{n+s-1}{4},-\frac{s-1}{2}, \frac{5+n-s}{4};t^2). \label{eqn:bn}
\end{align}
\vskip 0.05in
\noindent
It thus suffices to classify $(s,n) \in \C \times \Z_{\geq 0}$ such that
$a_{[s;n]}(t)\in \Pol_n[t]$ or $b_{[s;n]}(t)\in \Pol_n[t]$.

\vskip 0.1in
Given $n\in \Z_{\geq 0}$, we define 
$I_0^{\pm}$, $J_0$, $I_1$, $I_2^{\pm}$, $J_2$, $I_3 \subset \Z$ as follows:
\begin{alignat}{3}\label{eqn:IJ}
\begin{aligned}
I_0^{\pm}&:=\{\pm(3+4j) : j=0,1,\ldots,\frac{n}{4}-1\}\quad (n \in 4\Z_{\geq0});\\
J_0&:=\{1+4j: j=0,1,\ldots, \frac{n}{4}-1\} \quad (n \in 4\Z_{\geq0}); \\
I_1&:=\{\pm 4j : j=0,1, \dots, \left[\frac{n}{4}\right] \};\\
I_2^{\pm}
&:=\{\pm(1+4j) : j=0,1,\ldots,
\left[\frac{n}{4}\right]\};\\
J_2&:=\{3+4j: j=0,1,\ldots, 
\left[\frac{n}{4}\right]-1\};\\
I_3&:=\{\pm (2+4j) : j=0,1, \dots, \left[\frac{n}{4}\right] \}.
\end{aligned}
\end{alignat}
\vskip 0.1in

We start by observing when $a_{[s;n]}(t) \in \Pol_n[t]$.
A key observation is that,
for $(a,b,c) \in \C$ for which the hypergeometric function 
$F(a,b,c;z)$ is well-defined, we have
\begin{equation*}
F(a,b,c;z) \in \Pol_n[z] 
\quad
\Longleftrightarrow
\quad
\text{$a \in \{0, -1, \ldots, -n\}$ or $b \in \{0, -1, \ldots, -n\}$}.
\end{equation*}


\begin{prop}\label{prop:an}
The following conditions on $(s,n) \in \C \times \Z_{\geq 0}$ are equivalent.
\begin{enumerate}[{\normalfont (i)}]
\item $a_{[s;n]}(t) \in \Pol_n[t]$.
\item One of the following conditions hold:
\begin{enumerate}[{\normalfont (a)}]
\item $n\equiv 0 \Mod 4:$ $s \in \C \backslash J_0;$

\item $n \equiv 1 \Mod 4:$ $s \in I_1;$

\item $n\equiv 2 \Mod 4:$ $s \in \C \backslash J_2;$

\item $n\equiv 3 \Mod 4:$ $s \in I_3$.

\end{enumerate}
\end{enumerate}
\end{prop}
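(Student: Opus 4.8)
The plan is to work directly from the hypergeometric realization $a_{[s;n]}(t) = F(-\frac{n}{2}, -\frac{n+s-1}{4}, \frac{3-n+s}{4};t^2)$ in \eqref{eqn:an} and use the elementary termination criterion for $_2F_1$ recalled just before the proposition: $F(a,b,c;z)$ (when well-defined, i.e. $c \notin \Z_{\leq 0}$) is a polynomial of degree $\leq n$ in $z$ precisely when $a$ or $b$ lies in $\{0,-1,\ldots,-n\}$. Since the argument is $t^2$, the condition $a_{[s;n]}(t) \in \Pol_n[t]$ amounts to $F(\cdot;z) \in \Pol_{[n/2]}[z]$, so the target is $a \in \{0,-1,\ldots,-\lfloor n/2\rfloor\}$ or $b \in \{0,-1,\ldots,-\lfloor n/2\rfloor\}$, together with the requirement that $F$ be well-defined, namely $c = \frac{3-n+s}{4} \notin \Z_{\leq 0}$.

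First I would dispose of the first factor: $a = -\frac{n}{2}$ is automatically in $\{0,-1,\ldots,-\lfloor n/2\rfloor\}$ when $n$ is even, and is not an integer at all when $n$ is odd. So for $n$ even the hypergeometric series always terminates at degree $\leq n/2 \leq n$ via the first parameter, and the only constraint is well-definedness, i.e. $\frac{3-n+s}{4} \notin \Z_{\leq 0}$; a quick congruence check shows $\frac{3-n+s}{4} \in \Z$ forces $s \equiv n-3 \pmod 4$, and then $\frac{3-n+s}{4} \leq 0 \iff s \leq n-3$, which for $n \equiv 0$ (resp.\ $2$) $\pmod 4$ translates exactly to $s \in J_0$ (resp.\ $s \in J_2$) after matching the index ranges in \eqref{eqn:IJ}. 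This gives cases (a) and (c). For $n$ odd, the first parameter never helps, so termination must come from $b = -\frac{n+s-1}{4} \in \{0,-1,\ldots,-\lfloor n/2\rfloor\} = \{0,-1,\ldots,-\frac{n-1}{2}\}$, i.e. $\frac{n+s-1}{4} \in \{0,1,\ldots,\frac{n-1}{2}\}$; writing $\frac{n+s-1}{4} = j$ gives $s = 4j - n + 1$, and as $j$ runs over $0,\ldots,\frac{n-1}{2}$ one checks that $s$ runs over the symmetric set $\{-(n-1), -(n-5), \ldots, (n-1)\}$, which is $\pm\{0,4,8,\ldots\}$ when $n \equiv 1 \pmod 4$ (giving $I_1$) and $\pm\{2,6,10,\ldots\}$ when $n \equiv 3 \pmod 4$ (giving $I_3$). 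One must also confirm that in these odd cases the well-definedness constraint $c \notin \Z_{\leq 0}$ is not violated: since $c - b = \frac{3-n+s}{4} + \frac{n+s-1}{4} = \frac{s+1}{2}$ and $b$ is a non-positive integer, $c \in \Z_{\leq 0}$ would force $\frac{s+1}{2} \in \Z$, i.e. $s$ odd, but in the odd-$n$ cases $s$ is even, so no conflict arises; this yields (b) and (d).

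The main obstacle I anticipate is purely bookkeeping rather than conceptual: matching the explicit index ranges $j = 0,1,\ldots,\frac{n}{4}-1$ versus $j=0,1,\ldots,[\frac{n}{4}]$ in the definitions \eqref{eqn:IJ} of $J_0$, $J_2$, $I_1$, $I_3$ against the ranges that come out of the termination and well-definedness conditions, and being careful that the two sources of polynomiality (first parameter versus second parameter) do not produce spurious extra values or miss boundary cases such as $s = \pm n$ or $s = 0$. A secondary subtlety is that the criterion quoted in the excerpt presupposes $F$ is well-defined; so whenever the second parameter $b$ forces termination I should double-check that $c$ is not a non-positive integer, and conversely when $b$ is what fails well-definedness I should verify the first parameter still does (or does not) save the day — this is exactly the interplay that separates ``$s \in \C \setminus J_0$'' (most $s$ work) from ``$s \in I_1$'' (only special $s$ work). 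Once these range-matchings are carried out case by case on $n \bmod 4$, the equivalence of (i) and (ii) follows.
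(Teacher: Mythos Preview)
Your overall approach matches the paper's: read off the termination condition directly from the hypergeometric parameters and split on $n \bmod 4$. The odd case is fine. The even case, however, has a real gap in the step where you pass from ``well-definedness'' to the sets $J_0$, $J_2$.

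You assert that for $n$ even ``the only constraint is well-definedness, i.e.\ $\frac{3-n+s}{4} \notin \Z_{\leq 0}$'', and that the set where $c=\frac{3-n+s}{4}\in\Z_{\leq 0}$ ``translates exactly to $s\in J_0$'' (resp.\ $J_2$). Neither claim holds. The congruence $s\equiv n-3\pmod 4$ together with $s\leq n-3$ describes an \emph{infinite} set containing all negative values $s=-3,-7,-11,\ldots$ as well, not just the finite set $J_0=\{1,5,\ldots,n-3\}$. For those negative $s$ the series is nevertheless a well-defined polynomial, because a numerator parameter terminates the sum before the vanishing of $(c)_k$: concretely, for $1-n\leq s\leq -1$ one has $b=-\frac{n+s-1}{4}\in\{0,-1,\ldots,c\}$, and for $s\leq -n-3$ one has $a=-\frac{n}{2}\geq c$. (Example: $n=4$, $s=-3$ gives $c=-1$ but $b=0$, so $a_{[-3;4]}(t)=1\in\Pol_4[t]$, consistent with $-3\notin J_0$.) This rescue mechanism is precisely what the paper's three simultaneous conditions
\[
\frac{3-n+s}{4}\in -\Z_{\geq 0},\qquad -\frac{n}{2}<\frac{3-n+s}{4},\qquad -\frac{n+s-1}{4}<\frac{3-n+s}{4}
\]
encode: the last two inequalities exclude the cases where $a$ or $b$ saves the series, and only \emph{with} them does the resulting set collapse to $J_0$ (resp.\ $J_2$). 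You do flag this interplay in your obstacle paragraph, but the derivation in your second paragraph skips it and reaches the correct set by two cancelling errors; as written it is not a valid argument.
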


\begin{proof}
The proposition follows from a careful observation for
the parameters of $a_{[s;n]}(t)$ in \eqref{eqn:an}.
Indeed, suppose that $n$ is even. 
Then the parameters of $a_{[s;n]}(t)$
imply that $a_{[s;n]}(t) \notin \Pol_n[t]$ if and only if 
the following conditions are satisfied
\begin{equation*}\label{eqn:nEven}
\frac{3-n+s}{4} \in -\Z_{\geq 0},
\quad
-\frac{n}{2}<\frac{3-n+s}{4},
\quad
\text{and}
\quad
-\frac{n+s-1}{4}<\frac{3-n+s}{4},
\end{equation*}
which is equivalent to $s \in J_0$ for $n\equiv 0 \Mod 4$ and 
$s \in J_2$ for $n\equiv 2 \Mod 4$.
Now the assertions for (a) and (c) follow from the contrapositive
of the arguments.

Next suppose that $n$ is odd.
In this case it follows from \eqref{eqn:an} that
$a_{[s;n]}(t) \in \Pol_n[t]$ if and only if
\begin{equation*}
0 \leq \frac{n+s-1}{4} \leq \left[\frac{n}{2}\right]
\quad 
\text{and}
\quad
\frac{n+s-1}{4} \in \Z_{\geq 0},
\end{equation*}
which is equivalent to 
$s \in I_1$ for $n\equiv 1 \Mod 4$
and $s \in I_3$ for $n\equiv 3 \Mod 4$.
Here we remark that, for $s \in I_1 \cup I_3$, 
we have $\frac{3-n-s}{4} \notin \Z$.
This concludes the proposition.
\end{proof}

Suppose that $a_{[s;n]}(t) \in \Pol_n[t]$.
Then generically we have $\deg a_{[s;n]}(t) = n$. 
Lemma \ref{lem:an1} below classifies 
the singular parameters of $s \in \C$ for $a_{[s;n]}(t)$ 
in a sense that $\deg a_{[s;n]}(t) <n$
(see Section \ref{subsub:Case2C}).

\begin{lem}\label{lem:an1}
Suppose that $n \equiv k \Mod 4$ 
and $s \in \C \backslash J_k$ for $k = 0, 2$.
Then the following conditions on $(s, n) \in \C \times \Z_{\geq 0}$ are equivalent.
\begin{enumerate}[{\normalfont (i)}]
\item $\deg a_{[s;n]}(t) < n$.
\item $\deg a_{[s;n]}(t) = \frac{n+s-1}{2}$.
\item One of the following conditions holds:
\begin{enumerate}[{\normalfont (a)}]\item $n\equiv 0 \Mod 4:$ $s \in I_0^-;$
\item $n \equiv 2 \Mod 4:$ $s\in I_2^-$.
\end{enumerate}
\end{enumerate}
\end{lem}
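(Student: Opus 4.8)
The plan is to work directly with the hypergeometric series for $a_{[s;n]}(t)=F\bigl(-\tfrac n2,-\tfrac{n+s-1}{4},\tfrac{3-n+s}{4};t^2\bigr)$, whose $z=t^2$-coefficients are $c_r=\dfrac{\bigl(-\tfrac n2\bigr)_r\,\bigl(-\tfrac{n+s-1}{4}\bigr)_r}{\bigl(\tfrac{3-n+s}{4}\bigr)_r\,r!}$. Under the hypothesis $n\equiv k\Mod 4$ with $s\in\C\setminus J_k$ for $k=0,2$, Proposition \ref{prop:an} already tells us $a_{[s;n]}(t)\in\Pol_n[t]$, i.e.\ the series terminates and has $z$-degree at most $n/2$. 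First I would note that the only mechanisms by which termination can occur are (a) the numerator factor $\bigl(-\tfrac n2\bigr)_r$ vanishing for $r>n/2$ — the generic case, giving $z$-degree exactly $n/2$ and hence $\deg a_{[s;n]}(t)=n$ — or (b) the numerator factor $\bigl(-\tfrac{n+s-1}{4}\bigr)_r$ vanishing first, which happens precisely when $\tfrac{n+s-1}{4}\in\Z_{\ge 0}$ and $\tfrac{n+s-1}{4}<\tfrac n2$; in that case the series stops at $z$-degree $\tfrac{n+s-1}{4}$, giving $\deg a_{[s;n]}(t)=\tfrac{n+s-1}{2}<n$. This is the equivalence (i)$\Leftrightarrow$(ii), provided one also checks that the denominator $\bigl(\tfrac{3-n+s}{4}\bigr)_r$ does not vanish before the cutoff, which is exactly what the exclusion $s\notin J_k$ guarantees (this is the same computation appearing in the proof of Proposition \ref{prop:an}).

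Next I would translate the condition ``$\tfrac{n+s-1}{4}\in\Z_{\ge 0}$ and $\tfrac{n+s-1}{4}<\tfrac n2$'' into the arithmetic progressions in (iii). Writing $n+s-1=4j$ with $j\in\Z_{\ge 0}$ and imposing $4j<2n$, i.e.\ $j<n/2$, we get $s=1-n+4j$. For $n\equiv 0\Mod 4$ this forces $s\equiv 1\Mod 4$ and, since $s$ negative here, one checks the admissible values are exactly $s\in\{-(3+4i):0\le i\le n/4-1\}=I_0^-$; for $n\equiv 2\Mod 4$ one similarly gets $s\in\{-(1+4i):0\le i\le\ldots\}=I_2^-$. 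I would also double-check at this point that these $s$-values indeed lie in $\C\setminus J_k$, so the hypothesis of the lemma is consistent with (iii); and that when (b) fails but $a_{[s;n]}(t)$ still terminates via (a), we genuinely have $\deg=n$, which rules out any further degenerate sub-cases and completes (i)$\Leftrightarrow$(iii).

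The only mild obstacle is bookkeeping: one must be careful that both numerator Pochhammer factors could in principle vanish ``at the same $r$'' and that the denominator Pochhammer factor could also vanish, so the honest argument is to compare the three critical indices $r=\tfrac n2+1$, $r=\tfrac{n+s-1}{4}+1$, and the first pole of $\bigl(\tfrac{3-n+s}{4}\bigr)_r$, and verify that under $s\notin J_k$ the pole index is never the smallest. This is routine once the three indices are written down explicitly, and it parallels the case analysis already carried out for Proposition \ref{prop:an}; I would simply cite that computation rather than repeat it. No genuinely new idea beyond careful reading of the series is needed.
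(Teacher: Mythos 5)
Your proposal is correct and follows essentially the same route as the paper: read off from the hypergeometric parameters that early termination occurs exactly when $\tfrac{n+s-1}{4}\in\Z_{\geq 0}$ and $2\cdot\tfrac{n+s-1}{4}<n$, rewrite this as $s\in\{-n+1+4j: j=0,\ldots,\tfrac n2-1\}$, and observe that removing $J_k$ leaves $I_k^-$. The only nitpick is the phrase ``since $s$ negative here'': the set $\{-n+1+4j\}$ also contains the positive values $1,5,\ldots$, which are discarded precisely because they constitute $J_k$, not because of any a priori sign restriction --- a point your own final consistency check already covers.
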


\begin{proof}
It follows from the parameters of $a_{[s;n]}(t)$ that
$\deg a_{[s;n]}(t) < n$ if and only if 
\begin{equation}\label{eqn:ans1}
2\cdot \frac{n+s-1}{4} < n 
\quad
\text{and}
\quad
\frac{n+s-1}{4} \in \Z_{\geq 0},
\end{equation}
which shows the equivalence between (i) and (ii).
Moreover, \eqref{eqn:ans1} is equivalent to
\begin{equation}\label{eqn:ans2}
s \in \{-n+1+4j : j=0,1,2, \ldots, \frac{n}{2}-1\}.
\end{equation}
One can readily verifty that, under the condition $s \notin J_k$,
 \eqref{eqn:ans2}
is indeed equivalent to $s \in I^-_k$ for $k=0,2$.
\end{proof}


We next consider $b_{[s;n]}(t)$ in \eqref{eqn:bn}.

\begin{prop}\label{prop:bn}
The following conditions on $(s,n) \in \C\times \Z_{\geq 0}$ are equivalent.
\begin{enumerate}[{\normalfont (i)}]
\item $b_{[s;n]}(t) \in \Pol_n[t]$ with $b_{[s;n]}(t) \neq a_{[s;n]}(t)$.
\item One of the following conditions holds.
\begin{enumerate}[{\normalfont (a)}]\item $n \equiv 0 \Mod 4:$
$s \in I^+_0 \cup I^-_0 \cup J_0;$
\item $n \equiv 1 \Mod 4:$
$s \in I_1;$
\item $n\equiv 2 \Mod 4:$
$s \in I^+_2 \cup I^-_2 \cup J_2;$
\item $n \equiv 3 \Mod 4:$
$s \in I_3;$
\end{enumerate}

\end{enumerate}
\end{prop}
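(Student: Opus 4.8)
The plan is to read the two requirements for $b_{[s;n]}(t)\in\Pol_n[t]$ straight off the closed form $b_{[s;n]}(t)=t^{(1+n-s)/2}F(-\tfrac{n+s-1}{4},-\tfrac{s-1}{2},\tfrac{5+n-s}{4};t^2)$ of \eqref{eqn:bn}. First I would record the elementary criterion, already used for Proposition~\ref{prop:an}, that a Gauss series $F(a,b,c;z)$ with $c\notin\Z_{\leq 0}$ is a polynomial in $z$ exactly when $a\in\Z_{\leq 0}$ or $b\in\Z_{\leq 0}$; since $s\leq n+1$ makes the third parameter $\tfrac{5+n-s}{4}\geq 1$, that parameter is never an obstruction here. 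Hence $b_{[s;n]}(t)$ is a polynomial precisely when (i) the prefactor exponent $\ell:=\tfrac{1+n-s}{2}$ is a non-negative integer --- which forces $s\in\Z$ with $s\equiv n+1\pmod 2$ and $s\leq n+1$ --- and (ii) for such an $s$, one of $-\tfrac{n+s-1}{4}$, $-\tfrac{s-1}{2}$ lies in $\Z_{\leq 0}$.

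Next I would run the residue analysis modulo $4$. If $n$ is even then $s$ is odd, so $-\tfrac{s-1}{2}\in\Z$, which is $\leq 0$ iff $s\geq 1$, while $-\tfrac{n+s-1}{4}\in\Z_{\leq 0}$ iff $n+s\equiv 1\pmod 4$ and $s\geq 1-n$; if $n$ is odd then $s$ is even, $-\tfrac{s-1}{2}$ is never an integer, and the only route to (ii) is $n+s\equiv 1\pmod 4$ together with $s\geq 1-n$. In every surviving case the bound $\deg b_{[s;n]}\leq n$ is then automatic: writing $d$ for the $t^2$-degree of the terminated Gauss factor, one has $d\leq\tfrac{n+s-1}{4}$ when termination comes from the first parameter, whence $\deg b_{[s;n]}=\ell+2d\leq n$; and $d\leq\tfrac{s-1}{2}$ when it comes from the second, whence $\deg b_{[s;n]}=\ell+2d\leq\tfrac{n+s-1}{2}\leq n$ because $s\leq n+1$. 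Sorting the surviving integers $s$ by residue mod $4$ and matching the resulting arithmetic progressions against \eqref{eqn:IJ} reproduces, in the four cases, the sets $I_0^{\pm}\cup J_0$, $I_1$, $I_2^{\pm}\cup J_2$, $I_3$, together with one extra value $s=n+1$ that appears in the even cases $n\equiv 0,2\pmod 4$.

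Finally I would eliminate $s=n+1$ using the requirement $b_{[s;n]}(t)\neq a_{[s;n]}(t)$. The point is that $b_{[s;n]}(t)$ is divisible by $t^{\ell}$, whereas $a_{[s;n]}(t)$ is a hypergeometric series in $t^2$ with constant term $1$, so equality can hold only when $\ell=0$, i.e. $s=n+1$; and there both \eqref{eqn:an} and \eqref{eqn:bn} collapse to $F(-\tfrac n2,-\tfrac n2,1;t^2)$, so indeed $b_{[s;n]}=a_{[s;n]}$. Discarding this value leaves exactly the four cases of the proposition. I expect the only real work to be the modulo-$4$ bookkeeping: keeping track of which numerator parameter terminates the series, handling the mild edge cases where a numerator parameter or a denominator parameter of the Gauss factor vanishes (so that $F$ is still well-defined and equal to $1$ rather than ill-defined), and identifying the surviving residue classes with the sets $I_k^{\pm}$ and $J_k$ of \eqref{eqn:IJ}. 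None of this is deep, but it must be carried out with some care.
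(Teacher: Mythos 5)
Your proposal is correct and follows essentially the same route as the paper's proof: read the polynomiality of $b_{[s;n]}(t)$ off the explicit form \eqref{eqn:bn} (non-negative integrality of the exponent $\tfrac{1+n-s}{2}$, which forces $\tfrac{5+n-s}{4}\geq 1$, plus termination via one of the two numerator parameters), observe the degree bound is automatic, sort the resulting integers $s$ by residue mod $4$ into the sets of \eqref{eqn:IJ}, and discard $s=n+1$ where $b_{[n+1;n]}(t)=F(-\tfrac n2,-\tfrac n2,1;t^2)=a_{[n+1;n]}(t)$. The bookkeeping in your mod-$4$ case analysis matches the paper's conditions \eqref{eqn:bn1}--\eqref{eqn:bn4}.
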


\begin{proof}
 Observe that 
if $b_{[s;n]}(t) \in \Pol[t]$, then the exponent $\frac{1+n-s}{2}$ for 
$t^\frac{1+n-s}{2}$ in \eqref{eqn:bn} must satisfy $\frac{1+n-s}{2} \in \Z_{\geq 0}$,
which in particular forces $\frac{5+n-s}{4} \notin -\Z_{\geq 0}$. Moreover,
if $\frac{1+n-s}{2}=0$, then 
\begin{equation*}
b_{[n+1;n]}(t)=F(-\frac{n}{2},-\frac{n}{2},1;t^2)=a_{[n+1;n]}(t).
\end{equation*}
\vskip 0.05in
\noindent
Consequently, we have 
$b_{[s;n]}(t) \in \Pol_n[t]$ with $b_{[s;n]}(t) \neq a_{[s;n]}(t)$
if and only if either
\begin{alignat}{4}
& 
&&\frac{1+n-s}{2} \in 1+\Z_{\geq 0}, 
\quad
&&\frac{n+s-1}{4} \in \Z_{\geq 0},
\quad
&&\text{and}
\quad
 \frac{1+n-2}{2}+2\cdot \frac{n+s-1}{4} \leq n,\label{eqn:bn1}\\
&\text{or}\qquad  && && && \nonumber \\
&
&&\frac{1+n-s}{2} \in 1+\Z_{\geq 0}, 
\quad
&&\frac{s-1}{2} \in \Z_{\geq 0},
\quad
&&\text{and}
\quad 
\frac{1+n-2}{2}+2\cdot \frac{s-1}{2} \leq n.\label{eqn:bn2}
\end{alignat}
\vskip 0.05in
\noindent
A direct observation shows that the conditions \eqref{eqn:bn1} and \eqref{eqn:bn2}
are equivalent to
\begin{equation}\label{eqn:bn3}
s \in (-n+1+4\Z_{\geq 0}) \cap (-\infty, n+1)
\end{equation}
and
\begin{equation}\label{eqn:bn4}
\text{$n$ is even and $s\in (1+2\Z_{\geq 0}) \cap (-\infty, n+1)$},
\end{equation}
respectively. 
One can directly verify that
\eqref{eqn:bn3} and \eqref{eqn:bn4}  
are equivalent to the conditions on $s \in \C$ 
stated in Proposition \ref{prop:bn} for 
$n \equiv k \Mod 4$ for $k=0,1,2,3$.
Indeed, if $n \equiv 0 \Mod 4$,
then 
\eqref{eqn:bn3} and \eqref{eqn:bn4}  are equivalent to
$s \in I^-_0 \cup J_0$ and $s \in I^+_0 \cup J_0$, respectively.
Since the other three cases can be shown similarly, we omit the proof.
\end{proof}

It follows from Propositions \ref{prop:an} and \ref{prop:bn} that
if $n \equiv k \Mod 4$ for $k=0,2$, then 
\begin{equation}\label{eqn:abn}
\Sol_{\textnormal{II}}(s;n)=
\C a_{[s;n]}(t) \oplus \C b_{[s;n]}(t) 
\quad
\textnormal{for $s \in I_k^+\cup I_k^-$}.
\end{equation}
For a later purpose for determining the $M$-representations
on $\Sol_{\textnormal{II}}(s;n)$,
for such $n\equiv k \Mod 4$, we define
\begin{equation*}
c^{\pm}_{[s;n]}(t):=a_{[s;n]}(t)\pm C(s;n)b_{[s;n]}(t)
\quad
\text{for $s \in I_k^-$},
\end{equation*}
where
\begin{equation}\label{eqn:Csn}
C(s;n)
:=
\begin{cases}
\frac{\left(-\frac{n}{2}, \left[\frac{n}{4}\right]\right)}
{\left[\frac{n}{4}\right]!} &
\textnormal{if $n\equiv 2 \Mod 4$ and $s=-1$},\vspace{5pt}\\
\frac{\left(-\frac{n}{2}, \frac{n+s-1}{4}\right)
\left(-\frac{n+s-1}{4}, \frac{n+s-1}{4}\right)}
{\left(\frac{3-n+s}{4},\frac{n+s-1}{4}\right)}
\cdot
\frac{1}{\left(\frac{n+s-1}{4}\right)!}&
\textnormal{otherwise}.
\end{cases}
\end{equation}
\vskip 0.05in
\noindent
Here $(\ell,m)$ stands for the shifted factorial, namely,
$(\ell,m) = \frac{\Gamma(\ell+m)}{\Gamma(\ell)}$.
Then, 
for $n \equiv k \Mod 4$ for $k=0,2$,
the space $\Sol_{\textnormal{II}}(s;n)$ may be described as
\begin{equation}\label{eqn:Solw}
\Sol_{\textnormal{II}}(s;n)=
\begin{cases}
\C a_{[s;n]}(t) \oplus \C b_{[s;n]}(t) &
\textnormal{if $s \in I_k^+$},\\
\C c^+_{[s;n]}(t) \oplus \C c^-_{[s;n]}(t) &
\textnormal{if $s \in I_k^-$}.
\end{cases}
\end{equation}
\vskip 0.05in
\noindent
We then summarize the parameters $(s,n)$ such that $\Sol_{\textnormal{II}}(s;n)\neq \{0\}$
as follows.

\begin{thm}\label{thm:Sol1}
The following conditions on $(s,n) \in \C \times \Z_{\geq 0}$ are 
equivalent.
\begin{enumerate}[{\normalfont (i)}]
\item $\Sol_{\textnormal{II}}(s;n) \neq \{0\}$. 
\item One of the following conditions is satisfied.
\begin{itemize}
\item $n \equiv 0 \Mod 4:$ $s \in \C$.
\item $n \equiv 1 \Mod 4:$ $s \in I_1$.
\item $n\equiv 2 \Mod 4:$ $s \in \C$.
\item $n \equiv 3 \Mod 4:$ $s \in I_3$.
\end{itemize}
\end{enumerate}
Further, 
for such $(s,n)$,
the space $\Sol_{\textnormal{II}}(s;n)$ may be given as follows.
\begin{enumerate}[{\quad \normalfont (1)}]
\item 
$n \equiv 0 \Mod 4:$ 
\begin{equation*}
\Sol_{\textnormal{II}}(s;n)
=
\begin{cases}
\C a_{[s;n]}(t) & \textnormal{if $s \in \C\backslash (I_0^+ \cup I_0^- \cup J_0)$},\\
\C a_{[s;n]}(t) \oplus \C b_{[s;n]}(t) &\textnormal{if $s \in I_0^+$},\\
\C b_{[s;n]}(t) & \textnormal{if $s \in J_0$},\\
\C c^+_{[s;n]}(t) \oplus \C c^-_{[s;n]}(t) &\textnormal{if $s\in I_0^-$}.
\end{cases}
\end{equation*}
\vskip 0.1in

\item $n \equiv 1 \Mod 4:$ 
\begin{equation*}
\Sol_{\textnormal{II}}(s;n)=\C a_{[s;n]}(t) \oplus \C b_{[s;n]}(t) \quad \textnormal{for $s\in I_1$}.
\end{equation*}

\item 
$n \equiv 2 \Mod 4:$ 
\begin{equation*}
\Sol_{\textnormal{II}}(s;n)
=
\begin{cases}
\C a_{[s;n]}(t) & \textnormal{if $s \in \C\backslash (I_2^+ \cup I_2^- \cup J_2)$},\\
\C a_{[s;n]}(t) \oplus \C b_{[s;n]}(t) &\textnormal{if $s \in I_2^+$},\\
\C b_{[s;n]}(t) & \textnormal{if $s \in J_2$},\\
\C c^+_{[s;n]}(t) \oplus \C c^-_{[s;n]}(t) & \textnormal{if $s\in I_2^-$}.
\end{cases}
\end{equation*}
\vskip 0.1in

\item $n \equiv 3 \Mod 4:$ 
\begin{equation*}
\Sol_{\textnormal{II}}(s;n)=
\C a_{[s;n]}(t) \oplus \C b_{[s;n]}(t) \quad \textnormal{for $s\in I_3$}.
\end{equation*}

\end{enumerate}
\end{thm}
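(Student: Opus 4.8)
The plan is to assemble Theorem \ref{thm:Sol1} essentially as a bookkeeping exercise built on the three ingredients already in hand: the inclusion $\Sol_{\textnormal{II}}(s;n) \subset \C a_{[s;n]}(t) \oplus \C b_{[s;n]}(t)$ from \eqref{eqn:Tab}, the polynomiality criteria for $a_{[s;n]}(t)$ and $b_{[s;n]}(t)$ established in Propositions \ref{prop:an} and \ref{prop:bn}, and the degree analysis in Lemma \ref{lem:an1}. First I would establish the equivalence (i)$\Leftrightarrow$(ii). Since $a_{[s;n]}(t)$ and $b_{[s;n]}(t)$ span the solution space of the second-order equation \eqref{eqn:Gauss2} whenever they are both well-defined and linearly independent, $\Sol_{\textnormal{II}}(s;n)\neq\{0\}$ holds if and only if at least one of $a_{[s;n]}(t)$, $b_{[s;n]}(t)$ lies in $\Pol_n[t]$ (together with the boundary case $b_{[s;n]}(t)=a_{[s;n]}(t)$, which is harmless since then $a_{[s;n]}(t)$ itself is the polynomial solution). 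Taking the union of the parameter sets from Proposition \ref{prop:an}(ii) and Proposition \ref{prop:bn}(ii) case by case on $n \bmod 4$ yields exactly the list in (ii): for $n\equiv 0$ one gets $(\C\backslash J_0)\cup(I_0^+\cup I_0^-\cup J_0)=\C$; for $n\equiv 2$ similarly $(\C\backslash J_2)\cup(I_2^+\cup I_2^-\cup J_2)=\C$; and for $n$ odd both propositions give the same set $I_1$ (resp. $I_3$), so the union is $I_1$ (resp. $I_3$).

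Next I would pin down the precise description of $\Sol_{\textnormal{II}}(s;n)$ in the four itemized cases (1)–(4). For $n$ odd (cases (2) and (4)), Propositions \ref{prop:an} and \ref{prop:bn} show that for $s\in I_1$ (resp.\ $I_3$) \emph{both} $a_{[s;n]}(t)$ and $b_{[s;n]}(t)$ are polynomials of degree $\leq n$, and one checks they are linearly independent — their leading exponents differ because $\frac{1+n-s}{2}\notin 2\Z$ forces the two hypergeometric factors to contribute distinct monomial supports (indeed $a_{[s;n]}$ involves only even powers of $t$ while $b_{[s;n]}$ is $t$ to an odd power times even powers, since for $s\in I_1\cup I_3$ we have $\frac{1+n-s}{2}$ odd). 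Hence $\Sol_{\textnormal{II}}(s;n)$ is the full two-dimensional span. For $n\equiv 0,2 \bmod 4$ (cases (1) and (3)) I would split into the four sub-ranges. On $\C\backslash(I_k^+\cup I_k^-\cup J_k)$, only $a_{[s;n]}(t)$ is a polynomial (Proposition \ref{prop:an} gives $a\in\Pol_n$, Proposition \ref{prop:bn} excludes $b$), and the second solution $b_{[s;n]}(t)$ has a genuinely non-polynomial part, so $\Sol_{\textnormal{II}}(s;n)=\C a_{[s;n]}(t)$; on $s\in J_k$, Proposition \ref{prop:an} shows $a_{[s;n]}(t)\notin\Pol_n[t]$ while Proposition \ref{prop:bn} gives $b_{[s;n]}(t)\in\Pol_n[t]$ with $b\neq a$, so $\Sol_{\textnormal{II}}(s;n)=\C b_{[s;n]}(t)$; on $s\in I_k^+$ both are polynomials and independent (leading-term argument as above), giving the two-dimensional span $\C a_{[s;n]}(t)\oplus\C b_{[s;n]}(t)$ recorded in \eqref{eqn:abn}. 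The one subtle sub-case is $s\in I_k^-$: here Lemma \ref{lem:an1} tells us $\deg a_{[s;n]}(t)=\frac{n+s-1}{2}<n$, which is also the exponent governing $b_{[s;n]}(t)$, so $a_{[s;n]}$ and $b_{[s;n]}$ now share the same monomial support and a priori one needs a basis adapted to the $M$-action — this is precisely why $c^{\pm}_{[s;n]}(t)=a_{[s;n]}(t)\pm C(s;n)b_{[s;n]}(t)$ with $C(s;n)$ as in \eqref{eqn:Csn} is introduced, and I would verify that $\{c^+_{[s;n]},c^-_{[s;n]}\}$ is again a basis of the two-dimensional solution space (both are degree-$\leq n$ polynomials, linearly independent since $C(s;n)\neq 0$), recovering \eqref{eqn:Solw}.

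The main obstacle I anticipate is not any deep idea but the careful handling of the $s\in I_k^-$ sub-case: one must be sure that $b_{[s;n]}(t)\in\Pol_n[t]$ there (which requires re-reading Proposition \ref{prop:bn}, where $I_k^-$ appears in the list), that the constant $C(s;n)$ in \eqref{eqn:Csn} is well-defined and nonzero for exactly these parameters (so that no Gamma factor in numerator or denominator hits a pole, which is the reason the definition splits off the case $n\equiv 2\bmod 4$, $s=-1$), and that the resulting $c^{\pm}_{[s;n]}(t)$ genuinely span $\Sol_{\textnormal{II}}(s;n)$ rather than a proper subspace. A secondary bit of care is the degree count: in all of cases (1)–(4) one is implicitly asserting that the polynomials produced actually lie in $\Pol_n[t]$ and not merely in some $\Pol_N[t]$ with $N>n$; this follows from the hypergeometric termination bounds checked inside Propositions \ref{prop:an}, \ref{prop:bn} and Lemma \ref{lem:an1}, and I would simply cite those inequalities rather than re-derive them. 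Everything else is a routine union-of-sets computation over the residue classes of $n$ modulo $4$, so the proof can be kept short by organizing it as "(i)$\Leftrightarrow$(ii) by taking unions, then the four displayed formulas by the sub-range analysis, invoking \eqref{eqn:abn} and \eqref{eqn:Solw}."
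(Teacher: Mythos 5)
Your proposal is correct and follows the same route as the paper, whose proof of Theorem \ref{thm:Sol1} is literally a one-line citation of Propositions \ref{prop:an} and \ref{prop:bn} together with \eqref{eqn:Solw}; you have simply written out the union-of-parameter-sets and sub-range bookkeeping that the paper leaves implicit. One small slip in your commentary: for $s\in I_k^-$ the polynomials $a_{[s;n]}$ and $b_{[s;n]}$ do not share monomial support (the lowest exponent $\tfrac{1+n-s}{2}$ of $b_{[s;n]}$ exceeds $\deg a_{[s;n]}=\tfrac{n+s-1}{2}$ since $s<0$), but this does not affect your argument, as the passage to $c^{\pm}_{[s;n]}$ is only a change of basis justified by $C(s;n)\neq 0$ and is motivated by the $M$-action, exactly as you say.
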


\begin{proof}
This is a summary of the results in
Propositions \ref{prop:an} and \ref{prop:bn} and \eqref{eqn:Solw}.
\end{proof}

\begin{rem}\label{rem:SolT1}
Theorem \ref{thm:Sol1} shows that the structure of 
$\Sol_{\textnormal{II}}(s;n)$ (hypergeometric model)
is somewhat complicated. It will be shown in Theorem \ref{thm:SolKO1}
that that of $\Sol_{\textnormal{I}}(s;n)$ (Heun model)
is more straightforward.
\end{rem}

\begin{rem}\label{rem:SolT2}
Theorem \ref{thm:Sol1} also completely classifies the dimension
$\dim_\C\Sol_{\textnormal{II}}(s;n)$.
When $n$ is even, the dimension $\dim_\C\Sol_{\textnormal{II}}(s;n)$
($=\dim_\C\Sol_{\textnormal{I}}(s;n)$) was determined
in \cite[Thm.\ 5.13]{Kable12A} by factorization formulas
of certain tridiagonal determinants 
(see Theorem \ref{thm:factorPQ} and Remark \ref{rem:PQ1}).
We shall study such determinants
in Section \ref{sec:PQ} from a different perspective from \cite{Kable12A}.
\end{rem}

\subsection{The $M$-representations on $\Sol_{\textnormal{II}}(s;n)$}

As Step B of the recipe in Section \ref{subsec:recipe2}, 
we next classify the $M$-representations on $\Sol_{\textnormal{II}}(s;n)$.
Here is the classification.

\begin{thm}\label{thm:Sol2}
For each $(s;n) \in \C \times \Z_{\geq 0}$ determined in Theorem \ref{thm:Sol1},
the $M$-representations on $\Sol_{\textnormal{II}}(s;n)$ are classified as follows.

\begin{enumerate}[{\normalfont (1)}]
\item 
$n \equiv 0 \Mod 4:$ 
\begin{equation*}
\Sol_{\textnormal{II}}(s;n)
\simeq
\begin{cases}
\pp & \textnormal{if $s \in \C\backslash (I_0^+ \cup I_0^- \cup J_0)$},\\
\pp \oplus \pmi & \textnormal{if $s \in I_0^+$},\\
\pp & \textnormal{if $s \in J_0$},\\
\pp \oplus \mip & \textnormal{if $s\in I_0^-$}.
\end{cases}
\end{equation*}
\vskip 0.1in

\item $n \equiv 1 \Mod 4:$ 
\begin{equation*}
\Sol_{\textnormal{II}}(s;n) \simeq \mathbb{H} \quad \textnormal{for $s\in I_1$}.
\end{equation*}

\item 
$n \equiv 2 \Mod 4:$ 
\begin{equation*}
\Sol_{\textnormal{II}}(s;n)
\simeq
\begin{cases}
\mm & \textnormal{if $s \in \C\backslash (I_2^+ \cup I_2^- \cup J_2)$},\\
\mm \oplus \mip &\textnormal{if $s \in I_2^+$},\\
\mm & \textnormal{if $s \in J_2$},\\
\mm \oplus \pmi & \textnormal{if $s\in I_2^-$}.
\end{cases}
\end{equation*}
\vskip 0.1in

\item $n \equiv 3 \Mod 4:$ 
\begin{equation*}
\Sol_{\textnormal{II}}(s;n)\simeq \mathbb{H} \quad \textnormal{for $s\in I_3$}.
\end{equation*}

\end{enumerate}
Here the characters $(\eps, \eps')$ stand for 
the ones on $\C a_{[s;n]}(t)$, $\C b_{[s;n]}(t)$, 
and $\C c^{\pm}_{[s;n]}(t)$ at the same places in
Theorem \ref{thm:Sol1}.
\end{thm}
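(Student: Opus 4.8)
Recall that $\Sol_{\textnormal{II}}(s;n)$ is a submodule of $(\pi_n,\Pol_n[t])$ for the subgroup $M=\{\pm m^{\textnormal{II}}_j : j=0,1,2,3\}\simeq Q_8$ (Lemma \ref{lem:SolKO}), which is generated by $m^{\textnormal{II}}_1$ and $m^{\textnormal{II}}_2$; by \eqref{eqn:pi-wMT} these act on $\Pol_n[t]$ by the inversion $p(t)\mapsto t^np(-1/t)$ and by $p(t)\mapsto(\sqrt{-1})^np(-t)$, and one has $\pi_n\big((m^{\textnormal{II}}_j)^2\big)=\pi_n(-I)=(-1)^n\,\id$. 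The four characters $\pp,\pmi,\mip,\mm$ are distinguished by their values on $m^{\textnormal{II}}_1$ and $m^{\textnormal{II}}_2$ alone (Table \ref{table:char2}; the value on $m^{\textnormal{II}}_3$ is the product of the other two, since $m_3^{\textnormal{II}}=m_1^{\textnormal{II}}m_2^{\textnormal{II}}$). Hence the plan is, for each pair $(s,n)$ produced by Theorem \ref{thm:Sol1}, to evaluate the action of $m^{\textnormal{II}}_1$ and $m^{\textnormal{II}}_2$ on the explicit spanning solutions $a_{[s;n]}(t)$, $b_{[s;n]}(t)$, $c^{\pm}_{[s;n]}(t)$ of Theorem \ref{thm:Sol1} and match the eigenvalue data against Table \ref{table:char2}.

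First, for $n$ odd (cases (2) and (4)) we have $\pi_n(-I)=-\id$, so a nonzero $\Sol_{\textnormal{II}}(s;n)$ is a \emph{genuine} $M$-module; since it is two-dimensional by Theorem \ref{thm:Sol1} and $\mathbb{H}$ is the unique genuine irreducible representation of $Q_8$, necessarily $\Sol_{\textnormal{II}}(s;n)\simeq\mathbb{H}$. This settles the two odd cases at once, with no computation.

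For $n$ even we have $\pi_n(-I)=\id$, so $\Sol_{\textnormal{II}}(s;n)$ is a direct sum of characters. The eigenvalue of $m^{\textnormal{II}}_2$ on a homogeneous-parity element of $\Pol_n[t]$ is $\pm(\sqrt{-1})^n=\pm(-1)^{n/2}$ according to the parity; here $a_{[s;n]}$ is even, and the parity of $b_{[s;n]}$ (hence of $c^{\pm}_{[s;n]}$) is constant on each of the index sets $I^{\pm}_k,J_k$ and is read off from $(1+n-s)/2$ modulo $2$. For $m^{\textnormal{II}}_1$ we use that $p\mapsto t^np(-1/t)$ preserves parity (for $n$ even) and reverses the coefficient sequence of a degree-$n$ polynomial. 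Combining this with Lemma \ref{lem:an1} and the truncation degrees of the hypergeometric series defining $a_{[s;n]}$ and $b_{[s;n]}$, one finds: (i) when $a_{[s;n]}$ and $b_{[s;n]}$ have opposite parity — the generic case $\Sol_{\textnormal{II}}(s;n)=\C a_{[s;n]}$ and the cases $s\in I^{+}_k$ — the lines $\C a_{[s;n]}$ and $\C b_{[s;n]}$ are separately $M$-stable $m^{\textnormal{II}}_1$-eigenlines whose eigenvalues are the leading coefficients of $a_{[s;n]}$ and $b_{[s;n]}$; (ii) when $a_{[s;n]}$ and $b_{[s;n]}$ have the same parity — the cases $s\in I^{-}_k$ — the inequality $\deg a_{[s;n]}<n=\deg b_{[s;n]}$ forces $m^{\textnormal{II}}_1$ to act anti-diagonally in the basis $(a_{[s;n]},b_{[s;n]})$, and $C(s;n)$ of \eqref{eqn:Csn} is precisely the normalization for which $c^{\pm}_{[s;n]}=a_{[s;n]}\pm C(s;n)b_{[s;n]}$ are the $(\pm1)$-eigenvectors of $m^{\textnormal{II}}_1$; (iii) when $\Sol_{\textnormal{II}}(s;n)=\C b_{[s;n]}$ — the cases $s\in J_k$ — $\C b_{[s;n]}$ is $M$-stable and $m^{\textnormal{II}}_1$ acts by the trailing-to-leading coefficient ratio of $b_{[s;n]}$. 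In every instance the relevant leading and trailing coefficients are products of shifted factorials arising from the hypergeometric series, and they collapse to $\pm1$ (or to $C(s;n)^{-1}$) after an elementary pairing of their factors. Reading the resulting pairs of $m^{\textnormal{II}}_1$- and $m^{\textnormal{II}}_2$-eigenvalues against Table \ref{table:char2} yields the characters and the assignment to $\C a_{[s;n]}$, $\C b_{[s;n]}$, $\C c^{\pm}_{[s;n]}$ asserted in (1) and (3).

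The main work is the bookkeeping in steps (ii)--(iii) for $n$ even: one must track the parity and the precise truncation degree of $a_{[s;n]}$ and $b_{[s;n]}$ through the four residue classes $n\bmod4$ and the index sets $I^{\pm}_k,J_k$ (including the degenerate subcase $n\equiv2\pmod4$, $s=-1$, which is why \eqref{eqn:Csn} is stated separately there), and — crucially — verify that the explicit ratio of shifted factorials in \eqref{eqn:Csn} equals $\mathrm{lc}(b_{[s;n]})^{-1}$, equivalently $\mathrm{lc}(a_{[s;n]})$, so that it is $c^{+}_{[s;n]}$, and not $c^{-}_{[s;n]}$, that carries $\pp$ for $n\equiv0$ and $\mm$ for $n\equiv2\pmod4$. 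As a consistency check for the generic and $I^{+}_k$ cases, at $s=0$ the decompositions $\CSol(0;\sigma)_K$ of \cite[Thm.\ 1.6]{KuOr19} force $\Sol_{\textnormal{II}}(0;n)\simeq\pp$ for $n\equiv0$ and $\simeq\mm$ for $n\equiv2\pmod4$, in agreement with the computation $\mathrm{lc}(a_{[s;n]})=1$ whenever $\deg a_{[s;n]}=n$.
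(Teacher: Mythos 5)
Your proposal is correct and follows essentially the same route as the paper: for $n$ even you determine the characters by computing the action of $m_1^{\textnormal{II}}$ (coefficient reversal under $p(t)\mapsto t^np(-1/t)$) and $m_2^{\textnormal{II}}$ (parity) on $a_{[s;n]}$, $b_{[s;n]}$, $c^{\pm}_{[s;n]}$, with the degree drop $\deg a_{[s;n]}=\tfrac{n+s-1}{2}<n$ of Lemma \ref{lem:an1} forcing the anti-diagonal action and the normalization $C(s;n)$ exactly as in Lemma \ref{lem:C2} and Proposition \ref{prop:C2}. The only genuine difference is the odd case, where the paper defers to the explicit computation of \cite[Prop.\ 6.11]{KuOr19}, while you observe more cleanly that $\pi_n(-I)=-\mathrm{id}$ makes the two-dimensional space $\Sol_{\textnormal{II}}(s;n)$ a genuine $Q_8$-module, hence isomorphic to $\mathbb{H}$ since $Q_8$ has no genuine characters; both arguments are valid.
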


We prove Theorem \ref{thm:Sol2} by considering the following cases
separately.

\begin{itemize}

\item Case 1: $n$ is odd.

\item Case 2: $n$ is even. Let $k\in \{0,2\}$.

\begin{itemize}
\item Case 2a: $n\equiv k \Mod 4$ and 
$s \in \C\backslash (I_k^-\cup J_k)$ 

\item Case 2b: $n\equiv k \Mod 4$ and
$s \in I_k^+\cup J_k$

\item Case 2c: $n\equiv k \Mod 4$ and
$s \in I_k^-$

\end{itemize}

\end{itemize}

\subsubsection{Case 1}
We start with the case that $\Sol_{\textnormal{II}}(s;n)$ is a two-dimensional 
representation of $M$.

\begin{lem}\label{lem:A2}
Suppose that $n \equiv k \Mod 4$ and $s \in I_k$ for $k = 1, 3$.
Then, as an $M$-representation, we have
\begin{equation*}
\C a_{[s;n]}(t) \oplus \C b_{[s;n]}(t) \simeq \mathbb{H}.
\end{equation*}
\end{lem}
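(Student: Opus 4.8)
The plan is to use that $M\simeq Q_8$ has exactly one two-dimensional irreducible representation, namely $\mathbb{H}$, and that among \emph{all} irreducible representations of $M$ the module $\mathbb{H}$ is singled out by the property that the central element $-m_0^{\textnormal{II}}=-\mathrm{id}\in SU(2)$ acts nontrivially: each character $(\eps,\eps')$ is, by its very definition \eqref{eqn:charM}, insensitive to the overall sign and hence sends $-m_0^{\textnormal{II}}$ to $+1$. Consequently, to prove that the two-dimensional $M$-module $\C a_{[s;n]}(t)\oplus\C b_{[s;n]}(t)$ is isomorphic to $\mathbb{H}$, it suffices to verify that this space really is two-dimensional and $M$-stable and that $-m_0^{\textnormal{II}}$ acts on it by $-\mathrm{id}$.

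\textbf{Steps.} First I would record that, for $n\equiv k\Mod 4$ and $s\in I_k$ with $k\in\{1,3\}$, Propositions \ref{prop:an} and \ref{prop:bn} give $a_{[s;n]}(t),b_{[s;n]}(t)\in\Pol_n[t]$ with $b_{[s;n]}(t)\neq a_{[s;n]}(t)$, while \eqref{eqn:Tab} tells us that $a_{[s;n]}(t)$ and $b_{[s;n]}(t)$ are a fundamental system of solutions, hence linearly independent; so $\Sol_{\textnormal{II}}(s;n)=\C a_{[s;n]}(t)\oplus\C b_{[s;n]}(t)$ is genuinely two-dimensional, and it is $M$-stable by Lemma \ref{lem:SolKO} specialized to the split case $K\cap M=M$. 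Second, I would compute the action of the central element: since $m_0^{\textnormal{II}}=m_0^{\textnormal{I}}=\mathrm{id}$ by \eqref{eqn:mTKO2}, the element $-m_0^{\textnormal{II}}=-\mathrm{id}\in SU(2)$ acts on $\Pol_n[t]$ via $\pi_n$ by $p(t)\mapsto(-1)^n p(t)$ (immediate from \eqref{eqn:pin}), which is $-p(t)$ because $n$ is odd; hence $-m_0^{\textnormal{II}}$ acts on the submodule $\Sol_{\textnormal{II}}(s;n)$ by $-\mathrm{id}$ as well. Third, I would conclude: a two-dimensional $M$-module on which $-m_0^{\textnormal{II}}$ acts by $-\mathrm{id}$ can contain no character of $M$ as a submodule (characters send $-m_0^{\textnormal{II}}$ to $1\neq-1$), so by complete reducibility it is a sum of copies of the unique non-character irreducible $\mathbb{H}$, and comparing dimensions yields $\C a_{[s;n]}(t)\oplus\C b_{[s;n]}(t)\simeq\mathbb{H}$.

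\textbf{Main obstacle.} There is no serious difficulty here; the argument is essentially a character-theoretic observation about $Q_8$. The only points deserving care are the two-dimensionality claim, which must be pulled from the linear independence packaged in \eqref{eqn:Tab} together with $b_{[s;n]}(t)\neq a_{[s;n]}(t)$ from Proposition \ref{prop:bn}, and the bookkeeping needed to be sure that the element acting as $-\mathrm{id}$ on $\Pol_n[t]$ is indeed the central element of $M$ (this is clear once we use $m_0^{\textnormal{II}}=\mathrm{id}$, so $-m_0^{\textnormal{II}}=-I$ is central in $M\simeq Q_8$).
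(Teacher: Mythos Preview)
Your proof is correct and takes a genuinely different, more conceptual route than the paper. The paper's argument first observes that $a_{[s;n]}(t)$ is even and $b_{[s;n]}(t)$ is odd (since the exponent $\frac{1+n-s}{2}$ is odd for $n$ odd and $s\in I_k$), and then defers to \cite[Prop.\ 6.11]{KuOr19}, where the analogous statement is proved by explicitly computing the action of the generators $m_1^{\textnormal{I}}, m_2^{\textnormal{I}}, m_3^{\textnormal{I}}$ on the even/odd basis and recognizing the resulting $2\times 2$ matrices as those of $\mathbb{H}$.

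Your argument bypasses all of this by looking only at the central element $-m_0^{\textnormal{II}}=-I$: since $n$ is odd, $\pi_n(-I)=(-1)^n\,\mathrm{id}=-\mathrm{id}$ on $\Pol_n[t]$, while every one-dimensional character of $M$ factors through $M/\{\pm I\}$ by \eqref{eqn:charM} and so sends $-I$ to $+1$. Complete reducibility of $M\simeq Q_8$ then forces the two-dimensional $M$-module $\Sol_{\textnormal{II}}(s;n)$ to be $\mathbb{H}$. This is cleaner and self-contained; it avoids both the external reference and any explicit matrix computation. The paper's approach, on the other hand, yields as a by-product the explicit action of each $m_j$ on the basis $\{a_{[s;n]},b_{[s;n]}\}$, which is not needed here but is the kind of information used in the parallel even-$n$ cases. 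Either way, the two-dimensionality input you cite (Propositions \ref{prop:an}, \ref{prop:bn} together with \eqref{eqn:Tab}, or equivalently Theorem \ref{thm:Sol1}) and the $M$-stability from Lemma \ref{lem:SolKO} are exactly what is required.
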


\begin{proof}
In this case $a_{[s;n]}(t)$ and $b_{[s;n]}(t)$ are even and odd functions,
respectively. Then the assertion can be shown by essentially the same 
argument as the one for \cite[Prop.\ 6.11]{KuOr19} by replacing
$u_n(t)$ and $v_n(t)$ with
$a_{[s;n]}(t)$ and $b_{[s;n]}(t)$ , respectively.
Hence we omit the proof.
\end{proof}

\subsubsection{Cases 2a}
We next consider the characters on $\C a_{[s;n]}(t)$.

\begin{lem}\label{lem:A1}
Suppose that $n \equiv k \Mod 4$ and 
$s \in \C\backslash (I_k^-\cup J_k)$ for $k = 0, 2$,
Then $M$ acts on $\C a_{[s;n]}(t)$ as a character.
\end{lem}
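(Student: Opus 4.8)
The plan is to prove that the line $\C a_{[s;n]}(t)$ is $M$-stable inside $\Pol_n[t]$; since it is one-dimensional, $M$ then necessarily acts on it through a character, which is the assertion. To this end I would first record that $a_{[s;n]}(t)$ is an \emph{even} polynomial: by its definition \eqref{eqn:an} it is a power series in $t^2$, and it lies in $\Pol_n[t]$ by Proposition \ref{prop:an}, whose hypothesis for $n\equiv k\Mod 4$ ($k\in\{0,2\}$) is just $s\notin J_k$, which holds under our assumption $s\in\C\backslash(I_k^-\cup J_k)$. Next I would check that the subspace $\Pol_n[t]^{\mathrm{ev}}$ of even polynomials of degree $\leq n$ is $M$-stable whenever $n$ is even. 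Since $-I$ acts on $\Pol_n[t]$ by $(-1)^n=1$, it is enough to verify this on the two generators $m_1^{\textnormal{II}},m_2^{\textnormal{II}}$ of $M\simeq Q_8$: by \eqref{eqn:pi-wMT}, $m_2^{\textnormal{II}}$ sends $p(t)$ to $(\sqrt{-1})^np(-t)$, which is even if $p$ is, while $m_1^{\textnormal{II}}$ sends $p(t)=\sum_i a_it^{2i}$ to $t^np(-1/t)=\sum_i a_it^{\,n-2i}$, again even because $n$ is even. As $\Sol_{\textnormal{II}}(s;n)$ is an $M$-subrepresentation of $\Pol_n[t]$ (Lemma \ref{lem:SolKO}, with $K\cap M=M$), the intersection $\Sol_{\textnormal{II}}(s;n)\cap\Pol_n[t]^{\mathrm{ev}}$ is then $M$-stable as well.

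It remains to identify this intersection with $\C a_{[s;n]}(t)$, and this is the step that genuinely uses the earlier classification. By Theorem \ref{thm:Sol1}, for $n\equiv k\Mod 4$ and $s\in\C\backslash(I_k^-\cup J_k)$ one has either $\Sol_{\textnormal{II}}(s;n)=\C a_{[s;n]}(t)$ — in which case the identification is immediate, $a_{[s;n]}(t)$ being a nonzero even polynomial — or $s\in I_k^+$ and $\Sol_{\textnormal{II}}(s;n)=\C a_{[s;n]}(t)\oplus\C b_{[s;n]}(t)$. In the latter case I would determine the parity of the exponent $\tfrac{1+n-s}{2}$ of $t$ occurring in $b_{[s;n]}(t)$: for $s=3+4j\in I_0^+$ it equals $\tfrac{n}{2}-1-2j$, and for $s=1+4j\in I_2^+$ it equals $\tfrac{n}{2}-2j$, both odd since $\tfrac{n}{2}$ is even for $4\mid n$ and odd for $n\equiv 2\Mod 4$. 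Hence $b_{[s;n]}(t)$ is an odd polynomial and the even part of $\C a_{[s;n]}(t)\oplus\C b_{[s;n]}(t)$ is exactly $\C a_{[s;n]}(t)$. In all cases, therefore, $\C a_{[s;n]}(t)=\Sol_{\textnormal{II}}(s;n)\cap\Pol_n[t]^{\mathrm{ev}}$ is $M$-stable, which proves the lemma.

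I expect the only nonroutine point to be this last identification of the even part of $\Sol_{\textnormal{II}}(s;n)$: it needs Theorem \ref{thm:Sol1} together with the oddness of $b_{[s;n]}(t)$ in the two-dimensional case $s\in I_k^+$, and it is precisely there that removing $I_k^-$ and $J_k$ from the range of $s$ is used. The rest is parity bookkeeping based on the explicit action \eqref{eqn:pi-wMT} of $M$ on $\Pol_n[t]$.
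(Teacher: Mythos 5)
Your proof is correct and follows essentially the same route as the paper's: both arguments split on whether $\Sol_{\textnormal{II}}(s;n)$ is one-dimensional (via Theorem \ref{thm:Sol1}), in which case $M$-stability of the line is automatic from Lemma \ref{lem:SolKO}, or two-dimensional with $s\in I_k^+$, in which case one checks that $a_{[s;n]}(t)$ is even, $b_{[s;n]}(t)$ is odd, and the $M$-action \eqref{eqn:pi-wMT} preserves parity for $n$ even. Your packaging of the parity argument as ``$\C a_{[s;n]}(t)$ is the even part of the $M$-stable space $\Sol_{\textnormal{II}}(s;n)$'' is just a reformulation of the paper's observation that the action preserves parities.
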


\begin{proof}
We only give a proof for the case $k=0$, namely,
$n\equiv 0 \Mod 4$ and $s \in \C \backslash (I_0^-\cup J_0)$;
the other case can be shown similarly.
As  
\begin{equation*}
\C \backslash (I_0^-\cup J_0)
=( \C \backslash (I_0^+\cup I_0^- \cup J_0)) \cup I_0^+,
\end{equation*}
we consider the cases 
$s\in \C \backslash (I_0^+\cup I_0^- \cup J_0)$ and 
$s \in I_0^+$, separately.

First suppose that $s\in \C \backslash (I_0^+\cup I_0^- \cup J_0)$.
By Theorem \ref{thm:Sol1}, we have $\Sol_{\textnormal{II}}(s;n) = \C a_{[s;n]}(t)$.
Since 
$\Sol_{\textnormal{II}}(s;n)$ is an $M$-representation by
Lemma \ref{lem:SolKO},
the assertion clearly holds for this case.
We next suppose that $s \in I_0^+$. 
In this case we have
$\Sol_{\textnormal{II}}(s;n) = \C a_{[s;n]}(t) \oplus \C b_{[s;n]}(t)$ by Theorem \ref{thm:Sol1}.
As the exponent $\frac{1+n-s}{2}$ for $t^{\frac{1+n-s}{2}}$
of $b_{[s;n]}(t)$ is odd, 
$a_{[s;n]}(t)$ and $b_{[s;n]}(t)$ are 
an even and odd function, respectively. 
The transformation laws
\eqref{eqn:pi-wMT}
imply that the action of $M$ on 
$\Pol_n[t]$ preserves the parities of the polynomials
for $n$ even. Hence $M$ acts both
on $\C a_{[s;n]}(t)$ and $\C b_{[s;n]}(t)$ as a character.
\end{proof}

We next determine the characters on $a_{[s;n]}(t)$ explicitly.
It follows from Lemma \ref{lem:an1} that
$a_{[s;n]}(t)$ has degree $\deg a_{[s;n]}(t) =n$
for $(s,n)$
with 
$n \equiv k \Mod 4$ and 
$s \in \C \backslash (I_k^- \cup J_k)$ for $k=0,2$.
Thus, in this case, the hypergeometric polynomial 
$a_{[s;n]}(t)$ is given as
\begin{equation}\label{eqn:ansum}
a_{[s;n]}(t) = 
\sum_{j=0}^{n/2}A_j(s;n)t^{2j}
\end{equation}
with
\begin{equation*}
A_j(s;n)
=\frac{\left(-\frac{n}{2}, j\right)
\left(-\frac{n+s-1}{4}, j\right)}
{\left(\frac{3-n+s}{4},j\right)}
\cdot
\frac{1}{j!},
\end{equation*}
where 
$(\ell,m)$ stands for the shifted factorial.
Recall from \eqref{eqn:mT} that 
$M = \{ \pm m_j^{\textnormal{II}} : j = 0, 1, 2, 3\}$ with
\begin{equation*}
m^{\textnormal{II}}_0=
\begin{pmatrix}
1 & 0\\
0 & 1\\
\end{pmatrix},\;
m^{\textnormal{II}}_1=
\begin{pmatrix}
0 & 1 \\
-1 & 0
\end{pmatrix},\;
m^{\textnormal{II}}_2=
\begin{pmatrix}
\sqrt{-1} & 0\\
0 & -\sqrt{-1}\\
\end{pmatrix},\;
m^{\textnormal{II}}_3=
-
\begin{pmatrix}
0 & \sqrt{-1} \\
\sqrt{-1} & 0
\end{pmatrix}.
\end{equation*}

\noindent
As $m_3^{\textnormal{II}} = m_1^{\textnormal{II}}m_2^{\textnormal{II}}$, it suffices to check the actions 
of $m_1^{\textnormal{II}}$ and $m_2^{\textnormal{II}}$
on $a_{[s;n]}(t) = \sum_{j=0}^{n/2}A_j(s;n)t^{2j}$.

\begin{prop}\label{prop:A2}
Under the same hypothesis in Lemma \ref{lem:A1},
the character $(\eps,\eps')$ on $\C a_{[s;n]}(t)$ is given as follows.
\begin{enumerate}[{\normalfont (1)}]
\item $n \equiv 0 \Mod 4$ and $s \in \C \backslash (I_0^- \cup J_0):$
\begin{equation*}
\C a_{[s;n]}(t) \simeq \pp.
\end{equation*}

\item $n \equiv 2 \Mod 4$ and $s \in \C \backslash (I_2^- \cup J_2):$
\begin{equation*}
\C a_{[s;n]}(t) \simeq \mm.
\end{equation*}
\end{enumerate}
\end{prop}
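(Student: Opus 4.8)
The plan is to pin down the character $\chi$ by which $M$ acts on the line $\C a_{[s;n]}(t)$; such a character exists by Lemma \ref{lem:A1}, and by Table \ref{table:char2} it is $\{\pm 1\}$-valued and is determined by its values on $m_1^{\textnormal{II}}$ and $m_2^{\textnormal{II}}$, whose images generate $M/\{\pm 1\}$. So it suffices to evaluate $\chi(m_1^{\textnormal{II}})$ and $\chi(m_2^{\textnormal{II}})$ using the explicit transformation laws \eqref{eqn:pi-wMT}. First I would record that under the hypothesis $n \equiv k \Mod 4$, $s \in \C \backslash (I_k^- \cup J_k)$ (with $k \in \{0,2\}$), the function $a_{[s;n]}(t)$ is an \emph{even} polynomial of degree exactly $n$: it is visibly a polynomial in $t^2$; it lies in $\Pol_n[t]$ since $s \notin J_k$ (Proposition \ref{prop:an}); and $\deg a_{[s;n]}(t) = n$ since $s \notin I_k^-$ (Lemma \ref{lem:an1}). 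Hence we may write $a_{[s;n]}(t) = \sum_{j=0}^{n/2} A_j(s;n)\,t^{2j}$ with $A_0(s;n) = 1$ and $A_{n/2}(s;n) \neq 0$.

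Next I would compute the two character values. By \eqref{eqn:pi-wMT}, $m_2^{\textnormal{II}}$ sends $a_{[s;n]}(t)$ to $(\sqrt{-1})^n a_{[s;n]}(-t) = (\sqrt{-1})^n a_{[s;n]}(t)$ by evenness, so $\chi(m_2^{\textnormal{II}}) = (\sqrt{-1})^n = (-1)^{n/2}$. Again by \eqref{eqn:pi-wMT}, $m_1^{\textnormal{II}}$ sends $a_{[s;n]}(t)$ to $t^n a_{[s;n]}(-1/t) = \sum_{i=0}^{n/2} A_{n/2-i}(s;n)\,t^{2i}$, whose constant term is $A_{n/2}(s;n)$; since $m_1^{\textnormal{II}}$ acts on the line by the scalar $\chi(m_1^{\textnormal{II}})$, comparing constant terms with $\chi(m_1^{\textnormal{II}})\,a_{[s;n]}(t)$ gives $\chi(m_1^{\textnormal{II}}) = A_{n/2}(s;n)$.

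The one computational step left — and the only place requiring more than bookkeeping — is to show $A_{n/2}(s;n) = 1$. Writing $a_{[s;n]}(t) = F(-\tfrac{n}{2}, b, c; t^2)$ with $b = -\tfrac{n+s-1}{4}$ and $c = \tfrac{3-n+s}{4}$, the top hypergeometric coefficient is $A_{n/2}(s;n) = \frac{(-n/2)_{n/2}\,(b)_{n/2}}{(c)_{n/2}\,(n/2)!}$, where $(x)_m$ is the rising factorial. Here $(-n/2)_{n/2} = (-1)^{n/2}(n/2)!$, and the key observation is the reflection identity $(c)_{n/2} = (-1)^{n/2}(b)_{n/2}$: since $b + c = \tfrac{2-n}{2}$, the length-$\tfrac{n}{2}$ progressions $c, c+1, \dots, c+\tfrac{n}{2}-1$ and $-b, -b-1, \dots, -b-\tfrac{n}{2}+1$ have the same set of terms, so $(c)_{n/2} = \prod_{\ell=0}^{n/2-1}(c+\ell) = \prod_{\ell=0}^{n/2-1}\bigl(-(b+\ell)\bigr) = (-1)^{n/2}(b)_{n/2}$. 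Substituting gives $A_{n/2}(s;n) = 1$, hence $\chi(m_1^{\textnormal{II}}) = 1$. Finally I would read off Table \ref{table:char2}: if $n \equiv 0 \Mod 4$ then $(-1)^{n/2}=1$, so $\chi(m_1^{\textnormal{II}}) = \chi(m_2^{\textnormal{II}}) = 1$ and $\chi = \pp$; if $n \equiv 2 \Mod 4$ then $(-1)^{n/2}=-1$, so $\chi(m_1^{\textnormal{II}}) = 1$, $\chi(m_2^{\textnormal{II}}) = -1$ and $\chi = \mm$. The hard part is really just spotting the Pochhammer reflection identity that collapses the leading coefficient to $1$; the rest is the transformation laws \eqref{eqn:pi-wMT} and the character table.
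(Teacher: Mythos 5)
Your proposal is correct and follows essentially the same route as the paper: use Lemma \ref{lem:A1} to know the action is scalar, apply the transformation laws \eqref{eqn:pi-wMT} for $m_1^{\textnormal{II}}$ and $m_2^{\textnormal{II}}$, and reduce everything to the identity $A_{n/2}(s;n)=1$. The only difference is that you spell out the step the paper dismisses as "an easy computation" — your Pochhammer reflection argument, using $b+c=\tfrac{2-n}{2}$ to get $(c)_{n/2}=(-1)^{n/2}(b)_{n/2}$ (which is nonzero precisely because $s\notin I_k^-\cup J_k$) — and you treat the two congruence classes of $n$ uniformly via the single sign $(-1)^{n/2}$, whereas the paper does case (1) and declares case (2) similar.
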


\begin{proof}
Since the second assertion can be shown similarly,
we only give a proof for the first assertion.
Let $n \equiv 0 \Mod 4$ and 
$s \in \C \backslash (I_0^- \cup J_0)$.
We wish to show that both 
$m_1^{\textnormal{II}}$ and $m_2^{\textnormal{II}}$ act trivially.
First, it is easy to see that the action of $m_2^{\textnormal{II}}$ is trivial.
Indeed, since $n \equiv 0 \Mod 4$
and $a_{[s;n]}(t)$ is an even function,
the transformation law of \eqref{eqn:pi-wMT} shows that
\begin{equation*}
m_2^{\textnormal{II}}\colon a_{[s;n]}(t)
\To
(\sqrt{-1})^{n}a_{[s;n]}(-t)
=a_{[s;n]}(t).
\end{equation*}
In order to show that 
$m_1^{\textnormal{II}}$ also acts trivially, observe that, 
by \eqref{eqn:pi-wMT} and \eqref{eqn:ansum}, 
we have
\begin{equation*}
m_1^{\textnormal{II}} \colon a_{[s;n]}(t)
\To
t^na_{[s;n]}\left(-\frac{1}{t}\right)
= 
\sum_{j=0}^{n/2}A_{\frac{n}{2}-j}(s;n)t^{2j}.
\end{equation*}
On the other hand, 
by Lemma \ref{lem:A1} and 
Table \ref{table:char2}, $m_1^{\textnormal{II}}$ acts on $a_{[s;n]}(t)$ by $\pm 1$.
Therefore,
\begin{equation*}
\sum_{j=0}^{n/2}A_{\frac{n}{2}-j}(s;n)t^{2j}=
\pm \sum_{j=0}^{n/2}A_j(s;n)t^{2j}.
\end{equation*}
An easy computation shows that we have
$A_{\frac{n}{2}}(s;n)=1=A_0(s;n)$, which forces
$t^na_{[s;n]}\left(-\frac{1}{t}\right) = a_{[s;n]}(t)$.
Hence $m_1^{\textnormal{II}}$ also acts on $\C a_{[s;n]}(t)$ trivially.
\end{proof}

\subsubsection{Cases 2b}
Next we consider the characters on $\C b_{[s;n]}(t)$.

\begin{prop}\label{prop:B2}
Suppose that $n \equiv k \Mod 4$ and 
$s \in I_k^+ \cup J_k$ for $k = 0, 2$.
Then $M$ acts on $\C b_{[s;n]}(t)$ as a character as follows.
\begin{enumerate}[{\normalfont (1)}]
\item $n \equiv 0 \Mod 4$ and $s \in I_0^+ \cup J_0:$
\begin{equation*}
\C b_{[s;n]}(t) \simeq 
\begin{cases}
\pmi & \textnormal{if $s \in I^+_0$},\\
\pp & \textnormal{if $s \in J_0$}.
\end{cases}
\end{equation*}

\item $n \equiv 2 \Mod 4$ and $s \in I_2^+ \cup J_2:$
\begin{equation*}
\C b_{[s;n]}(t) \simeq
\begin{cases}
\mip & \textnormal{if $s \in I^+_2$},\\
\mm & \textnormal{if $s \in J_2$}.\\
\end{cases}
\end{equation*}
\end{enumerate}
\end{prop}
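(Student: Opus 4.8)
The plan is to determine, for $n\equiv k\Mod 4$ with $k\in\{0,2\}$ and $s\in I_k^+\cup J_k$, the character by which $M=\{\pm m_0^{\textnormal{II}},\pm m_1^{\textnormal{II}},\pm m_2^{\textnormal{II}},\pm m_3^{\textnormal{II}}\}$ acts on the one-dimensional space $\C b_{[s;n]}(t)$, where $b_{[s;n]}(t)=t^{\frac{1+n-s}{2}}F(-\frac{n+s-1}{4},-\frac{s-1}{2},\frac{5+n-s}{4};t^2)$. First I would record that for these $(s,n)$ we have $b_{[s;n]}(t)\in\Pol_n[t]$ by Proposition~\ref{prop:bn}, and that $\C b_{[s;n]}(t)$ is an $M$-subrepresentation of $\Sol_{\textnormal{II}}(s;n)$ by Lemma~\ref{lem:SolKO} together with the decompositions in Theorem~\ref{thm:Sol1}. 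Since $n$ is even and, as noted in the proof of Lemma~\ref{lem:A1}, the transformation laws \eqref{eqn:pi-wMT} preserve parity of polynomials for $n$ even, the space spanned by $b_{[s;n]}(t)$ is genuinely $M$-stable; because $M/[M,M]$ consists of the four characters in Table~\ref{table:char2}, it suffices to compute the scalars by which $m_1^{\textnormal{II}}$ and $m_2^{\textnormal{II}}$ act (then $m_3^{\textnormal{II}}=m_1^{\textnormal{II}}m_2^{\textnormal{II}}$ is forced).

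The action of $m_2^{\textnormal{II}}$ is immediate from \eqref{eqn:pi-wMT}: $m_2^{\textnormal{II}}\colon p(t)\mapsto(\sqrt{-1})^n p(-t)$, so since $n\in 4\Z_{\geq0}$ for $k=0$ (resp.\ $n\in 2+4\Z_{\geq0}$ for $k=2$) and $b_{[s;n]}(t)$ has definite parity equal to that of the exponent $\frac{1+n-s}{2}$, which is odd for $s\in I_k^+$ and even for $s\in J_k$, we read off that $m_2^{\textnormal{II}}$ acts by $+1$ when $\frac{1+n-s}{2}$ is even and by $(\sqrt{-1})^n\cdot(-1)^{\mathrm{odd}}=(\sqrt{-1})^n$ when it is odd. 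In the $k=0$ case this gives $+1$ for $s\in J_0$ and $-1$ for $s\in I_0^+$; in the $k=2$ case $(\sqrt{-1})^n=-1$, giving $-1$ for $s\in J_2$ and $+1$ for $s\in I_2^+$. For $m_1^{\textnormal{II}}$, which by \eqref{eqn:pi-wMT} sends $p(t)\mapsto t^n p(-1/t)$, I would use the palindromic symmetry of the coefficient sequence of $b_{[s;n]}(t)$: writing $b_{[s;n]}(t)=\sum_j B_j(s;n)\,t^{\frac{1+n-s}{2}+2j}$, the action of $m_1^{\textnormal{II}}$ reverses the coefficient list, and comparing the top and bottom coefficients (both explicitly computable shifted-factorial ratios, with leading and trailing coefficients equal up to the sign $\pm1$ already known) pins down the scalar exactly as in the argument for Proposition~\ref{prop:A2}. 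Cross-referencing against Table~\ref{table:char2} then identifies the character: $\pmi$ on $I_0^+$, $\pp$ on $J_0$, $\mip$ on $I_2^+$, $\mm$ on $J_2$.

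The main obstacle I anticipate is the bookkeeping for $m_1^{\textnormal{II}}$: one must verify that the ratio of the bottom coefficient of $b_{[s;n]}(t)$ to its top coefficient is exactly $+1$ (not merely $\pm1$), which requires the explicit shifted-factorial expression for the hypergeometric coefficients $B_j(s;n)$ and a small identity of the form $\bigl(\text{ratio of }(\ell,m)\text{'s}\bigr)=1$ at the extreme index $j=\frac{1}{2}(n-\tfrac{1+n-s}{2})$. Once that normalization is checked the sign is determined by $m_2^{\textnormal{II}}$ alone, and the conclusion follows. An alternative, which I would mention as a shortcut, is to invoke the relation \eqref{eqn:au0}--\eqref{eqn:Csn} between the hypergeometric solutions and the Heun functions $u_{[s;n]},v_{[s;n]}$ via the Cayley transform $\pi_n(k_0)$ and the commuting diagram for $m_j^{\textnormal{II}}$ versus $m_j^{\textnormal{I}}$ in Section~\ref{subsec:Omega12}, transporting the parity computation to the Heun side where $v_{[s;n]}(t)=t\,Hl(\cdots;t^2)$ is manifestly odd; but the direct computation above is self-contained and shorter.
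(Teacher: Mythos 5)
Your proposal is correct and follows exactly the route the paper intends: the paper omits the proof of Proposition \ref{prop:B2}, stating only that it is analogous to Lemma \ref{lem:A1} and Proposition \ref{prop:A2}, and your argument is precisely that analogue (parity of the exponent $\tfrac{1+n-s}{2}$ determines the $m_2^{\textnormal{II}}$-scalar, and the coefficient-reversal under $m_1^{\textnormal{II}}\colon p(t)\mapsto t^np(-1/t)$ together with the check that the extreme coefficients $B_0$ and $B_{j_{\max}}$ both equal $1$ determines the $m_1^{\textnormal{II}}$-scalar, after which Table \ref{table:char2} identifies the character). The only blemish is the displayed intermediate formula ``$(\sqrt{-1})^n\cdot(-1)^{\mathrm{odd}}=(\sqrt{-1})^n$'' for the $m_2^{\textnormal{II}}$-action, which should read $-(\sqrt{-1})^n$; the four case-by-case values you then state ($+1$ on $J_0$, $-1$ on $I_0^+$, $+1$ on $I_2^+$, $-1$ on $J_2$) are nevertheless all correct, so this does not affect the conclusion.
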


\begin{proof}
Since the assertions can be shown similarly to 
Lemma \ref{lem:A1} and Proposition \ref{prop:A2}, 
we omit the proof. We remark that it is already shown in 
the proof of Lemma \ref{lem:A1} that $M$ acts on
$\C b_{[s;n]}(t)$ as a character for 
$(s,n)$ with $n\equiv 0 \Mod 4$ and $s \in I_0^+$. 
\end{proof}

\subsubsection{Case 2c}\label{subsub:Case2C}

Now we consider $\C c^{\pm}_{[s;n]}(t)$ 
for $n \equiv k \Mod 4$ and $s \in I_k^-$ 
for $k = 0, 2$, where $\C c^{\pm}_{[s;n]}(t)$ is given as
\begin{equation*}
c^{\pm}_{[s;n]}(t)=a_{[s;n]}(t)\pm C(s;n)b_{[s;n]}(t)
\end{equation*}
with $C(s;n)$ in \eqref{eqn:Csn}.

Observe that, in this case,
$a_{[s;n]}(t)$ has degree $\deg a_{[s;n]}(t)= \frac{n+s-1}{2}<n$
by Lemma \ref{lem:an1}. Therefore, $a_{[s;n]}(t)$ is of the form
\begin{equation}\label{eqn:ansum2}
a_{[s;n]}(t) = 
\sum_{j=0}^{(n+s-1)/4}\wA_j(s;n)t^{2j}.
\end{equation}
To give an explicit formula of $\wA_j(s;n)$,
first remark that when 
$n\equiv 2 \Mod 4$ and $s=-1$, we have
$-\frac{n+s-1}{4}=\frac{3-n+s}{4} \in \Z_{\geq 0}$.
Namely, for $n=4\ell+2$ and $s=-1$, 
the hypergeometric polynomial $a_{[-1;4\ell+2]}(t)$ is 
\begin{equation}\label{eqn:an2C}
a_{[-1;4\ell+2]}(t) = F(-2\ell-1,-\ell,-\ell;t^2).
\end{equation}
In this paper we regard \eqref{eqn:an2C} as
\begin{equation*}
a_{[-1;4\ell+2]}(t) = \sum_{j=0}^k \frac{(-2\ell-1,j)}{j!} t^{2j}
\end{equation*}
so that $a_{[-1;4\ell+2]}(t)$ and $b_{[-1;4\ell+2]}(t)$ still
form a fundamental solutions to 
the hypergeometric equation \eqref{eqn:Gauss2}.
Therefore $\wA_j(s;n)$ in \eqref{eqn:ansum2} is given as
\begin{equation*}
\wA_j(s;n)
=
\begin{cases}
\frac{\left(-\frac{n}{2},j\right)}{j!} & 
\textnormal{if $n\equiv 2 \Mod 4$ and $s=-1$}\vspace{5pt} \\
\frac{\left(-\frac{n}{2}, j\right)
\left(-\frac{n+s-1}{4}, j\right)}
{\left(\frac{3-n+s}{4},j\right)}
\cdot
\frac{1}{j!} 
& \textnormal{otherwise}.
\end{cases}
\end{equation*}
In particular, by \eqref{eqn:Csn}, we have
\begin{equation}\label{eqn:ACsn}
C(s;n) = \wA_{\frac{n+s-1}{4}}(s;n).
\end{equation}

It follows from \eqref{eqn:bn} that $b_{[s;n]}(t)$ is given as
\begin{equation}\label{eqn:bnsum1}
b_{[s;n]}(t) = t^{\frac{1+n-s}{2}}\sum_{j=0}^{(n+s-1)/4}B_j(s;n)t^{2j}
\end{equation}
with
\begin{equation*}
B_j(s;n)
=\frac{\left(-\frac{n+s-1}{4}, j\right)
\left(-\frac{s-1}{2}, j\right)}
{\left(\frac{5+n-s}{4},j\right)}
\cdot
\frac{1}{j!}.
\end{equation*}

Lemma \ref{lem:C2} below plays a key role to 
determine the $M$-representation
on $\C c^{\pm}_{[s;n]}(t)$.

\begin{lem}\label{lem:C2}
Suppose that $n \equiv k \Mod 4$ and $s \in I_k^-$
for $k=0,2$. Then, 
\begin{equation*}
\pi_n(m_1^{\textnormal{II}})
a_{[s;n]}(t)=
C(s;n)b_{[s;n]}(t).
\end{equation*}
\end{lem}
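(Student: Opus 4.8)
The plan is to compute explicitly how the Weyl group element $m_1^{\textnormal{II}}$ acts on the polynomial $a_{[s;n]}(t)$ and to identify the result with $C(s;n)b_{[s;n]}(t)$ by comparing coefficients. Write $n \equiv k \Mod 4$ with $k \in \{0,2\}$ and $s \in I_k^-$, so by Lemma \ref{lem:an1} the hypergeometric polynomial $a_{[s;n]}(t)$ has degree $\frac{n+s-1}{2}<n$ and is given by the finite sum \eqref{eqn:ansum2}, namely $a_{[s;n]}(t)=\sum_{j=0}^{(n+s-1)/4}\wA_j(s;n)t^{2j}$. By the transformation law \eqref{eqn:pi-wMT}, since $n$ is even,
\begin{equation*}
\pi_n(m_1^{\textnormal{II}})a_{[s;n]}(t)=t^n a_{[s;n]}\!\left(-\tfrac{1}{t}\right)
=\sum_{j=0}^{(n+s-1)/4}\wA_j(s;n)\,t^{\,n-2j}.
\end{equation*}
Reindexing by $i=\frac{n}{2}-j$, this becomes $\sum_{i}\wA_{\frac{n}{2}-i}(s;n)\,t^{2i}$, a polynomial whose lowest-degree term has exponent $n-\frac{n+s-1}{2}=\frac{n-s+1}{2}$, which matches the leading power $t^{\frac{1+n-s}{2}}$ factored out of $b_{[s;n]}(t)$ in \eqref{eqn:bnsum1}. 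So the plan is to prove the coefficient identity
\begin{equation*}
\wA_{\frac{n}{2}-j}(s;n)=C(s;n)\,B_{\,j-\frac{1+n-s}{4}}(s;n)
\end{equation*}
for the appropriate index range (with the convention that the right side is $0$ when the index is out of range), which would give the claim.

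The key steps, in order, are: (1) translate the shifted-factorial formula for $\wA_j(s;n)$ into a closed product form and likewise for $B_j(s;n)$ from \eqref{eqn:bnsum1}; (2) verify the endpoint $j=\frac{n}{2}$, i.e.\ that $\pi_n(m_1^{\textnormal{II}})a_{[s;n]}(t)$ has constant term $\wA_{\frac{n+s-1}{4}}(s;n)=C(s;n)$, which is exactly \eqref{eqn:ACsn}; this pins down the proportionality constant as $C(s;n)$; (3) verify the ratio of consecutive coefficients on both sides agree, i.e.\ show $\wA_{\frac{n}{2}-j-1}(s;n)/\wA_{\frac{n}{2}-j}(s;n)$ equals $B_{i+1}(s;n)/B_i(s;n)$ with $i=j-\frac{1+n-s}{4}$. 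Since both $a_{[s;n]}(t)$ and $b_{[s;n]}(t)$ satisfy the same second-order hypergeometric ODE \eqref{eqn:Gauss2}, a cleaner alternative to a brute-force ratio check is available: $\pi_n(m_1^{\textnormal{II}})a_{[s;n]}(t)$ is again a solution of $d\pi_n^{\textnormal{II}}(D^\flat_s)p(t)=0$ (because $m_1^{\textnormal{II}}\in M$ and $D^\flat_s$ is $M$-equivariant — this is implicit in the commuting diagram after \eqref{eqn:mTKO}), it is a polynomial in $\Pol_n[t]$, and its lowest-order term is $t^{\frac{1+n-s}{2}}$; hence it lies in $\Sol_{\textnormal{II}}(s;n)$ and, having a nonzero $t^{\frac{1+n-s}{2}}$-term, it must be a scalar multiple of $b_{[s;n]}(t)$ by the decomposition \eqref{eqn:abn}. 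Then only the scalar needs identifying, which is step (2).

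The main obstacle will be step (2): pinning down the constant to be exactly $C(s;n)$ as defined in \eqref{eqn:Csn}, including the exceptional case $n\equiv 2\Mod 4$, $s=-1$ where $a_{[s;n]}(t)$ is the degenerate hypergeometric $F(-2\ell-1,-\ell,-\ell;t^2)$ read as the truncated series \eqref{eqn:an2C}. In that degenerate case the generic product formula for $\wA_j$ has a $0/0$ ambiguity, so one must use the stipulated convention for $\wA_j(s;n)$ and check \eqref{eqn:ACsn} directly there; the constant $C(s;n)$ was precisely designed so that \eqref{eqn:ACsn} holds, so this is really a matter of carefully unwinding definitions rather than a genuine difficulty. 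The consecutive-ratio computation in the generic case is routine manipulation of Pochhammer symbols and I would not grind through it in the writeup beyond indicating the ODE-based shortcut above.
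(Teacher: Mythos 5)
Your ``cleaner alternative'' is essentially the paper's proof: one writes $\pi_n(m_1^{\textnormal{II}})a_{[s;n]}(t)=c_1a_{[s;n]}(t)+c_2b_{[s;n]}(t)$ using $\Sol_{\textnormal{II}}(s;n)=\C a_{[s;n]}(t)\oplus\C b_{[s;n]}(t)$ from \eqref{eqn:abn}, kills the $a$-component by a degree count on $t^na_{[s;n]}(-1/t)=\sum_j\wA_j(s;n)t^{n-2j}$, and reads off $c_2=\wA_{\frac{n+s-1}{4}}(s;n)=C(s;n)$ from the lowest coefficient and \eqref{eqn:ACsn}. The only point to tighten is your justification that the image is a multiple of $b_{[s;n]}(t)$: having a nonzero $t^{\frac{1+n-s}{2}}$-term only shows $c_2\neq 0$, whereas what forces $c_1=0$ is that every exponent $n-2j$ appearing in the image satisfies $n-2j\geq \frac{n-s+1}{2}>\frac{n+s-1}{2}=\deg a_{[s;n]}(t)$ (using $s<0$ on $I_k^-$), which is exactly the inequality the paper checks.
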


\begin{proof}
We only show the case of $k=0$; 
the other case can be shown similarly,
including the exceptional case for $n\equiv 2 \Mod 4$ and $s=-1$.
Let $n\equiv 0 \Mod 4$ and $s \in I_0^-$. 
It follows from \eqref{eqn:abn} that
we have 
\begin{equation*}
\Sol_{\textnormal{II}}(s;n)=
\C a_{[s;n]}(t) \oplus \C b_{[s;n]}(t).
\end{equation*}
Therefore there exist some constants $c_1, c_2 \in \C$ 
such that 
\begin{equation}\label{eqn:abc12}
\pi_n(m_1^{\textnormal{II}})a_{[s;n]}(t)
=c_1 a_{[s;n]}(t) + c_2 b_{[s;n]}(t).
\end{equation}
On the other hand, by \eqref{eqn:pi-wMT} and \eqref{eqn:ansum2}, 
we have
\begin{equation}\label{eqn:maA}
\pi_n(m_1^{\textnormal{II}})a_{[s;n]}(t)
=t^n a_{[s;n]}\left(-\frac{1}{t}\right)
=\sum_{j=0}^{(n+s-1)/4}\wA_j(s;n)t^{n-2j}.
\end{equation}
Since
\begin{equation*}
n-\frac{n+s-1}{2} > \frac{n+s-1}{2}=\deg a_{[s;n]}(t)
\end{equation*}
for $n \equiv 0 \Mod 4$ and $s \in I_0^-$, 
all exponents $n-2j$ for $t^{n-2j}$ in \eqref{eqn:maA} is 
$n-2j > \deg a_{[s;n]}(t)$.
Therefore,
\begin{equation*}
\pi_n(m_1^{\textnormal{II}})a_{[s;n]}(t)=c_2 b_{[s;n]}(t).
\end{equation*}
Further, by \eqref{eqn:maA} and \eqref{eqn:bnsum1}, we then have
\begin{equation*}
\wA_{\frac{n+s-1}{4}}(s;n)=c_2B_0(s;n)=c_2.
\end{equation*}
Now \eqref{eqn:ACsn} concludes the assertion.
\end{proof}

\begin{prop}\label{prop:C2}
Suppose that 
$n \equiv k \Mod 4$ and $s \in I_k^-$ 
for $k = 0, 2$.
Then $M$ acts on $\C c^{\pm}_{[s;n]}(t)$ as the following characters.
\begin{enumerate}[{\quad \normalfont (1)}]\item $n \equiv 0 \Mod 4$ and $s \in I_0^-$.
\begin{equation*}
\C c^+_{[s;n]}(t) \simeq \pp
\quad
\textnormal{and}
\quad
\C c^-_{[s;n]}(t) \simeq \mip.
\end{equation*}
\vskip 0.05in

\item $n \equiv 2 \Mod 4$ and $s \in I_2^-$.
\begin{equation*}
\C c^+_{[s;n]}(t) \simeq \mm
\quad
\textnormal{and}
\quad
\C c^-_{[s;n]}(t) \simeq \pmi.
\end{equation*}

\end{enumerate}
\end{prop}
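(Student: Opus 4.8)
The plan is to determine the $M$-character on each line $\C c^{\pm}_{[s;n]}(t)$ by computing how the two elements $m_1^{\textnormal{II}}$ and $m_2^{\textnormal{II}}$ act on it; since $m_3^{\textnormal{II}}=m_1^{\textnormal{II}}m_2^{\textnormal{II}}$ and a character of $M\simeq Q_8$ is determined by its values on $m_1^{\textnormal{II}}$ and $m_2^{\textnormal{II}}$, this suffices, and the isomorphism class is then read off from Table~\ref{table:char2}. Throughout, $n$ is even, so $(\pi_n(m_1^{\textnormal{II}}))^2=\pi_n(-I)$ acts on $\Pol_n[t]$ by $(-1)^n=1$.

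First I would treat $m_1^{\textnormal{II}}$. Lemma~\ref{lem:C2} gives $\pi_n(m_1^{\textnormal{II}})a_{[s;n]}(t)=C(s;n)b_{[s;n]}(t)$, and $C(s;n)\neq 0$ (for instance because $\pi_n(m_1^{\textnormal{II}})$ is invertible, or because by Lemma~\ref{lem:an1} the number $C(s;n)$ is the nonzero leading coefficient of $a_{[s;n]}(t)$). Applying $\pi_n(m_1^{\textnormal{II}})$ once more and using $(\pi_n(m_1^{\textnormal{II}}))^2=\id$ yields $\pi_n(m_1^{\textnormal{II}})b_{[s;n]}(t)=C(s;n)^{-1}a_{[s;n]}(t)$. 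Hence
\begin{equation*}
\pi_n(m_1^{\textnormal{II}})c^{\pm}_{[s;n]}(t)
=C(s;n)b_{[s;n]}(t)\pm a_{[s;n]}(t)
=\pm\,c^{\pm}_{[s;n]}(t),
\end{equation*}
so $m_1^{\textnormal{II}}$ acts by $+1$ on $\C c^{+}_{[s;n]}(t)$ and by $-1$ on $\C c^{-}_{[s;n]}(t)$, in both cases $k=0,2$.

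Next I would treat $m_2^{\textnormal{II}}$. The point to check is that, for $s\in I_k^{-}$, the exponent $\tfrac{1+n-s}{2}$ occurring in $b_{[s;n]}(t)$ is even: when $k=0$ we have $s\equiv 1\Mod 4$ and when $k=2$ we have $s\equiv 3\Mod 4$, so $1+n-s\equiv 0\Mod 4$ in either case (this includes the exceptional sub-case $n\equiv 2\Mod 4$, $s=-1$, where $b_{[-1;n]}(t)=t^{(n+2)/2}F(\cdots)$). Thus $a_{[s;n]}(t)$ and $b_{[s;n]}(t)$ are both even functions of $t$, and the transformation law~\eqref{eqn:pi-wMT} gives $\pi_n(m_2^{\textnormal{II}})f(t)=(\sqrt{-1})^{n}f(t)$ for $f=a_{[s;n]}$ and $f=b_{[s;n]}$, hence $\pi_n(m_2^{\textnormal{II}})c^{\pm}_{[s;n]}(t)=(\sqrt{-1})^{n}c^{\pm}_{[s;n]}(t)$; this factor equals $1$ for $n\equiv 0\Mod 4$ and $-1$ for $n\equiv 2\Mod 4$. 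In particular $\C c^{\pm}_{[s;n]}(t)$ is $M$-stable, and comparing these two actions with Table~\ref{table:char2} gives $\C c^{+}_{[s;n]}(t)\simeq\pp$ and $\C c^{-}_{[s;n]}(t)\simeq\mip$ when $n\equiv 0\Mod 4$, and $\C c^{+}_{[s;n]}(t)\simeq\mm$ and $\C c^{-}_{[s;n]}(t)\simeq\pmi$ when $n\equiv 2\Mod 4$, which is the assertion.

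The argument is routine once Lemma~\ref{lem:C2} is available; the only points requiring care are the non-vanishing of $C(s;n)$ (needed to invert the relation of Lemma~\ref{lem:C2}) and the congruence computation showing $\tfrac{1+n-s}{2}$ is even, together with tracking the exceptional value $s=-1$ when $n\equiv 2\Mod 4$. I do not expect any step to be a genuine obstacle.
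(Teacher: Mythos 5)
Your proof is correct and follows essentially the same route as the paper's: the action of $m_1^{\textnormal{II}}$ is read off from Lemma~\ref{lem:C2} (the paper likewise inverts that relation to get $\pi_n(m_1^{\textnormal{II}})b_{[s;n]}=C(s;n)^{-1}a_{[s;n]}$), the action of $m_2^{\textnormal{II}}$ is settled by the evenness of $a_{[s;n]}$ and $b_{[s;n]}$ via the parity of $\tfrac{1+n-s}{2}$, and the character is identified from Table~\ref{table:char2}. The only cosmetic difference is that the paper writes out only the case $k=0$ and declares $k=2$ analogous, whereas you treat both congruence classes uniformly and make explicit the non-vanishing of $C(s;n)$ and the involutivity of $\pi_n(m_1^{\textnormal{II}})$ on $\Pol_n[t]$ for $n$ even.
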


\begin{proof}
As in Lemma \ref{lem:C2}, we only show the case of $k=0$;
the other case can be treadted similarly 
(including the exceptional case for $n\equiv 2 \Mod 4$ and $s=-1$). 
Let $n\equiv 0 \Mod 4$ and $s \in I_0^-$. 
To show the assertion it suffices to consider only $m_1^{\textnormal{II}}$.
Indeed, since the exponent $\frac{1+n-s}{2}$ for $t^{\frac{1+n-s}{2}}$
in \eqref{eqn:bnsum1} is even, 
$a_{[s;n]}(t)$ and $b_{[s;n]}(t)$ are both even functions;
thus, $m_2^{\textnormal{II}}$ acts on $\C c^{\pm}_{[s;n]}(t)$
trivially by \eqref{eqn:pi-wMT}.
To consider the action of $m_1^{\textnormal{II}}$ on $\C c^{\pm}_{[s;n]}(t)$,
observe that, by Lemma \ref{lem:C2}, we have
\begin{equation*}
\pi_n(m_1^{\textnormal{II}})b_{[s;n]}(t) = C(s;n)^{-1}a_{[s;n]}(t).
\end{equation*}
Therefore, for $\eps \in \{+, -\}$,
$m_1^{\textnormal{II}}$ transforms $c^{\eps}_{[s;n]}(t)$ as
\begin{equation*}
m_1^{\textnormal{II}}\colon c^{\eps}_{[s;n]}(t) \To \eps c^{\eps}_{[s;n]}(t).
\end{equation*}
Now Table \ref{table:char2} concludes the assertion. 
\end{proof}

\subsection{The classification of  $\Hom_M(\Sol_{\textnormal{II}}(s;n),\sigma)$}
As Step C in the recipe in Section \ref{subsec:recipe2},
we now classify $(\sigma, s, n) \in \Irr(M) \times \C \times \Z_{\geq 0}$
such that $\Hom_M(\Sol_{\textnormal{II}}(s;n), \sigma) \neq \{0\}$.
Given $\sigma \in \Irr(M)$, we define 
$I^{\pm}(\pmi)$, $I^{\pm}(\mip)$, $I(\mathbb{H}) \subset \Z \times \Z_{\geq 0}$
as follows.
\begin{alignat}{3}
\begin{aligned}\label{eqn:Ipm}
I^+(\pmi)&:=\{(s,n) \in (3+4\Z_{\geq 0}) \times (4\Z_{\geq 0}): n > s \},\\
I^-(\pmi)&:=\{(s,n) \in -(1+4\Z_{\geq 0}) \times (2+4\Z_{\geq 0}) : n > |s| \},\\[5pt]
I^+(\mip)&:=\{(s,n) \in (1+4\Z_{\geq 0}) \times (2+4\Z_{\geq 0}): n > s \},\\
I^-(\mip)&:=\{(s,n) \in -(3+4\Z_{\geq 0}) \times (4\Z_{\geq 0}) : n > |s| \},\\[5pt]
I(\mathbb{H})&:=\{(s,n) \in (2\Z) \times (1+2\Z_{\geq 0}) : n > |s| \}.
\end{aligned}
\end{alignat}

\begin{thm}\label{thm:HomHGE}
The following conditions on 
$(\sigma, s, n) \in \Irr(M) \times \C \times \Z_{\geq 0}$ are equivalent.

\begin{enumerate}[{\normalfont (i)}]

\item $\Hom_M(\Sol_{\textnormal{II}}(s;n), \sigma) \neq \{0\}$.

\item $\dim_\C\Hom_M(\Sol_{\textnormal{II}}(s;n), \sigma) = 1$.

\item One of the following conditions holds.

\begin{itemize}

\item $\sigma = \pp:$
$(s, n) \in \C \times 4\Z_{\geq 0}$.

\item $\sigma = \mm:$
$(s, n) \in \C \times (2+4\Z_{\geq 0})$.

\item $\sigma=\pmi:$
$(s,n) \in I^+(\pmi)\cup I^-(\pmi)$.

\item $\sigma=\mip:$
$(s,n) \in I^+(\mip)\cup I^-(\mip)$.

\item $\sigma=\mathbb{H}:$
\hspace{13pt}
$(s,n) \in I(\mathbb{H})$.
\end{itemize}
\end{enumerate}

Further, for such $(\sigma, s, n)$, 
the space $\Hom_M(\Sol_{\textnormal{II}}(s;n), \sigma)$ is given as follows.

\begin{enumerate}[{\qquad \normalfont (1)}]

\item $\sigma = \pp:$  For $n \in 4\Z_{\geq 0}$, we have
\begin{equation*}
\Hom_M(\Sol_{\textnormal{II}}(s;n), \pp)
=
\begin{cases}
\C a_{[s;n]}(t) & \textnormal{if $s \in \C \backslash (I^-_0 \cup J_0)$},\\
\C b_{[s;n]}(t) & \textnormal{if $s \in J_0$},\\
\C c^+_{[s;n]}(t) & \textnormal{if $s \in I^-_0$}.
\end{cases}
\end{equation*}

\item $\sigma = \mm:$ For $n \in 2+4\Z_{\geq 0}$, we have
\begin{equation*}
\Hom_M(\Sol_{\textnormal{II}}(s;n), \mm)
=
\begin{cases}
\C a_{[s;n]}(t) & \textnormal{if $s \in \C \backslash (I^-_2 \cup J_2)$},\\
\C b_{[s;n]}(t) & \textnormal{if $s \in J_2$},\\
\C c^+_{[s;n]}(t) & \textnormal{if $s \in I^-_2$.}
\end{cases}
\end{equation*}

\item $\sigma=\pmi:$ We have
\begin{equation*}
\Hom_M(\Sol_{\textnormal{II}}(s;n), \pmi) 
=
\begin{cases}
\C b_{[s;n]}(t) & \textnormal{if $(s,n) \in I^+(\pmi)$},\\
\C c^-_{[s;n]}(t) & \textnormal{if $(s,n) \in I^-(\pmi)$}.
\end{cases}
\end{equation*}

\item $\sigma=\mip:$ We have
\begin{equation*}
\Hom_M(\Sol_{\textnormal{II}}(s;n), \mip) 
=
\begin{cases}
\C b_{[s;n]}(t) & \textnormal{if $(s,n) \in I^+(\mip)$},\\
\C c^-_{[s;n]}(t) & \textnormal{if $(s,n) \in I^-(\mip)$}.
\end{cases}
\end{equation*}

\item $\sigma=\mathbb{H}:$ We have 
\begin{equation*}
\Hom_M(\Sol_{\textnormal{II}}(s;n), \mathbb{H}) 
=\C \varphi^{(s;n)}_{\textnormal{II}}
\quad \textnormal{for $(s,n) \in I(\mathbb{H})$},
\end{equation*}
where $\varphi^{(s;n)}_{\textnormal{II}}$ is a non-zero $M$-isomorphism
\begin{equation*}
\varphi^{(s;n)}_{\textnormal{II}}\colon \Sol_{\textnormal{II}}(s;n) 
\stackrel{\sim}{\To} \mathbb{H}.
\end{equation*}

\end{enumerate}
\end{thm}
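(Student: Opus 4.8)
The plan is to read Theorem~\ref{thm:HomHGE} off from the two structural results that precede it, with essentially no new computation. By Theorem~\ref{thm:Sol1} the space $\Sol_{\textnormal{II}}(s;n)$ is at most two-dimensional and comes with an explicit basis in every admissible case, and by Theorem~\ref{thm:Sol2} each basis line is identified with a concrete member of $\Irr(M)$. Inspecting that list one sees that $\Sol_{\textnormal{II}}(s;n)$ is always a multiplicity-free $M$-module: it is either a single character, or a sum of two \emph{distinct} characters, or the genuine irreducible $\mathbb{H}$ of dimension $2$. Hence for every $\sigma\in\Irr(M)$ one has $\dim_\C\Hom_M(\Sol_{\textnormal{II}}(s;n),\sigma)\le 1$, with equality precisely when $\sigma$ occurs in the decomposition of Theorem~\ref{thm:Sol2}. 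This already yields the equivalence of (i) and (ii). It also makes the ``Further'' part automatic: in parts (1)--(4) the space $\Hom_M(\Sol_{\textnormal{II}}(s;n),\sigma)$ is spanned by whichever of the lines $\C a_{[s;n]}(t)$, $\C b_{[s;n]}(t)$, $\C c^{\pm}_{[s;n]}(t)$ carries the character $\sigma$ in Theorem~\ref{thm:Sol2}, while in part~(5), where $\Sol_{\textnormal{II}}(s;n)\simeq\mathbb{H}$ is irreducible, Schur's lemma shows that every nonzero $M$-homomorphism to $\mathbb{H}$ is an isomorphism, so $\Hom_M(\Sol_{\textnormal{II}}(s;n),\mathbb{H})=\C\varphi^{(s;n)}_{\textnormal{II}}$ for any such isomorphism $\varphi^{(s;n)}_{\textnormal{II}}$.

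What remains is to put condition (iii) into the aggregated form stated. First I would dispose of the two characters $\pp$ and $\mm$: from Theorem~\ref{thm:Sol2}(1) the character $\pp$ occurs in $\Sol_{\textnormal{II}}(s;n)$ for \emph{every} $s$ as soon as $n\equiv 0 \Mod 4$ --- as $\C a_{[s;n]}(t)$ when $s\notin I_0^+\cup I_0^-\cup J_0$, inside $\C a_{[s;n]}(t)\oplus\C b_{[s;n]}(t)$ when $s\in I_0^+$, as $\C b_{[s;n]}(t)$ when $s\in J_0$, and as $\C c^+_{[s;n]}(t)$ when $s\in I_0^-$ --- and it occurs for no other $n$; symmetrically $\mm$ occurs exactly when $n\equiv 2 \Mod 4$. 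This gives the first two bullets of (iii) together with the spanning vectors in parts (1) and (2).

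The last step, and the only genuinely fiddly one, is to check that the parameters producing the characters $\pmi$, $\mip$ and the module $\mathbb{H}$ in Theorem~\ref{thm:Sol2} reassemble into the sets $I^{\pm}(\pmi)$, $I^{\pm}(\mip)$, $I(\mathbb{H})$ of \eqref{eqn:Ipm}. I would do this case by case in $n \Mod 4$, simply unwinding the definitions in \eqref{eqn:IJ}: $\pmi$ arises as $\C b_{[s;n]}(t)$ exactly for $n\equiv 0 \Mod 4$, $s\in I_0^+$, and as $\C c^-_{[s;n]}(t)$ exactly for $n\equiv 2 \Mod 4$, $s\in I_2^-$; writing $I_0^+=\{3+4j:0\le j\le \tfrac n4-1\}$ and $I_2^-=\{-(1+4j):0\le j\le \big[\tfrac n4\big]\}$ and noting that the top index forces $s\le n-1$, respectively $|s|\le n-1$, identifies these with $I^+(\pmi)$ and $I^-(\pmi)$. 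The case of $\mip$ is verbatim the same with $I_0$ and $I_2$ interchanged, and for $\mathbb{H}$ one uses Theorem~\ref{thm:Sol2}(2),(4): the module occurs exactly for $n$ odd with $s\in I_1$ (when $n\equiv 1 \Mod 4$) or $s\in I_3$ (when $n\equiv 3 \Mod 4$), and tracking the residue of $s$ and the bound $\big[\tfrac n4\big]$ through these definitions recovers $I(\mathbb{H})$. The two points needing care throughout are: evaluating $\big[\tfrac n4\big]$ according to $n \Mod 4$ so that the strict inequalities $n>s$ and $n>|s|$ come out correctly; and recalling that in the exceptional case $n\equiv 2 \Mod 4$ and $s=-1$ the constant $C(s;n)$ of \eqref{eqn:Csn} is given by the separate formula there, so that $c^{\pm}_{[s;n]}(t)$ are still the $\pm 1$-eigenvectors of $\pi_n(m_1^{\textnormal{II}})$ exploited in Proposition~\ref{prop:C2}. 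With these observed the verification is routine.
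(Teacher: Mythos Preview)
Your proposal is correct and follows essentially the same route as the paper: read the Hom-spaces off from the explicit decomposition of $\Sol_{\textnormal{II}}(s;n)$ in Theorems~\ref{thm:Sol1} and~\ref{thm:Sol2}, use multiplicity-freeness (and Schur's lemma in the $\mathbb{H}$ case) to get (i)$\Leftrightarrow$(ii), and then match the parameter sets. If anything you supply more detail than the paper's proof, which only writes out the cases $\sigma=\pp$ and $\sigma=\mathbb{H}$ and leaves the bookkeeping identifying $I_k^\pm$, $I_k$ with $I^\pm(\pmi)$, $I^\pm(\mip)$, $I(\mathbb{H})$ to the reader.
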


\begin{proof}
The theorem simply follows from Theorems \ref{thm:Sol1} and \ref{thm:Sol2}.
Indeed, for instance, suppose that $\sigma =\pp$. It then follows from 
Theorem \ref{thm:Sol2} that 
$\Hom_M(\Sol_{\textnormal{II}}(s;n),\pp) \neq \{0\}$ if and only if 
$n\equiv 0 \Mod 4$. Moreover, 
Theorems \ref{thm:Sol1} and \ref{thm:Sol2} show that
\begin{equation*}
\Hom_M(\Sol_{\textnormal{II}}(s;n), \pp)
=
\begin{cases}
\C a_{[s;n]}(t) & \textnormal{if $s \in \C \backslash (I^-_0 \cup J_0)$},\\
\C b_{[s;n]}(t) & \textnormal{if $s \in J_0$},\\
\C c^+_{[s;n]}(t) & \textnormal{if $s \in I^-_0$}.
\end{cases}
\end{equation*} 
This concludes the assertion for $\sigma=\pp$. 
Similarly, suppose that $\sigma = \mathbb{H}$. 
Then, by Theorem \ref{thm:Sol2}, we have
$\Hom_M(\Sol_{\textnormal{II}}(s;n),\mathbb{H}) \neq \{0\}$ if and only if 
$(s,n) \in I(\mathbb{H})$. Furthermore, in this case,
$\Sol_{\textnormal{II}}(s;n) \simeq \mathbb{H}$. Therefore,
$\Hom_M(\Sol_{\textnormal{II}}(s;n),\mathbb{H})$ is spanned by 
a non-zero $M$-isomorphism 
$\varphi^{(s;n)}_{\textnormal{II}}\colon \Sol_{\textnormal{II}}(s;n) 
\stackrel{\sim}{\to} \mathbb{H}$.
Since the other cases can be handled similarly, we omit the proof. 
\end{proof}

\begin{rem}\label{rem:HomSolT}
As for Theorem \ref{thm:Sol1},
it will be shown in Theorem \ref{thm:HomHeun} that
$\Hom_M(\Sol_{\textnormal{I}}(s;n),\sigma)$ 
is simpler than $\Hom_M(\Sol_{\textnormal{II}}(s;n), \sigma)$.
\end{rem}

Now we give the $K$-type formulas for $\CSol(s;n)_K$ 
for each $\sigma \in \Irr(M)$ in the polynomial realization
\eqref{eqn:IrrK} of $\Irr(K)$.

\begin{thm}\label{thm:K-type}
The following conditions on $(\sigma, s) \in \Irr(M) \times \C$ 
are equivalent.

\begin{enumerate}[{\normalfont (i)}]
\item $\CSol(s;\sigma)_K\neq \{0\}$.
\item One of the following conditions holds.

\begin{itemize}
\item $\sigma = \pp:$ $s \in \C$.
\item $\sigma=\mm:$ $s \in \C$.
\item $\sigma=\pmi:$ $s \in 3+4\Z$.
\item $\sigma=\mip:$ $s \in 1+4\Z$.
\item $\sigma=\mathbb{H}:$ \hspace{13pt} $s \in 2\Z$
\end{itemize}
\end{enumerate}

Further, the $K$-type formulas for $\CSol(s;\sigma)_K$ 
may be given as follows.

\begin{enumerate}[{\qquad \normalfont (1)}]
\item $\sigma = \pp:$ 
\begin{equation*}
\CSol(s;\pp)_K \simeq 
\bigoplus_{n\geq 0} \Pol_{4n}[t] 
\quad\; \;\;\textnormal{for all $s \in \C$}.
\end{equation*}

\item $\sigma=\mm:$
\begin{equation*}
\CSol(s;\mm)_K \simeq 
\bigoplus_{n\geq 0} \Pol_{2+4n}[t]
\quad \textnormal{for all $s \in \C$}.
\end{equation*}

\item $\sigma=\pmi:$
\begin{equation*}
\qquad 
\CSol(s;\pmi)_K
\simeq
\bigoplus_{n\geq 0}
\Pol_{|s|+1+4n}[t] \quad \textnormal{for $s \in 3+4\Z$}.
\end{equation*}

\item $\sigma=\mip:$
\begin{equation*}
\qquad 
\CSol(s;\mip)_K
\simeq
\bigoplus_{n\geq 0}
\Pol_{|s|+1+4n}[t] \quad \textnormal{for $s \in 1+4\Z$}.
\end{equation*}

\item $\sigma=\mathbb{H}:$
\begin{equation*}
\qquad 
\CSol(s;\mathbb{H})_K
\simeq
\bigoplus_{n\geq 0}
\Pol_{|s|+1+4n}[t] \quad \textnormal{for $s \in 2\Z$}.
\end{equation*}

\end{enumerate}
\end{thm}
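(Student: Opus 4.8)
The plan is to obtain Theorem~\ref{thm:K-type} by feeding the classification of $\Hom_M(\Sol_{\textnormal{II}}(s;n),\sigma)$ from Theorem~\ref{thm:HomHGE} into the Peter--Weyl decomposition \eqref{eqn:PWSL2} taken in the hypergeometric model $J=\textnormal{II}$, namely
\[
\CSol(s;\sigma)_K \simeq \bigoplus_{n\geq 0}
\Pol_n[t]\otimes \Hom_M\!\left(\Sol_{\textnormal{II}}(s;n),\sigma\right).
\]
By Theorem~\ref{thm:HomHGE} each multiplicity space $\Hom_M(\Sol_{\textnormal{II}}(s;n),\sigma)$ is either $\{0\}$ or one dimensional, so $\CSol(s;\sigma)_K$ is, as a $K$-module, the direct sum of the $K$-types $\Pol_n[t]$ for which $\Hom_M(\Sol_{\textnormal{II}}(s;n),\sigma)\neq\{0\}$; in particular $\CSol(s;\sigma)_K\neq\{0\}$ precisely when this index set of $n$ is nonempty, which gives the equivalence of (i) and (ii) once the index sets of Theorem~\ref{thm:HomHGE} are read off.

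It then remains to rewrite, for each $\sigma\in\Irr(M)$, the index set from Theorem~\ref{thm:HomHGE} as one of the arithmetic progressions in the statement. For $\sigma=\pp$ (respectively $\sigma=\mm$) the condition is $n\in 4\Z_{\geq 0}$ (respectively $n\in 2+4\Z_{\geq 0}$) with no constraint on $s$, so $\CSol(s;\pp)_K\simeq\bigoplus_{n\geq 0}\Pol_{4n}[t]$ and $\CSol(s;\mm)_K\simeq\bigoplus_{n\geq 0}\Pol_{2+4n}[t]$ for all $s\in\C$, and these are never zero since $n=0$ (resp.\ $n=2$) always contributes. For $\sigma=\pmi$ the contributions come from $I^+(\pmi)\cup I^-(\pmi)$; since $(3+4\Z_{\geq 0})\cup(-(1+4\Z_{\geq 0}))=3+4\Z$, the admissible $s$ are exactly $s\in 3+4\Z$, and for each such $s$ a residue-modulo-$4$ check shows that the admissible $n$ are the $n>|s|$ lying in $4\Z_{\geq 0}$ when $s>0$ and in $2+4\Z_{\geq 0}$ when $s<0$, which in both cases equals $\{\,|s|+1+4m:m\geq 0\,\}$; hence $\CSol(s;\pmi)_K\simeq\bigoplus_{n\geq 0}\Pol_{|s|+1+4n}[t]$. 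The case $\sigma=\mip$ is identical with $1+4\Z$ in place of $3+4\Z$. Finally, by Theorem~\ref{thm:Sol2} (together with the classification in Theorem~\ref{thm:Sol1}) the space $\Sol_{\textnormal{II}}(s;n)$ contains a copy of $\mathbb{H}$ exactly when $n$ is odd and either $n\equiv 1\Mod 4$ with $s\in I_1$ or $n\equiv 3\Mod 4$ with $s\in I_3$; letting $n$ range over the odd integers the admissible $s$ fill out $2\Z$, and for fixed $s\in 2\Z$ the admissible $n$ are the odd $n$ with $n\equiv |s|+1\Mod 4$ and $n>|s|$, i.e.\ $n\in\{\,|s|+1+4m:m\geq 0\,\}$, so $\CSol(s;\mathbb{H})_K\simeq\bigoplus_{n\geq 0}\Pol_{|s|+1+4n}[t]$ for $s\in 2\Z$.

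All of the substantive work has already been carried out in Theorems~\ref{thm:Sol1}, \ref{thm:Sol2} and \ref{thm:HomHGE}, so the argument above is essentially bookkeeping. The only point requiring care is the uniformization of the several index sets $I^{\pm}(\pmi)$, $I^{\pm}(\mip)$ and the $\mathbb{H}$-set --- which separately treat positive and negative $s$ and impose different residues of $n$ modulo $4$ --- into the single family $\{\,|s|+1+4m:m\geq 0\,\}$; I would verify this by splitting into the cases $s>0$ and $s<0$ and tracking $n\bmod 4$ in each. No genuine obstacle is anticipated.
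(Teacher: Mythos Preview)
Your proposal is correct and follows essentially the same route as the paper: both feed Theorem~\ref{thm:HomHGE} into the Peter--Weyl decomposition \eqref{eqn:PWSL2} with $J=\textnormal{II}$, note that the multiplicities are at most one, and then rewrite the resulting index sets as the arithmetic progressions $\{|s|+1+4m\}$ by a case split on the sign of $s$. Your handling of the $\mathbb{H}$ case by going back to Theorems~\ref{thm:Sol1} and~\ref{thm:Sol2} (rather than the packaged set $I(\mathbb{H})$) is slightly more explicit about the congruence $n\equiv |s|+1\Mod 4$, but otherwise the arguments coincide.
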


\begin{proof}
We only give a proof for $\sigma = \pmi$; the other cases may be handled
similarly. Suppose that $\sigma =\pmi$. By \eqref{eqn:PWSL2} with $J=\textnormal{II}$,
we have
\begin{equation*}
\CSol(s;\pmi)_K
\simeq 
\bigoplus_{n\geq 0}
\Pol_n[t] \otimes 
\mathrm{Hom}_{M}\left(
\Sol_{\textnormal{II}}(s;n), \pmi\right).
\end{equation*}
Thus, $\CSol(s;\pmi)_K \neq \{0\}$ if and only if 
$\mathrm{Hom}_{M}\left(
\Sol_{\textnormal{II}}(s;n), \pmi\right)\neq \{0\}$ for some $n \in \Z_{\geq 0}$,
which is further equivalent to 
\begin{equation*}
s \in (3+4\Z_{\geq 0}) \cup (-(1+4\Z_{\geq 0})) = 3+4\Z
\end{equation*}
by Theorem \ref{thm:HomHGE}. This shows the 
assertion
between (i) and (ii) for $\sigma = \pmi$.

To determine the explicit branching law, observe that, by the 
Frobenius reciprocity, we have
\begin{equation*}
\Hom_K(\Pol_n[t], \CSol(s;\pmi)_K) \neq \{0\}
\quad
\text{if and only if}
\quad
\Hom_M(\Sol_{\textnormal{II}}(s;n), \pmi) \neq \{0\},
\end{equation*}
which is equivalent to
\begin{equation*}
\textnormal{$n \in 4\Z_{\geq 0}$ with $n > s$ for $s \in 3 +4\Z_{\geq 0}$}
\end{equation*}
or
\begin{equation*}
\textnormal{$n \in 2+4\Z_{\geq 0}$ with $n > |s|$ for $s \in -(1 +4\Z_{\geq 0})$}
\end{equation*}
by Theorem \ref{thm:HomHGE}. 
It also follows from Theorem \ref{thm:HomHGE} that
$\CSol(s;\pmi)_K$ is multiplicity-free. 
Therefore, we have
\begin{equation*}
\CSol(s;\pmi)_K
\simeq 
\begin{cases}
\begin{aligned}
\bigoplus\limits_{\substack{n\equiv 0 \Mod 4\\ n> s}}
\Pol_n[t] \quad & \textnormal{if $s \in 3+4\Z_{\geq 0}$},\\[1ex]
\bigoplus\limits_{\substack{n\equiv 2 \Mod 4\\ n> |s|}}
\Pol_n[t] \quad & \textnormal{if $s \in -(1+4\Z_{\geq 0})$},\\
\end{aligned}
\end{cases}\\[2ex]
\end{equation*}
which is equivalent to the proposed formula.
\end{proof}


\begin{rem}\label{rem:Tamori}
The $K$-type formula for the case of $\sigma =\mathbb{H}$ is obtained 
also in \cite[Prop.\ 5.4.6]{Tamori20}. 
\end{rem}

We close this section by giving a proof of Theorem \ref{thm:K-type0}
in the introduction.

\begin{proof}[Proof of Theorem \ref{thm:K-type0}]
The assertions simply follow from Theorem \ref{thm:K-type}.
Indeed, as $\CSol(s;n)_K$ is dense in $\CSol(s;n)$, we have 
$\CSol(s;n)_K \neq \{0\}$ if and only if $\CSol(s;n) \neq \{0\}$.
Then the equivalence $(\pi_n,\Pol_n[t]) \simeq (n/2)$
concludes the theorem.
\end{proof}


\section{Heun model $\dpin^{\textnormal{I}}(D^\flat_s)f(t)=0$}\label{sec:Heun}

The purpose of this short section is to interpret the results in Section \ref{sec:HGE}
to the Heun model $\dpin^{\textnormal{I}}(D^\flat_s)f(t)=0$. 
In particular we give a variant of Theorem \ref{thm:HomHGE}
for $\Hom_M(\Sol_{\textnormal{I}}(s;n),\sigma)$
in Theorem \ref{thm:HomHeun}.
We remark that the results in this section 
will play a key role to compute certain
tridiagonal determinants in Section \ref{sec:PQ}.

\subsection{Relationships between  
$a_{[s;n]}(t),b_{[s;n]}(t),c^{\pm}_{[s;n]}(t)$ 
and 
$u_{[s;n]}(t),v_{[s;n]}(t)$}\label{subsec:SolTKO}
As for the hypergeometric model 
$\dpin^{\textnormal{II}}(D^\flat_s)f(t)=0$, 
we start with Step A
of the recipe in Section \ref{subsec:recipe2}, that is, the classification 
of $(s,n) \in \C \times \Z_{\geq 0}$ such that $\Sol_{\textnormal{I}}(s;n)\neq \{0\}$.
Recall from \eqref{eqn:KOuv} that we have
\begin{equation}\label{eqn:KOuv2}
\Sol_{\textnormal{I}}(s;n) \subset \C u_{[s;n]}(t) \oplus \C v_{[s;n]}(t)
\end{equation}
with
\begin{align*}
&u_{[s;n]}(t)=
Hl(-1, -\frac{ns}{4}; -\frac{n}{2}, -\frac{n-1}{2}, \frac{1}{2}, \frac{1-n-s}{2};t^2),\\[3pt]
&v_{[s;n]}(t)=
tHl(-1, -\frac{n-2}{4}s; -\frac{n-1}{2}, -\frac{n-2}{2}, \frac{3}{2}, \frac{1-n-s}{2};t^2).
\end{align*}
Thus, equivalently, we wish to classify $(s,n)$ such that 
$u_{[s;n]}(t)\in \Pol_n[t]$ or $v_{[s;n]}(t) \in \Pol_n[t]$.

To make full use of the results for $\Sol_{\textnormal{II}}(s;n)$, we first consider
a transfer of elements in $\Sol_{\textnormal{II}}(s;n)$ 
to $\Sol_{\textnormal{I}}(s;n)$.
Recall from \eqref{eqn:k0} and \eqref{eqn:pik0} that 
\begin{equation*}
k_0 = \frac{1}{\sqrt{2}}
\begin{pmatrix}
\sqrt{-1} & 1\\
-1 & -\sqrt{-1}
\end{pmatrix}\in SU(2)
\end{equation*}
gives an $M$-isomorphism 
\begin{equation}
\pi_n(k_0)\colon \Sol_{\textnormal{II}}(s;n) \stackrel{\sim}{\To} \Sol_{\textnormal{I}}(s;n).
\end{equation}
For $p(t), q(t) \in \Pol_n[t]$ with $\pi_n(k_0)p(t) \in \C q(t)$,
we write
\begin{equation*}
p(t) \stackrel{k_0}{\sim} q(t).
\end{equation*}
Then, for $n \in 2\Z_{\geq 0}$,  the polynomials
$a_{[s;n]}(t), b_{[s;n]}(t), c^{\pm}_{[s;n]}(t) 
\in \Sol_{\textnormal{II}}(s;n)$ are 
transferred to 
$u_{[s;n]}(t), v_{[s;n]}(t) \in \Sol_{\textnormal{I}}(s;n)$ 
via $\pi_n(k_0)$ as follows.

\begin{lem}\label{lem:n0}
Let $n \in 2\Z_{\geq 0}$. We have the following.

\begin{enumerate}[{\normalfont (1)}]
\item $n \equiv 0 \Mod 4:$
\vskip 0.1in

\begin{enumerate}[{\normalfont (a)}]\item $a_{[s;n]}(t) \stackrel{k_0}{\sim} u_{[s;n]}(t)$
\quad \textnormal{for $s \in \C\backslash (I^-_0 \cup J_0)$}.
\vskip 0.1in

\item $b_{[s;n]}(t) \stackrel{k_0}{\sim} u_{[s;n]}(t)$
\quad
\textnormal{for $s \in J_0$}.
\vskip 0.1in

\item $c^+_{[s;n]}(t) \stackrel{k_0}{\sim} u_{[s;n]}(t)$
\quad 
\textnormal{for $s\in I_0^-$}.
\vskip 0.1in

\item $b_{[s;n]}(t) \stackrel{k_0}{\sim} v_{[s;n]}(t)$
\quad 
\textnormal{for $s \in I^+_0$}.
\vskip 0.1in

\item $c^-_{[s;n]}(t) \stackrel{k_0}{\sim} v_{[s;n]}(t)$
\quad
\textnormal{for $s \in I^-_0$}.
\vskip 0.1in

\end{enumerate}
\vskip 0.1in

\item $n \equiv 2 \Mod 4:$
\vskip 0.1in

\begin{enumerate}[{\normalfont (a)}]
\item $a_{[s;n]}(t) \stackrel{k_0}{\sim} v_{[s;n]}(t)$
\quad
\textnormal{for $s \in \C\backslash (I^-_2 \cup J_2)$}.
\vskip 0.1in

\item $b_{[s;n]}(t) \stackrel{k_0}{\sim} v_{[s;n]}(t)$
\quad
\textnormal{for $s \in J_2$}.
\vskip 0.1in

\item $c^+_{[s;n]}(t) \stackrel{k_0}{\sim} v_{[s;n]}(t)$
\quad
\textnormal{for $s\in I_2^-$}.
\vskip 0.1in

\item $b_{[s;n]}(t) \stackrel{k_0}{\sim} u_{[s;n]}(t)$
\quad 
\textnormal{for $s \in I^+_2$}.
\vskip 0.1in

\item $c^-_{[s;n]}(t) \stackrel{k_0}{\sim} u_{[s;n]}(t)$
\quad
\textnormal{for $s \in I^-_2$}.
\vskip 0.1in

\end{enumerate}

\end{enumerate}

\end{lem}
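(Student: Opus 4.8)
The plan is to transport the already-established structure of $\Sol_{\textnormal{II}}(s;n)$ through the isomorphism $\pi_n(k_0)$ and then to identify the resulting line inside $\Sol_{\textnormal{I}}(s;n)$ by a parity argument, so that essentially no computation is needed. First I would assemble the ingredients. By \eqref{eqn:pik0} and the commutative diagram of Section~\ref{subsec:Omega12}, $\pi_n(k_0)$ is an isomorphism $\Sol_{\textnormal{II}}(s;n)\stackrel{\sim}{\To}\Sol_{\textnormal{I}}(s;n)$ which intertwines the $M$-action of the $\Omega^{\textnormal{II}}$-model with that of the $\Omega^{\textnormal{I}}$-model; since Tables~\ref{table:char} and \ref{table:char2} agree entrywise, it carries an irreducible $M$-subrepresentation labelled $(\eps,\eps')$ to one labelled $(\eps,\eps')$. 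By \eqref{eqn:KOuv2}, $\Sol_{\textnormal{I}}(s;n)\subset\C u_{[s;n]}(t)\oplus\C v_{[s;n]}(t)$, where $u_{[s;n]}(t)$ is a nonzero power series in $t^{2}$ and $v_{[s;n]}(t)=t\cdot(\text{nonzero power series in }t^{2})$. Because $\Sol_{\textnormal{I}}(s;n)$ is stable under $\pi_n(m_1^{\textnormal{I}})$ (Lemma~\ref{lem:SolKO}), and $m_1^{\textnormal{I}}$ acts on $\Pol_n[t]$ as $p(t)\mapsto(\sqrt{-1})^{n}p(-t)$ by \eqref{eqn:pi-wM}, the space $\Sol_{\textnormal{I}}(s;n)$ is the direct sum of its intersections with the even and the odd polynomials; the even part, if nonzero, must equal $\C u_{[s;n]}(t)$ (in particular forcing $u_{[s;n]}(t)\in\Pol_n[t]$) and the odd part, if nonzero, must equal $\C v_{[s;n]}(t)$. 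Hence $\Sol_{\textnormal{I}}(s;n)$ is one of $\{0\}$, $\C u_{[s;n]}(t)$, $\C v_{[s;n]}(t)$, $\C u_{[s;n]}(t)\oplus\C v_{[s;n]}(t)$, and on $\C u_{[s;n]}(t)$ the element $m_1^{\textnormal{I}}$ acts by $(\sqrt{-1})^{n}$ while on $\C v_{[s;n]}(t)$ it acts by $-(\sqrt{-1})^{n}$.

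Next I would match the constituents of $\Sol_{\textnormal{II}}(s;n)$ obtained in Theorems~\ref{thm:Sol1} and \ref{thm:Sol2} with $u_{[s;n]}(t),v_{[s;n]}(t)$. From Table~\ref{table:char}, $m_1^{\textnormal{I}}$ takes the value $+1$ exactly on $\pp,\mm$ and $-1$ exactly on $\pmi,\mip$; so for $n\equiv0\Mod4$ one has $\C u_{[s;n]}(t)\in\{\pp,\mm\}$ and $\C v_{[s;n]}(t)\in\{\pmi,\mip\}$, and for $n\equiv2\Mod4$ the two roles are interchanged. The observation that makes the identification automatic is that in every case of the Lemma the list of $M$-constituents of $\Sol_{\textnormal{II}}(s;n)$ — equivalently, via the label-preserving isomorphism $\pi_n(k_0)$, of $\Sol_{\textnormal{I}}(s;n)$ — contains at most one constituent on which $m_1^{\textnormal{I}}$ acts by $+1$ and at most one on which it acts by $-1$; combined with the previous paragraph this pins down which of $\C u_{[s;n]}(t),\C v_{[s;n]}(t)$ realises each constituent, and since $\pi_n(k_0)$ preserves the $(\eps,\eps')$-label it sends whichever of $a_{[s;n]}(t),b_{[s;n]}(t),c^{\pm}_{[s;n]}(t)$ spans a given constituent to the matching line. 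For instance, when $n\equiv0\Mod4$ the character $\pp$ occurs in $\Sol_{\textnormal{II}}(s;n)$ for every $s$, carried by $a_{[s;n]}(t)$ for $s\in\C\backslash(I_0^-\cup J_0)$, by $b_{[s;n]}(t)$ for $s\in J_0$, and by $c^+_{[s;n]}(t)$ for $s\in I_0^-$; as $\pp$ is the unique $m_1^{\textnormal{I}}$-eigenvalue-$+1$ constituent it is realised by $\C u_{[s;n]}(t)$, giving (1)(a),(b),(c), while the remaining constituent ($\pmi$ on $I_0^+$ carried by $b_{[s;n]}(t)$, and $\mip$ on $I_0^-$ carried by $c^-_{[s;n]}(t)$) has $m_1^{\textnormal{I}}$-eigenvalue $-1$ and is realised by $\C v_{[s;n]}(t)$, giving (1)(d),(e). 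The case $n\equiv2\Mod4$ is verbatim the same with $\pp$ replaced by $\mm$ — now the $m_1^{\textnormal{I}}$-eigenvalue-$+1$ constituent, realised by $\C v_{[s;n]}(t)$ — and $\pmi,\mip$ by the $m_1^{\textnormal{I}}$-eigenvalue-$-1$ constituent, realised by $\C u_{[s;n]}(t)$; this yields (2)(a)--(e).

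The main obstacle is organizational rather than conceptual: one must keep the two realizations $\{\pm m_j^{\textnormal{I}}\}$ and $\{\pm m_j^{\textnormal{II}}\}$ of $M$ and their (coinciding) character tables straight, track the $m_1^{\textnormal{I}}$-eigenvalue of $u_{[s;n]}(t)$ and $v_{[s;n]}(t)$ separately in the residue classes $n\equiv0$ and $n\equiv2$ modulo $4$, and check in each relevant case of Theorem~\ref{thm:Sol2} that the constituent list genuinely separates the two $m_1^{\textnormal{I}}$-parities; once this bookkeeping is done, all ten assertions fall out without further work. I note that producing the explicit proportionality constants quoted in the introduction would require, in addition, a Kummer-type connection formula for the relevant terminating ${}_2F_1$'s, but those constants play no role here and I would not pursue them.
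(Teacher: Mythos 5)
Your proposal is correct and follows essentially the same route as the paper: transport each line of $\Sol_{\textnormal{II}}(s;n)$ to $\Sol_{\textnormal{I}}(s;n)$ via the $M$-equivariant map $\pi_n(k_0)$, and use the $m_1^{\textnormal{I}}$-eigenvalue (equivalently, the parity of the image under $p(t)\mapsto(\sqrt{-1})^np(-t)$) together with $\Sol_{\textnormal{I}}(s;n)\subset\C u_{[s;n]}(t)\oplus\C v_{[s;n]}(t)$ to decide which of the even solution $u_{[s;n]}(t)$ or the odd solution $v_{[s;n]}(t)$ the image spans. The paper writes this out only for case (1)(a) and declares the rest similar, whereas you organize all ten cases uniformly through the character labels of Theorem \ref{thm:Sol2}; this is a difference of bookkeeping, not of method.
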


\begin{proof}
We only give a proof of (1)(a); the other cases can be shown similarly.
Let $n\equiv 0 \Mod 4$ and $s \in \C \backslash (I_0^-\cap J_0)$.
It follows from Theorem \ref{thm:Sol1} that in this case
$a_{[s;n]}(t) \in \Sol_{\textnormal{II}}(s;n)$
and thus $\pi_n(k_0)a_{[s;n]}(t) \in \Sol_{\textnormal{I}}(s;n)$. By \eqref{eqn:KOuv2},
this implies that $\pi_n(k_0)a_{[s;n]}(t) = c_1 u_{[s;n]}(t) + c_2 v_{[s;n]}(t)$
for some constants $c_1, c_2 \in \C$.
We wish to show $c_2 =0$. To do so, it suffices to show that 
$\pi_n(k_0)a_{[s;n]}(t)$ is an even function, as
$u_{[s;n]}(t)$ and $v_{[s;n]}(t)$ are even and odd functions, respectively.
It follows from Theorem \ref{thm:Sol2} that $m_1^\textnormal{II}$ acts on 
$a_{[s;n]}(t)$ trivially. Since the linear map 
$\pi_n(k_0)\colon \Sol_{\textnormal{II}}(s;n) \to \Sol_{\textnormal{I}}(s;n)$
is an $M$-isomorphism, by \eqref{eqn:mTKO}, this implies that 
$m_1^{\textnormal{I}}$ acts on $\pi_n(k_0)a_{[s;n]}(t)$ trivially. 
Then, by \eqref{eqn:pi-wM} and the assumption $n \equiv 0 \Mod 4$, we have
\begin{align*}
\pi_n(k_0)a_{[s;n]}(-t)
&=(\sqrt{-1})^n \pi_n(k_0)a_{[s;n]}(-t)\\
&=\pi_n(m_1^{\textnormal{I}})\pi_n(k_0)a_{[s;n]}(t)\\
&=\pi_n(k_0)a_{[s;n]}(t).
\end{align*} 
Now the assertion follows.
\end{proof}

\begin{rem}\label{rem:S0F2}
Lemma \ref{lem:n0} in particular 
gives Gauss-to-Heun transformations.
Moreover, by Remark \ref{rem:S0F}, 
the transformations (1)(a) and (2)(a) reduce to 
Gauss-to-Gauss transformations for $s=0$.
\end{rem}

\begin{prop}\label{prop:upoly}
Given $n \in \Z_{\geq 0}$, 
the following conditions on $s\in\C$ are equivalent.

\begin{enumerate}[{\normalfont (i)}]
\item $u_{[s;n]}(t) \in \Pol_n[t]$.
\item One of the following holds.
\begin{enumerate}[{\normalfont (a)}]\item $n \equiv 0 \Mod 4:$ $s\in \C$.
\item $n \equiv 1 \Mod 4:$ $s \in I_1$.
\item $n \equiv 2 \Mod 4:$ $s \in I^+_2 \cup I^-_2$.
\item $n \equiv 3 \Mod 4:$ $s \in I_3$.
\end{enumerate}
\end{enumerate}
\end{prop}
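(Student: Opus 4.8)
The plan is to reduce the statement to facts already established about the hypergeometric model $\Sol_{\textnormal{II}}(s;n)$, transported to the Heun model by the Cayley transform $\pi_n(k_0)$. The starting point is the elementary equivalence
\begin{equation*}
u_{[s;n]}(t) \in \Pol_n[t] \iff u_{[s;n]}(t) \in \Sol_{\textnormal{I}}(s;n).
\end{equation*}
Here ``$\Leftarrow$'' holds because $\Sol_{\textnormal{I}}(s;n) \subset \Pol_n[t]$ by definition \eqref{eqn:TKO}, and ``$\Rightarrow$'' holds because $u_{[s;n]}(t)$ always satisfies $d\pi_n^{\textnormal{I}}(D^\flat_s)u_{[s;n]}(t)=0$ by Lemma \ref{lem:Heun}, so once it is a polynomial of degree $\le n$ it automatically lies in $\Sol_{\textnormal{I}}(s;n)$. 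Note also that $u_{[s;n]}(t)\neq 0$, its constant term being $1$. Thus it suffices to decide, case by case in $n \Mod 4$, whether $u_{[s;n]}(t)$ belongs to $\Sol_{\textnormal{I}}(s;n)$.

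For $n$ even I would read this off from Lemma \ref{lem:n0} combined with Theorem \ref{thm:Sol1}. If $n \equiv 0 \Mod 4$, parts (1)(a)--(c) of Lemma \ref{lem:n0} produce, for $s \in \C\backslash(I_0^-\cup J_0)$, for $s \in J_0$, and for $s \in I_0^-$ respectively, a polynomial element of $\Sol_{\textnormal{II}}(s;n)$ whose image under $\pi_n(k_0)$ is a nonzero multiple of $u_{[s;n]}(t)$; since these three parameter sets exhaust $\C$, we get $u_{[s;n]}(t) \in \Sol_{\textnormal{I}}(s;n)$, hence $u_{[s;n]}(t)\in \Pol_n[t]$, for every $s$. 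If $n \equiv 2 \Mod 4$, parts (2)(d)--(e) give $u_{[s;n]}(t)\in\Sol_{\textnormal{I}}(s;n)$ for $s \in I_2^+\cup I_2^-$; for the complementary parameters, Theorem \ref{thm:Sol1}(3) shows $\Sol_{\textnormal{II}}(s;n)$ is one-dimensional (spanned by $a_{[s;n]}(t)$ off $I_2^+\cup I_2^-\cup J_2$, by $b_{[s;n]}(t)$ on $J_2$), and parts (2)(a)--(b) of Lemma \ref{lem:n0} identify its $\pi_n(k_0)$-image with $\C v_{[s;n]}(t)$, so $\Sol_{\textnormal{I}}(s;n) = \C v_{[s;n]}(t)$; as $u_{[s;n]}(t)$ and $v_{[s;n]}(t)$ are linearly independent (by \eqref{eqn:HeunSol}), $u_{[s;n]}(t)\notin\Sol_{\textnormal{I}}(s;n)$ and therefore $u_{[s;n]}(t)\notin\Pol_n[t]$.

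For $n$ odd Lemma \ref{lem:n0} is not available, so I would argue by a dimension count instead. By the isomorphism \eqref{eqn:pik0} and Theorem \ref{thm:Sol1}, $\Sol_{\textnormal{I}}(s;n)\neq\{0\}$ precisely when $s \in I_1$ (for $n\equiv 1\Mod 4$) or $s \in I_3$ (for $n\equiv 3\Mod 4$), and then $\dim_\C\Sol_{\textnormal{I}}(s;n)=2$. Since $\Sol_{\textnormal{I}}(s;n) \subset \C u_{[s;n]}(t)\oplus\C v_{[s;n]}(t)$ by \eqref{eqn:KOuv2}, this forces $\Sol_{\textnormal{I}}(s;n) = \C u_{[s;n]}(t)\oplus\C v_{[s;n]}(t)$, so $u_{[s;n]}(t)\in\Sol_{\textnormal{I}}(s;n)\subset\Pol_n[t]$ for such $s$; and for the remaining odd-$n$ parameters $\Sol_{\textnormal{I}}(s;n)=\{0\}$, whence $u_{[s;n]}(t)\notin\Pol_n[t]$ by the equivalence above. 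The only real difficulty is bookkeeping: one must check that in each residue class $n\Mod 4$ the parameter sets $I_0^\pm, J_0, I_1, I_2^\pm, J_2, I_3$ occurring in Lemma \ref{lem:n0} and Theorem \ref{thm:Sol1} partition $\C$ as implicitly used above, so that no value of $s$ is left unaccounted for and both the ``if'' and ``only if'' directions are genuinely covered. No new analytic input is required beyond the transfer Lemma \ref{lem:n0} and the classification of $\Sol_{\textnormal{II}}(s;n)$ in Theorem \ref{thm:Sol1}.
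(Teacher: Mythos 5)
Your proposal is correct and follows essentially the same route as the paper: for $n$ even it reads the answer off from Lemma \ref{lem:n0} together with Theorem \ref{thm:Sol1}, and for $n$ odd it uses the dimension count $\dim_\C\Sol_{\textnormal{I}}(s;n)=\dim_\C\Sol_{\textnormal{II}}(s;n)$ combined with the inclusion \eqref{eqn:KOuv2}. You merely spell out the even case (and the partition bookkeeping for the sets $I_k^{\pm}, J_k$) in more detail than the paper does.
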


\begin{proof}
For the case of $n$ even, the assertions simply follow from 
Theorem \ref{thm:Sol1} and Lemma \ref{lem:n0}. 
For the case of $n$ odd, suppose that $n \equiv 1 \Mod 4$.
By Theorem \ref{thm:Sol1}, we have
$\Sol_{\textnormal{II}}(s;n) \neq \{0\}$ if and only if $s \in I_1$.
Moreover, for such $s \in I_1$, the dimension $\dim_\C \Sol_{\textnormal{II}}(s;n)$ is
$\dim_\C \Sol_{\textnormal{II}}(s;n) =2$.
Thus $\dim_\C \Sol_{\textnormal{I}}(s;n) =2$ for $s \in I_1$.
Now the assertion follows from \eqref{eqn:KOuv2}.
Since the case of $n\equiv 3\Mod 4$ can be handled similarly, we omit the proof.
\end{proof}

\begin{prop}\label{prop:vpoly}
Given $n \in \Z_{\geq 0}$, 
the following conditions on $s\in\C$ are equivalent.

\begin{enumerate}[{\normalfont (i)}]

\item $v_{[s;n]}(t) \in \Pol_n[t]$.
\item One of the following holds.
\begin{enumerate}[{\normalfont (a)}]\item $n \equiv 0 \Mod 4:$ $s \in I^+_0 \cup I^-_0$.
\item $n \equiv 1 \Mod 4:$ $s \in I_1$.
\item $n \equiv 2 \Mod 4:$ $s\in \C$.
\item $n \equiv 3 \Mod 4:$ $s \in I_3$.
\end{enumerate}
\end{enumerate}
\end{prop}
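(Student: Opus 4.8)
The plan is to mirror the proof of Proposition \ref{prop:upoly} but applied to $v_{[s;n]}(t)$, using the same transfer machinery between the Heun model and the hypergeometric model. The starting point is the inclusion $\Sol_{\textnormal{I}}(s;n) \subset \C u_{[s;n]}(t) \oplus \C v_{[s;n]}(t)$ from \eqref{eqn:KOuv2} together with the $M$-isomorphism $\pi_n(k_0)\colon \Sol_{\textnormal{II}}(s;n) \stackrel{\sim}{\to} \Sol_{\textnormal{I}}(s;n)$, so that $v_{[s;n]}(t) \in \Pol_n[t]$ precisely when $v_{[s;n]}(t) \in \Sol_{\textnormal{I}}(s;n)$, and the latter space is completely controlled by Theorems \ref{thm:Sol1} and \ref{thm:Sol2} via the Cayley transform $k_0$.

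First I would treat the even cases $n \equiv 0, 2 \Mod 4$. For these, Lemma \ref{lem:n0} tells us exactly which of $a_{[s;n]}, b_{[s;n]}, c^\pm_{[s;n]}$ is carried by $\pi_n(k_0)$ onto a multiple of $v_{[s;n]}(t)$. For $n \equiv 0 \Mod 4$, parts (1)(d) and (1)(e) of Lemma \ref{lem:n0} show $v_{[s;n]}(t)$ lies in $\Sol_{\textnormal{I}}(s;n)$ exactly for $s \in I^+_0 \cup I^-_0$, matching (a); conversely, when $s \notin I^+_0 \cup I^-_0$, Theorem \ref{thm:Sol1} shows $\Sol_{\textnormal{II}}(s;n)$ is spanned by an even function (either $a_{[s;n]}$ alone or $b_{[s;n]}$ with even exponent), whose image under $\pi_n(k_0)$ is even, hence a multiple of $u_{[s;n]}$ and not of the odd function $v_{[s;n]}$. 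For $n \equiv 2 \Mod 4$, parts (2)(a), (2)(b), (2)(c) of Lemma \ref{lem:n0} cover all $s \in \C$, giving (c). The parity argument — that $u_{[s;n]}$ is even and $v_{[s;n]}$ odd, so an even element of $\Sol_{\textnormal{I}}(s;n)$ cannot equal a nonzero multiple of $v_{[s;n]}$ — is the recurring tool here, exactly as in the proof of Lemma \ref{lem:n0}(1)(a).

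For the odd cases $n \equiv 1, 3 \Mod 4$, I would argue by dimension count as in Proposition \ref{prop:upoly}. By Theorem \ref{thm:Sol1}, $\Sol_{\textnormal{II}}(s;n) \neq \{0\}$ iff $s \in I_1$ (resp.\ $s \in I_3$), and in that case $\dim_\C \Sol_{\textnormal{II}}(s;n) = 2$; hence $\dim_\C \Sol_{\textnormal{I}}(s;n) = 2$ via $\pi_n(k_0)$, which by \eqref{eqn:KOuv2} forces $\Sol_{\textnormal{I}}(s;n) = \C u_{[s;n]}(t) \oplus \C v_{[s;n]}(t)$, so in particular $v_{[s;n]}(t) \in \Pol_n[t]$. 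Conversely, if $s \notin I_1$ (resp.\ $I_3$) then $\Sol_{\textnormal{I}}(s;n) = \{0\}$, so neither $u_{[s;n]}$ nor $v_{[s;n]}$ can be a polynomial of degree $\leq n$. I do not expect a genuine obstacle here: the only mild care needed is checking that $v_{[s;n]}(t)$, whose Heun parameters differ from those of $u_{[s;n]}(t)$ (indices shifted by one, as in \eqref{eqn:v}), is genuinely of the form $t \cdot (\text{power series in } t^2)$ and thus an odd function whenever it is a polynomial — this is immediate from the definition in \eqref{eqn:v} and the fact that $Hl$ is a power series in its argument. Assembling the four congruence classes then yields the stated equivalence.
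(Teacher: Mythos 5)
Your proposal is correct and follows essentially the same route the paper intends: the paper omits the proof of Proposition \ref{prop:vpoly} with the remark that it parallels Proposition \ref{prop:upoly}, whose proof is exactly your argument — Theorem \ref{thm:Sol1} plus Lemma \ref{lem:n0} (reading off which of $a_{[s;n]}, b_{[s;n]}, c^{\pm}_{[s;n]}$ transfers to $v_{[s;n]}$ under $\pi_n(k_0)$) for $n$ even, and the two-dimensionality of $\Sol_{\textnormal{II}}(s;n)$ combined with \eqref{eqn:KOuv2} for $n$ odd. The extra parity check you include for the even case is already packaged inside Lemma \ref{lem:n0}, so nothing is missing.
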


\begin{proof}
Since the proof is similar to the one for Proposition \ref{prop:upoly},
we omit the proof.
\end{proof}

\begin{thm}\label{thm:SolKO1}
The following conditions on $(s,n) \in \C \times \Z_{\geq 0}$ are 
equivalent.
\begin{enumerate}[{\normalfont (i)}]
\item $\Sol_{\textnormal{I}}(s;n) \neq \{0\}$. 
\item One of the following conditions is satisfied.
\begin{itemize}
\item $n \equiv 0 \Mod 4:$ $s \in \C$.
\item $n \equiv 1 \Mod 4:$ $s \in I_1$.
\item $n\equiv 2 \Mod 4:$ $s \in \C$.
\item $n \equiv 3 \Mod 4:$ $s \in I_3$.
\end{itemize}
\end{enumerate}
Moreover,
for such $(s,n)$,
the space $\Sol_{\textnormal{I}}(s;n)$ may be described as follows.
\begin{enumerate}[{\quad \normalfont (1)}]
\item 
$n \equiv 0 \Mod 4:$ 
\begin{equation*}
\Sol_{\textnormal{I}}(s;n)
=
\begin{cases}
\C u_{[s;n]}(t) & \textnormal{if $s \in \C\backslash (I^+_0 \cup I^-_0)$},\\
\C u_{[s;n]}(t) \oplus \C v_{[s;n]}(t) & \textnormal{if $s \in I^+_0$},\\
\C u_{[s;n]}(t) \oplus \C v_{[s;n]}(t) & \textnormal{if $s \in I^-_0$}.
\end{cases}
\end{equation*}

\item $n \equiv 1 \Mod 4:$ 
\begin{equation*}
\Sol_{\textnormal{I}}(s;n) = \C u_{[s;n]}(t) \oplus \C v_{[s;n]}(t) \quad 
\textnormal{for $s \in I_1$}.
\end{equation*}

\item 
$n \equiv 2 \Mod 4:$ 
\begin{equation*}
\Sol_{\textnormal{I}}(s;n)
=
\begin{cases}
\qquad \qquad \quad
\C v_{[s;n]}(t) & \textnormal{if $s \in \C\backslash (I^+_2 \cup I^-_2)$},\\
\C u_{[s;n]}(t) \oplus \C v_{[s;n]}(t) & \textnormal{if $s \in I^+_2$},\\
\C u_{[s;n]}(t) \oplus \C v_{[s;n]}(t) & \textnormal{if $s \in I^-_2$}.
\end{cases}
\end{equation*}
\vskip 0.1in

\item $n \equiv 3 \Mod 4:$ 
\begin{equation*}
\Sol_{\textnormal{I}}(s;n) = \C u_{[s;n]}(t) \oplus \C v_{[s;n]}(t), \quad 
\textnormal{for $s \in I_3$}.
\end{equation*}

\end{enumerate}
\end{thm}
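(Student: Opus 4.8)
The plan is to reduce the whole statement to the already-completed classification of the hypergeometric model $\Sol_{\textnormal{II}}(s;n)$ in Theorem \ref{thm:Sol1}, transported across the Cayley transform $\pi_n(k_0)$ by means of the transfer relations recorded in Lemma \ref{lem:n0}. For the equivalence of (i) and (ii), I would argue as follows. By Proposition \ref{prop:KOT} together with \eqref{eqn:pik0}, the map $\pi_n(k_0)\colon \Sol_{\textnormal{II}}(s;n) \stackrel{\sim}{\to} \Sol_{\textnormal{I}}(s;n)$ is a linear isomorphism, so $\Sol_{\textnormal{I}}(s;n)\neq\{0\}$ if and only if $\Sol_{\textnormal{II}}(s;n)\neq\{0\}$, and in fact $\dim_\C\Sol_{\textnormal{I}}(s;n)=\dim_\C\Sol_{\textnormal{II}}(s;n)$. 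The condition describing the latter in Theorem \ref{thm:Sol1}(ii) is verbatim the list in (ii) here, which settles the first half of the theorem.

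For the explicit description of $\Sol_{\textnormal{I}}(s;n)$ when $n$ is even, I would combine the case-by-case formula for $\Sol_{\textnormal{II}}(s;n)$ in Theorem \ref{thm:Sol1}(1),(3) (given in terms of the spanning functions $a_{[s;n]}(t)$, $b_{[s;n]}(t)$, $c^{\pm}_{[s;n]}(t)$) with Lemma \ref{lem:n0}, which tells us precisely which of $u_{[s;n]}(t)$, $v_{[s;n]}(t)$ each spanning function is carried to by $\pi_n(k_0)$. For instance, when $n\equiv 0 \Mod 4$ and $s\in\C\setminus(I^+_0\cup I^-_0\cup J_0)$ we have $\Sol_{\textnormal{II}}(s;n)=\C a_{[s;n]}(t)$, and Lemma \ref{lem:n0}(1)(a) gives $a_{[s;n]}(t)\stackrel{k_0}{\sim}u_{[s;n]}(t)$, hence $\Sol_{\textnormal{I}}(s;n)=\C u_{[s;n]}(t)$; for $s\in J_0$ one instead uses $\Sol_{\textnormal{II}}(s;n)=\C b_{[s;n]}(t)$ and Lemma \ref{lem:n0}(1)(b), again landing on $\C u_{[s;n]}(t)$; for $s\in I^+_0$ the two-dimensional space $\C a_{[s;n]}(t)\oplus\C b_{[s;n]}(t)$ is carried onto $\C u_{[s;n]}(t)\oplus\C v_{[s;n]}(t)$ by parts (1)(a) and (1)(d); and for $s\in I^-_0$ the same conclusion follows from parts (1)(c) and (1)(e) applied to $c^{\pm}_{[s;n]}(t)$. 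The case $n\equiv 2 \Mod 4$ runs identically, using parts (2)(a)--(2)(e) of Lemma \ref{lem:n0}; there the roles of $u_{[s;n]}(t)$ and $v_{[s;n]}(t)$ are swapped relative to the $n\equiv 0$ case, which is exactly what produces $\C v_{[s;n]}(t)$ (rather than $\C u_{[s;n]}(t)$) in the one-dimensional sub-case.

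For $n$ odd, Lemma \ref{lem:n0} is not available, so I would argue directly from Propositions \ref{prop:upoly} and \ref{prop:vpoly}: when $n\equiv 1 \Mod 4$ (resp.\ $n\equiv 3 \Mod 4$), both $u_{[s;n]}(t)$ and $v_{[s;n]}(t)$ belong to $\Pol_n[t]$ exactly when $s\in I_1$ (resp.\ $s\in I_3$). Since $u_{[s;n]}(t)$ and $v_{[s;n]}(t)$ are linearly independent solutions at $t=0$ of the Heun equation \eqref{eqn:Heun1} and $\Sol_{\textnormal{I}}(s;n)\subset\C u_{[s;n]}(t)\oplus\C v_{[s;n]}(t)$ by \eqref{eqn:KOuv2}, it follows that $\Sol_{\textnormal{I}}(s;n)=\C u_{[s;n]}(t)\oplus\C v_{[s;n]}(t)$ for such $s$, and $\Sol_{\textnormal{I}}(s;n)=\{0\}$ otherwise by the equivalence (i)$\Leftrightarrow$(ii) already proved.

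The argument is essentially bookkeeping, and the only point demanding care — not really an obstacle — is to line up the case partition of Theorem \ref{thm:Sol1} with the ten transfer relations of Lemma \ref{lem:n0}, verifying that their domains of validity overlap as needed (e.g.\ $I^+_0\subset\C\setminus(I^-_0\cup J_0)$, so that Lemma \ref{lem:n0}(1)(a) and (1)(d) may be invoked simultaneously on $I^+_0$, and analogously for $n\equiv 2 \Mod 4$), so that the images of the one- and two-dimensional pieces of $\Sol_{\textnormal{II}}(s;n)$ assemble into exactly the spaces claimed.
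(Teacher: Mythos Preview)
Your proposal is correct and follows essentially the same approach as the paper: the paper's proof simply says the theorem is a summary of Propositions \ref{prop:upoly} and \ref{prop:vpoly}, and those propositions are themselves proved (for $n$ even) via Theorem \ref{thm:Sol1} and Lemma \ref{lem:n0}, which is exactly the route you take. Your write-up merely unpacks the even case directly rather than routing it through the intermediate propositions, but the content is identical.
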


\begin{proof}
This is a summary of the results in
Propositions \ref{prop:upoly} and \ref{prop:vpoly}.
\end{proof}

\begin{thm}\label{thm:SolKO2}
For each $(s,n) \in \C \times \Z_{\geq 0}$ determined in Theorem \ref{thm:SolKO1},
the $M$-representations on $\Sol_{\textnormal{II}}(s;n)$ are classified as follows.

\begin{enumerate}[{\normalfont (a)}]
\item 
$n \equiv 0 \Mod 4:$ 
\begin{equation*}
\Sol_{\textnormal{I}}(s;n)
\simeq
\begin{cases}
\pp & \textnormal{if $s \in \C\backslash (I_0^+ \cup I_0^-)$},\\
\pp \oplus \pmi &\textnormal{if $s \in I_0^+$},\\
\pp \oplus \mip & \textnormal{if $s\in I_0^-$}.
\end{cases}
\end{equation*}
\vskip 0.1in

\item $n \equiv 1 \Mod 4:$ 
\begin{equation*}
\Sol_{\textnormal{I}}(s;n) \simeq \mathbb{H} \quad \textnormal{for $s\in I_1$}.
\end{equation*}

\item 
$n \equiv 2 \Mod 4:$ 
\begin{equation*}
\Sol_{\textnormal{I}}(s;n)
\simeq
\begin{cases}
\qquad \quad \;\; \mm & \textnormal{if $s \in \C\backslash (I_2^+ \cup I_2^-)$},\\
\mip \oplus \mm  &\textnormal{if $s \in I_2^+$},\\
\pmi \oplus \mm  & \textnormal{if $s\in I_2^-$}.
\end{cases}
\end{equation*}
\vskip 0.1in

\item $n \equiv 3 \Mod 4:$ 
\begin{equation*}
\Sol_{\textnormal{I}}(s;n)\simeq \mathbb{H} \quad \textnormal{for $s\in I_3$}.
\end{equation*}

\end{enumerate}
Here the characters $(\eps, \eps')$ stand for 
the ones on $\C u_{[s;n]}(t)$ and $\C v_{[s;n]}(t)$
at the same places in
Theorem \ref{thm:SolKO1}.

\end{thm}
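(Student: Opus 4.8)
The plan is to deduce Theorem~\ref{thm:SolKO2} from its hypergeometric counterpart, Theorem~\ref{thm:Sol2}, by transporting everything through the Cayley transform $\pi_n(k_0)$. First I would set up the transport mechanism. By \eqref{eqn:pik0} the map $\pi_n(k_0)$ is a linear isomorphism $\Sol_{\textnormal{II}}(s;n)\stackrel{\sim}{\to}\Sol_{\textnormal{I}}(s;n)$, and by the relation $m^{\textnormal{II}}_j=k_0^{-1}m^{\textnormal{I}}_jk_0$ in \eqref{eqn:mTKO} together with the commutative diagram right after it, $\pi_n(k_0)$ intertwines $\pi_n(m^{\textnormal{II}}_j)$ on the source with $\pi_n(m^{\textnormal{I}}_j)$ on the target for $j=0,1,2,3$; in particular it is an $M$-isomorphism. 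The crucial elementary observation is that Table~\ref{table:char} (values on $m^{\textnormal{I}}_j$) and Table~\ref{table:char2} (values on $m^{\textnormal{II}}_j$) have identical entries. Hence a line in $\Sol_{\textnormal{II}}(s;n)$ on which $M$ acts by the character labelled $(\eps,\eps')$ is carried by $\pi_n(k_0)$ to a line in $\Sol_{\textnormal{I}}(s;n)$ on which $M$ acts by the character with the same label, and a copy of $\mathbb{H}$ is carried to a copy of $\mathbb{H}$ since it is the unique two-dimensional irreducible representation of $M$.

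With this in hand the odd cases are immediate: Theorem~\ref{thm:Sol2} gives $\Sol_{\textnormal{II}}(s;n)\simeq\mathbb{H}$ for $s\in I_1$ when $n\equiv 1\Mod 4$ and for $s\in I_3$ when $n\equiv 3\Mod 4$, so the $M$-isomorphism $\pi_n(k_0)$ yields $\Sol_{\textnormal{I}}(s;n)\simeq\mathbb{H}$ on the same parameter sets. For $n$ even I would run through the subcases of Theorem~\ref{thm:SolKO1} and, for each range of $s$, read off from Lemma~\ref{lem:n0} which of $u_{[s;n]}(t)$, $v_{[s;n]}(t)$ is $\pi_n(k_0)$-proportional to $a_{[s;n]}(t)$, $b_{[s;n]}(t)$ or $c^{\pm}_{[s;n]}(t)$, and then attach to it the character assigned by Theorem~\ref{thm:Sol2} (that is, by Propositions~\ref{prop:A2}, \ref{prop:B2} and \ref{prop:C2}). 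For example, for $n\equiv 0\Mod 4$ and $s\in I_0^-$, Lemma~\ref{lem:n0}(1)(c),(e) gives $c^+_{[s;n]}(t)\stackrel{k_0}{\sim}u_{[s;n]}(t)$ and $c^-_{[s;n]}(t)\stackrel{k_0}{\sim}v_{[s;n]}(t)$, while Theorem~\ref{thm:Sol2}(1) gives $\C c^+_{[s;n]}(t)\simeq\pp$ and $\C c^-_{[s;n]}(t)\simeq\mip$, whence $\C u_{[s;n]}(t)\simeq\pp$ and $\C v_{[s;n]}(t)\simeq\mip$, as asserted; the other ranges for $n\equiv 0\Mod 4$ and all ranges for $n\equiv 2\Mod 4$ are handled in exactly the same way.

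The hard part, such as it is, is a bookkeeping consistency check: the partition of the parameter space in Theorems~\ref{thm:SolKO1}/\ref{thm:SolKO2} is coarser than the one in Theorems~\ref{thm:Sol1}/\ref{thm:Sol2}, since it absorbs $J_k$ into the complement of $I_k^+\cup I_k^-$. I would therefore check that the character attached to $\C u_{[s;n]}(t)$ (resp.\ $\C v_{[s;n]}(t)$) is the same on the overlapping pieces: for $n\equiv 0\Mod 4$, Lemma~\ref{lem:n0}(1)(a),(b) shows $u_{[s;n]}(t)$ comes from $a_{[s;n]}(t)$ when $s\notin I_0^-\cup J_0$ and from $b_{[s;n]}(t)$ when $s\in J_0$, and in both situations Theorem~\ref{thm:Sol2}(1) assigns the character $\pp$; symmetrically, for $n\equiv 2\Mod 4$ the function $v_{[s;n]}(t)$ always inherits $\mm$. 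One should also note that $\Sol_{\textnormal{I}}(s;n)$ is an honest $M$-representation (Lemma~\ref{lem:SolKO}, with $\fk\cap\fm=\{0\}$ in the present split situation), so the character-versus-$\mathbb{H}$ dichotomy above is exhaustive. Beyond this matching, no new computation is needed: the entire proof is the transport of Theorems~\ref{thm:Sol1} and \ref{thm:Sol2} along $\pi_n(k_0)$ via Lemma~\ref{lem:n0}.
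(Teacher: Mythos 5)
Your proposal is correct and follows essentially the same route as the paper: the odd cases are transported directly through the $M$-isomorphism $\pi_n(k_0)$ from Theorem \ref{thm:Sol2}, and the even cases are read off from Lemma \ref{lem:n0} together with Propositions \ref{prop:A2}, \ref{prop:B2}, and \ref{prop:C2}. Your explicit remarks on why the character labels are preserved (the matching of Tables \ref{table:char} and \ref{table:char2} under conjugation by $k_0$) and on the consistency of the coarser partition absorbing $J_k$ are details the paper leaves implicit, but they do not change the argument.
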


\begin{proof}
The case of $n$ odd, the assertions simply follow from
Theorem \ref{thm:Sol2} via the $M$-isomorphism
$\pi_n(k_0)\colon \Sol_{\textnormal{II}}(s;n)\stackrel{\sim}{\to} \Sol_{\textnormal{I}}(s;n)$.
Similarly, the assertions for $n$ even are drawn by
Lemma \ref{lem:n0}, Theorem \ref{thm:SolKO1}, and
Propositions \ref{prop:A2}, \ref{prop:B2}, and \ref{prop:C2}.
\end{proof}

Recall from \eqref{eqn:Ipm} the subsets 
$I^{\pm}(\pmi)$, $I^{\pm}(\mip)$, $I(\mathbb{H}) \subset \Z \times \Z_{\geq 0}$.
We now give the explicit description of $\Hom_M(\Sol_{\textnormal{I}}(s;n), \sigma) \neq \{0\}$.

\begin{thm}\label{thm:HomHeun}
The following conditions on 
$(\sigma, s, n) \in \Irr(M) \times \C \times \Z_{\geq 0}$ are equivalent.

\begin{enumerate}[{\normalfont (i)}]

\item $\Hom_M(\Sol_{\textnormal{I}}(s;n), \sigma) \neq \{0\}$.

\item $\dim_\C\Hom_M(\Sol_{\textnormal{I}}(s;n), \sigma) = 1$.

\item One of the following conditions holds.

\begin{itemize}

\item $\sigma = \pp:$
$(s, n) \in \C \times 4\Z_{\geq 0}$.

\item $\sigma = \mm:$
$(s, n) \in \C \times (2+4\Z_{\geq 0})$.

\item $\sigma=\pmi:$
$(s,n) \in I^+(\pmi)\cup I^-(\pmi)$.

\item $\sigma=\mip:$
$(s,n) \in I^+(\mip)\cup I^-(\mip)$.

\item $\sigma=\mathbb{H}:$
\hspace{13pt}
$(s,n) \in I(\mathbb{H})$.
\end{itemize}
\end{enumerate}

Moreover, for such $(\sigma, s, n)$, 
the space $\Hom_M(\Sol_{\textnormal{I}}(s;n), \sigma)$ is given as follows.

\begin{enumerate}[{\qquad \normalfont (1)}]

\item $\sigma = \pp:$ For $n \in 4\Z_{\geq 0}$, we have
\begin{equation*}
\Hom_M(\Sol_{\textnormal{I}}(s;n), \pp)= \C u_{[s;n]}(t)
\quad \textnormal{for all $s \in \C$}.
\end{equation*}

\item $\sigma = \mm:$ For $n \in 2+4\Z_{\geq 0}$, we have
\begin{equation*}
\Hom_M(\Sol_{\textnormal{I}}(s;n), \mm)= \C v_{[s;n]}(t)
\quad \textnormal{for all $s \in \C$}.
\end{equation*}

\item $\sigma=\pmi:$ We have
\begin{equation*}
\Hom_M(\Sol_{\textnormal{I}}(s;n), \pmi) 
=
\begin{cases}
\C v_{[s;n]}(t) & \textnormal{if $(s,n) \in I^+(\pmi)$},\\
\C u_{[s;n]}(t) & \textnormal{if $(s,n) \in I^-(\pmi)$}.
\end{cases}
\end{equation*}

\item $\sigma=\mip:$ We have
\begin{equation*}
\Hom_M(\Sol_{\textnormal{I}}(s;n), \mip) 
=
\begin{cases}
\C u_{[s;n]}(t) & \textnormal{if $(s,n) \in I^+(\mip)$},\\
\C v_{[s;n]}(t) & \textnormal{if $(s,n) \in I^-(\mip)$}.
\end{cases}
\end{equation*}

\item $\sigma=\mathbb{H}:$ We have
\begin{equation*}
\Hom_M(\Sol_{\textnormal{I}}(s;n), \mathbb{H}) 
=\C \varphi^{(s;n)}_{\textnormal{I}}
\quad \textnormal{for $(s,n) \in I(\mathbb{H})$},
\end{equation*}
where $\varphi^{(s;n)}_{\textnormal{I}}$ is a non-zero $M$-isomorphism
\begin{equation*}
\varphi^{(s;n)}_{\textnormal{I}}\colon \Sol_{\textnormal{I}}(s;n) 
\stackrel{\sim}{\To} \mathbb{H}.
\end{equation*}

\end{enumerate}
\end{thm}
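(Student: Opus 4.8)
The plan is to deduce Theorem~\ref{thm:HomHeun} from the already-established Theorem~\ref{thm:HomHGE} by transporting everything through the $M$-isomorphism $\pi_n(k_0)\colon \Sol_{\textnormal{II}}(s;n)\stackrel{\sim}{\to}\Sol_{\textnormal{I}}(s;n)$ of \eqref{eqn:pik0}. Since $\pi_n(k_0)$ is an $M$-equivariant linear isomorphism, it induces for every $\sigma\in\Irr(M)$ a linear isomorphism
\begin{equation*}
\Hom_M(\Sol_{\textnormal{I}}(s;n),\sigma)\stackrel{\sim}{\To}\Hom_M(\Sol_{\textnormal{II}}(s;n),\sigma),
\qquad
\psi\mapsto \psi\circ\pi_n(k_0).
\end{equation*}
Hence the equivalence of (i), (ii), (iii) and the condition list in (iii) are literally the same as in Theorem~\ref{thm:HomHGE}; nothing new has to be proved there. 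What remains is to identify the distinguished spanning vector of $\Hom_M(\Sol_{\textnormal{I}}(s;n),\sigma)$ in each case, i.e.\ to translate the answers $\C a_{[s;n]}(t)$, $\C b_{[s;n]}(t)$, $\C c^{\pm}_{[s;n]}(t)$ of Theorem~\ref{thm:HomHGE} into the Heun basis $u_{[s;n]}(t)$, $v_{[s;n]}(t)$.

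First I would fix $\sigma$ and run through the four congruence classes of $n$ modulo $4$. For $n$ odd ($n\equiv 1,3$), Theorem~\ref{thm:SolKO2} gives $\Sol_{\textnormal{I}}(s;n)\simeq\mathbb{H}$, so $\Hom_M(\Sol_{\textnormal{I}}(s;n),\sigma)$ is nonzero only for $\sigma=\mathbb{H}$, where it is spanned by any nonzero $M$-isomorphism $\varphi^{(s;n)}_{\textnormal{I}}\colon\Sol_{\textnormal{I}}(s;n)\stackrel{\sim}{\to}\mathbb{H}$; one may simply take $\varphi^{(s;n)}_{\textnormal{I}}=\varphi^{(s;n)}_{\textnormal{II}}\circ\pi_n(k_0)^{-1}$. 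For $n$ even, I would invoke Lemma~\ref{lem:n0}, which records precisely which of $a_{[s;n]}(t),b_{[s;n]}(t),c^{\pm}_{[s;n]}(t)$ maps under $\pi_n(k_0)$ (up to scalar) to $u_{[s;n]}(t)$ and which to $v_{[s;n]}(t)$, for every relevant range of $s$. Composing the dictionary of Lemma~\ref{lem:n0} with the case-by-case output of Theorem~\ref{thm:HomHGE} then yields each line of the present statement. For instance, for $\sigma=\pp$ and $n\equiv 0\Mod 4$, Theorem~\ref{thm:HomHGE} says the Hom-space is spanned by $a_{[s;n]}(t)$ for $s\notin I_0^-\cup J_0$, by $b_{[s;n]}(t)$ for $s\in J_0$, and by $c^+_{[s;n]}(t)$ for $s\in I_0^-$; Lemma~\ref{lem:n0}(1)(a),(b),(c) says each of these is $\stackrel{k_0}{\sim}u_{[s;n]}(t)$, so $\Hom_M(\Sol_{\textnormal{I}}(s;n),\pp)=\C u_{[s;n]}(t)$ uniformly in $s$, which is exactly the claimed simplification. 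The cases $\sigma=\mm$ (using Lemma~\ref{lem:n0}(2)(a),(b),(c)), and $\sigma=\pmi,\mip$ (using parts (1)(d),(e) and (2)(d),(e), together with the parity/character bookkeeping from Theorem~\ref{thm:SolKO2}) go the same way.

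The only genuinely delicate point is making sure the identifications of which basis vector maps to $u$ versus $v$ are consistent across the seam where the two Iwasawa-type identifications $\Omega^{\textnormal{I}}$ and $\Omega^{\textnormal{II}}$ meet, i.e.\ keeping the relabelling \eqref{eqn:mTKO2} of the $m^{J}_j$ and the transformation laws \eqref{eqn:pi-wM}, \eqref{eqn:pi-wMT} straight when reading off characters. This bookkeeping, however, has already been done once and for all in Lemma~\ref{lem:n0} and Theorem~\ref{thm:SolKO2}, so the proof of Theorem~\ref{thm:HomHeun} is essentially a transcription: assemble Theorems~\ref{thm:HomHGE}, \ref{thm:SolKO1}, \ref{thm:SolKO2} and Lemma~\ref{lem:n0}, and observe that the Heun-model answers collapse into the stated uniform-in-$s$ descriptions because $\pi_n(k_0)$ sends the several $\Sol_{\textnormal{II}}$-generators attached to a fixed $\sigma$ to a single $\Sol_{\textnormal{I}}$-generator. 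I expect no obstacle of substance; the main care is in verifying that every case of the case split above is covered and that the multiplicity-one assertion (ii) transfers verbatim, which it does because $\pi_n(k_0)$ is an isomorphism.
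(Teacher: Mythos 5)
Your proposal is correct and matches the paper's (omitted) argument in substance: the paper states only that the theorem ``can be shown similarly to Theorem \ref{thm:HomHGE},'' i.e.\ by reading it off from Theorems \ref{thm:SolKO1} and \ref{thm:SolKO2}, which were themselves obtained by exactly the $\pi_n(k_0)$-transfer and the dictionary of Lemma \ref{lem:n0} that you invoke directly at the level of $\Hom$-spaces. The only difference is packaging (transporting $\Hom_M(\Sol_{\textnormal{II}}(s;n),\sigma)$ versus transporting $\Sol_{\textnormal{II}}(s;n)$ first), and your spot-checks, e.g.\ that Lemma \ref{lem:n0}(1)(a)--(c) collapse the three cases for $\sigma=\pp$, $n\equiv 0 \Mod 4$ onto $\C u_{[s;n]}(t)$, are accurate.
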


\begin{proof}
Since the theorem can be shown similarly to Theorem \ref{thm:HomHGE},
we omit the proof.
\end{proof}


\section{Sequences $\{P_k(x;y)\}_{k=0}^{\infty}$ and $\{Q_k(x;y)\}_{k=0}^\infty$ 
of tridiagonal determinants}
\label{sec:PQ}

The aim of this section is to discuss about two sequences 
$\{P_k(x;y)\}_{k=0}^{\infty}$ and $\{Q_k(x;y)\}_{k=0}^\infty$ 
of $k \times k$ tridiagonal determinants associated to
polynomial solutions to the Heun model
$d\pi_n^{\textnormal{I}}(D^\flat_s)f(t)=0$.
We give factorization formulas for 
$P_{\left[\frac{n+2}{2}\right]}(x;n)$ and $Q_{\left[\frac{n+1}{2}\right]}(x;n)$
as well as palindromic property of 
$\{P_k(x;y)\}_{k=0}^{\infty}$ and $\{Q_k(x;y)\}_{k=0}^{\infty}$ for $n$ even.
(See Definition \ref{def:palindromic} for the definition of palindromic property.)
These are achieved in Theorems 
\ref{thm:factorPQ} and
\ref{thm:factorP}
(factorization formulas),
and \ref{thm:PalinP} and
\ref{thm:PalinQ} (palindromic property).

\subsection{Sequences 
$\{P_k(x;y)\}_{k=0}^\infty$ and $\{Q_k(x;y)\}_{k=0}^\infty$
of tridiagonal determinants}
\label{subsec:PQ}

We start with the definitions of 
$\{P_k(x;y)\}_{k=0}^\infty$ and $\{Q_k(x;y)\}_{k=0}^\infty$.
Let $a(x)$ and $b(x)$ be two polynomials such that
\begin{equation}
a(x)=2x(2x-1) 
\quad \text{and} \quad
b(x)=2x(2x+1). 
\end{equation}
For instance, for $x=1, 2, 3,\dots$, we have
\begin{alignat*}{3}
&a(1)=1 \cdot 2, \quad
&&a(2)=3 \cdot 4, \quad
&&a(3)=5 \cdot 6, \dots, \\
&b(1)=2 \cdot 3, \quad 
&&b(2)=4 \cdot 5, \quad
&&b(3)=6 \cdot 7, \dots.
\end{alignat*}
Similarly, for $x=\frac{1}{2}, \frac{3}{2}, \frac{5}{2},\ldots$, we have
\begin{alignat*}{3}
&a\left(\frac{1}{2}\right)=0 \cdot 1, \quad
&&a\left(\frac{3}{2}\right)=2 \cdot 3, \quad
&&a\left(\frac{5}{2}\right)=4 \cdot 5, \dots, \\[5pt]
&b\left(\frac{1}{2}\right)=1 \cdot 2, \quad 
&&b\left(\frac{3}{2}\right)=3 \cdot 4, \quad
&&b\left(\frac{5}{2}\right)=5 \cdot 6, \dots.
\end{alignat*}
\vskip 0.1in
\noindent
Clearly, the polynomials $a(x)$ and $b(x)$ satisfy

\begin{equation}\label{eqn:ab1}
a(x)= b\left(\frac{2x-1}{2}\right)
\quad \text{and} \quad
b(x)=a\left(\frac{2x+1}{2}\right).
\end{equation}
\vskip 0.1in
We define $k \times k$ tridiagonal determinants
$P_k(x;y)$ and $Q_k(x;y)$ 
in terms of $a(x)$ and $b(x)$ as follows.

\begin{enumerate}[(1)]

\item $P_k(x;y):$
\vskip 0.1in

\begin{itemize}

\item $k =0:$ $P_0(x;y)=1$,
\vskip 0.1in

\item $k=1:$ $P_1(x;y) = yx$,
\vskip 0.2in

\item $k \geq 2:$ $P_k(x;y) = 
\begin{small}
\begin{vmatrix}
yx& a(1)    &              &            &             &       \\
-a\left(\frac{y}{2}\right)& (y-4)x    &     a(2)       &            &             &       \\
  & -a\left(\frac{y-2}{2}\right) &     (y-8)x       &  a(3)        &             &       \\
  &       &  \dots  &  \dots & \dots  &       \\
  &       &              &-a\left(\frac{y-2k+6}{2}\right)   &(y-4k+8)x            & a(k-1) \\
  &       &              &            &-a\left(\frac{y-2k+4}{2}\right)     & (y-4k+4)x    \\
\end{vmatrix}
\end{small}$.
\end{itemize}
\vskip 0.2in

\item $Q_k(x;y):$
\vskip 0.1in
 
\begin{itemize}

\item $k =0:$ $Q_0(x;y)=1$,
\vskip 0.1in

\item $k=1:$ $Q_1(x;y) = (y-2)x$,
\vskip 0.2in

\item $k \geq 2:$ $Q_k(x;y) = 
\begin{small}
\begin{vmatrix}
(y-2)x& b(1)    &              &            &             &       \\
-b\left(\frac{y-2}{2}\right)& (y-6)x    &     b(2)       &            &             &       \\
  & -b\left(\frac{y-4}{2}\right) &     (y-10)x       &  b(3)        &             &       \\
  &       &  \dots  &  \dots & \dots  &       \\
  &       &              &-b\left(\frac{y-2k+4}{2}\right)   &(y-4k+6)x            & b(k-1) \\
  &       &              &            &-b\left(\frac{y-2k+2}{2}\right)     & (y-4k+2)x    \\
\end{vmatrix}
\end{small}
$.
\end{itemize}
\vskip 0.2in

\end{enumerate}

\begin{example}
Here are a few examples of $P_k(x;4)$ and $Q_k(x;6)$ for $k=2, 3, 4$.

\begin{enumerate}[(1)]

\item $P_k(x;4):$
\begin{equation*}
P_2(x;4)=
\begin{vmatrix}
4x & 1 \cdot 2\\
- 3 \cdot 4 & 0
\end{vmatrix},
\quad
P_3(x;4)=
\begin{small}
\begin{vmatrix}
4x & 1 \cdot 2 & \\
- 3\cdot 4 & 0 & 3 \cdot 4\\
 &  -1 \cdot 2&-4x 
\end{vmatrix},
\end{small}
\quad
P_4(x;4)=
\begin{small}
\begin{vmatrix}
4x            & 1 \cdot 2  &                   &\\
- 3\cdot 4 & 0              & 3 \cdot 4    &\\
                &  -1 \cdot 2&-4x               & 5 \cdot 6\\
                &                 &0                  &-12x 
\end{vmatrix}
\end{small}
\end{equation*}

\item  $Q_k(x;6):$ 
\begin{equation*}
Q_2(x;6)=
\begin{vmatrix}
4x & 2 \cdot 3\\
- 4 \cdot 5 & 0
\end{vmatrix},
\quad
Q_3(x;6)=
\begin{small}
\begin{vmatrix}
4x            & 2 \cdot 3       & \\
- 4\cdot 5 & 0                   & 4 \cdot 5\\
                 &  -2 \cdot 3     &-4x 
\end{vmatrix},
\end{small}
\quad
Q_4(x;6)=
\begin{small}
\begin{vmatrix}
4x            & 2 \cdot 3  &                   &\\
- 4\cdot 5 & 0              & 4 \cdot 5    &\\
                &  -2 \cdot 3&-4x               & 6 \cdot 7\\
                &                 &0                  &-12x 
\end{vmatrix}
\end{small}
\end{equation*}

\end{enumerate}

\noindent
Moreover, for instance, for $y=5$ and $k=3$, we have

\begin{equation*}
P_3(x;5)=
\begin{small}
\begin{vmatrix}
5x & 1 \cdot 2 & \\
- 4\cdot 5 & x & 3 \cdot 4\\
 &  -2 \cdot 3&-3x 
\end{vmatrix}
\end{small}
\qquad
\text{and}
\qquad
Q_3(x;5)=
\begin{small}
\begin{vmatrix}
3x            & 2 \cdot 3       & \\
- 3\cdot 4 & -x                   & 4 \cdot 5\\
                 &  -1 \cdot 2     &-5x 
\end{vmatrix}.
\end{small}
\end{equation*}
\end{example}

\vskip 0.1in

\begin{rem}
\label{rem:PQ}
In general $P_k(x;y)$ and $Q_k(x;y)$ satisfy the following properties
for specific $y$ and $k$.

\begin{enumerate}

\item
If $y=n \in 2+2\Z_{\geq 0}$, then 
$P_{\frac{n+2}{2}}(x;n)$ is anti-centrosymmetric:
\begin{equation}\label{eqn:P1}
\begin{small}
P_{\frac{n+2}{2}}(x;n) 
= 
\begin{vmatrix}
nx& a(1)    &              &            &             &       \\
-a\left(\frac{n}{2}\right)& (n-4)x    &     a(2)       &            &             &       \\
  & -a\left(\frac{n-2}{2}\right) &     (n-8)x       &  a(3)        &             &       \\
  &       &  \dots  &  \dots & \dots  &       \\
  &       &              &-a(2)   &-(n-4)x            & a\left(\frac{n}{2}\right) \\
  &       &              &            &-a(1)     & -nx    \\
\end{vmatrix}.\\[5pt]
\end{small}
\end{equation}
\vskip 0.1in

\item
If $y=n \in 4+2\Z_{\geq 0}$, then 
$Q_{\frac{n}{2}}(x;n)$ is also anti-centrosymmetric:
\begin{equation}\label{eqn:Q1}
\begin{small}
Q_{\frac{n}{2}}(x;n) = 
\begin{vmatrix}
(n-2)x& b(1)    &              &            &             &       \\
-b\left(\frac{n-2}{2}\right)& (n-6)x    &     b(2)       &            &             &       \\
  & -b\left(\frac{n-4}{2}\right) &     (n-10)x       &  b(3)        &             &       \\
  &       &  \dots  &  \dots & \dots  &       \\
  &       &              &-b(2)   &-(n-6)x            & b\left(\frac{n-2}{2}\right) \\
  &       &              &            &-b(1)     & -(n-2)x    \\
\end{vmatrix}.\\[5pt]
\end{small}
\end{equation}
\vskip 0.1in

\item 
It follows from \eqref{eqn:ab1} that
for $y=n \in 3+2\Z_{\geq 0}$, we have
\begin{align}
P_{\frac{n+1}{2}}(x;n)
&= 
\small
\begin{vmatrix}
nx& a(1)    &              &            &             &       \\
-b\left(\frac{n-1}{2}\right)& (n-4)x    &     a(2)       &            &             &       \\
  & -b\left(\frac{n-3}{2}\right) &     (n-8)x       &  a(3)        &             &       \\
  &       &  \dots  &  \dots & \dots  &       \\
  &       &              &-b\left(2\right)   &-(n-6)x            & a\left(\frac{n-1}{2}\right) \\
  &       &              &            &-b\left(1\right)     & -(n-2)x    \\
\end{vmatrix}
\normalsize
 \label{eqn:P2}\\[10pt]
Q_{\frac{n+1}{2}}(x;n) 
&= 
\small
\begin{vmatrix}
(n-2)x& b(1)    &              &            &             &       \\
-a\left(\frac{n-1}{2}\right)& (n-6)x    &     b(2)       &            &             &       \\
  & -a\left(\frac{n-3}{2}\right) &     (n-10)x       &  b(3)        &             &       \\
  &       &  \dots  &  \dots & \dots  &       \\
  &       &              &-a(2)   &-(n-4)x            & b\left(\frac{n-1}{2}\right) \\
  &       &              &            &-a(1)     & -nx    \\
\end{vmatrix}.\label{eqn:Q2}
\normalsize
\end{align}
We also have $P_1(x;1)=x$ and $Q_1(x;1)=-x$.
Therefore, 
\begin{equation}\label{eqn:QP}
Q_{\frac{n+1}{2}}(x;n) 
=
(-1)^{\frac{n+1}{2}}P_{\frac{n+1}{2}}(x;n)
\quad 
\text{for $n \in 1+2\Z_{\geq 0}$.}
\end{equation}

\end{enumerate}
\end{rem}

The tridiagonal determinants
$P_k(x;y)$
and 
$Q_k(x;y)$
enjoy the following property.

\begin{lem}\label{lem:PQs}
For $k \in \Z_{\geq 0}$, we have
\begin{equation*}
P_k(-x;y)=(-1)^kP_k(x;y) 
\quad \text{and} \quad 
Q_k(-x;y)=(-1)^kQ_k(x;y).
\end{equation*}
\end{lem}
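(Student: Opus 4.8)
\textbf{Proof proposal for Lemma \ref{lem:PQs}.}

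The plan is to exploit the tridiagonal structure of $P_k(x;y)$ and $Q_k(x;y)$ directly. Each of these is a $k \times k$ tridiagonal determinant in which every diagonal entry is a scalar multiple of $x$ (namely $(y-4j)x$ for $P_k$ and $(y-4j+2)x$ for $Q_k$, $j = 0, 1, \dots, k-1$), while the super- and sub-diagonal entries are the $x$-free quantities $a(\cdot)$, $b(\cdot)$ and their negatives. The key observation is a standard conjugation trick: if one conjugates a tridiagonal matrix $T$ by the diagonal matrix $\Delta = \mathrm{diag}(1, -1, 1, -1, \dots)$ with entries $(-1)^{i-1}$, then the diagonal of $T$ is unchanged, while each off-diagonal entry $T_{i,i+1}$ and $T_{i+1,i}$ gets multiplied by $-1$. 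Since $\det(\Delta T \Delta^{-1}) = \det T$, this shows that negating all off-diagonal entries of a tridiagonal matrix leaves its determinant invariant.

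Now I would apply this as follows. Let $M_k(x;y)$ denote the tridiagonal matrix whose determinant is $P_k(x;y)$. Replacing $x$ by $-x$ multiplies every diagonal entry of $M_k(x;y)$ by $-1$ and leaves the off-diagonal entries untouched; equivalently, $M_k(-x;y) = -M_k(x;y) + 2R$, where $R$ is the off-diagonal part — but more cleanly, observe that $M_k(-x;y) = -\,\Delta\, M_k(x;y)\, \Delta^{-1}$, because the factor $-1$ flips the sign of the diagonal entries of $M_k(x;y)$ back to what we want after the $\Delta$-conjugation flips the off-diagonal signs. Concretely: $-M_k(x;y)$ has diagonal entries $-(y-4j)x$, which match those of $M_k(-x;y)$, but its off-diagonal entries are the negatives of those in $M_k(-x;y)$; conjugating $-M_k(x;y)$ by $\Delta$ restores the off-diagonal signs without touching the diagonal, yielding exactly $M_k(-x;y)$. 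Taking determinants gives $P_k(-x;y) = \det(-\Delta M_k(x;y)\Delta^{-1}) = (-1)^k \det M_k(x;y) = (-1)^k P_k(x;y)$. The identical argument applied to the matrix defining $Q_k(x;y)$ gives $Q_k(-x;y) = (-1)^k Q_k(x;y)$. The cases $k = 0$ and $k = 1$ are immediate from the explicit formulas $P_0 = Q_0 = 1$, $P_1(x;y) = yx$, $Q_1(x;y) = (y-2)x$.

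The only mild subtlety — and the step I would be most careful about — is bookkeeping the sign in the relation $M_k(-x;y) = -\Delta M_k(x;y)\Delta^{-1}$, i.e. checking that the interplay between the global $-1$ (from $x \mapsto -x$ on the diagonal) and the $\Delta$-conjugation (which sign-flips only the off-diagonal band) produces precisely $M_k(-x;y)$ and not some other sign pattern. This is purely a matter of tracking which entries change sign under each operation, and it works because $M_k$ is genuinely tridiagonal (no nonzero entries outside the three central bands). An equally valid alternative I could use instead is the cofactor recursion: expanding along the last row gives $P_k(x;y) = (y-4k+4)x\, P_{k-1}(x;y) - a(k-1)a\!\left(\tfrac{y-2k+4}{2}\right) P_{k-2}(x;y)$, and an easy induction on $k$ — with the base cases above — propagates the parity $P_k(-x;y) = (-1)^k P_k(x;y)$, since the first term carries one factor of $x$ (hence one sign flip times $(-1)^{k-1}$) and the second term carries no $x$ (hence $(-1)^{k-2} = (-1)^k$). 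The same recursion argument handles $Q_k$. I would present whichever is shorter; the conjugation argument is the cleanest since it avoids writing out the recursion.
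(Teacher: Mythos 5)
Your proof is correct and follows essentially the same route as the paper: the paper reduces the claim to the general fact that negating every diagonal entry of a $k\times k$ tridiagonal determinant (leaving the off-diagonal bands fixed) multiplies it by $(-1)^k$, which is exactly what your $\Delta$-conjugation identity $M_k(-x;y)=-\Delta M_k(x;y)\Delta^{-1}$ establishes. The only difference is that the paper cites this tridiagonal identity without proof while you supply one; your sign bookkeeping is right (a minor indexing slip in the $Q_k$ diagonal entries and a sign in the stated recursion are immaterial to the argument).
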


\begin{proof}
The identities for $k=0,1$ clearly hold by definition.
The assertions for $k\geq 2$ follow from the following 
general property of $k \times k$ tridiagonal determinants:

\begin{equation}\label{eqn:tridiag}
\begin{small}
\begin{vmatrix}
-a_1& b_1    &              &            &             &      \\
c_1& -a_2    &     b_2       &            &             &    \\
  & c_2 &     -a_3       &  b_3        &             &       &\\
  &       &  \dots  &  \dots & \dots  &       &\\
  &       &              &c_{k-2}          &-a_{k-1}            & b_{k-1} \\
  &       &              &            &c_{k-1}            & -a_{k} \\
\end{vmatrix}
\end{small}
=(-1)^k
\begin{small}
\begin{vmatrix}
a_1& b_1    &              &            &             &      \\
c_1& a_2    &     b_2       &            &             &    \\
  & c_2 &   a_3       &  b_3        &             &       &\\
  &       &  \dots  &  \dots & \dots  &       &\\
  &       &              &c_{k-2}          & a_{k-1}            & b_{k-1} \\
  &       &              &            &c_{k-1}            & a_{k} \\
\end{vmatrix}.
\end{small}
\qedhere
\end{equation}
\end{proof}
\vskip 0.1in

From the next subsections $y$ is taken to be $y=n \in \Z_{\geq 0}$
and we shall discuss several properties of $\{P_k(x;n)\}_{k=0}^\infty$ 
and $\{Q_k(x;n)\}_{k=0}^\infty$.

\subsection{Generating functions of $\{P_k(x;n)\}_{k=0}^\infty$ and 
$\{Q_k(x;n)\}_{k=0}^\infty$}

We first give the generating functions of $\{P_k(x;n)\}_{k=0}^\infty$ and 
$\{Q_k(x;n)\}_{k=0}^\infty$.
Let $u_{[s;n]}(t)$ be 
the local Heun function defined in \eqref{eqn:u}.
Write
$u_{[s;n]}(t)=\sum_{k=0}^\infty U_k(s;n)t^{2k}$
for the power series expansion at $t=0$.
It follows from \eqref{eqn:uk} with \eqref{eqn:uk0} in Section \ref{sec:appendix}
that each coefficient $U_k(s;n)$ can be given as
\begin{equation}
U_k(s;n) = 
\begin{small}
\begin{vmatrix}
E^u_0& -1    &              &            &             &      \\
F^u_1& E^u_1    &     -1       &            &             &    \\
  & F^u_2 &     E^u_2       &  -1        &             &       &\\
  &       &  \dots  &  \dots & \dots  &       &\\
  &       &              &F^u_{k-2}          &E^u_{k-2}            & -1 \\
  &       &              &            &F^u_{k-1}            & E^u_{k-1} \\
\end{vmatrix} \label{eqn:uk1}\\[5pt]
\end{small}
\end{equation}
with
\begin{equation}
E^u_0=\frac{ns}{a(1)},\quad
E^u_k=\frac{(n-4k)s}{a(k+1)},
\quad \text{and} \quad
F^u_k=\frac{a\left(\frac{n-2k+2}{2}\right)}{a(k+1)}.\label{eqn:uk01}
\end{equation}
Similarly,
equation \eqref{eqn:vk} with \eqref{eqn:vk0} in Section \ref{sec:appendix}
shows that the coefficients $V_k(s;n)$ of  the power series expansion
$v_{[s;n]}(t) = \sum_{k=0}^\infty V_k(s;n)t^{2k+1}$ of 
the second solution $v_{[s;n]}(t)$ to the Heun equation \eqref{eqn:Heun1}
at $t=0$ are given as
\begin{equation}
V_k(s;n) = 
\begin{small}
\begin{vmatrix}
E^v_0& -1    &              &            &             &      \\
F^v_1& E^v_1    &     -1       &            &             &    \\
  & F^v_2 &     E^v_2       &  -1        &             &       &\\
  &       &  \dots  &  \dots & \dots  &       &\\
  &       &              &F^v_{k-2}          &E^v_{k-2}            & -1 \\
  &       &              &            &F^v_{k-1}            & E^v_{k-1} \\
\end{vmatrix}\label{eqn:vk1}
\end{small}
\end{equation}
with
\begin{equation}
E^v_0=\frac{(n-2)s}{b(1)}, \quad
E^v_k=\frac{(n-4k-2)s}{b(k+1)},
\quad \text{and} \quad
F^v_k=\frac{b\left(\frac{n-2k}{2}\right)}{b(k+1)}. \label{eqn:vk01}
\end{equation}
Proposition \ref{prop:PQ} below then shows that
$u_{[s;n]}(t)$ and $v_{[s;n]}(t)$ are in fact
the ``hyperbolic cosine'' generating function of $\{P_k(s;n)\}_{k=0}^\infty$
and ``hyperbolic sine'' generating function of $\{Q_k(s;n)\}_{k=0}^\infty$,
respectively.

\begin{prop}\label{prop:PQ}
We have
\begin{align*}
u_{[s;n]}(t) = \sum_{k=0}^\infty P_k(s;n) \frac{t^{2k}}{(2k)!}
\quad \mathrm{and} \quad 
v_{[s;n]}(t) = \sum_{k=0}^\infty Q_k(s;n) \frac{t^{2k+1}}{(2k+1)!}.
\end{align*}
\end{prop}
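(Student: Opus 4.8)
The plan is to compare the two tridiagonal determinants at hand: on one side the power-series coefficients $U_k(s;n)$ and $V_k(s;n)$ of $u_{[s;n]}(t)$ and $v_{[s;n]}(t)$, which by \eqref{eqn:uk1}--\eqref{eqn:uk01} and \eqref{eqn:vk1}--\eqref{eqn:vk01} are $k\times k$ tridiagonal determinants with entries built from the normalized Heun recursion; on the other side the determinants $P_k(x;y)$ and $Q_k(x;y)$ of Section \ref{subsec:PQ} with $x=s$, $y=n$. Since $u_{[s;n]}(t)=\sum_{k}U_k(s;n)t^{2k}$ and $v_{[s;n]}(t)=t\,Hl(\dots;t^2)=\sum_{k}V_k(s;n)t^{2k+1}$, the proposition is equivalent to the scalar identities
\[
P_k(s;n)=(2k)!\,U_k(s;n),\qquad Q_k(s;n)=(2k+1)!\,V_k(s;n)\qquad(k\ge 0).
\]
For $k=0$ both sides equal $1$, so the content is in $k\ge 1$.

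The cleanest route I would take is a diagonal row/column rescaling. Write the matrix behind \eqref{eqn:uk1} as $A$ with $A_{ii}=E^u_{i-1}$, $A_{i,i+1}=-1$, $A_{i+1,i}=F^u_i$, and conjugate by $D_r=\mathrm{diag}(r_1,\dots,r_k)$, $D_s=\mathrm{diag}(s_1,\dots,s_k)$, so that $\det(D_rAD_s)=\big(\prod_i r_is_i\big)U_k(s;n)$. Using $E^u_{i-1}=\tfrac{(n-4(i-1))s}{a(i)}$ and $F^u_i=\tfrac{a((n-2i+2)/2)}{a(i+1)}$, requiring $D_rAD_s$ to be precisely the matrix defining $P_k$ forces $r_is_i=a(i)$, $r_is_{i+1}=-a(i)$, $r_{i+1}s_i=-a(i+1)$; these are consistent and solved by $s_i=(-1)^{i-1}$, $r_i=(-1)^{i-1}a(i)$. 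Then $\prod_{i=1}^{k}r_is_i=\prod_{i=1}^{k}a(i)=\prod_{i=1}^{k}2i(2i-1)=(2k)!$, giving the first identity. The argument for $V_k,Q_k$ is word-for-word the same with $a$ replaced by $b$: one solves $r_is_i=b(i)$, $r_is_{i+1}=-b(i)$, $r_{i+1}s_i=-b(i+1)$, and $\prod_{i=1}^{k}b(i)=\prod_{i=1}^{k}2i(2i+1)=(2k+1)!$.

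As a fallback (or cross-check) I would instead match three-term recursions: Laplace expansion along the last row gives $a(k)\,U_k=(n-4k+4)s\,U_{k-1}+a(\tfrac{n-2k+4}{2})\,U_{k-2}$ for the Heun coefficients, and the same expansion of the definition of $P_k(x;y)$ gives $P_k=(n-4k+4)s\,P_{k-1}+a(k-1)\,a(\tfrac{n-2k+4}{2})\,P_{k-2}$. Substituting $P_j=(2j)!\,U_j$ and using $a(k)=2k(2k-1)=\tfrac{(2k)!}{(2k-2)!}$ and $a(k-1)=(2k-2)(2k-3)$ turns the $P$-recursion into the $U$-recursion; the base cases $P_0=U_0=1$ and $P_1=2!\,U_1=ns$ close the induction, and the analogue with $b$, $V_k$, $Q_k$ is identical.

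The only place that needs genuine care is the index bookkeeping: the appendix entries $E^u_k,F^u_k,E^v_k,F^v_k$ are indexed from $0$, whereas the main-text determinants $P_k,Q_k$ have diagonal running $(n-4(j-1))x$ and off-diagonal entries $a(j)$, $-a(\tfrac{n-2(j-1)}{2})$ (resp. $b(j)$, $-b(\tfrac{n-2j}{2})$), so one must line up the shifts correctly before the rescaling system can be read off. Once that is done, the two key facts $\prod_{i=1}^{k}a(i)=(2k)!$ and $\prod_{i=1}^{k}b(i)=(2k+1)!$ do all the remaining work, and I expect no real obstacle beyond this routine verification.
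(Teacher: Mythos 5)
Your proposal is correct and follows essentially the same route as the paper: both arguments identify $U_k$ and $V_k$ as tridiagonal determinants via the appendix recurrences and then remove the denominators $a(i)$, $b(i)$ by a diagonal rescaling, with $\prod_{i=1}^{k}a(i)=(2k)!$ and $\prod_{i=1}^{k}b(i)=(2k+1)!$ finishing the job. The only cosmetic difference is that the paper scales rows by $-a(j)$ and then invokes Lemma \ref{lem:PQs} to absorb the resulting sign $(-1)^k$ (landing on $P_k(-s;n)$ first), whereas your two-sided scaling with alternating signs produces $P_k(s;n)$ directly.
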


\begin{proof}
We only show the identity for $u_{[s;n]}(t)$; 
the assertion for $v_{[s;n]}(t)$ can be shown similarly.
We wish to show that 
$U_k(s;n)=P_k(s;n)/(2k)!$
for all $k\in \Z_{\geq 0}$.
By definition, it is clear that
$U_0(s;n)=1=P_0(s;n)$ and 
$U_1(s;n)=ns/2=P_1(s;n)/2!$.
We then assume that $k\geq 2$. It follows from \eqref{eqn:uk01} that
$U_k(s;n)$ can be given as
\begin{equation*}
U_k(s;n)
=\frac{(-1)^k}{\prod^k_{j=1}a(j)}P_k(-s;n)
=\frac{P_k(s;n)}{\prod^k_{j=1}a(j)}.
\end{equation*}
Here, Lemma \ref{lem:PQs} is applied from the second identity to the third.
Now the assertion follows from the identity
$\prod^k_{j=1}a(j) = (2k)!$.
\end{proof}

For $R_k(s;n) \in \{P_k(s;n), Q_k(s;n)\}$, we define
\begin{equation*}
\ESol_k\left(R;n\right)
:=\{s \in \C : R_k(s;n)=0\}.
\end{equation*}
\noindent
We recall from \eqref{eqn:IJ} the subsets 
$I^\pm_0$, $I_1$, $I^\pm_2$, $I_3 \subset \Z$.

\begin{cor}\label{cor:PQ}
Let $n \in 2\Z_{\geq 0}$. Then 
$\ESol_{\frac{n+2}{2}}(P;n)$ and $\ESol_{\frac{n}{2}}(Q;n)$ are given as
\begin{equation*}
\ESol_{\frac{n+2}{2}}(P;n)=
\begin{cases}
\C & \emph{if $n \equiv 0 \Mod 4$},\\[3pt]
I^+_2\cup I^-_2 & \emph{if $n\equiv 2 \Mod 4$}
\end{cases}
\end{equation*}
and 
\begin{equation*}
\ESol_{\frac{n}{2}}(Q;n)=
\begin{cases}
I^+_0\cup I^-_0 & \emph{if $n\equiv 0 \Mod 4$},\\[3pt]
\C & \emph{if $n \equiv 2 \Mod 4$}.
\end{cases}
\end{equation*}
\vskip 0.05in
\noindent
Further, for $n \in 1+2\Z_{\geq 0}$, the sets 
$\ESol_{\frac{n+1}{2}}(P;n)$ and $\ESol_{\frac{n+1}{2}}(Q;n)$ are given as
\begin{equation*}
\ESol_{\frac{n+1}{2}}(P;n)=\ESol_{\frac{n+1}{2}}(Q;n)
=
\begin{cases}
I_1 &\emph{if $n \equiv 1 \Mod 4$},\\[3pt]
I_3 &\emph{if $n \equiv 3 \Mod 4$}.
\end{cases}
\end{equation*}
\end{cor}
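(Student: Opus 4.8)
The plan is to deduce Corollary~\ref{cor:PQ} directly from the classification of polynomial solutions to the Heun model already obtained in Propositions~\ref{prop:upoly} and~\ref{prop:vpoly}, using Proposition~\ref{prop:PQ} as the dictionary between the tridiagonal determinants and the Taylor coefficients of the generating functions. Concretely, Proposition~\ref{prop:PQ} gives $P_k(s;n) = (2k)!\,U_k(s;n)$ and $Q_k(s;n) = (2k+1)!\,V_k(s;n)$, where $u_{[s;n]}(t) = \sum_{k\geq 0} U_k(s;n)\,t^{2k}$ and $v_{[s;n]}(t) = \sum_{k\geq 0} V_k(s;n)\,t^{2k+1}$ are the power-series solutions at $t=0$; in particular $\ESol_{k}(P;n) = \{s : U_k(s;n) = 0\}$ and $\ESol_k(Q;n) = \{s : V_k(s;n) = 0\}$. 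I would also note in passing that $u_{[s;n]}(t)$ and $v_{[s;n]}(t)$ are well-defined for every $s\in\C$, since the exponents at $0$ are governed by $\gamma = \tfrac12$, $\tfrac32$, which are not non-positive integers, and the denominators $a(k+1)$, $b(k+1)$ of the recursion coefficients in \eqref{eqn:uk01}, \eqref{eqn:vk01} never vanish.

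The heart of the argument is a \emph{decoupling} of the three-term recursion at the critical index. Expanding the continuants \eqref{eqn:uk1} and \eqref{eqn:vk1} along their last rows yields $U_k = E^u_{k-1}U_{k-1} + F^u_{k-1}U_{k-2}$ and $V_k = E^v_{k-1}V_{k-1} + F^v_{k-1}V_{k-2}$, and since $a(x)=2x(2x-1)$, $b(x)=2x(2x+1)$ one reads off from \eqref{eqn:uk01}, \eqref{eqn:vk01} that $F^u_k = (n-2k+2)(n-2k+1)/a(k+1)$ and $F^v_k = (n-2k)(n-2k+1)/b(k+1)$. A one-line check in each residue class $n \equiv 0,1,2,3 \Mod 4$ shows $F^u_{[(n+2)/2]} = 0$ and $F^v_{[(n+1)/2]} = 0$ (in each case one of the two linear factors vanishes). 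Hence, writing $N = [(n+2)/2]$, the recursion forces $U_{N+1}, U_{N+2}, \dots$ to be scalar multiples of $U_N$; since $u_{[s;n]}(t)\in\Pol_n[t]$ is equivalent to $U_k(s;n) = 0$ for all $k$ with $2k > n$, i.e.\ for all $k \geq N$, we get
\[
u_{[s;n]}(t)\in\Pol_n[t] \iff U_N(s;n) = 0 \iff P_N(s;n) = 0,
\]
and the identical argument with $F^v$ gives $v_{[s;n]}(t)\in\Pol_n[t] \iff Q_{[(n+1)/2]}(s;n) = 0$.

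It then remains to quote Propositions~\ref{prop:upoly} and~\ref{prop:vpoly}, which describe exactly the set of $s$ for which $u_{[s;n]}(t)$, resp.\ $v_{[s;n]}(t)$, lies in $\Pol_n[t]$ in each class of $n$ modulo $4$; substituting these in yields the displayed formulas for $\ESol_{(n+2)/2}(P;n)$ and $\ESol_{n/2}(Q;n)$ when $n$ is even and for $\ESol_{(n+1)/2}(P;n)$ when $n$ is odd. For $n$ odd the identity \eqref{eqn:QP}, namely $Q_{(n+1)/2}(x;n) = (-1)^{(n+1)/2}P_{(n+1)/2}(x;n)$, then gives $\ESol_{(n+1)/2}(Q;n) = \ESol_{(n+1)/2}(P;n)$ for free. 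I do not expect a genuine obstacle; the only point demanding care is the bookkeeping that matches the critical index $[(n+2)/2]$ (resp.\ $[(n+1)/2]$) with the place where $F^u$ (resp.\ $F^v$) degenerates, checked separately for $n \equiv 0,1,2,3 \Mod 4$.
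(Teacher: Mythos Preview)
Your proposal is correct and follows essentially the same strategy as the paper: translate the vanishing of $P_{[(n+2)/2]}(s;n)$ and $Q_{[(n+1)/2]}(s;n)$ into the condition $u_{[s;n]}(t)\in\Pol_n[t]$, resp.\ $v_{[s;n]}(t)\in\Pol_n[t]$, via Proposition~\ref{prop:PQ}, and then invoke Propositions~\ref{prop:upoly} and~\ref{prop:vpoly}. The paper's proof is terse and leaves the equivalence $P_{[(n+2)/2]}(s;n)=0 \Leftrightarrow u_{[s;n]}(t)\in\Pol_n[t]$ implicit; your explicit decoupling argument (checking that $F^u_{[(n+2)/2]}=0$ and $F^v_{[(n+1)/2]}=0$ in each residue class) is exactly the detail that justifies this step.
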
 

\begin{proof}
It follows from Propositions \ref{prop:upoly} and \ref{prop:PQ} that
the following conditions on $s \in \C$ are equivalent for $n \in 2\Z_{\geq 0}$:
\begin{enumerate}[{\normalfont (i)}]
\item $s \in \ESol_{\frac{n+2}{2}}(P;n)$;
\vskip 5pt
\item $u_{[s;n]}(t) \in \Pol_n[t]$;
\vskip 5pt
\item $s \in 
\begin{cases}
\C & \text{if $n\equiv 0 \Mod 4$},\\
I^+_2 \cup I^-_2 & \text{if $n \equiv 2 \Mod 4$}.
\end{cases}$
\vskip 5pt
\end{enumerate} 
This concludes the assertion for $\ESol_{\frac{n+2}{2}}(P;n)$.
The other cases can be shown similarly. 
\end{proof}

\subsection{Factorization formulas of 
$P_{\left[\frac{n+2}{2}\right]}(x;n)$ and $Q_{\left[\frac{n+1}{2}\right]}(x;n)$}

We next show the factorization formulas 
for $P_{\frac{n+2}{2}}(x;n)$ 
and $Q_{\frac{n}{2}}(x;n)$   
for $n$ even,
and for $P_{\frac{n+1}{2}}(x;n)$ 
and $Q_{\frac{n+1}{2}}(x;n)$  
for $n$ odd. 
We remark that any product of the form 
$\prod^{j-1}_{\ell=j} c_\ell$  with 
$c_\ell \in \Pol[t]$ (in particular, $c_\ell \in \C$)
is regarded as $\prod^{j-1}_{\ell=j} c_\ell=1$.

\subsubsection{Factorization formulas of
$P_{\frac{n+2}{2}}(x;n)$ and $Q_{\frac{n}{2}}(x;n)$}

We start with $P_{\frac{n+2}{2}}(x;n)$ 
and $Q_{\frac{n}{2}}(x;n)$ for $n$ even (see \eqref{eqn:P1} and \eqref{eqn:Q1}).
For $n \in 2\Z_{\geq 0}$, let  $\ga_n$ and $\gb_n$ be the products of the coefficients
of $x$ on the main diagonal of $P_{\frac{n+2}{2}}(x;n)$ and $Q_{\frac{n}{2}}(x;n)$, respectively, 
Namely, we have 
\begin{equation*}
\ga_n=\prod^{\frac{n}{2}}_{\ell=0}(n-4\ell)
\quad
\text{and}
\quad
\gb_n=\prod^{\frac{n-2}{2}}_{\ell=0}(n-2-4\ell).
\end{equation*}
\noindent

It is remarked that $\ga_n$ and $\gb_n$ may be given as follows.

\begin{equation*}
\ga_n
=
\begin{cases}
0 & \text{if $n \equiv 0 \Mod 4$},\\[3pt]
(-4)^{\frac{n+2}{4}}\prod\limits^{\frac{n-2}{4}}_{\ell=0} (1+2\ell)^2
& \text{if $n \equiv 2\Mod 4$}
\end{cases}
\end{equation*}
and
\begin{equation*}
\beta_n=
\begin{cases}
(-4)^{\frac{n}{4}}\prod\limits^{\frac{n-4}{4}}_{\ell=0}(1+2\ell)^2
& \text{if $n \equiv 0 \Mod 4$}, \\[3pt]
0 & \text{if $n \equiv 2 \Mod4$}.
\end{cases}
\end{equation*}

\begin{thm}
[Factorization formulas 
of $P_{\frac{n+2}{2}}(x;n)$ and $Q_{\frac{n}{2}}(x;n)$]
\label{thm:factorPQ}
For $n \in 2\Z_{\geq 0}$, 
the polynomials $P_{\frac{n+2}{2}}(x;n)$ and $Q_{\frac{n}{2}}(x;n)$ 
are either $0$ or factored as follows.

\begin{equation}\label{eqn:factor1}
P_{\frac{n+2}{2}}(x;n) =
\begin{cases}
0 &\emph{if $n \equiv 0 \Mod 4$},\\[3pt]
\ga_n \prod\limits^{\frac{n-2}{4}}_{\ell=0}(x^2-(4\ell+1)^2) & \emph{if $n \equiv 2 \Mod 4$}
\end{cases}
\end{equation}
and
\begin{equation}\label{eqn:factor2}
Q_{\frac{n}{2}}(x;n)=
\begin{cases}
\beta_n\prod\limits^{\frac{n-4}{4}}_{\ell=0}(x^2-(4\ell+3)^2) &\emph{if $n \equiv 0 \Mod 4$},\\[3pt]
0 &\emph{if $n \equiv 2 \Mod 4$}.
\end{cases}
\end{equation}
\end{thm}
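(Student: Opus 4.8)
The plan is to derive both factorization formulas from Corollary \ref{cor:PQ}, which already records the zero loci $\ESol_{\frac{n+2}{2}}(P;n)$ and $\ESol_{\frac{n}{2}}(Q;n)$, together with a one-line computation of the leading coefficient of $P_{\frac{n+2}{2}}(x;n)$ and $Q_{\frac{n}{2}}(x;n)$ as polynomials in $x$. First I would record the elementary fact that a $k\times k$ tridiagonal determinant whose main-diagonal entries are $d_1x,\dots,d_kx$ and whose off-diagonal entries are constants is a polynomial in $x$ of degree at most $k$ with $x^k$-coefficient $d_1\cdots d_k$: a nonzero term in the Leibniz expansion corresponds to a product of disjoint adjacent transpositions, so the term $\prod_i d_i x$ coming from the identity permutation is the only one of $x$-degree $k$, every other term trading a pair of adjacent diagonal factors for the corresponding off-diagonal product and thereby dropping the $x$-degree by $2$. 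Applied to \eqref{eqn:P1} and \eqref{eqn:Q1}, this gives $\deg_x P_{\frac{n+2}{2}}(x;n)\leq \frac{n+2}{2}$ with leading coefficient $\ga_n=\prod_{\ell=0}^{n/2}(n-4\ell)$, and $\deg_x Q_{\frac{n}{2}}(x;n)\leq \frac{n}{2}$ with leading coefficient $\gb_n=\prod_{\ell=0}^{(n-2)/2}(n-2-4\ell)$.

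When $n\equiv 0\Mod 4$ the claim for $P$ is then immediate: Corollary \ref{cor:PQ} gives $\ESol_{\frac{n+2}{2}}(P;n)=\C$, i.e.\ $P_{\frac{n+2}{2}}(s;n)=0$ for every $s\in\C$, so $P_{\frac{n+2}{2}}(x;n)$ is the zero polynomial; symmetrically $Q_{\frac{n}{2}}(x;n)\equiv 0$ when $n\equiv 2\Mod 4$. These vanishings can also be seen directly from \eqref{eqn:P1} and \eqref{eqn:Q1}: in the relevant residue classes the determinant is anti-centrosymmetric of odd order, and an odd-order anti-centrosymmetric determinant equals $(-1)^{\mathrm{order}}$ times itself, hence is $0$.

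It then remains to handle $n\equiv 2\Mod 4$ for $P$, the case $n\equiv 0\Mod 4$ for $Q$ being entirely parallel. In this residue class $\ga_n=(-4)^{(n+2)/4}\prod_{\ell=0}^{(n-2)/4}(1+2\ell)^2\neq 0$, so $P_{\frac{n+2}{2}}(x;n)$ has degree exactly $\frac{n+2}{2}$. By Corollary \ref{cor:PQ} its set of zeros in $x$ is precisely $I^+_2\cup I^-_2=\{\pm(4\ell+1):0\leq\ell\leq\frac{n-2}{4}\}$, a set of $\frac{n+2}{2}$ distinct numbers, so $P_{\frac{n+2}{2}}(x;n)$ equals its leading coefficient $\ga_n$ times $\prod_{\ell=0}^{(n-2)/4}(x-(4\ell+1))(x+(4\ell+1))=\prod_{\ell=0}^{(n-2)/4}(x^2-(4\ell+1)^2)$, which is \eqref{eqn:factor1}. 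Running the same argument with $I^{\pm}_0$ and $\gb_n$ in place of $I^{\pm}_2$ and $\ga_n$ yields \eqref{eqn:factor2}.

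The only point requiring care is the bookkeeping: checking that $\ga_n$ (resp.\ $\gb_n$) is indeed the nonzero leading coefficient in the appropriate class mod $4$, and that the finite root sets of Corollary \ref{cor:PQ} pair up exactly into the quadratic factors $x^2-(4\ell+1)^2$ (resp.\ $x^2-(4\ell+3)^2$). No new analytic input is needed, since Corollary \ref{cor:PQ} already packages the Heun/hypergeometric analysis of Sections \ref{sec:HGE} and \ref{sec:Heun}.
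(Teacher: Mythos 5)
Your proposal is correct and follows essentially the same route as the paper's own proof: read off the zero set from Corollary \ref{cor:PQ}, identify $\ga_n$ (resp.\ $\gb_n$) as the leading coefficient coming from the product of the diagonal entries, and match the $\frac{n+2}{2}$ (resp.\ $\frac{n}{2}$) distinct roots against the degree. The extra details you supply (the Leibniz-expansion justification of the leading coefficient and the anti-centrosymmetry explanation of the vanishing cases) are correct but are left implicit in the paper.
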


\begin{proof}
We only demonstrate the proof for $P_{\frac{n+2}{2}}(x;n)$; the assertion for 
$Q_{\frac{n}{2}}(x;n)$ can be shown similarly.
It follows from Corollary \ref{cor:PQ} that 
\begin{equation*}
\ESol_{\frac{n+2}{2}}(P;n)=
\begin{cases}
\C & \text{if $n \equiv 0 \Mod 4$},\\[3pt]
I^+_2\cup I^-_2 & \text{if $n\equiv 2 \Mod 4$}.
\end{cases}
\end{equation*}
In particular, 
as $\ga_n$ is the product of the coefficients of $x$ 
in $P_{\frac{n+2}{2}}(x;n)$, we have 
\begin{equation*}
P_{\frac{n+2}{2}}(x;n)
=\begin{cases}
0 & \text{if $n \equiv 0 \Mod 4$},\\[2pt]
\ga_n \prod\limits_{s \in I^+_2 \cup I^-_2} (x-s) & \text{if $n \equiv 2 \Mod 4$}.
\end{cases}
\end{equation*}
Now the assertion follows from the identity 
$\prod_{s \in I^+_2 \cup I^-_2} (x-s) = 
\prod^{\frac{n-2}{4}}_{\ell=0}(x^2-(4\ell+1)^2)$.
\end{proof}

\begin{rem}\label{rem:PQ1}
The factorization formulas \eqref{eqn:factor1} and \eqref{eqn:factor2}
can also be obtained from \cite[Prop.~5.11]{Kable12C}.
In fact, in \cite{Kable12C}, the dimension 
$\dim_\C \Sol_{\textnormal{I}}(s;n) (=\dim_\C\Sol_{\textnormal{II}}(s;n))$
was determined by using \eqref{eqn:factor1} and \eqref{eqn:factor2}
(see Remark \ref{rem:SolT2}).
\end{rem}

\subsubsection{Factorization formula of $P_{\frac{n+1}{2}}(x;n)$}
We next consider 
$P_{\frac{n+1}{2}}(x;n)$ and $Q_{\frac{n+1}{2}}(x;n)$
for $n$ odd (see \eqref{eqn:P2} and \eqref{eqn:Q2}). 
As shown in \eqref{eqn:QP}, the determinant
$Q_{\frac{n+1}{2}}(x;n)$ is given as
\begin{equation*}
Q_{\frac{n+1}{2}}(x;n) = (-1)^{\frac{n+1}{2}}P_{\frac{n+1}{2}}(x;n)
\quad
\text{for $n \in 1+2\Z_{\geq 0}$}.
\end{equation*}
It thus suffices to only consider $P_{\frac{n+1}{2}}(x;n)$.
For $n$ odd, let 
$\gamma_n$ be the product of the coefficients of $x$ on the main diagonal of 
$P_{\frac{n+1}{2}}(x;n)$, namely, 
\begin{equation*}
\gamma_n = \prod^{\frac{n-1}{2}}_{\ell=0}(n-4\ell).
\end{equation*}
We remark that $\gamma_n$ may be given as
\begin{equation*}
\gamma_n
=
\begin{cases}
(-1)^{\frac{n-1}{4}}\prod\limits^{\frac{n-3}{2}}_{\ell=0}(3+2\ell) & \text{if $n \equiv 1 \Mod 4$},\\[2ex]
(-1)^{\frac{n+1}{4}}\prod\limits^{\frac{n-3}{2}}_{\ell=0}(3+2\ell) & \text{if $n \equiv 3 \Mod4$}.
\end{cases}
\end{equation*}

\begin{thm}
[Factorization formula of $P_{\frac{n+1}{2}}(x;n)$]
\label{thm:factorP}
For $n \in 1+2\Z_{\geq 0}$, the polynomial $P_{\frac{n+1}{2}}(x;n)$ 
is factored as
\begin{equation}\label{eqn:factor3}
P_{\frac{n+1}{2}}(x;n) 
= 
\gamma_n 
\prod\limits^{\frac{n-1}{2}}_{\ell=0}(x-(n-1)+4\ell).
\end{equation}
Equivalently, we have
\begin{equation*}
P_{\frac{n+1}{2}}(x;n)=
\begin{cases}
\gamma_n\, x\prod\limits^{\frac{n-1}{4}}_{\ell=1}(x^2-(4\ell)^2)
& \textnormal{if $n\equiv 1 \Mod 4$},\\[5pt]
\gamma_n \prod\limits^{\frac{n-3}{4}}_{\ell=0}(x^2-(4\ell+2)^2)
& \textnormal{if $n\equiv 3 \Mod 4$}.
\end{cases}
\end{equation*}
\end{thm}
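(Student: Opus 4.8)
\textbf{Proof strategy for Theorem \ref{thm:factorP}.} The plan is to obtain the factorization of $P_{\frac{n+1}{2}}(x;n)$ for $n$ odd by exactly the same mechanism used for the even case in Theorem \ref{thm:factorPQ}: identify the roots of $P_{\frac{n+1}{2}}(x;n)$ with the set $\ESol_{\frac{n+1}{2}}(P;n)$, then pin down the leading coefficient as the product $\gamma_n$ of the diagonal $x$-coefficients. By Remark \ref{rem:PQ}(3), for $n\in 1+2\Z_{\geq 0}$ the determinant $P_{\frac{n+1}{2}}(x;n)$ is a polynomial in $x$ of degree exactly $\frac{n+1}{2}$ whose $x^{\frac{n+1}{2}}$-coefficient equals $\gamma_n=\prod_{\ell=0}^{\frac{n-1}{2}}(n-4\ell)$ (none of the factors $n-4\ell$ vanishes because $n$ is odd, so the degree really is $\frac{n+1}{2}$). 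Thus it suffices to exhibit $\frac{n+1}{2}$ distinct roots.

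\textbf{Key steps.} First I would invoke Corollary \ref{cor:PQ}, which gives
\[
\ESol_{\frac{n+1}{2}}(P;n)=
\begin{cases}
I_1 & \text{if } n\equiv 1 \Mod 4,\\
I_3 & \text{if } n\equiv 3 \Mod 4,
\end{cases}
\]
with $I_1=\{\pm 4j : 0\le j\le [\tfrac n4]\}$ and $I_3=\{\pm(2+4j): 0\le j\le[\tfrac n4]\}$. In either case this is a set of exactly $\frac{n+1}{2}$ distinct complex numbers. Since $P_{\frac{n+1}{2}}(x;n)$ has degree $\frac{n+1}{2}$ and vanishes at each of these points, and its leading coefficient is $\gamma_n$, we get
\[
P_{\frac{n+1}{2}}(x;n)=\gamma_n\prod_{s\in\ESol_{\frac{n+1}{2}}(P;n)}(x-s).
\]
Second, I would rewrite the index set uniformly: in both cases $\ESol_{\frac{n+1}{2}}(P;n)=\{\,(n-1)-4\ell : \ell=0,1,\dots,\tfrac{n-1}{2}\,\}$, since for $n\equiv 1\Mod 4$ this set is $\{n-1,n-5,\dots,-(n-1)\}=\{0,\pm4,\dots,\pm(n-1)\}=I_1$, and for $n\equiv 3\Mod 4$ it is $\{n-1,n-5,\dots,-(n-1)\}=\{\pm2,\pm6,\dots,\pm(n-1)\}=I_3$; in each case the $\frac{n+1}{2}$ values are distinct. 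This yields \eqref{eqn:factor3} directly. Finally, to obtain the ``equivalently'' form I would pair off $\ell$ with $\frac{n-1}{2}-\ell$: the factors $(x-(n-1)+4\ell)$ and $(x+(n-1)-4\ell)$ multiply to $x^2-((n-1)-4\ell)^2$, the middle factor $x$ surviving unpaired when $n\equiv 1\Mod 4$ (corresponding to $s=0$), and re-indexing $\ell$ gives $\gamma_n\, x\prod_{\ell=1}^{\frac{n-1}{4}}(x^2-(4\ell)^2)$ and $\gamma_n\prod_{\ell=0}^{\frac{n-3}{4}}(x^2-(4\ell+2)^2)$ respectively. The stated form of $\gamma_n$ follows from $\prod_{\ell=0}^{\frac{n-1}{2}}(n-4\ell)=\pm\prod_{k\ \mathrm{odd},\,3\le k\le n}k$ after tracking the sign of the negative factors $n-4\ell<0$, which number $\frac{n-1}{4}$ or $\frac{n+1}{4}$ according to $n\bmod 4$.

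\textbf{Main obstacle.} Essentially all of the content is in Corollary \ref{cor:PQ}, hence ultimately in Propositions \ref{prop:upoly} and \ref{prop:PQ} and the Heun-model analysis of Section \ref{sec:HGE}; the present argument is a short bookkeeping step on top of that. The only mildly delicate point is the degree count — one must confirm that $\deg P_{\frac{n+1}{2}}(x;n)=\frac{n+1}{2}$ exactly (so that $\frac{n+1}{2}$ roots force the full factorization with no spurious lower-degree collapse), which is immediate here because $\gamma_n\ne 0$ for $n$ odd, in contrast to the even case where $\alpha_n$ or $\beta_n$ can vanish. The other bookkeeping chore is verifying the two explicit descriptions of $\ESol_{\frac{n+1}{2}}(P;n)$ as $\{(n-1)-4\ell\}$ agree with $I_1$, $I_3$, and that the sign computation for $\gamma_n$ comes out as claimed; both are routine.
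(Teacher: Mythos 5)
Your proposal is correct and is precisely the argument the paper intends: the paper omits the proof of Theorem \ref{thm:factorP}, stating only that it is similar to that of Theorem \ref{thm:factorPQ}, which is exactly your "roots from Corollary \ref{cor:PQ} plus leading coefficient $\gamma_n$ from the diagonal" argument. Your added observation that $\gamma_n\neq 0$ for $n$ odd (so the degree is exactly $\tfrac{n+1}{2}$ and the $\tfrac{n+1}{2}$ elements of $I_1$ or $I_3$ account for all roots) is the right point to check and is handled correctly.
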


\begin{proof}
Since the argument is similar to that for Theorem \ref{thm:factorPQ}, 
we omit the proof.
\end{proof}

The factorization formula of $P_{\frac{n+1}{2}}(x;n)$ in \eqref{eqn:factor3}
suggests that one may express $P_{\frac{n+1}{2}}(x;n)$ in terms of 
the so-called \emph{Sylvester determinant} $\Sylv(x;n)$
(\eqref{eqn:Sylv_intro}).
The factorization formula \eqref{eqn:factorSylv} below for $\Sylv(x;n)$
was first observed by Sylvester in 1854 (\cite{Sylv54}).
Later, a number of people such as 
Askey (\cite{Askey05}),
Edelman--Kostlan (\cite{EK94}),
Holtz (\cite{Holtz05}),
Kac (\cite{Kac47}),
Mazza (\cite[p.\ 442]{Muir06}),
Taussky-Todd (\cite{TT91}),
among others,
gave several proofs.
For more details about the Sylvester determinant $\Sylv(x;n)$
as well as some related topics, 
see, for instance, 
\cite{FK20, FMMW13, MT05, NT12} 
and references therein.
We remark that the factorization formula \eqref{eqn:factorSylv} 
also readily follows from a general theory of $\f{sl}(2, \C)$ representations
(see Proposition \ref{prop:genCay8} and Section \ref{sec:appendixB}).

\begin{fact}[Sylvester's factorization formula]\label{fact:Sylv}
Let $n \in 1+\Z_{\geq 0}$.
The Sylvester determinant $\Sylv(x;n)$ in \eqref{eqn:Sylv_intro}
is factored as
\begin{equation}\label{eqn:factorSylv}
\Sylv(x;n)
=\prod_{\ell=0}^n(x-n+2\ell).
\end{equation}
Equivalently, we have
\begin{equation}\label{eqn:factorSylv2}
\Sylv(x;n)=
\begin{cases}
\prod\limits^{\frac{n-1}{2}}_{\ell=0}(x^2-(2\ell+1)^2) 
&\text{if $n$ is odd},\\
x\prod\limits^{\frac{n}{2}}_{\ell=1}(x^2-(2\ell)^2) 
&\text{if $n$ is even}.\\
\end{cases}
\end{equation}
\end{fact}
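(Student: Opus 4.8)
The plan is to recognize $\Sylv(x;n)$ as the characteristic polynomial of an explicit operator on the $(n+1)$-dimensional irreducible $\f{sl}(2,\C)$-module $\Pol_n[t]$, whose spectrum is dictated by $\f{sl}(2,\C)$ representation theory; no determinant computation is needed, in the spirit of Section \ref{subsec:method}.

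First I would rewrite $\Sylv(x;n)=\det\big(xI_{n+1}+T\big)$, where $T$ is the $(n+1)\times(n+1)$ tridiagonal matrix with superdiagonal entries $1,2,\dots,n$ (read top to bottom) and subdiagonal entries $n,n-1,\dots,1$, and zero diagonal. Next, using the polynomial model $(\pi_n,\Pol_n[t])$ together with the formulas $\dpin(E_+)=-\tfrac{d}{dt}$ and $\dpin(E_-)=-nt+t^2\tfrac{d}{dt}$ from \eqref{eqn:Epm}, I would compute, for $0\le k\le n$,
\[
\dpin(E_++E_-)\,t^{k}=-k\,t^{k-1}+(k-n)\,t^{k+1}.
\]
Reading off the matrix of $\dpin(E_++E_-)$ in the monomial basis $\{1,t,\dots,t^{n}\}$ of $\Pol_n[t]$, one sees it is precisely $-T$. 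Hence $\Sylv(x;n)=\det\big(xI_{n+1}-\dpin(E_++E_-)\big)$ is the characteristic polynomial of $\dpin(E_++E_-)$ acting on $\Pol_n[t]$.

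Then I would invoke the structure theory of $\f{sl}(2,\C)$: the element $E_++E_-$ is regular semisimple with characteristic polynomial $\lambda^2-1$ on the defining representation, exactly as for the Cartan element $E_0$, so it is $\Ad(g)$-conjugate to $E_0$ for a suitable $g\in SL(2,\C)$ (explicitly $g=\tfrac{\sqrt{-1}}{\sqrt2}\left(\begin{smallmatrix}1&1\\1&-1\end{smallmatrix}\right)$). Therefore $\dpin(E_++E_-)=\pi_n(g)\,\dpin(E_0)\,\pi_n(g)^{-1}$ is conjugate, as a linear operator, to $\dpin(E_0)$, whose eigenvalues on the irreducible $(n+1)$-dimensional module $\Pol_n[t]$ are the weights $n,n-2,\dots,-n$, each with multiplicity one. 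Consequently
\[
\Sylv(x;n)=\prod_{\ell=0}^{n}\big(x-(n-2\ell)\big)=\prod_{\ell=0}^{n}\big(x-n+2\ell\big),
\]
which is \eqref{eqn:factorSylv}; the equivalent forms \eqref{eqn:factorSylv2} follow by pairing the factors $\ell\leftrightarrow n-\ell$ and separating the parities of $n$.

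The only step requiring any care is the last one — verifying that $\dpin(E_++E_-)$ is genuinely semisimple with exactly those eigenvalues — but this is immediate once the conjugacy $E_++E_-\sim E_0$ inside $\f{sl}(2,\C)$ is made precise; everything else is a one-line unravelling of \eqref{eqn:Epm}. An alternative, more self-contained route, which derives \eqref{eqn:factorSylv} simultaneously with a generating function for the Cayley continuants $\{\Cay_k(x;y)\}_{k=0}^{\infty}$, is carried out in Proposition \ref{prop:genCay8}; the representation-theoretic argument above is given in detail in Section \ref{sec:appendixB}.
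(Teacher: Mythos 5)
Your proposal is correct and follows essentially the same route as the paper's own argument in Section \ref{sec:appendixB}: identify $\Sylv(x;n)$ as the characteristic polynomial of $\dpin(E_++E_-)$ on $\Pol_n[t]$ via \eqref{eqn:Epm}, conjugate $E_++E_-$ to $E_0$ (the paper uses $g_0=\tfrac{1}{\sqrt2}\left(\begin{smallmatrix}1&1\\-1&1\end{smallmatrix}\right)\in SU(2)$ rather than your $SL(2,\C)$ element, but this is immaterial), and read off the weights $n-2\ell$. No gaps.
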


\begin{cor}\label{cor:PS}
For $n$ odd, we have 
\begin{equation*}
P_{\frac{n+1}{2}}(x;n) 
= 2^{n+1}
\Sylv(\frac{1}{2}; \frac{n-1}{2})
\Sylv(\frac{x}{2};\frac{n-1}{2}).
\end{equation*}
\end{cor}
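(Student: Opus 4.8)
The goal is to show that for $n$ odd, $P_{\frac{n+1}{2}}(x;n) = 2^{n+1}\Sylv(\tfrac{1}{2};\tfrac{n-1}{2})\Sylv(\tfrac{x}{2};\tfrac{n-1}{2})$. The natural route is to compare the two factorization formulas already available: Theorem \ref{thm:factorP} for $P_{\frac{n+1}{2}}(x;n)$ and Fact \ref{fact:Sylv} (Sylvester's formula) for $\Sylv(x;n)$. Both sides are polynomials in $x$ whose roots and leading coefficients are explicit, so the identity will follow by matching roots with multiplicity and then matching a single normalization constant.

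First I would write $m:=\frac{n-1}{2}$ (so $n=2m+1$) and expand the right-hand side using \eqref{eqn:factorSylv}: $\Sylv(\tfrac{x}{2};m)=\prod_{\ell=0}^{m}(\tfrac{x}{2}-m+2\ell) = 2^{-(m+1)}\prod_{\ell=0}^{m}(x-2m+4\ell) = 2^{-(m+1)}\prod_{\ell=0}^{m}(x-(n-1)+4\ell)$, which is precisely $2^{-(m+1)}$ times the product appearing in \eqref{eqn:factor3}. Hence $2^{n+1}\Sylv(\tfrac{1}{2};m)\Sylv(\tfrac{x}{2};m) = 2^{n+1}\cdot 2^{-(m+1)}\Sylv(\tfrac12;m)\prod_{\ell=0}^{m}(x-(n-1)+4\ell) = 2^{m+1}\Sylv(\tfrac12;m)\prod_{\ell=0}^{\frac{n-1}{2}}(x-(n-1)+4\ell)$, using $n+1-(m+1)=2m+1-m=m+1$. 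Comparing with \eqref{eqn:factor3}, it remains to check the scalar identity $\gamma_n = 2^{m+1}\Sylv(\tfrac12;m)$.

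For that last step I would again use \eqref{eqn:factorSylv}: $\Sylv(\tfrac12;m)=\prod_{\ell=0}^{m}(\tfrac12-m+2\ell)=2^{-(m+1)}\prod_{\ell=0}^{m}(1-2m+4\ell)=2^{-(m+1)}\prod_{\ell=0}^{m}(1-(n-1)+4\ell) = 2^{-(m+1)}\prod_{\ell=0}^{m}(n-4\ell)$ after replacing $\ell$ by $m-\ell$ in the product (since $1-(n-1)+4\ell$ runs over $\{1-(n-1),5-(n-1),\dots,1+(n-1)\}=\{-(n-2),\dots,n\}$, the same set as $\{n-4\ell:\ell=0,\dots,m\}$, up to sign bookkeeping which I would do carefully). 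Then $2^{m+1}\Sylv(\tfrac12;m)=\prod_{\ell=0}^{m}(n-4\ell)=\prod_{\ell=0}^{\frac{n-1}{2}}(n-4\ell)=\gamma_n$ by the definition of $\gamma_n$ given just before Theorem \ref{thm:factorP}. This closes the argument.

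**Main obstacle.** The only delicate point is the sign and ordering bookkeeping in the reindexing of the product $\prod_{\ell=0}^{m}(n-4\ell)$ versus $\prod_{\ell=0}^{m}(1-(n-1)+4\ell)$: the two index sets coincide as \emph{sets} $\{-(n-2),-(n-6),\dots,n-2,n\}$ but one must confirm there is no spurious global sign, i.e.\ that the factors match as an ordered product after the substitution $\ell\mapsto m-\ell$ rather than only up to $(-1)^{\#}$. This is routine but is where an error would most likely creep in; I would verify it by checking the smallest cases $n=1,3,5$ explicitly against the displayed formula for $\gamma_n$ and the definition of $P_{\frac{n+1}{2}}(x;n)$ in \eqref{eqn:P2}, and then give the general reindexing. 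Everything else is a direct substitution into formulas already proved in the paper.
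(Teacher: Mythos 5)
Your proposal is correct and follows essentially the same route as the paper: both proofs start from Theorem \ref{thm:factorP} and use Sylvester's factorization \eqref{eqn:factorSylv} to identify $\gamma_n=\prod_{\ell=0}^{(n-1)/2}(n-4\ell)$ with $2^{(n+1)/2}\Sylv(\tfrac12;\tfrac{n-1}{2})$ and the remaining product with $2^{(n+1)/2}\Sylv(\tfrac{x}{2};\tfrac{n-1}{2})$. The reindexing you worry about is harmless, since $\ell\mapsto \tfrac{n-1}{2}-\ell$ carries each factor $2-n+4\ell$ exactly to $n-4\ell$ with no residual sign.
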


\begin{proof}
By a direct computation one finds
\begin{equation*}
\prod^{\frac{n-1}{2}}_{\ell=0}(n-4\ell)
=2^{\frac{n+1}{2}} \Sylv(\frac{1}{2};\frac{n-1}{2})
\end{equation*}
and
\begin{equation*}
\prod^{\frac{n-1}{2}}_{\ell=0}(x-(n-1)+4\ell)
=
2^{\frac{n+1}{2}} \Sylv(\frac{x}{2};\frac{n-1}{2}).
\end{equation*}
Now the proposed identity follows from Theorem \ref{thm:factorP} as 
$\gamma_n = \prod^{\frac{n-1}{2}}_{\ell=0}(n-4\ell)$.
\end{proof}

\subsection{Palindromic properties of 
$\{P_k(x;y)\}_{k=0}^{\infty}$ and $\{Q_k(x;y)\}_{k=0}^{\infty}$}
\label{subsec:PalinPQ}

We finish this section by showing 
the palindromic properties of
$\{P_k(x;y)\}_{k=0}^{\infty}$ and $\{Q_k(x;y)\}_{k=0}^{\infty}$
(see Definition \ref{def:palindromic}).
For $s \in \R\backslash \{0\}$, we set
\begin{equation*}
\sgn(s) := 
\begin{cases}
+1 & \text{if $s > 0$},\\
-1 & \text{if $s < 0$}.
\end{cases}
\end{equation*}

\subsubsection{Palindromic property of $\{P_k(x;y)\}_{k=0}^{\infty}$}
\label{subsubsec:PalinP}

We start with $\{P_k(x;y)\}_{k=0}^{\infty}$.
Recall from Corollary \ref{cor:PQ} that
$\ESol_{\frac{n+2}{2}}(P;n)$ for $n \in 2\Z_{\geq 0}$ is given as
\begin{equation*}
\ESol_{\frac{n+2}{2}}(P;n)=
\begin{cases}
\C & \textnormal{if $n \equiv 0 \Mod 4$},\\[3pt]
I^+_2\cup I^-_2 & \textnormal{if $n\equiv 2 \Mod 4$}.
\end{cases}
\end{equation*}
We then define a map
$\gt_{(P;n)}\colon \ESol_{\frac{n+2}{2}}(P;n) \to \{\pm 1\}$ as 
\begin{equation}\label{eqn:thetaP}
\gt_{(P;n)}(s)=
\begin{cases}
1 & \textnormal{if $n \equiv 0 \Mod 4$},\\
\sgn(s) & \textnormal{if $n \equiv 2 \Mod 2$}.
\end{cases}
\end{equation}

\begin{thm}[{Palindromic property of $\{P_k(x;y)\}_{k=0}^{\infty}$}]
\label{thm:PalinP}
The pair $(\{P_k(x;y)\}_{k=0}^\infty, \{(2k)!\}_{k=0}^\infty)$ 
is a palindromic pair 
with degree $\frac{n}{2}$.
Namely,
let $n \in 2\Z_{\geq 0}$ and 
$s \in \ESol_{\frac{n+2}{2}}(P;n)$. 
Then we have
$P_k(s;n)=0$ for $k \geq \frac{n+2}{2}$ and
\begin{equation}\label{eqn:PPu}
\frac{P_k(s;n)}{(2k)!}
=
\gt_{(P;n)}(s)\frac{P_{\frac{n}{2}-k}(s;n)}{(n-2k)!}
\quad
\emph{for $k \leq \frac{n}{2}$}.
\end{equation}
\vskip 0.05in
\noindent
Equivalently,
for $k \leq \frac{n}{2}$,
the palindromic identity \eqref{eqn:PPu}
is given as follows.
\begin{enumerate}[{\quad \normalfont (1)}]

\item $n \equiv 0 \Mod 4:$ We have
\begin{equation}\label{eqn:PPu0}
\frac{P_k(s;n)}{(2k)!}
=
\frac{P_{\frac{n}{2}-k}(s;n)}{(n-2k)!}
\quad
\emph{for all $s \in \C$}.
\end{equation}

\item $n \equiv 2 \Mod 4:$ We have
\begin{equation}\label{eqn:PPu2}
\frac{P_k(s;n)}{(2k)!}
=
\sgn(s)
\frac{P_{\frac{n}{2}-k}(s;n)}{(n-2k)!}
\quad
\emph{for $s \in I^+_2 \cup I^-_2$}.
\end{equation}

\end{enumerate}
\end{thm}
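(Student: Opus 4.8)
The plan is to extract the palindromic identity directly from a symmetry of the generating function $u_{[s;n]}(t)$, using the fact that when $s\in\ESol_{\frac{n+2}{2}}(P;n)$ the function $u_{[s;n]}(t)$ is actually a polynomial lying in $\Sol_{\textnormal{I}}(s;n)\subset\Pol_n[t]$ (Proposition \ref{prop:upoly}, Theorem \ref{thm:SolKO1}). By Proposition \ref{prop:PQ} we have $u_{[s;n]}(t)=\sum_{k\ge 0}P_k(s;n)\frac{t^{2k}}{(2k)!}$; since $u_{[s;n]}(t)$ has degree $\le n$ in $t$, this already shows $P_k(s;n)=0$ for $k\ge\frac{n+2}{2}$, which is the first claim. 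The heart of the matter is to show that, as a polynomial in $\Pol_n[t]$, $u_{[s;n]}(t)$ is ``palindromic'' in the sense that applying the Weyl group element $m^{\textnormal{I}}_2$ — which by \eqref{eqn:pi-wM} acts as $p(t)\mapsto t^n p(-1/t)$ — sends $u_{[s;n]}(t)$ to $\gt_{(P;n)}(s)\,u_{[s;n]}(t)$. Comparing coefficients of $t^{2j}$ on both sides of $t^n u_{[s;n]}(-1/t)=\gt_{(P;n)}(s)\,u_{[s;n]}(t)$ (note $n$ is even, so $t^n u_{[s;n]}(-1/t)$ again has only even powers) yields exactly $\frac{P_{n/2-k}(s;n)}{(n-2k)!}=\gt_{(P;n)}(s)\frac{P_k(s;n)}{(2k)!}$, which is \eqref{eqn:PPu}.

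To establish that $m^{\textnormal{I}}_2$ acts on $\C u_{[s;n]}(t)$ by the scalar $\gt_{(P;n)}(s)$, I would invoke the $M$-representation structure already determined. For $s\in\ESol_{\frac{n+2}{2}}(P;n)$ we have $u_{[s;n]}(t)\in\Sol_{\textnormal{I}}(s;n)$, and by Theorem \ref{thm:HomHeun} the line $\C u_{[s;n]}(t)$ is an $M$-subrepresentation affording a specific character: for $n\equiv 0\Mod 4$ it is $\pp$ if $s\notin I_0^+\cup I_0^-$, and it is either $\pp$ or $\mip$ for $s\in I_0^\pm$; for $n\equiv 2\Mod 4$ it is $\mip$ for $s\in I_2^+$ and $\pmi$ for $s\in I_2^-$ (reading off from Theorems \ref{thm:SolKO1}, \ref{thm:SolKO2}). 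Now consult the character table (Table \ref{table:char}): on $m^{\textnormal{I}}_2$ the characters $\pp$ and $\mip$ both take value $+1$, while $\pmi$ takes value $-1$. Thus for $n\equiv 0\Mod 4$ the element $m^{\textnormal{I}}_2$ always acts by $+1$ on $\C u_{[s;n]}(t)$, giving $\gt_{(P;n)}(s)=1$; for $n\equiv 2\Mod 4$ it acts by $+1$ when $s\in I_2^+$ (positive $s$) and by $-1$ when $s\in I_2^-$ (negative $s$), i.e.\ by $\sgn(s)$. This matches the definition \eqref{eqn:thetaP} of $\gt_{(P;n)}(s)$ precisely, so the two displayed cases \eqref{eqn:PPu0} and \eqref{eqn:PPu2} follow.

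The main obstacle — really the only nontrivial point — is to make airtight the passage from ``$m^{\textnormal{I}}_2$ acts on $u_{[s;n]}$ by a scalar'' to the coefficient-wise identity, keeping careful track of which Weyl element in the $\Omega^{\textnormal{I}}$ model corresponds to the inversion $t\mapsto -1/t$ and verifying that this is indeed the nontrivial element of the relevant $\Z/2$ (the introduction flags this as ``a symmetry with respect to the non-trivial Weyl group element $m^{\textnormal{I}}_2$ in the form of the inversion''). One should check that $u_{[s;n]}(t)$ is an \emph{even} function (it is, being a power series in $t^2$), so that $t^n u_{[s;n]}(-1/t)$ has the same parity structure and the index reflection $k\mapsto\frac n2-k$ is the correct one; this is where $n$ even is used essentially, and it is why the odd-$n$ case is deferred to Propositions \ref{prop:PalinPodd}, \ref{prop:PalinQodd}. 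Everything else — the degree bound, the coefficient extraction, the character lookup — is routine. The $\{Q_k\}$ statement is proved identically, replacing $u_{[s;n]}(t)$ by $v_{[s;n]}(t)=t\cdot(\text{even series})$, noting $v_{[s;n]}\in\Pol_n[t]$ for $s\in\ESol_{\frac n2}(Q;n)$ by Proposition \ref{prop:vpoly}, that $m^{\textnormal{I}}_2$ sends $v_{[s;n]}(t)$ to $\pm v_{[s;n]}(t)$ with sign read from the $M$-character on $\C v_{[s;n]}(t)$ in Theorem \ref{thm:SolKO2} and Table \ref{table:char}, and adjusting the reflection to $k\mapsto\frac{n-2}{2}-k$ to account for the extra factor of $t$.
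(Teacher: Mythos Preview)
Your proposal is correct and follows essentially the same approach as the paper: both exploit that $u_{[s;n]}(t)\in\Sol_{\textnormal{I}}(s;n)$ carries a one-dimensional $M$-character, identify the action of $m_2^{\textnormal{I}}$ (inversion $p(t)\mapsto t^np(-1/t)$) as the scalar $\chi_{(\eps,\eps')}(m_2^{\textnormal{I}})$, and read off this scalar from Theorem \ref{thm:SolKO2} and Table \ref{table:char} to obtain the coefficient identity. One minor imprecision: for $n\equiv 0\Mod 4$ the character on $\C u_{[s;n]}(t)$ is always $\pp$ (the $\pmi$ or $\mip$ piece sits on $\C v_{[s;n]}(t)$), but since you correctly note that both $\pp$ and $\mip$ take value $+1$ on $m_2^{\textnormal{I}}$, your conclusion is unaffected.
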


\begin{proof}
Take $s \in \ESol_{\frac{n+2}{2}}(P;n)$.
It follows from the equivalence given in the proof of 
Corollary \ref{cor:PQ} that $P_k(s;n)=0$ for $k \geq \frac{n+2}{2}$.
Thus, to prove the theorem,
it suffices to show \eqref{eqn:PPu0} and \eqref{eqn:PPu2}
for $k \leq \frac{n}{2}$.
We first show that 
\begin{equation}\label{eqn:PalP1}
\frac{P_k(s;n)}{(2k)!}
=
\pm \frac{P_{\frac{n}{2}-k}(s;n)}{(n-2k)!}.
\end{equation}
It follows from Theorem \ref{thm:SolKO2} and Corollary \ref{cor:PQ} that
$M$ acts on $u_{[s;n]}(t)$ as a character 
$\chi_{(\eps, \eps')}$ in \eqref{eqn:charM}; 
in particular, we have
\begin{equation}\label{eqn:usn0}
u_{[s;n]}(t)=\chi_{(\eps, \eps')}(m_2^{\textnormal{I}}) \pi_n(m_2^{\textnormal{I}})u_{[s;n]}(t).
\end{equation}
By \eqref{eqn:pi-wM}, the identity \eqref{eqn:usn0} is equivalent to
\begin{equation}\label{eqn:usn}
u_{[s;n]}(t)=\chi_{(\eps, \eps')}(m_2^{\textnormal{I}})  t^nu_{[s;n]}\left(-\frac{1}{t}\right).
\end{equation}
As $s\in\ESol_{\frac{n+2}{2}}(P;n)$,
by Proposition \ref{prop:PQ}, we have
\begin{equation*}
u_{[s;n]}(t) 
= 
\sum_{k=0}^\frac{n}{2} P_k(s;n) \frac{t^{2k}}{(2k)!}.
\end{equation*}
The identity \eqref{eqn:usn} thus yields the identity
\begin{equation}\label{eqn:usn1}
\sum_{k=0}^\frac{n}{2} P_k(s;n) \frac{t^{2k}}{(2k)!}
=
\chi_{(\eps, \eps')}(m_2^{\textnormal{I}})
\sum_{k=0}^\frac{n}{2}P_{\frac{n}{2}-k}(s;n)\frac{t^{2k}}{(n-2k)!}.
\end{equation}
Since $\chi_{(\eps, \eps')}(m_2^{\textnormal{I}}) \in \{\pm 1\}$,
the identity \eqref{eqn:PalP1} follows from \eqref{eqn:usn1}.

In order to show \eqref{eqn:PPu0} and \eqref{eqn:PPu2}, 
observe that Theorem \ref{thm:SolKO2} shows that
the character $\chi_{(\eps, \eps')}$ is given as
\begin{itemize}
\item $\chi_{(\eps, \eps')} = \chi_{\pp}$\;
for $n \equiv 0 \Mod 4$ and $s \in \C$,\vspace{3pt}
\item $\chi_{(\eps, \eps')}=\chi_{\mip}$\;
for $n \equiv 2 \Mod 4$ and $s \in I^+_2$, \vspace{3pt}
\item $\chi_{(\eps, \eps')}=\chi_{\pmi}$\;
for $n \equiv 2 \Mod 4$ and $s \in I^-_2$.
\end{itemize}
\vskip 0.1in
\noindent
By Table \ref{table:char}, we have 
$\chi_{\pp}(m_2^{\textnormal{I}})=\chi_{\mip}(m_2^{\textnormal{I}})=1$ and $\chi_{\pmi}(m_2^{\textnormal{I}})=-1$.
Now \eqref{eqn:PPu0} and \eqref{eqn:PPu2} follow from \eqref{eqn:usn1}.
\end{proof}

Theorem \ref{thm:PalinP} in particular implies
the factorial identity of $P_{\frac{n}{2}}(s;n)$ (see \eqref{eqn:P_intro})
as follows.


\begin{cor}
[Factorial identity of $P_{\frac{n}{2}}(x;n)$]
\label{cor:PalinP}
Let $n \in 2\Z_{\geq 0}$. Then the following hold.

\begin{enumerate}[{\quad \normalfont (1)}]
\item $n \equiv 0 \Mod 4:$ We have
\begin{equation*}
P_{\frac{n}{2}}(s;n) = n! 
\quad
\emph{for all $s\in \C$}.
\end{equation*}

\item $n\equiv 2 \Mod 4:$ We have
\begin{equation}\label{eqn:Pn2}
P_{\frac{n}{2}}(s;n) = \sgn(s) n!
\quad
\emph{for $s \in I^+_2 \cup I^-_2$}.
\end{equation}
\end{enumerate}
\end{cor}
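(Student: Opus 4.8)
The plan is to derive Corollary \ref{cor:PalinP} as the special case $k=0$ of the palindromic identity in Theorem \ref{thm:PalinP}. First I would recall that $P_0(x;y)=1$ by definition, so setting $k=0$ in the palindromic identity \eqref{eqn:PPu} gives
\begin{equation*}
\frac{P_0(s;n)}{0!} = \gt_{(P;n)}(s)\frac{P_{\frac{n}{2}}(s;n)}{(n)!},
\end{equation*}
that is, $1 = \gt_{(P;n)}(s)\,P_{\frac{n}{2}}(s;n)/n!$, which rearranges to $P_{\frac{n}{2}}(s;n) = \gt_{(P;n)}(s)^{-1} n! = \gt_{(P;n)}(s)\,n!$ since $\gt_{(P;n)}(s) \in \{\pm 1\}$. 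It remains only to substitute the explicit value of the sign factor $\gt_{(P;n)}(s)$ from \eqref{eqn:thetaP}: for $n\equiv 0 \Mod 4$ we have $\gt_{(P;n)}(s)=1$ for all $s\in\C$, giving $P_{\frac{n}{2}}(s;n)=n!$; for $n\equiv 2\Mod 4$ we have $\gt_{(P;n)}(s)=\sgn(s)$ for $s \in I_2^+\cup I_2^- = \ESol_{\frac{n+2}{2}}(P;n)$, giving $P_{\frac{n}{2}}(s;n)=\sgn(s)\,n!$.

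One small point I would make explicit is the domain of validity: in case (1), $n\equiv 0\Mod 4$, Corollary \ref{cor:PQ} tells us $\ESol_{\frac{n+2}{2}}(P;n)=\C$, so the identity from Theorem \ref{thm:PalinP} holds for every $s\in\C$, and correspondingly $P_{\frac{n}{2}}(s;n)=n!$ holds for all $s$; in case (2), $n\equiv 2\Mod 4$, the set on which $P_{\frac{n+2}{2}}$ vanishes is precisely $I_2^+\cup I_2^-$, so that is the range of $s$ for which the palindromic identity and hence \eqref{eqn:Pn2} applies. I should also note that in case (1) this is consistent with Theorem \ref{thm:factorPQ}, which says $P_{\frac{n+2}{2}}(x;n)=0$ identically, and the factorization of $P_{\frac{n+2}{2}}$ plays no role here; only the palindromic identity is needed.

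I do not expect any real obstacle: the entire argument is a one-line specialization of an already-proved theorem followed by reading off $\gt_{(P;n)}(s)$ from its definition. If anything, the only thing worth double-checking is the bookkeeping of which index appears where — that $k=0$ pairs with $\frac{n}{2}-k=\frac{n}{2}$ and the denominators are $0!=1$ and $(n-2\cdot 0)!=n!$ respectively — so that the rearrangement producing $P_{\frac{n}{2}}(s;n)=\gt_{(P;n)}(s)\,n!$ is correct. Everything else is immediate from Theorem \ref{thm:PalinP}, \eqref{eqn:thetaP}, and Corollary \ref{cor:PQ}.
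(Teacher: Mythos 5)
Your proof is correct and is essentially the paper's own argument: the paper simply reads the palindromic identity \eqref{eqn:PPu} at $k=\tfrac{n}{2}$, whereas you read it at $k=0$ and invert the sign factor $\gt_{(P;n)}(s)\in\{\pm1\}$, which yields the identical conclusion. Your bookkeeping ($0!=1$, $(n-0)!=n!$) and the identification of the domains of validity via Corollary \ref{cor:PQ} are both accurate.
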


\begin{proof}
This is simply the case of $k=\frac{n}{2}$ in \eqref{eqn:PPu0} and \eqref{eqn:PPu2}.
\end{proof}

We now give a proof for Corollary \ref{cor:P0} in the introduction.

\begin{proof}[Proof of Corollary \ref{cor:P0}]
It follows from Corollary \ref{cor:PalinP} that 
it suffices to show \eqref{eqn:PalinPintro}.
Suppose $n \equiv 2 \Mod 4$.
Since 
\begin{equation*}
I^+_2\cup I^-_2=\left\{\pm (4j+1) : j=0, 1, 2, \ldots, \frac{n-2}{4}\right\},
\end{equation*}
we have 
\begin{equation*}
I^+_2\cup I^-_2
=\{\pm 1, \pm 5, \pm 9 \dots, \pm (n-2)\}.
\end{equation*}
The identity \eqref{eqn:Pn2} then concludes the assertion.
\end{proof}

Theorem \ref{thm:PalinP} shows that 
$(\{P_k(x;y)\}_{k=0}^\infty, \{(2k)!\}_{k=0}^\infty)$
is a palindromic pair for $n$ even. 
Proposition \ref{prop:PalinPodd} below shows that 
it is not the case for $n$ odd.

\begin{prop}\label{prop:PalinPodd}
The pair $(\{P_k(x;y)\}_{k=0}^\infty, \{(2k)!\}_{k=0}^\infty)$
is not a palindromic pair for $n$ odd.
\end{prop}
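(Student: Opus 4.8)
The plan is to argue by contradiction. Suppose $(\{P_k(x;y)\}_{k=0}^\infty,\{(2k)!\}_{k=0}^\infty)$ were a palindromic pair; then for every odd $n$ at which the degree is an integer there would exist $d(n)\in\Z_{\geq0}$ and a map $\gt_{(P;n)}\colon\ESol_{d(n)+1}(P;n)\to\{\pm1\}$ satisfying the two requirements of Definition \ref{def:palindromic}. I would first show that requirement (a) forces $d(n)=\tfrac{n-1}{2}$, and then that with this $d(n)$ the palindromic identity \eqref{eqn:intro_palin0} cannot hold; already $n=3$ witnesses this, so that case can be used as a self-contained argument.

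Pinning down the degree. Expanding the tridiagonal determinant along its last row gives the three-term recurrence
\[
P_k(s;n)=(n-4(k-1))s\,P_{k-1}(s;n)+a(k-1)\,a\!\left(\tfrac{n-2k+4}{2}\right)P_{k-2}(s;n)\qquad(k\geq2),
\]
and, $n$ being odd, the factor $a(k-1)a(\tfrac{n-2k+4}{2})$ vanishes at no integer $k$ except $k=\tfrac{n+3}{2}$ (the only place where $\tfrac{n-2k+4}{2}$ meets the zero $\tfrac12$ of $a$). Moreover $P_k(x;n)$ has degree exactly $k$ in $x$, with leading coefficient $\prod_{\ell=0}^{k-1}(n-4\ell)\neq0$. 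Combining these: if $d(n)+1\neq\tfrac{n+1}{2}$, then for any root $s$ of $P_{d(n)+1}(x;n)$ the recurrence yields $P_{d(n)+2}(s;n)=(\text{nonzero constant})\cdot P_{d(n)}(s;n)$, so requirement (a) would force every root of $P_{d(n)+1}$ to be a root of the strictly lower degree polynomial $P_{d(n)}$, which is impossible (the value $d(n)=0$ is checked by hand: $P_2(0;n)=a(1)a(\tfrac{n}{2})\neq0$ for odd $n\geq3$). On the other hand $d(n)=\tfrac{n-1}{2}$ does satisfy (a): the vanishing factor at $k=\tfrac{n+3}{2}$ propagates $P_{(n+1)/2}(s;n)=0$ to all higher terms. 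Hence $d(n)=\tfrac{n-1}{2}$, and by Corollary \ref{cor:PQ}, $\ESol_{d(n)+1}(P;n)=\ESol_{(n+1)/2}(P;n)=I_1$ (if $n\equiv1\Mod4$) or $I_3$ (if $n\equiv3\Mod4$), which for odd $n\geq3$ contains a nonzero element $s_0$ (take $s_0=4$, resp.\ $s_0=2$).

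Refuting the identity. For $s\in\ESol_{(n+1)/2}(P;n)$, Proposition \ref{prop:PQ} gives $u_{[s;n]}(t)=\sum_{k=0}^{(n-1)/2}\frac{P_k(s;n)}{(2k)!}t^{2k}$, a nonzero even polynomial. Multiplying the palindromic identity \eqref{eqn:intro_palin0} by $t^{2k}$ and summing over $0\leq k\leq\tfrac{n-1}{2}$ turns it into $u_{[s;n]}(t)=\gt_{(P;n)}(s)\,t^{n-1}u_{[s;n]}(1/t)$. Now $t^{n-1}u_{[s;n]}(1/t)=t^{-1}\bigl(t^{n}u_{[s;n]}(-1/t)\bigr)=t^{-1}\pi_n(m_2^{\textnormal{I}})u_{[s;n]}(t)$ by \eqref{eqn:pi-wM}; since $\Sol_{\textnormal{I}}(s;n)=\C u_{[s;n]}(t)\oplus\C v_{[s;n]}(t)$ with $u_{[s;n]}$ even and $v_{[s;n]}$ odd (Theorem \ref{thm:SolKO1}), and $\pi_n(m_2^{\textnormal{I}})u_{[s;n]}$ is an odd element of $\Sol_{\textnormal{I}}(s;n)$, there is $c(s)\in\C^{\times}$ with $\pi_n(m_2^{\textnormal{I}})u_{[s;n]}(t)=c(s)\,v_{[s;n]}(t)$. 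Hence $t\,u_{[s;n]}(t)=\gt_{(P;n)}(s)c(s)\,v_{[s;n]}(t)$; comparing the coefficients of $t$ gives $\gt_{(P;n)}(s)c(s)=1$, so $t\,u_{[s;n]}(t)=v_{[s;n]}(t)$; comparing the coefficients of $t^3$ then gives $\frac{P_1(s;n)}{2!}=\frac{Q_1(s;n)}{3!}$, i.e.\ $\frac{ns}{2}=\frac{(n-2)s}{6}$. At $s=s_0\neq0$ this says $3n=n-2$, absurd for $n\geq3$. This contradiction proves the proposition.

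The step I expect to require the most care is pinning down $d(n)$: one must verify that \emph{every} admissible value of $d(n)$ other than $\tfrac{n-1}{2}$ violates requirement (a), including the small values, and the recurrence together with the exact-degree statement for $P_k(x;n)$ is what makes this uniform. (The value $n=1$ is genuinely degenerate — $u_{[s;1]}(t)\equiv1$, $\ESol_1(P;1)=\{0\}$ — so "$n$ odd" is to be read as $n\geq3$; equivalently, there is no degree function that is $\Z_{\geq0}$-valued on all odd $n$ and makes the pair palindromic.)
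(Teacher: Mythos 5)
Your core contradiction (the ``Refuting the identity'' step) is essentially the paper's argument made precise: the paper likewise plays the inversion symmetry forced on $u_{[s;n_0]}(t)$ by the palindromic identity against the fact that $\Sol_{\textnormal{I}}(s;n_0)=\C u_{[s;n_0]}\oplus\C v_{[s;n_0]}\simeq\mathbb{H}$ is the irreducible two-dimensional $M$-module for $n_0$ odd. Your explicit identification $\pi_n(m_2^{\textnormal{I}})u_{[s;n]}=c(s)v_{[s;n]}$ followed by the coefficient comparison at $t$ and $t^3$ at a nonzero $s_0\in I_\ell$ is a correct, and in fact more careful, way of extracting the contradiction than the paper's (which conflates the $M$-actions via $\pi_{2d}$ and $\pi_{n_0}$), and your isolation of the degenerate case $n=1$ is a genuine catch.

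The gap is in ``Pinning down the degree.'' The assertion that requirement (a) cannot force every root of $P_{d+1}(x;n)$ to be a root of the lower-degree polynomial $P_{d}(x;n)$ is unjustified: a degree-$(d+1)$ polynomial with repeated roots, or with a small root set, can have all of its roots among those of a degree-$d$ polynomial. This actually happens here. At the blocked index $k=\tfrac{n+3}{2}$ your recurrence reads $P_{\frac{n+3}{2}}(x;n)=-(n+2)\,x\,P_{\frac{n+1}{2}}(x;n)$, so for $d=\tfrac{n+1}{2}$ with $n\equiv 1 \Mod 4$ (where $0\in I_1=\ESol_{\frac{n+1}{2}}(P;n)$ by Corollary \ref{cor:PQ}) every root of $P_{d+1}(x;n)$ \emph{is} a root of $P_d(x;n)$ and requirement (a) is satisfied; only requirement (b) rules this $d$ out. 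Hence your step 1 does not exclude $d(n)=\tfrac{n+1}{2}$, nor larger values without further analysis, and for $d<\tfrac{n-1}{2}$ you would need to iterate the backward recurrence all the way to $P_0=1$ rather than stop after one step. The repair is cheap, and the cleanest version makes step 1 unnecessary (as in the paper, which never pins down $d(n_0)$): for an arbitrary admissible $d$, requirement (b) at $k=0$ gives $P_d(s;n)=\pm(2d)!\neq 0$, requirement (a) gives $u_{[s;n]}\in\Pol_{2d}[t]$, hence $s\in I_\ell$ and $\Sol_{\textnormal{I}}(s;n)\simeq\mathbb{H}$ by Proposition \ref{prop:upoly} and Theorem \ref{thm:SolKO2}; the identity then yields $\pi_n(m_2^{\textnormal{I}})u_{[s;n]}(t)=\pm t^{\,n-2d}u_{[s;n]}(t)\in\C v_{[s;n]}(t)\setminus\{0\}$, and comparing lowest-order terms ($v_{[s;n]}=t+\cdots$ versus $t^{n-2d}u_{[s;n]}=t^{n-2d}+\cdots$) forces $n-2d=1$ automatically, after which your $t^3$-comparison finishes the proof.
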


\begin{proof}
Proof by contradiction. Suppose the contrary, that is,
there exist
$n_0 \in 1+2\Z_{\geq 0}$
and 
a map $d\colon \Z_{\geq 0} \to \R$ with
$d(n_0) \in \Z_{\geq 0}$
such that,
for all $s \in \ESol_{d(n_0)+1}(P;n_0)$,
we have
$P_k(s;n_0)=0$ for $k \geq d(n_0)+1$ and
\begin{equation*}
\frac{P_k(s;n_0)}{(2k)!}
=
\pm\frac{P_{d(n_0)-k}(s;n_0)}{(2d(n_0)-2k)!}
\quad
\text{for $k \leq d(n_0)$}.
\end{equation*}
Take $s \in \ESol_{d(n_0)+1}(P;n_0)$.
It follows from 
Proposition \ref{prop:PQ} that
the palindromity of the pair
$(\{P_k(x;y)\}_{k=0}^\infty, \{(2k)!\}_{k=0}^\infty)$ implies
\begin{equation*}
u_{[s;n_0]}(t) = \sum_{k=0}^{d(n_0)}P_k(s;n_0)\frac{t^{2k}}{(2k)!}
\in \Pol_{d(n_0)}[t]
\end{equation*}
and
\begin{equation*}
t^{d(n_0)} u_{[s;n_0]}\left(\frac{1}{t}\right) = \pm u_{[s;n_0]}(t),
\end{equation*}
which is, by \eqref{eqn:pi-wM}, equivalent to
\begin{equation*}
\pi_{d(n_0)}(m_2^{\textnormal{I}}) u_{[s;n_0]}(t) = \pm u_{[s;n_0]}(t).
\end{equation*}
Since $u_{[s;n_0]}(-t) = u_{[s;n_0]}(t)$, this further implies that 
$\C u_{[s;n_0]}(t)$ is a one-dimensional representation of $M$ for 
such $(s,n_0)$. On the other hand, as $n_0 \in 1+2\Z_{\geq 0}$,
it follows from Proposition \ref{prop:upoly} that, 
for $n_0 \equiv \ell \Mod 4$ for 
$\ell = 1, 3$, we have
\begin{equation*}
s\in I_\ell \cap \ESol_{d(n_0)+1}(P;n_0).
\end{equation*}
In particular, by Theorem \ref{thm:SolKO2},
the space
$\C u_{[s;n_0]}(t) \oplus \C v_{[s;n_0]}(t)$ forms the unique two-dimensional
irreducible representation of $M$, which is a contradiction. 
Now the proposed assertion follows.
\end{proof}

\subsubsection{Palindromic property of $\{Q_k(x;y)\}_{k=0}^{\infty}$}
\label{subsubsec:PalinQ}

We next consider $\{Q_k(x;y)\}_{k=0}^{\infty}$.
Corollary \ref{cor:PQ} shows that
\begin{equation*}
\ESol_{\frac{n}{2}}(Q;n)=
\begin{cases}
I^+_0\cup I^-_0 & \textnormal{if $n\equiv 0 \Mod 4$},\\[3pt]
\C & \textnormal{if $n \equiv 2 \Mod 4$}.
\end{cases}
\end{equation*}
We then define a map
$\gt_{(Q;n)}\colon \ESol_{\frac{n}{2}}(Q;n) \to \{\pm 1\}$ as 
\begin{equation}\label{eqn:thetaQ}
\gt_{(Q;n)}(s)=
\begin{cases}
\sgn(s) & \textnormal{if $n \equiv 0 \Mod 4$},\\
1 & \textnormal{if $n \equiv 2 \Mod 4$}.
\end{cases}
\end{equation}

\begin{thm}[{Palindromic property of $\{Q_k(x;y)\}_{k=0}^{\infty}$}]
\label{thm:PalinQ}
The pair $(\{Q_k(x;y)\}_{k=0}^\infty, \{(2k+1)!\}_{k=0}^\infty)$ 
is a palindromic pair
with degree $\frac{n-2}{2}$.
Namely, let $n \in 2(1+\Z_{\geq 0})$ and 
$s \in \ESol_{\frac{n}{2}}(Q;n)$.
Then we have $Q_k(s;n)=0$ for $k \geq \frac{n}{2}$ and
\begin{equation}\label{eqn:PPv}
\frac{Q_k(s;n)}{(2k+1)!}
=
\gt_{(Q;n)}(s)
\frac{Q_{\frac{n-2}{2}-k}(s;n)}{(n-2k-1)!}
\quad
\emph{for $k \leq \frac{n-2}{2}$}.
\end{equation}
\vskip 0.05in
\noindent
Equivalently, for $k \leq \frac{n}{2}-1$, 
the palindromic identity \eqref{eqn:PPv} is given as follows.
\begin{enumerate}[{\quad \normalfont (1)}]

\item $n \equiv 0 \Mod 4:$ We have
\begin{equation}\label{eqn:PPv0}
\frac{Q_k(s;n)}{(2k+1)!}
=
\sgn(s)
\frac{Q_{\frac{n-2}{2}-k}(s;n)}{(n-2k-1)!}
\quad
\emph{for $s \in I^+_0 \cup I^-_0$}.
\end{equation}
\vskip 0.05in

\item $n \equiv 2 \Mod 4:$ We have
\begin{equation}\label{eqn:PPv2}
\frac{Q_k(s;n)}{(2k+1)!}
=
\frac{Q_{\frac{n-2}{2}-k}(s;n)}{(n-2k-1)!}
\quad
\emph{for all $s \in \C$}.
\end{equation}

\end{enumerate}
\end{thm}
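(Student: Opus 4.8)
The plan is to follow the proof of Theorem~\ref{thm:PalinP} essentially verbatim, replacing the local Heun function $u_{[s;n]}(t)$ by the second solution $v_{[s;n]}(t)$ and carrying along one extra sign coming from the odd exponents $2k+1$.

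First I would fix $n \in 2(1+\Z_{\geq 0})$ and $s \in \ESol_{\frac{n}{2}}(Q;n)$. By the analogue for $v_{[s;n]}(t)$ of the equivalence used in the proof of Corollary~\ref{cor:PQ} (namely Proposition~\ref{prop:vpoly} together with Proposition~\ref{prop:PQ}), the vanishing $Q_{\frac{n}{2}}(s;n)=0$ is equivalent to $v_{[s;n]}(t)\in\Pol_n[t]$, and since $v_{[s;n]}(t)$ is the power--series solution of the Heun equation this forces $Q_k(s;n)=0$ for all $k\geq\frac{n}{2}$, so that
\begin{equation*}
v_{[s;n]}(t)=\sum_{k=0}^{\frac{n-2}{2}}Q_k(s;n)\frac{t^{2k+1}}{(2k+1)!}.
\end{equation*}
This settles the vanishing statement and reduces the theorem to the identity \eqref{eqn:PPv} for $k\leq\frac{n-2}{2}$.

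Next I would use $M$-equivariance. By Theorem~\ref{thm:SolKO2} and Corollary~\ref{cor:PQ}, for $s\in\ESol_{\frac{n}{2}}(Q;n)$ the line $\C v_{[s;n]}(t)$ is a one--dimensional $M$-representation, affording the character $\chi_{\pmi}$ if $n\equiv 0\Mod 4$ and $s\in I_0^+$, the character $\chi_{\mip}$ if $n\equiv 0\Mod 4$ and $s\in I_0^-$, and the character $\chi_{\mm}$ if $n\equiv 2\Mod 4$. In particular $v_{[s;n]}(t)=\chi_{(\eps,\eps')}(m_2^{\textnormal{I}})\,\pi_n(m_2^{\textnormal{I}})v_{[s;n]}(t)$, which by \eqref{eqn:pi-wM} becomes
\begin{equation*}
v_{[s;n]}(t)=\chi_{(\eps,\eps')}(m_2^{\textnormal{I}})\,t^n v_{[s;n]}\!\left(-\frac{1}{t}\right).
\end{equation*}
Substituting the finite sum above, using $t^n(-1/t)^{2k+1}=-t^{n-2k-1}$, and reindexing by $j=\frac{n-2}{2}-k$ (so that $n-2k-1=2j+1$ and $2k+1=n-2j-1$), the right--hand side becomes $-\chi_{(\eps,\eps')}(m_2^{\textnormal{I}})\sum_{k}Q_{\frac{n-2}{2}-k}(s;n)\frac{t^{2k+1}}{(n-2k-1)!}$; comparing coefficients of $t^{2k+1}$ yields \eqref{eqn:PPv} with $\gt_{(Q;n)}(s)=-\chi_{(\eps,\eps')}(m_2^{\textnormal{I}})$.

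Finally I would evaluate the sign using Table~\ref{table:char}, where $\chi_{\pmi}(m_2^{\textnormal{I}})=-1$, $\chi_{\mip}(m_2^{\textnormal{I}})=1$, and $\chi_{\mm}(m_2^{\textnormal{I}})=-1$. Hence $-\chi_{(\eps,\eps')}(m_2^{\textnormal{I}})$ equals $+1$ on $I_0^+$, equals $-1$ on $I_0^-$, and equals $+1$ when $n\equiv 2\Mod 4$, which is exactly $\gt_{(Q;n)}(s)=\sgn(s)$ for $n\equiv 0\Mod 4$ and $\gt_{(Q;n)}(s)=1$ for $n\equiv 2\Mod 4$ as in \eqref{eqn:thetaQ}; specializing \eqref{eqn:PPv} to these two cases gives \eqref{eqn:PPv0} and \eqref{eqn:PPv2}. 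The only step needing genuine attention is keeping track of the factor $(-1)^{2k+1}=-1$ produced by the odd exponents: this is what creates the extra minus sign relative to the $P$-case and makes the bookkeeping with the characters $\chi_{\pmi},\chi_{\mip},\chi_{\mm}$ come out consistently; the rest is a direct transcription of the argument for Theorem~\ref{thm:PalinP}.
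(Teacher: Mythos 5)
Your argument is correct and is precisely the proof the paper intends: the paper omits it with the remark that it "goes similarly to the one for Theorem \ref{thm:PalinP}", and you have carried out that analogy faithfully, including the one genuinely new ingredient, namely the extra factor $(-1)^{2k+1}=-1$ from the odd exponents, which combines with the characters $\chi_{\pmi},\chi_{\mip},\chi_{\mm}$ on $\C v_{[s;n]}(t)$ (read off from Theorem \ref{thm:SolKO2}) to produce exactly the sign factor $\gt_{(Q;n)}(s)$ of \eqref{eqn:thetaQ}. No gaps.
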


\begin{proof}
Since the argument goes similarly to the one for Theorem \ref{thm:PalinP},
we omit the proof.
\end{proof}

\begin{cor}
[Factorial identity of $Q_{\frac{n-2}{2}}(x;n)$]
\label{cor:PalinQ}
Let $n \in 2(1+\Z_{\geq 0})$. Then the following hold for 
$Q_{\frac{n-2}{2}}(s;n)$ (see \eqref{eqn:Q_intro}).

\begin{enumerate}[{\quad \normalfont (1)}]
\item $n \equiv 0 \Mod 4:$ 
We have
\begin{equation}\label{eqn:Qn0}
Q_{\frac{n-2}{2}}(s;n) = \sgn(s) (n-1)!
\quad
\emph{for $s \in I^+_0 \cup I^-_0$}.
\end{equation}

\item $n\equiv 2 \Mod 4:$ 
We have
\begin{equation*}
Q_{\frac{n-2}{2}}(s;n) = (n-1)! 
\quad
\emph{for all $s\in \C$}.
\end{equation*}
\end{enumerate}
\end{cor}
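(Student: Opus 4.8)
The plan is to derive Corollary \ref{cor:PalinQ} as the extreme-index special case of the palindromic identity of Theorem \ref{thm:PalinQ}, in exactly the way Corollary \ref{cor:PalinP} was obtained from Theorem \ref{thm:PalinP}. First I would fix $n\in 2(1+\Z_{\geq 0})$ and $s\in\ESol_{\frac{n}{2}}(Q;n)$ and substitute $k=\frac{n-2}{2}$ into the palindromic identity \eqref{eqn:PPv}. On the left the denominator becomes $(2k+1)!=(n-1)!$, while on the right the subscript $\frac{n-2}{2}-k$ collapses to $0$ and the denominator $(n-2k-1)!$ becomes $1!=1$. Since $Q_0(x;y)=1$ by the very definition of the sequence $\{Q_k(x;y)\}_{k=0}^\infty$ in Section \ref{subsec:PQ}, the identity \eqref{eqn:PPv} degenerates to
\begin{equation*}
Q_{\frac{n-2}{2}}(s;n)=\gt_{(Q;n)}(s)\,(n-1)!\qquad\text{for }s\in\ESol_{\frac{n}{2}}(Q;n).
\end{equation*}

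Next I would read off the sign factor $\gt_{(Q;n)}$ from its definition \eqref{eqn:thetaQ} and the set $\ESol_{\frac{n}{2}}(Q;n)$ from Corollary \ref{cor:PQ}. For $n\equiv 0\Mod 4$ one has $\ESol_{\frac{n}{2}}(Q;n)=I^+_0\cup I^-_0$ and $\gt_{(Q;n)}(s)=\sgn(s)$, giving \eqref{eqn:Qn0}; for $n\equiv 2\Mod 4$ one has $\ESol_{\frac{n}{2}}(Q;n)=\C$ and $\gt_{(Q;n)}(s)=1$, giving $Q_{\frac{n-2}{2}}(s;n)=(n-1)!$ for all $s\in\C$ (so $Q_{\frac{n-2}{2}}(x;n)$ is a constant polynomial in this residue class, consistent with $Q_{\frac{n}{2}}(x;n)\equiv 0$ from \eqref{eqn:factor2}). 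This completes the proof modulo the bookkeeping just described.

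The main point to watch — rather than a genuine obstacle — is simply keeping the two factorial normalizations $(2k+1)!$ and $(n-2k-1)!$ straight and matching each residue class of $n$ modulo $4$ to the correct description of $\ESol_{\frac{n}{2}}(Q;n)$; all substantive content already resides in Theorem \ref{thm:PalinQ}. As an optional sanity check I would test the claimed values on small cases such as $n=4,6$ against the factorization formula \eqref{eqn:factor2} for $Q_{\frac{n}{2}}(x;n)$ and the determinant \eqref{eqn:Q_intro} for $Q_{\frac{n-2}{2}}(x;n)$, but this is not required.
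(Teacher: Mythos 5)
Your proposal is correct and coincides with the paper's own proof, which simply observes that the corollary is the case $k=\tfrac{n-2}{2}$ of the palindromic identities \eqref{eqn:PPv0} and \eqref{eqn:PPv2}; your index bookkeeping ($(2k+1)!=(n-1)!$, $Q_0=1$, $(n-2k-1)!=1$) and the identification of $\ESol_{\frac{n}{2}}(Q;n)$ and $\gt_{(Q;n)}$ are all as in the paper.
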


\begin{proof}
This is the case of $k=\frac{n-2}{2}$ in \eqref{eqn:PPv0} and \eqref{eqn:PPv2}.
\end{proof}

Here is a proof of Corollary \ref{cor:Q0} in the introduction.

\begin{proof}[Proof of Corollary \ref{cor:Q0}]
By Corollary \ref{cor:PalinQ}, it suffices to show \eqref{eqn:PalinQintro}.
Suppose $n\equiv 0 \Mod 4$.
Since 
\begin{equation*}
I^+_0 \cup I^-_0 =\left\{\pm (4j+3) : j=0,1,2, \ldots, \frac{n-4}{4}\right\},
\end{equation*}
we have 
\begin{equation*}
I^+_0 \cup I^-_0 
=\left\{\pm 3, \pm 7, \pm 11, \ldots, \pm(n-1)\right\}.
\end{equation*}
Then \eqref{eqn:Qn0} concludes the proposed identity.
\end{proof}

We close this section by stating about the
palindromity of the pair
$(\{Q_k(x;y)\}_{k=0}^\infty, \{(2k+1)!\}_{k=0}^\infty)$ 
for $n$ odd.

\begin{prop}\label{prop:PalinQodd}
The pair $(\{Q_k(x;y)\}_{k=0}^\infty, \{(2k+1)!\}_{k=0}^\infty)$
is not a palindromic pair for $n$ odd.
\end{prop}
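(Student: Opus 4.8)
The plan is to mimic the proof of Proposition \ref{prop:PalinPodd} essentially verbatim, replacing everywhere the role of $P_k$, $u_{[s;n]}(t)$, and $(2k)!$ by $Q_k$, $v_{[s;n]}(t)$, and $(2k+1)!$. The key point is that, for $n$ odd, the space $\Sol_{\textnormal{I}}(s;n)$ is never one-dimensional: by Theorem \ref{thm:SolKO1} it is two-dimensional whenever it is nonzero, and by Theorem \ref{thm:SolKO2} it is then the unique two-dimensional irreducible representation $\mathbb{H}$ of $M$. A palindromic identity for the sequence $\{Q_k(x;y)\}_{k=0}^\infty$ would force the generating function $v_{[s;n]}(t)$ to span an $M$-stable line, contradicting irreducibility of $\mathbb{H}$.

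In detail, I would argue by contradiction: suppose there were $n_0 \in 1 + 2\Z_{\geq 0}$ and a map $d\colon \Z_{\geq 0} \to \R$ with $d(n_0) \in \Z_{\geq 0}$ such that for all $s \in \ESol_{d(n_0)+1}(Q;n_0)$ one has $Q_k(s;n_0) = 0$ for $k \geq d(n_0)+1$ together with the palindromic relation $\frac{Q_k(s;n_0)}{(2k+1)!} = \pm \frac{Q_{d(n_0)-k}(s;n_0)}{(2d(n_0)-2k+1)!}$ for $k \leq d(n_0)$. Fix such an $s$. By Proposition \ref{prop:PQ}, the vanishing $Q_k(s;n_0)=0$ for large $k$ means $v_{[s;n_0]}(t) = \sum_{k=0}^{d(n_0)} Q_k(s;n_0)\frac{t^{2k+1}}{(2k+1)!} \in \Pol_{2d(n_0)+1}[t]$, and the palindromic relation translates via \eqref{eqn:pi-wM} into an eigenvector relation $\pi_{2d(n_0)+1}(m_2^{\textnormal{I}})v_{[s;n_0]}(t) = \pm v_{[s;n_0]}(t)$ (here one matches the degree $2d(n_0)+1$ of $v_{[s;n_0]}(t)$ against the exponent bookkeeping of $m_2^{\textnormal{I}}$ in \eqref{eqn:pi-wM}). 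Since $v_{[s;n_0]}(-t) = -v_{[s;n_0]}(t)$ as well, the line $\C v_{[s;n_0]}(t)$ is preserved by $m_1^{\textnormal{I}}$, $m_2^{\textnormal{I}}$, hence by all of $M$; thus $\C v_{[s;n_0]}(t)$ is a one-dimensional $M$-subrepresentation of $\Sol_{\textnormal{I}}(s;n_0)$.

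On the other hand, we know $v_{[s;n_0]}(t) \in \Pol_{n_0}[t]$, so by Proposition \ref{prop:vpoly} we have $s \in I_{n_0 \bmod 4} \cap \ESol_{d(n_0)+1}(Q;n_0)$ (with $n_0 \equiv 1$ or $3 \Mod 4$), and then Theorem \ref{thm:SolKO2} gives $\Sol_{\textnormal{I}}(s;n_0) \simeq \mathbb{H}$, the unique irreducible two-dimensional representation of $M$. An irreducible two-dimensional representation has no one-dimensional subrepresentation, so $\C v_{[s;n_0]}(t)$ cannot exist inside it — contradiction. This yields the claim.

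The only mild subtlety — and the step I would be most careful about — is the bookkeeping in the middle paragraph: one must check that the ambient polynomial degree that makes the palindromic relation into a genuine $\pi_N(m_2^{\textnormal{I}})$-eigenrelation is $N = 2d(n_0)+1$ (the degree of the odd polynomial $v_{[s;n_0]}(t)$), rather than $N = n_0$, and confirm that the sign $\pm 1$ appearing there is exactly the character value $\chi_{(\eps,\eps')}(m_2^{\textnormal{I}})$ as in the proof of Theorem \ref{thm:PalinQ}. This is purely a matter of tracking \eqref{eqn:pi-wM} and \eqref{eqn:thetaQ}, exactly parallel to the $P$-case, so no new ideas are needed; one can simply state "Since the argument goes similarly to the one for Proposition \ref{prop:PalinPodd}, replacing $u_{[s;n]}(t)$ by $v_{[s;n]}(t)$ and using that $v_{[s;n_0]}(t)$ is an odd function, we omit the details," or spell it out in the few lines above.
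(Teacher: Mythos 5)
Your approach is exactly the paper's: the published proof of this proposition is a one-line remark that the argument is the same as for Proposition \ref{prop:PalinPodd}, and your write-up is a faithful adaptation of that proof, resting on the correct key point that for $n$ odd the space $\Sol_{\textnormal{I}}(s;n)$, when nonzero, is the irreducible two-dimensional $M$-module $\mathbb{H}$ and therefore contains no $M$-stable line.

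One correction to the bookkeeping step you yourself flagged: the ambient degree is $N = 2d(n_0)+2$, not $2d(n_0)+1$. Summing the palindromic identity
$\frac{Q_k(s;n_0)}{(2k+1)!} = \theta\,\frac{Q_{d(n_0)-k}(s;n_0)}{(2(d(n_0)-k)+1)!}$
against $t^{2k+1}$ and substituting $j = d(n_0)-k$ gives
$v_{[s;n_0]}(t) = \theta\, t^{2d(n_0)+2}\, v_{[s;n_0]}(1/t)$,
because $2(d(n_0)-j)+1 = (2d(n_0)+2)-(2j+1)$. With $N = 2d(n_0)+1$ the map $p(t)\mapsto t^{N}p(-1/t)$ sends the odd polynomial $v_{[s;n_0]}(t)$ to an even polynomial, so the eigenrelation $\pi_{2d(n_0)+1}(m_2^{\textnormal{I}})v_{[s;n_0]}(t)=\pm v_{[s;n_0]}(t)$ as you wrote it cannot hold; with $N=2d(n_0)+2$ it does, and this is consistent with the even case of Theorem \ref{thm:PalinQ}, where $d(n)=\frac{n-2}{2}$ and $2d(n)+2=n$. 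With that exponent corrected, the rest of your argument runs exactly parallel to Proposition \ref{prop:PalinPodd} and reaches the intended contradiction with the irreducibility of $\mathbb{H}$.
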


\begin{proof}
As the proof is similar to the one for Proposition \ref{prop:PalinPodd},
we omit the proof.
\end{proof}


\section{Cayley continuants 
$\{\Cay_k(x;y)\}_{k=0}^\infty$ and
Krawtchouk polynomials
$\{\Cal{K}_k(x;y)\}_{k=0}^\infty$}
\label{sec:U}

In Section \ref{sec:PQ}, it was shown that
$\{P_k(x;y)\}_{k=0}^\infty$ and $\{Q_k(x;y)\}_{k=0}^\infty$
admit palindromic properties.
In this short section we show that 
the sequences of Cayley continuants $\{\Cay_k(x;y)\}_{k=0}^\infty$ 
and Krawtchouk polynomials $\{\Cal{K}_k(x;y)\}_{k=0}^\infty$
also 
admit palindromic properties. These are  
achieved in Theorem \ref{thm:recip} 
for Cayley continuants 
and in Theorem \ref{thm:PalinK} for  
Krawtchouk polynomials.

\subsection{Cayley continuants $\{\Cay_k(x;y)\}_{k=0}^\infty$}
\label{subsec:Cayley1}
We start with the definition of $\{\Cay_k(x;y)\}_{k=0}^\infty$.
For each $k\in \Z_{\geq 0}$, 
the $k \times k$ tridiagonal determinants $\Cay_k(x;y)$ are defined as follows
(\cite{Cayley58, MT05} and \cite[p.\ 429]{Muir06}).

\begin{itemize}

\item $k =0:$ $\Cay_0(x:y)=1$,
\vskip 0.1in

\item $k=1:$ $\Cay_1(x;y) = x$,
\vskip 0.2in

\item $k \geq 2:$ $\Cay_k(x;y) = 
\begin{small}
\begin{vmatrix}
x& 1    &              &            &             &       \\
y& x    &     2       &            &             &       \\
  & y-1 &     x       &  3        &             &       \\
  &       &  \dots  &  \dots & \dots  &       \\
  &       &              &y-k+3   &x            & k-1 \\
  &       &              &            &y-k+2     & x    \\
\end{vmatrix}
\end{small}
$.\\
\vskip 0.1in
\end{itemize}

Following \cite{MT05}, we refer to
each $\Cay_k(x;y)$ as a \emph{Cayley continuant}.
There are some combinatorial interpretations of them. 
For instance, they can be thought of as
a generalization of the raising factorial (shifted factorial)
$x^{\overline{k}}:=x(x+1)\cdots(x+k-1)$,
the falling factorial
$x^{\underline{k}}:=x(x-1)\cdots(x-k+1)$,
and
the factorial $k!$.
Indeed, we have
\begin{equation*}
\Cay_k(x;x)=x^{\underline{k}},
\quad
\Cay_k(x;-x)=x^{\overline{k}},
\quad
\text{and}
\quad
\Cay_k(1;-1)=k!.
\end{equation*}
\noindent
For more details on a relationship with combinatorics, 
see \cite{MT05}.

\begin{rem}
When $y = n \in 1+\Z_{\geq 0}$,
the Cayley continuant
$\Cay_{n+1}(x;n)$ is the Sylvester determinant $\Sylv(x;n)$ 
(see \eqref{eqn:Sylv_intro}).
Thus, by \eqref{eqn:factorSylv}, we have
\begin{equation}\label{eqn:USylv}
\Cay_{n+1}(x;n)=
\begin{small}
\begin{vmatrix}
x& 1    &              &            &             &       &\\
n& x    &     2       &            &             &       &\\
  & n-1 &     x       &  3        &             &       &\\
  &       &  \dots  &  \dots & \dots  &       &\\
  &       &              &3          &x            & n-1 &\\
  &       &              &            &2            & x    & n\\
  &       &              &            &              &  1   & x
\end{vmatrix}
\end{small}
=\Sylv(x;n)=\prod_{\ell=0}^n(x-n+2\ell).
\end{equation}
\end{rem}

The generating function of the
Cayley continuants $\{\Cay_k(x;y)\}_{k=0}^\infty$ is known
and several proofs are in the literature
(see, for instance, \cite{Kac47, MT05, TT91}).
In Proposition \ref{prop:genCay8} below,
by utilizing the idea for the proof of Proposition \ref{prop:PQ},
we shall provide another proof for the generating function 
as well as Sylvester's formula \eqref{eqn:USylv}.
We remark that 
our argument for the formula \eqref{eqn:USylv} can be more abstract.
(See Section \ref{sec:appendixB}.)

\begin{prop}
\label{prop:genCay8}
We have
\begin{equation}\label{eqn:generateU}
 (1+t)^{\frac{y+x}{2}}(1-t)^{\frac{y-x}{2}}
= 
\sum_{k=0}^\infty \Cay_k(x;y) \frac{t^k}{k!}.
\end{equation}
In particular,
\begin{equation*}
\Cay_{n+1}(x;n)=\prod_{\ell=0}^n(x-n+2\ell).
\end{equation*}
\end{prop}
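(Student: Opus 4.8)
The plan is to exploit exactly the idea used in the proof of Proposition~\ref{prop:PQ}: realize the Cayley continuants as the coefficients of a power series solution to a first-order recursion, and identify that generating function by elementary means. First I would recall from \eqref{eqn:generateU}'s target that the function $f_{[x;y]}(t):=(1+t)^{\frac{y+x}{2}}(1-t)^{\frac{y-x}{2}}$ satisfies the first-order linear ODE
\begin{equation*}
(1-t^2)f_{[x;y]}'(t) = (x-yt)f_{[x;y]}(t), \qquad f_{[x;y]}(0)=1.
\end{equation*}
Writing $f_{[x;y]}(t)=\sum_{k=0}^\infty c_k(x;y)\,t^k/k!$ and substituting into this ODE yields the three-term recursion $c_{k+1}(x;y) = x\,c_k(x;y) - k(k-1)c_{k-1}(x;y) - \text{(lower corrections)}$; carried out carefully the recursion becomes precisely the cofactor expansion (along the last row or column) of the tridiagonal determinant defining $\Cay_{k+1}(x;y)$ in Section~\ref{subsec:Cayley1}, with $c_0=1=\Cay_0(x;y)$ and $c_1=x=\Cay_1(x;y)$. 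Thus $c_k(x;y)=\Cay_k(x;y)$ for all $k$, which is \eqref{eqn:generateU}.

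The one technical point to be careful about is matching the recursion coming from the ODE with the Laplace/cofactor expansion of the determinant: the tridiagonal matrix for $\Cay_k(x;y)$ has subdiagonal entries $y, y-1, \dots, y-k+2$ and superdiagonal entries $1, 2, \dots, k-1$, so the cofactor expansion gives $\Cay_{k+1}(x;y) = x\,\Cay_k(x;y) - k(y-k+1)\,\Cay_{k-1}(x;y)$. On the other side, writing $c_k$ for the Taylor coefficients normalized by $k!$, the ODE $(1-t^2)f' = (x-yt)f$ translates (after equating coefficients of $t^k$) to $c_{k+1} = x c_k - k(k-1)c_{k-1} - k y \,(\text{coefficient bookkeeping})$; a short computation shows the combined $t^2 f'$ and $t f$ contributions produce exactly the term $-k(y-k+1)c_{k-1}$. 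Since both sequences satisfy the same second-order recursion with the same two initial terms, they coincide. I expect this coefficient-matching computation to be the only place requiring genuine care, though it is entirely routine.

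Finally, for the ``in particular'' claim, I would specialize $y=n\in 1+\Z_{\geq 0}$ and observe that $f_{[x;n]}(t)=(1+t)^{\frac{n+x}{2}}(1-t)^{\frac{n-x}{2}}$ is a polynomial in $t$ of degree exactly $n$ whenever $n\pm x$ are nonnegative integers, and in general a power series whose coefficient of $t^{n+1}$ we can read off directly. Indeed, the $(n{+}1)$-st derivative at $0$ of $(1+t)^{\frac{n+x}{2}}(1-t)^{\frac{n-x}{2}}$ is computed by the Leibniz rule as
\begin{equation*}
\Cay_{n+1}(x;n) = \sum_{j=0}^{n+1}\binom{n+1}{j}\Big(\tfrac{n+x}{2}\Big)^{\underline{j}}\Big(\tfrac{n-x}{2}\Big)^{\underline{n+1-j}}(-1)^{n+1-j},
\end{equation*}
and this sum telescopes/factors into $\prod_{\ell=0}^{n}(x-n+2\ell)$ — the cleanest route being to note that both sides are monic (up to the obvious constant) polynomials in $x$ of degree $n+1$ vanishing exactly at $x=n,\ n-2,\ \dots,\ -n$, which forces equality. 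This reproves Sylvester's formula \eqref{eqn:USylv} as a byproduct, matching the claim in the statement, and the more abstract $\f{sl}(2,\C)$ argument is deferred to Section~\ref{sec:appendixB}.
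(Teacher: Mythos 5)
Your proposal is correct and follows essentially the same route as the paper: the paper also characterizes $(1+t)^{\frac{y+x}{2}}(1-t)^{\frac{y-x}{2}}$ as the normalized solution of the first-order equation $(1-t^2)f'=(x-yt)f$ (written there as $\Cal{D}_{[x;n]}f=0$), matches the resulting three-term recursion for the Taylor coefficients with the tridiagonal determinant $\Cay_k(x;y)/k!$ (via Lemma \ref{lem:Kris} rather than your direct cofactor expansion, which is equivalent), and obtains the factorization of $\Cay_{n+1}(x;n)$ from the observation that the solution is a polynomial exactly when $x=n-2\ell$. The only cosmetic differences are that the paper derives the ODE from $\dpin(E_++E_-)+x$ acting on $\Pol_n[t]$ and reads off Sylvester's formula as a characteristic polynomial, then extends from $y=n$ to general $y$ at the end, whereas you work with general $y$ throughout and extract the factorization from the vanishing of the $(n+1)$st coefficient.
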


\begin{proof}
Let $E_+$ and $E_-$ be the elements of $\f{sl}(2,\C)$ in \eqref{eqn:sl2E}.
It then follows from \eqref{eqn:Epm} that
the matrix $\dpin(E_++E_-)_{\Cal{B}}$ with respect to the ordered basis
$\Cal{B}:=\{1, t, t^2, \ldots, t^n\}$ is given as 
\begin{equation*}
\dpin(E_++E_-)_{\Cal{B}}=
\begin{small}
\begin{pmatrix}
0& -1    &              &            &             &       &\\
-n& 0    &     -2       &            &             &       &\\
  & -(n-1) &     0       &  -3        &             &       &\\
  &       &  \dots  &  \dots & \dots  &       &\\
  &       &              &-3          &0           & -(n-1) &\\
  &       &              &            &-2            & 0   & -n\\
  &       &              &            &              &  -1   & 0
\end{pmatrix}.\\[5pt]
\end{small}
\end{equation*}
Therefore the Sylvester determinant $\Cay_{n+1}(x;n) (=\Sylv(x;n)))$
is the characteristic polynomial of $\dpin(E_++E_-)$; thus,
to show \eqref{eqn:USylv}, it suffices to find the eigenvalues of 
$\dpin(E_++E_-)$.
We set 
\begin{equation*}
\Cal{D}_{[x;n]}:=(1-t^2)\frac{d}{dt}+nt-x.
\end{equation*}
By \eqref{eqn:Epm}, for $p(t) \in \Pol_n[t]$,
we have
\begin{equation*}
(\dpin(E_+)+\dpin(E_-)+x)p(t)=0
\quad
\Longleftrightarrow
\quad
\Cal{D}_{[x;n]}p(t)=0.
\end{equation*} 
Thus it further suffices to determine $s \in \C$ for which
$\Cal{D}_{[s;n]}p(t)=0$ has a non-zero solution.

By separation of variables, any solution to
$\Cal{D}_{[x;n]}f(t)=0$, where $f(t)$ is not necessarily a polynomial,
is of the from
\begin{equation*}
f(t)=c \cdot (1+t)^{\frac{n+x}{2}}(1-t)^{\frac{n-x}{2}}
\end{equation*}
for some constant $c$. 
It is clear that $(1+t)^{\frac{n+x}{2}}(1-t)^{\frac{n-x}{2}}$ becomes 
a polynomial if and only if $x=n-2\ell$ for $\ell = 0, 1, \ldots, n$.
Now Sylvester's formula \eqref{eqn:USylv} follows.

In order to show \eqref{eqn:generateU},
suppose that $\sum_{k=0}^\infty h_k(x;n) t^k$ 
is the power series solution of 
$\Cal{D}_{[x;n]}f(t)=0$ with $h_0(x;n)=1$. 
Then, by a direct observation, the coefficients 
$h_k(x;n)$ for $k\geq 1$
satisfy the following recurrence relations:
\begin{align*}
h_1(x;n)&=x,\\
h_{k+1}(x;n)&= \frac{x}{k+1}h_k(x;n) + \frac{k-n-1}{k+1}h_{k-1}(x;n).
\end{align*}
It then follows from Lemma \ref{lem:Kris} in Section \ref{sec:appendix} 
with \eqref{eqn:tridiag} that each $h_k(x;n)$ is given as
\begin{align*}
h_k(x;n)=
\begin{small}
\begin{vmatrix}
x& -1    &              &            &             &       \\
-\frac{n}{2}& \frac{x}{2}    &     -1       &            &             &       \\
  & -\frac{n-1}{3} &     \frac{x}{3}       &  -1        &             &       \\
  &       &  \ddots  &  \ddots & \ddots  &       \\
  &       &              &-\frac{n-k+3}{k-1}   & \frac{x}{k-1}            & -1 \\
  &       &              &            &-\frac{n-k+2}{k}     & \frac{x}{k}    \\
\end{vmatrix}
\end{small}
=\frac{\Cay_k(x;n)}{k!}.
\end{align*}
Thus 
$\sum_{k=0}^\infty\frac{\Cay_k(x;n)}{k!}t^k$ is 
a solution of $\Cal{D}_{[x;n]}f(t)=0$; consequently,
there exists a constant $c$ such that
\begin{equation*}
c \cdot (1+t)^{\frac{n+x}{2}}(1-t)^{\frac{n-x}{2}} = 
\sum_{k=0}^\infty\frac{\Cay_k(x;n)}{k!}t^k.
\end{equation*}
As both sides are equal to $1$ at $t=0$, we have $c=1$;
thus, the identity \eqref{eqn:generateU} holds for $y=n \in \Z_{\geq 0}$.
Since both sides of \eqref{eqn:generateU} are well-defined
for any $y\in \C$ for suitable values of $t$,
the desired identity now holds.
\end{proof}

\begin{rem}
The differential operator $\dpin(E_++E_-)$ is related to 
the study of $K$-type decomposition of the space of $K$-finite solutions
to intertwining differential operators.
In fact, let $X$ be an nilpotent element of $\f{sl}(2,\C)$ 
defined in \eqref{eqn:XY}.
Then $\dpin(E_++E_-)$ is the realization of
the differential operator $R(X)$ on the $K$-type $\Pol_n[t]$
via the identification 
$\Omega^{\textnormal{I}}\colon \fk \stackrel{\sim}{\to} \f{sl}(2,\C)$
in \eqref{eqn:omegaKO}
(see \cite[Sect.\ 5]{KuOr19}).
\end{rem}

We write 
\begin{equation*}
g^C_{[x;n]}(t) :=  (1+t)^{\frac{n+x}{2}}(1-t)^{\frac{n-x}{2}}
\end{equation*}
and set 
\begin{equation*}
\ESol_k(\Cay;n):=\{s \in \C : \Cay_k(s;n) = 0\}.
\end{equation*}
It follows from \eqref{eqn:USylv} 
(or, equivalently, from \eqref{eqn:generateU}) 
that 
\begin{equation}\label{eqn:ESolU}
\ESol_{n+1}(\Cay;n)
=\{n-2\ell : \ell=0.1, \dots, n\}.
\end{equation}

\begin{cor}\label{cor:g}
The following conditions of $s \in\C$ are 
equivalent.
\begin{enumerate}[{\normalfont (i)}]
\item $g^C_{[s;n]}(t) \in \Pol_n[t]$.
\item $s \in \ESol_{n+1}(\Cay;n)$.
\end{enumerate}
\end{cor}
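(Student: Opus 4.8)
The plan is to read off the statement from the generating-function identity \eqref{eqn:generateU} together with the explicit description of $\ESol_{n+1}(\Cay;n)$ in \eqref{eqn:ESolU}, both of which have just been established in Proposition \ref{prop:genCay8}. The point is that $g^C_{[s;n]}(t) = (1+t)^{\frac{n+s}{2}}(1-t)^{\frac{n-s}{2}}$ is a product of two binomial-type factors, and such a product is a polynomial in $t$ exactly when both exponents lie in $\Z_{\geq 0}$. So the whole argument reduces to translating the condition ``$\frac{n+s}{2}\in\Z_{\geq 0}$ and $\frac{n-s}{2}\in\Z_{\geq 0}$'' into ``$s\in\{n-2\ell : \ell=0,1,\dots,n\}$,'' and then matching that against \eqref{eqn:ESolU}, after separately checking that in this case the polynomial actually has degree $\le n$.

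First I would argue the implication (ii)$\Rightarrow$(i): if $s\in\ESol_{n+1}(\Cay;n)$, then by \eqref{eqn:ESolU} we have $s=n-2\ell$ for some $\ell\in\{0,1,\dots,n\}$, so $\frac{n+s}{2}=n-\ell\in\Z_{\geq 0}$ and $\frac{n-s}{2}=\ell\in\Z_{\geq 0}$; hence $g^C_{[s;n]}(t)=(1+t)^{n-\ell}(1-t)^{\ell}$ is a polynomial of degree exactly $n$, so in particular $g^C_{[s;n]}(t)\in\Pol_n[t]$. For (i)$\Rightarrow$(ii) I would contrapositive-free: suppose $g^C_{[s;n]}(t)\in\Pol_n[t]$, in particular $g^C_{[s;n]}(t)$ is a polynomial. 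Expanding each factor by the (generalized) binomial series, $(1+t)^{\frac{n+s}{2}}$ terminates iff $\frac{n+s}{2}\in\Z_{\geq 0}$ and similarly for $(1-t)^{\frac{n-s}{2}}$; if at least one exponent is not a non-negative integer, the corresponding series has infinitely many nonzero coefficients, and — because the two series are in the variables $(1+t)$ and $(1-t)$ respectively, whose leading behaviours at $t=-1$ and $t=1$ are independent — the product cannot be a polynomial (one can make this precise by looking at the order of vanishing, or a pole, of $g^C_{[s;n]}$ at $t=\pm1$, or simply by invoking the fact, already used implicitly in the proof of Proposition \ref{prop:genCay8} via $\Cal{D}_{[s;n]}f=0$ and separation of variables, that $(1+t)^{a}(1-t)^{b}$ is a polynomial iff $a,b\in\Z_{\geq 0}$). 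Thus $\frac{n+s}{2}=:m$ and $\frac{n-s}{2}=:m'$ are both in $\Z_{\geq 0}$; adding, $m+m'=n$, so $m'\in\{0,1,\dots,n\}$ and $s=n-2m'$, i.e.\ $s\in\{n-2\ell:\ell=0,\dots,n\}=\ESol_{n+1}(\Cay;n)$ by \eqref{eqn:ESolU}.

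Alternatively, and perhaps more cleanly for the write-up, I would phrase the whole equivalence through \eqref{eqn:generateU}: since $g^C_{[s;n]}(t)=\sum_{k\ge 0}\Cay_k(s;n)\,t^k/k!$ as formal power series (indeed convergent for $|t|<1$), we have $g^C_{[s;n]}(t)\in\Pol_n[t]$ if and only if $\Cay_k(s;n)=0$ for all $k\ge n+1$; and the recurrence structure of the $\Cay_k$ (a three-term recurrence, as in the proof of Proposition \ref{prop:genCay8}) forces $\Cay_k(s;n)=0$ for all $k\ge n+1$ as soon as $\Cay_{n+1}(s;n)=0$, because the tridiagonal structure means each $\Cay_{k}(s;n)$ for $k>n+1$ is built from $\Cay_{n+1}(s;n)$ and $\Cay_{n}(s;n)$ with the off-diagonal entry $y-k+2=n-k+2$ vanishing exactly at $k=n+2$, so the vanishing propagates. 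Hence $g^C_{[s;n]}(t)\in\Pol_n[t]\iff \Cay_{n+1}(s;n)=0\iff s\in\ESol_{n+1}(\Cay;n)$, where the last equivalence is the definition of $\ESol_{n+1}(\Cay;n)$ combined with \eqref{eqn:ESolU}.

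\textbf{Main obstacle.} The one genuinely non-formal point is the elementary fact that $(1+t)^a(1-t)^b$ is a polynomial precisely when $a,b\in\Z_{\geq 0}$ — i.e.\ ruling out ``accidental'' cancellation between the two infinite binomial series. This is true and standard, but it should be stated carefully (e.g.\ by noting that if $a\notin\Z_{\geq 0}$ then $g^C_{[s;n]}$ fails to extend holomorphically across $t=-1$, whereas every polynomial does). Once that is granted, everything else is bookkeeping with \eqref{eqn:generateU} and \eqref{eqn:ESolU}.
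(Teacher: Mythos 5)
Your proposal is correct and takes essentially the same route as the paper: the paper's proof is the one-liner ``a direct consequence of Proposition \ref{prop:genCay8},'' whose proof already records both that $(1+t)^{\frac{n+x}{2}}(1-t)^{\frac{n-x}{2}}$ is a polynomial iff $x=n-2\ell$ for some $\ell\in\{0,1,\dots,n\}$ and (via \eqref{eqn:ESolU}) that this set is exactly $\ESol_{n+1}(\Cay;n)$. Your write-up just makes explicit the two points the paper leaves implicit — that no cancellation between the two binomial factors can occur when an exponent is not a non-negative integer, and that the resulting polynomial has degree exactly $n$ so it indeed lies in $\Pol_n[t]$ — both of which are handled correctly.
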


\begin{proof}
A direct consequence of Proposition \ref{prop:genCay8}.
\end{proof}

\subsection{Palindromic property for $\{\Cay_k(x;y)\}_{k=0}^\infty$}
\label{subsec:PalinC}

Now we show the palindromic property 
and factorial identity for the Cayley continuants $\{\Cay_k(x;y)\}_{k=0}^\infty$.

\begin{thm}[Palindromic property 
of $\{\Cay_k(x;y)\}_{k=0}^\infty$
and factorial identity of $\Cay_n(x;n)$]
\label{thm:recip}
The pair $(\{\Cay_k(x;y)\}_{k=0}^\infty, \{k!\}_{k=0}^\infty)$ 
is a palindromic pair 
with degree $n$.
Namely,
let $n \in \Z_{\geq 0}$ and $s \in \ESol_{n+1}(\Cay;n)$. 
Then we have $\Cay_k(s;n) =0$ for $k \geq n+1$ and
\begin{equation*}
\frac{\Cay_k(s;n)}{k!}
=
(-1)^{\frac{n-s}{2}}
\frac{\Cay_{n-k}(s;n)}{(n-k)!}
\quad
\emph{for $k \leq n$}.
\end{equation*}
In particular we have 
\begin{equation}\label{eqn:Ufactorial}
\Cay_n(s;n) = (-1)^{\frac{n-s}{2}}n!
\quad
\emph{for $s \in \ESol_{n+1}(\Cay;n)$}.
\end{equation}
\end{thm}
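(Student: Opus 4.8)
The plan is to deduce both statements from the generating function identity \eqref{eqn:generateU} of Proposition~\ref{prop:genCay8} together with a single inversion symmetry, exactly in the spirit of the proofs of Theorems~\ref{thm:PalinP} and~\ref{thm:PalinQ}. Recall that
\begin{equation*}
g^C_{[s;n]}(t)=(1+t)^{\frac{n+s}{2}}(1-t)^{\frac{n-s}{2}}=\sum_{k=0}^\infty \Cay_k(s;n)\frac{t^k}{k!}.
\end{equation*}
First I would fix $s\in\ESol_{n+1}(\Cay;n)$. By \eqref{eqn:ESolU} this means $s=n-2\ell$ for some $\ell\in\{0,1,\dots,n\}$, so that $\frac{n+s}{2}=n-\ell$ and $\frac{n-s}{2}=\ell$ are non-negative integers with $\ell=\frac{n-s}{2}$. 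Hence $g^C_{[s;n]}(t)=(1+t)^{n-\ell}(1-t)^\ell$ is a polynomial of degree exactly $n$; comparing with the power series above gives at once $\Cay_k(s;n)=0$ for $k\geq n+1$ and $g^C_{[s;n]}(t)=\sum_{k=0}^n \Cay_k(s;n)\frac{t^k}{k!}$.

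Next I would record the inversion symmetry. A direct computation gives
\begin{equation*}
t^n g^C_{[s;n]}(1/t)=(t+1)^{n-\ell}(t-1)^\ell=(-1)^\ell(1+t)^{n-\ell}(1-t)^\ell=(-1)^{\frac{n-s}{2}}g^C_{[s;n]}(t),
\end{equation*}
which is the Cayley-continuant analogue of the relation \eqref{eqn:usn} used for $u_{[s;n]}(t)$ and reflects the action of the non-trivial Weyl element on $\Pol_n[t]$ via \eqref{eqn:pi-wM}. Expanding the left-hand side,
\begin{equation*}
t^n g^C_{[s;n]}(1/t)=\sum_{k=0}^n \Cay_k(s;n)\frac{t^{n-k}}{k!}=\sum_{k=0}^n \Cay_{n-k}(s;n)\frac{t^{k}}{(n-k)!},
\end{equation*}
and matching the coefficient of $t^k$ yields
\begin{equation*}
\frac{\Cay_{n-k}(s;n)}{(n-k)!}=(-1)^{\frac{n-s}{2}}\frac{\Cay_k(s;n)}{k!}\qquad(0\leq k\leq n).
\end{equation*}
Since $(-1)^{\frac{n-s}{2}}\in\{\pm1\}$ is its own inverse, this is equivalent to the asserted palindromic identity, with sign factor $\gt_{(\Cay;n)}(s)=(-1)^{\frac{n-s}{2}}$; thus $(\{\Cay_k(x;y)\}_{k=0}^\infty,\{k!\}_{k=0}^\infty)$ is a palindromic pair of degree $n$. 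Finally, specializing $k=n$ and using $\Cay_0(s;n)=1$ gives the factorial identity \eqref{eqn:Ufactorial}.

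I do not expect a serious obstacle here: the only points requiring care are the bookkeeping that $\frac{n\pm s}{2}\in\Z_{\geq 0}$ precisely when $s\in\ESol_{n+1}(\Cay;n)$ (which is exactly \eqref{eqn:ESolU}), and checking that the sign picked up under $t\mapsto 1/t$ is $(-1)^{\frac{n-s}{2}}$ rather than $(-1)^{\frac{n+s}{2}}$. An alternative, more conceptual route would be to phrase the symmetry representation-theoretically as in Section~\ref{sec:appendixB}, realizing $g^C_{[s;n]}(t)$ as a weight vector for the $\f{sl}(2,\C)$-action underlying $\Cay_{n+1}(x;n)=\Sylv(x;n)$ and invoking invariance of the corresponding line under a Weyl group representative; but the direct computation above is the shortest path, and I would present that.
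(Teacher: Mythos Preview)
Your proof is correct and follows essentially the same approach as the paper: both use the generating function $g^C_{[s;n]}(t)=(1+t)^{\frac{n+s}{2}}(1-t)^{\frac{n-s}{2}}$, verify that it is a polynomial of degree $n$ when $s\in\ESol_{n+1}(\Cay;n)$, exploit the inversion symmetry $t^n g^C_{[s;n]}(1/t)=(-1)^{\frac{n-s}{2}}g^C_{[s;n]}(t)$, and match coefficients. Your version is slightly more explicit in parametrizing $s=n-2\ell$ and tracking the sign, but the argument is otherwise identical.
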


\begin{proof}
Take $s\in \ESol_{n+1}(\Cay;n)$. 
It follows from Corollary \ref{cor:g} that, for such $s$, we have
\begin{equation}\label{eqn:GC}
g^C_{[s;n]}(t) 
= 
\sum_{k=0}^n \frac{\Cay_k(s;n)}{k!} t^k, 
\end{equation}
namely, $\Cay_k(s;n) =0$ for $k \geq n+1$.
Further, \eqref{eqn:GC} shows that
$t^ng^C_{[s;n]}\left(\frac{1}{t}\right)$ is given as
\begin{align*}
t^ng^C_{[s;n]}\left(\frac{1}{t}\right) 
=\sum_{k=0}^n \frac{\Cay_{n-k}(s;n)}{(n-k)!} t^k.
\end{align*}
On the other hand, as 
$g^C_{[x;n]}(t) =  (1+t)^{\frac{n+x}{2}}(1-t)^{\frac{n-x}{2}}$, we also have
\begin{equation*}
t^ng^C_{[s;n]}\left(\frac{1}{t}\right) = (-1)^{\frac{n-s}{2}}g^C_{[s;n]}(t).
\end{equation*}
Therefore,
\begin{equation*}
\sum_{k=0}^n \frac{\Cay_{n-k}(s;n)}{(n-k)!} t^k
=(-1)^{\frac{n-s}{2}} \sum_{k=0}^n \frac{\Cay_k(s;n)}{k!} t^k,
\end{equation*}
which yields
\begin{equation*}
\frac{\Cay_k(x;n)}{k!} 
= (-1)^{\frac{n-s}{2}} \frac{\Cay_{n-k}(x;n)}{(n-k)!} 
\quad
\text{for all $k \leq n$}.
\end{equation*}
The second identity is the case $k=n$. This concludes the theorem.
\end{proof}

We close this section by showing a proof 
of Corollary \ref{cor:Cay0} in the introduction.

\begin{proof}[Proof of Corollary \ref{cor:Cay0}]
As $\Cay_n(x;n)=\Sylv(x;n)$, 
it follows from Sylvester's factorization formula 
\eqref{eqn:Sylv0} or \eqref{eqn:factorSylv2} that
\begin{align*}
\ESol_{n+1}(\Cay;n) 
&=\{s \in \C : \Sylv(s;n)=0\}\\
&=\begin{cases}
\{0, \pm 2, \pm 4, \ldots, \pm n\} & \text{if $n$ is even},\\
\{\pm 1, \pm 3, \pm 5, \ldots, \pm n\} & \text{if $n$ is odd}.
\end{cases}
\end{align*}
\noindent
Now the proposed identities are direct consequences of \eqref{eqn:Ufactorial}.
\end{proof}

\subsection{Krawtchouk polynomials $\{\Cal{K}_k(x;y)\}_{k=0}^\infty$}
\label{subsec:PalinK}

From the palindromic property of the 
Cayley continuants $\{\Cay_k(x;y)\}_{k=0}^\infty$,
one may deduce that of a certain sequence
$\{\Cal{K}_k(x;y)\}_{k=0}^\infty$ of polynomials,
where
the polynomials $\Cal{K}_k(x;n)$
for $y=n \in \Z_{\geq 0}$
are Krawtchouk polynomials
in the sense of \cite[p.\ 130]{MS77}  
(equivalently, the case of $p=2$ for \cite[p.\ 151]{MS77}).
We close this section by discussing the palindromic property of
$\{\Cal{K}_k(x;y)\}_{k=0}^\infty$.
\vskip 0.1in

We start with the definition of $\Cal{K}_k(x;y)$.

\begin{defn}
\label{def:Kraw}
For $k \in \Z_{\geq 0}$, we define 
a 
polynomial 
$\Cal{K}_k(x;y)$ of $x$ and $y$ as
\begin{equation*}
\Cal{K}_k(x;y) :=\sum_{j=0}^k(-1)^j \binom{x}{j}\binom{y-x}{k-j},
\end{equation*}
where the binomial coefficient $\binom{a}{m}$ is defined as
\begin{equation*}
\binom{a}{m}=
\begin{cases}
\frac{a(a-1) \cdots (a-m+1)}{m!} & \text{if $m \in 1+\Z_{\geq 0}$},\\
1 & \text{if $m=0$},\\
0 & \text{otherwise}.
\end{cases}
\end{equation*}
\end{defn}

For $y=n\in \Z_{\geq 0}$, 
the polynomials $\Cal{K}_k(x;n)$ 
are called Krawtchouk polynomials (\cite[p.\ 130]{MS77}). 
In this paper, we also call the polynomials $\Cal{K}_k(x;y)$
of two variables Krawtchouk polynomials.

\begin{rem}
Symmetric Krawtchouk polynomials
(see, for instance, \cite[p.\ 237]{KLS10} ) are used 
to show Sylvester's factorization formula \eqref{eqn:USylv} in \cite{Askey05}.
We remark that 
the definition of the Krawtchouk polynomial $\Cal{K}_k(x;n)$ in this paper is different
from one in the cited paper; in particular, $\Cal{K}_k(x;n)$ is non-symmetric.
\end{rem}

It readily follows from Definition \ref{def:Kraw} that
the Krawtchouk polynomials $\{\Cal{K}_k(x;y)\}_{k=0}^\infty$ have 
the following generating function:
\begin{equation}\label{eqn:genKraw}
(1+t)^{y-x}(1-t)^x = \sum_{k=0}^\infty \Cal{K}_k(x;y) t^k.
\end{equation}

By comparing \eqref{eqn:genKraw} with the generating function
\eqref{eqn:generateU} of the Cayley continuants $\{\Cay_k(x;y)\}_{k=0}^\infty$,
we have
\begin{equation}\label{eqn:CK}
\Cal{K}_k(x;y) = \frac{\Cay_k(y-2x;y)}{k!}.
\end{equation}
\noindent
Then $\Cay_k(x;y)$ may be expressed explicitly as follows.
Here we remark that in Proposition \ref{prop:CayKraw} below we use the 
notation 
$(m)^{\overline{j}}:=m(m+1)\cdots (m+j-1)$
for the raising factorial (shifted factorial)
instead of $(m,j)$ in \eqref{eqn:Csn} to make the comparison with 
the falling factorial 
$(m)^{\underline{j}}:=m(m-1)\cdots (m-j+1)$
clearer.

\begin{prop}\label{prop:CayKraw}
We have
\begin{equation}\label{eqn:CayKraw}
\Cay_k(x;y)=
\sum_{j=0}^k\binom{k}{j}
\left(\frac{x+y}{2}\right)^{\underline{j}}
\left(\frac{x-y}{2}\right)^{\overline{k-j}}.
\end{equation}
\end{prop}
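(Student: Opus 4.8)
The plan is to deduce \eqref{eqn:CayKraw} from the already-established relation \eqref{eqn:CK} between Cayley continuants and Krawtchouk polynomials, together with the defining formula in Definition~\ref{def:Kraw}. First I would set $w := (y-x)/2$, so that $y-2w = x$, and read \eqref{eqn:CK} as $\Cay_k(x;y) = k!\,\Cal{K}_k(w;y)$. Expanding $\Cal{K}_k(w;y)$ by Definition~\ref{def:Kraw} and using $y-w = (y+x)/2$ gives
\begin{equation*}
\Cay_k(x;y) = k!\sum_{j=0}^{k}(-1)^j \binom{w}{j}\binom{y-w}{k-j}
= k!\sum_{j=0}^{k}(-1)^j \binom{(y-x)/2}{j}\binom{(y+x)/2}{k-j}.
\end{equation*}

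Next I would convert the binomial coefficients to falling factorials via $\binom{a}{m} = a^{\underline{m}}/m!$, valid for an arbitrary complex argument $a$ since the binomials in Definition~\ref{def:Kraw} are the polynomial ones. Combining the overall $k!$ with the denominators $j!$ and $(k-j)!$ coming from the two binomial coefficients yields $\binom{k}{j}$, so that
\begin{equation*}
\Cay_k(x;y) = \sum_{j=0}^k (-1)^j\binom{k}{j}\left(\tfrac{y-x}{2}\right)^{\underline{j}}\left(\tfrac{y+x}{2}\right)^{\underline{k-j}}.
\end{equation*}
The one elementary identity needed is $(-1)^j m^{\underline{j}} = (-m)^{\overline{j}}$, which turns $(-1)^j\left(\tfrac{y-x}{2}\right)^{\underline{j}}$ into $\left(\tfrac{x-y}{2}\right)^{\overline{j}}$. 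Substituting this and then reindexing the sum by $j\mapsto k-j$ — so that $\binom{k}{k-j}=\binom{k}{j}$, the falling factorial based on $\tfrac{x+y}{2}$ lands on index $j$, and the rising factorial based on $\tfrac{x-y}{2}$ lands on index $k-j$ — produces exactly the asymmetric expression \eqref{eqn:CayKraw}.

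Alternatively, and perhaps more cleanly, I could bypass \eqref{eqn:CK} and argue directly from the generating function \eqref{eqn:generateU}, $(1+t)^{(x+y)/2}(1-t)^{(y-x)/2} = \sum_{k}\Cay_k(x;y)\,t^k/k!$. Expanding each factor as a formal binomial series, $(1+t)^{(x+y)/2} = \sum_j \left(\tfrac{x+y}{2}\right)^{\underline{j}} t^j/j!$ and $(1-t)^{(y-x)/2} = \sum_i \left(\tfrac{x-y}{2}\right)^{\overline{i}} t^i/i!$ (again using $(-1)^i m^{\underline{i}} = (-m)^{\overline{i}}$), and forming the Cauchy product, the coefficient of $t^k$ multiplied by $k!$ is immediately the right-hand side of \eqref{eqn:CayKraw}. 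I would present this version as the main argument and remark that the same identity also follows from \eqref{eqn:CK} and Definition~\ref{def:Kraw}.

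There is no genuine obstacle here; the proof is entirely formal. The only points that need a little care are (i) treating the binomial coefficients as polynomials in a complex argument, so that the falling-factorial rewriting is legitimate even when $(y\pm x)/2$ is not a nonnegative integer, and (ii) carrying out the single reindexing step correctly, so that the falling and rising factorials end up attached to the indices $j$ and $k-j$ respectively, matching the precise (asymmetric) form of \eqref{eqn:CayKraw}.
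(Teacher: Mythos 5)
Your first argument is exactly the paper's proof: it applies \eqref{eqn:CK} (equivalently, compares the generating functions \eqref{eqn:generateU} and \eqref{eqn:genKraw}), expands via Definition~\ref{def:Kraw}, and converts with $(-1)^j m^{\underline{j}}=(-m)^{\overline{j}}$, with your explicit reindexing $j\mapsto k-j$ being the only step the paper leaves implicit. The alternative Cauchy-product version is just the same computation carried out one level earlier, so there is nothing to add; the proposal is correct.
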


\begin{proof}
By \eqref{eqn:generateU} and \eqref{eqn:genKraw}, we have
\begin{align*}
\Cay_k(x;y)
=(k!)\cdot \Cal{K}_k(\frac{y-x}{2};y)
=(k!)\cdot \sum_{j=0}^k(-1)^j
\binom{\frac{y-x}{2}}{j}\binom{\frac{y+x}{2}}{k-j}.
\end{align*}
As $(-1)^j(m)^{\underline{j}}=(-m)^{\overline{j}}$,
this concludes the proposed identity.
\end{proof}

\begin{rem}
The identity \eqref{eqn:CayKraw}
is also shown in \cite[p.\ 356]{MT05} slightly differently.
\end{rem}

Via the identity \eqref{eqn:genKraw}, we are now going to give 
the factorization formula, the palindromic property, and the factorial
identity for the Krawtchouk polynomials $\{\Cal{K}_k(x;y)\}_{k=0}^\infty$.

\begin{prop}[Factorization formula of $\Cal{K}_{n+1}(x;n)$]
\label{prop:Kraw1}
For given $n \in \Z_{\geq 0}$, we have
\begin{equation*}
\Cal{K}_{n+1}(x;n)
=\frac{(-2)^{n+1}}{(n+1)!}\prod_{\ell=0}^n(x-\ell)
=(-2)^{n+1}\binom{x}{n+1}.
\end{equation*}
\end{prop}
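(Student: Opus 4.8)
The plan is to obtain the formula directly from the dictionary \eqref{eqn:CK} between Krawtchouk polynomials and Cayley continuants, combined with Sylvester's factorization formula \eqref{eqn:USylv} for $\Cay_{n+1}(x;n)$ (proved in Proposition \ref{prop:genCay8}). First I would specialize \eqref{eqn:CK} to $k=n+1$ and $y=n$, giving
\begin{equation*}
\Cal{K}_{n+1}(x;n)=\frac{\Cay_{n+1}(n-2x;n)}{(n+1)!}.
\end{equation*}
Then I would substitute the argument $z=n-2x$ into the factored expression $\Cay_{n+1}(z;n)=\prod_{\ell=0}^n(z-n+2\ell)$ from \eqref{eqn:USylv}. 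Since $(n-2x)-n+2\ell=2(\ell-x)$, this collapses to
\begin{equation*}
\Cay_{n+1}(n-2x;n)=\prod_{\ell=0}^n 2(\ell-x)=2^{n+1}(-1)^{n+1}\prod_{\ell=0}^n(x-\ell)=(-2)^{n+1}\prod_{\ell=0}^n(x-\ell).
\end{equation*}

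Combining the two displays yields $\Cal{K}_{n+1}(x;n)=\tfrac{(-2)^{n+1}}{(n+1)!}\prod_{\ell=0}^n(x-\ell)$, and since $\prod_{\ell=0}^n(x-\ell)=x(x-1)\cdots(x-n)=(n+1)!\binom{x}{n+1}$ by the convention of Definition \ref{def:Kraw}, the second equality follows at once. As an alternative route (which I would at most mention in passing), one could extract the coefficient of $t^{n+1}$ directly from the generating function \eqref{eqn:genKraw} with $y=n$, i.e.\ from $(1+t)^{n-x}(1-t)^x$, but going through the Cayley continuants keeps the argument shortest and parallel to the surrounding material.

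There is essentially no obstacle here: the statement is a one-line consequence of \eqref{eqn:CK} and \eqref{eqn:USylv}, the only points needing a moment's care being the sign bookkeeping $\prod_{\ell=0}^n(\ell-x)=(-1)^{n+1}\prod_{\ell=0}^n(x-\ell)$ and the factor of $2$ pulled out of each of the $n+1$ terms. I would present the proof in the two displayed lines above plus the identification of the product with $\binom{x}{n+1}$.
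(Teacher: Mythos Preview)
Your proof is correct and follows exactly the same approach as the paper, which simply states that the identity follows from \eqref{eqn:CK} and Sylvester's factorization formula \eqref{eqn:USylv}. You have merely spelled out the substitution and sign bookkeeping that the paper leaves implicit.
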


\begin{proof}
The proposed identity simply follows from \eqref{eqn:CK} and 
Sylvester's factorization formula \eqref{eqn:USylv}.
\end{proof}

We next show the palindromic property and factorial identity 
for Krawtchouk polynomials
$\{\Cal{K}_k(x;y)\}$. We denote by $\{1\}_{k=0}^\infty$ the sequence
$\{a_k\}_{k=0}^\infty$ such that $a_k=1$ for all $k$.

\begin{thm}[Palindromic property of 
$\{\Cal{K}_k(x;y)\}_{k=0}^\infty$ and factorial identity of $\Cal{K}_n(x;n)$]
\label{thm:PalinK}
The pair $(\{\Cal{K}_k(x;y)\}_{k=0}^\infty, \{1\}_{k=0}^\infty)$ is 
a palindromic pair  
with degree $n$.
Namely, let $n \in \Z_{\geq 0}$ and $s \in \ESol_{n+1}(\Cal{K};n)$.
Then we have $\Cal{K}_k(s;n) =0$ for $k \geq n+1$ and
\begin{equation*}
\Cal{K}_k(s;n)
=
(-1)^{s}
\Cal{K}_{n-k}(s;n)
\quad
\emph{for $k \leq n$}.
\end{equation*}
In particular, we have 
\begin{equation}\label{eqn:Kfactorial}
\Cal{K}_n(s;n) = (-1)^s
\quad
\emph{for $s \in \ESol_{n+1}(\Cal{K};n)$}.
\end{equation}
\end{thm}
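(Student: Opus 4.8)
The statement is the palindromic property of $\{\Cal{K}_k(x;y)\}_{k=0}^\infty$, and the natural route is to transfer everything from the already-established palindromic property of the Cayley continuants (Theorem~\ref{thm:recip}) via the identity \eqref{eqn:CK}, i.e.\ $\Cal{K}_k(x;y) = \Cay_k(y-2x;y)/k!$. First I would record what $\ESol_{n+1}(\Cal{K};n)$ actually is: since $\Cal{K}_{n+1}(x;n) = \Cay_{n+1}(n-2x;n)/(n+1)!$ and $\Cay_{n+1}(\cdot;n) = \Sylv(\cdot;n)$ factors as $\prod_{\ell=0}^n(z-n+2\ell)$ by \eqref{eqn:USylv}, the substitution $z = n-2x$ gives roots $x = \ell$ for $\ell=0,1,\dots,n$; hence $\ESol_{n+1}(\Cal{K};n) = \{0,1,2,\dots,n\}$. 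Equivalently one may invoke Proposition~\ref{prop:Kraw1} directly. In particular, for $s \in \ESol_{n+1}(\Cal{K};n)$ we have $n-2s \in \ESol_{n+1}(\Cay;n)$ by \eqref{eqn:ESolU}.

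Next I would push the vanishing and the palindromic identity through \eqref{eqn:CK}. Fix $n \in \Z_{\geq 0}$ and $s \in \ESol_{n+1}(\Cal{K};n)$, and set $s' := n-2s \in \ESol_{n+1}(\Cay;n)$. For $k \geq n+1$, Theorem~\ref{thm:recip} gives $\Cay_k(s';n)=0$, hence $\Cal{K}_k(s;n) = \Cay_k(s';n)/k! = 0$. For $k \leq n$, Theorem~\ref{thm:recip} yields
\begin{equation*}
\frac{\Cay_k(s';n)}{k!} = (-1)^{\frac{n-s'}{2}}\frac{\Cay_{n-k}(s';n)}{(n-k)!}.
\end{equation*}
Since $\frac{n-s'}{2} = \frac{n-(n-2s)}{2} = s$, the exponent is simply $s$, and dividing appropriately (or rather reading the identity as $\Cal{K}_k(s;n) = (-1)^s \Cal{K}_{n-k}(s;n)$ after using $\Cal{K}_j(s;n) = \Cay_j(s';n)/j!$ on both sides) gives exactly the claimed palindromic identity. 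The factorial identity \eqref{eqn:Kfactorial} is then the case $k=n$ (using $\Cal{K}_0(s;n) = \binom{n-s}{0} = 1$), and this simultaneously confirms that $\{1\}_{k=0}^\infty$ is a valid associated sequence, so $(\{\Cal{K}_k(x;y)\}_{k=0}^\infty,\{1\}_{k=0}^\infty)$ is a palindromic pair with degree $n$ and sign factor $\gt_{(\Cal{K};n)}(s) = (-1)^s$.

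The only genuinely delicate points are bookkeeping ones rather than conceptual obstacles: first, one must be careful that the map $s \mapsto n-2s$ sends $\ESol_{n+1}(\Cal{K};n) = \{0,\dots,n\}$ bijectively onto $\ESol_{n+1}(\Cay;n) = \{n-2\ell : \ell=0,\dots,n\}$, which is immediate but worth stating; second, one must verify that the exponent $\frac{n-s'}{2}$ in Theorem~\ref{thm:recip} is an integer for the relevant $s'$, which holds precisely because $s' = n-2s$. Given that \eqref{eqn:CK} and Theorem~\ref{thm:recip} are already in hand, there is no hard analytic or combinatorial step remaining; the proof is essentially a change of variables. An alternative, entirely elementary, route would be to read off the palindromic identity directly from the generating function \eqref{eqn:genKraw}: when $s \in \{0,1,\dots,n\}$, the function $(1+t)^{n-s}(1-t)^s$ is a polynomial of degree $n$ whose coefficient vector satisfies $t^n g(1/t) = (-1)^s g(t)$, and matching coefficients gives the result; I would mention this as a remark but base the formal proof on the transfer from Theorem~\ref{thm:recip} for uniformity with the treatment of $\{\Cay_k\}$.
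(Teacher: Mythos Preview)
Your proposal is correct. The paper omits the proof, remarking only that it ``is the same as that of Theorem~\ref{thm:recip}'', i.e.\ one repeats the generating-function inversion argument directly for $(1+t)^{n-s}(1-t)^s$; this is precisely the alternative route you sketch at the end. Your primary route---reducing to Theorem~\ref{thm:recip} through the change of variables $s\mapsto n-2s$ using \eqref{eqn:CK}---is the one advertised in the introduction and is equally valid; the two differ only in whether one re-runs the inversion argument or quotes its conclusion, so there is no substantive gap between them.
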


\begin{proof}
The proof is the same as that of Theorem \ref{thm:recip}, we omit the proof.
\end{proof}

We close this section by giving a proof of Corollary \ref{cor:Kraw0} 
in the introduction.

\begin{proof}[Proof of Corollary \ref{cor:Kraw0}]
It follows from Proposition \ref{prop:Kraw1} that
\begin{align*}
\ESol_{n+1}(\Cal{K};n)
&=\{s \in \C : \Cal{K}_{n+1}(s;n)=0\}\\
&=\{0, 1, 2, \dots, n\}.
\end{align*} 
Then \eqref{eqn:Kfactorial} concludes the proposed identity.
\end{proof}


\section{Appendix A: local Heun functions}
\label{sec:appendix}

In this appendix we collect several facts and lemmas
for the local Heun function 
$Hl(a,q;\ga,\gb,\gamma,\delta;z)$ at $z=0$
that are used in the main part of this paper.

\subsection{General facts}\label{subsec:Heun}
As in \eqref{eqn:HeunOp}, we set
\begin{align*}
\D_H(a,q;\ga, \gb, \gamma,\delta;z)
:=\frac{d^2}{dz^2}+\left(\frac{\gamma}{z} + \frac{\delta}{z-1}+\frac{\eps}{z-a}\right)\frac{d}{dz}
+\frac{\ga\gb z-q}{z(z-1)(z-a)}
\end{align*}
\vskip 0.1in
\noindent
with $\gamma + \delta+\eps = \ga + \gb +1$.
Then the equation
\begin{equation*}
\D_H(a,q;\ga,\gb,\gamma,\delta;z)f(z)=0
\end{equation*}
is called \emph{Heun's differential equation}. The $P$-symbol is

\begin{equation*}
P\left\{
\begin{matrix}
0 & 1 & a & \infty & &\\
0 & 0 & 0 & \ga & z & q\\
1-\gamma & 1-\delta & 1-\eps & \beta & &
\end{matrix}
\right\}.
\end{equation*}
\vskip 0.1in

Let 
$Hl(a,q;\ga, \gb,\gamma,\delta;z)$
stands for the local Heun function at $z=0$ 
(\cite{Ron95}).
As in \cite{Ron95}, we normalize 
$Hl(a,q;\ga, \gb,\gamma,\delta;z)$
so that $Hl(a,q;\ga, \gb,\gamma,\delta;0)=1$.
It is known that, for $\gamma \notin \Z$, the functions
\begin{equation}\label{eqn:HeunSol}
Hl(a,q;\ga,\gb,\gamma,\delta;z)
\quad \text{and} \quad
z^{1-\gamma}Hl(a,q';\ga-\gamma+1,\beta-\gamma+1, 2-\gamma, \delta;z)
\end{equation}

\noindent
with $q':=q+(1-\gamma)(\eps+a\delta)$
are two linearly independent solutions at $z=0$ to the Heun equation
$\D_H(a, q;\ga, \gb,\gamma,\delta;z)f(z)=0$.
(See, for instance, \cite{Maier05} and \cite[p.~99]{SL00}.)

\begin{rem}
In \cite[p.~99]{SL00},
there seems to be a typographical error on the formula
\begin{equation*}
\lambda'=\lambda+(a+b+1-c)(1-c).
\end{equation*}
This should read as
\begin{equation*}
\lambda'=\lambda+(a+b+1-c+(t-1)d)(1-c).
\end{equation*}
\end{rem}

Let 
$Hl(a,q;\ga,\gb,\gamma,\delta;z)=\sum_{k=0}^\infty c_k z^k$ 
be the power series expansion at $z=0$.  
In our normalization we have $c_0=1$.
Then $c_k$ for $k\geq 1$ satisfy the following recurrence relations
(see, for instance, \cite[p.\ 34]{Ron95}).
\begin{align}
-q +a\gamma c_1 &=0, \label{eqn:PQR1}\\[1pt]
P_kc_{k-1}-(Q_k+q)c_k+R_kc_{k+1}&=0, \label{eqn:PQR2}
\end{align}
where
\begin{align*}
P_k&=(k-1+\ga)(k-1+\gb),\\[2pt]
Q_k&=k[(k-1+\gamma)(1+a)+a\delta+\eps],\\[2pt]
R_k&=(k+1)(k+\gamma)a.
\end{align*}

Lemma \ref{lem:Kris} below shows that each coefficient $c_k$
for $Hl(a,q;\ga,\gb,\gamma,\delta;z)=\sum_{k=0}^\infty c_k z^k$
has a determinant representation.

\begin{lem}[{see, for instance, \cite[Lem.\ B.1]{Kris10}}]\label{lem:Kris}
Let $\{a_k\}_{k\in \Z_{\geq 0}}$ be a sequence with $a_0=1$ generated by the following relations:
\begin{itemize}
\item $a_1 = A_0$;
\item $a_{k+1} = A_k a_k + B_k a_{k-1}$ $(k\geq 1)$
\end{itemize}
for some $A_0$, $A_k$, $B_k \in \C$. Then $a_k$ can be expressed as 
\begin{equation*}
a_k = 
\begin{small}
\begin{vmatrix}
A_0& -1    &              &            &             &      \\
B_1& A_1    &     -1       &            &             &    \\
  & B_2 &     A_2       &  -1        &             &       &\\
  &       &  \dots  &  \dots & \dots  &       &\\
  &       &              &B_{k-2}          &A_{k-2}            & -1 \\
  &       &              &            &B_{k-1}            & A_{k-1} \\
\end{vmatrix}.
\end{small}
\end{equation*}
\end{lem}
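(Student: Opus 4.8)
\textbf{Proof plan for Lemma \ref{lem:Kris}.}

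The statement is a purely formal (combinatorial/linear-algebraic) identity about three-term recurrences, so the plan is to proceed by induction on $k$, expanding the tridiagonal determinant along its last row. First I would fix notation: write $\Delta_k$ for the $k\times k$ tridiagonal determinant displayed on the right-hand side, with diagonal entries $A_0,A_1,\dots,A_{k-1}$, superdiagonal entries all equal to $-1$, and subdiagonal entries $B_1,B_2,\dots,B_{k-1}$. Set $\Delta_0:=1$ (the empty determinant) and $\Delta_1:=A_0$, which already matches $a_0=1$ and $a_1=A_0$. The claim is then $\Delta_k=a_k$ for all $k\ge 0$.

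The key step is the standard cofactor expansion of a tridiagonal determinant along the last row. Expanding $\Delta_{k+1}$ along its bottom row, the entry $A_k$ in the corner contributes $A_k\Delta_k$, and the entry $B_k$ just to its left contributes $-B_k$ times the $k\times k$ minor obtained by deleting the last row and the $k$-th column; that minor is itself upper-triangularized by the single $-1$ sitting in position $(k,k+1)$ of the deleted configuration, so one more expansion along that column gives $(-1)\cdot(-1)\cdot\Delta_{k-1} = \Delta_{k-1}$ up to the sign bookkeeping, which I will check carefully. Carrying out the sign computation gives exactly the relation
\begin{equation*}
\Delta_{k+1}=A_k\Delta_k+B_k\Delta_{k-1}\qquad(k\ge 1),
\end{equation*}
which is the very recurrence defining $\{a_k\}$. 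Since the base cases $\Delta_0=a_0$, $\Delta_1=a_1$ agree, induction finishes the argument: $\Delta_k=a_k$ for all $k$.

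The only genuinely delicate point — and the step I would be most careful about — is the sign accounting in the double cofactor expansion, i.e.\ confirming that the coefficient of $\Delta_{k-1}$ comes out to $+B_k$ and not $-B_k$. This is a routine but error-prone computation with the $(-1)^{i+j}$ factors; I would handle it by writing out the $3\times 3$ and $4\times 4$ cases explicitly to pin down the pattern, then give the general expansion. Everything else (the reduction to induction, the base cases, the matching with the recurrence for $a_k$) is immediate. Note also that this lemma is exactly what is invoked in the proof of Proposition \ref{prop:genCay8} via \eqref{eqn:tridiag} to identify $h_k(x;n)$ with $\Cay_k(x;n)/k!$, and in Section \ref{sec:appendix} to produce the determinant forms \eqref{eqn:uk1} and \eqref{eqn:vk1} of the Heun coefficients, so no circularity arises — the lemma depends only on the recurrences \eqref{eqn:PQR1}–\eqref{eqn:PQR2} recalled just above, which are classical.
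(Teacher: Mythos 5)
Your proposal is correct. Note first that the paper itself gives no proof of this lemma at all: it is stated as a known fact with the citation to \cite[Lem.\ B.1]{Kris10}, so there is no "paper's route" to compare against. Your argument is the standard one and it goes through: setting $\Delta_0=1$, $\Delta_1=A_0$, the cofactor expansion of $\Delta_{k+1}$ along the last row gives $A_k\Delta_k$ from the corner entry plus $(-1)^{(k+1)+k}B_k M_{k+1,k}$ from the subdiagonal entry; the minor $M_{k+1,k}$ has a single nonzero entry $-1$ in its last column, and expanding along that column gives $M_{k+1,k}=-\Delta_{k-1}$, so the second term is $(-1)\cdot B_k\cdot(-\Delta_{k-1})=+B_k\Delta_{k-1}$. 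Hence $\Delta_{k+1}=A_k\Delta_k+B_k\Delta_{k-1}$, matching the defining recurrence, and induction from the agreeing base cases finishes it. The sign bookkeeping you flagged as the delicate point does come out to $+B_k$ as claimed, and your remark about non-circularity (the lemma is purely formal and independent of the Heun/Cayley applications) is accurate.
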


\subsection{The local solutions $u_{[s;n]}(t)$ and $v_{[s;n]}(t)$}

Now we consider the following local solutions to \eqref{eqn:Heun1}.
\begin{align}
&u_{[s;n]}(t)=
Hl(-1, -\frac{ns}{4}; -\frac{n}{2}, -\frac{n-1}{2}, \frac{1}{2}, \frac{1-n-s}{2};t^2),
\label{eqn:Uapp}\\[3pt]
&v_{[s;n]}(t)=
tHl(-1, -\frac{n-2}{4}s; -\frac{n-1}{2}, -\frac{n-2}{2}, \frac{3}{2}, \frac{1-n-s}{2};t^2).
\label{eqn:Vapp}
\end{align}


\begin{lem}\label{lem:u}
Let $u_{[s;n]}(t)=\sum_{k=0}^\infty U_k(s;n) t^{2k}$ be the power series expansion of 
$u_{[s;n]}(t)$ at $t=0$.
Then the coefficients $U_k(s;n)$ for $k\geq 1$
satisfy the following recurrence relations.
\begin{align*}
U_1(s;n)&=E^u_0,\\[2pt]
U_{k+1}(s;n)&=E^u_k U_k(s;n) +F^u_k U_{k-1}(s;n),
\end{align*}
where
\begin{equation}\label{eqn:uk0}
E^u_0=\frac{ns}{2},\quad
E^u_k=\frac{(n-4k)s}{(2k+1)(2k+2)}
\quad \text{and} \quad
F^u_k=\frac{(n-2k+1)(n-2k+2)}{(2k+1)(2k+2)}.
\end{equation}
\end{lem}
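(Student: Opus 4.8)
The plan is to reduce Lemma \ref{lem:u} to the classical three-term recurrence \eqref{eqn:PQR1}--\eqref{eqn:PQR2} for the Taylor coefficients of a local Heun function. Substituting $z=t^2$ in \eqref{eqn:Uapp}, the power series $u_{[s;n]}(t)=\sum_{k\geq 0}U_k(s;n)\,t^{2k}$ is by definition $\sum_{k\geq 0}c_k z^k$ for $Hl(a,q;\ga,\gb,\gamma,\delta;z)$ with
\[
a=-1,\quad q=-\tfrac{ns}{4},\quad \ga=-\tfrac{n}{2},\quad \gb=-\tfrac{n-1}{2},\quad \gamma=\tfrac12,\quad \delta=\tfrac{1-n-s}{2},
\]
so that $U_k(s;n)=c_k$ with $c_0=1$ by the normalization of $Hl$. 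First I would record the remaining exponent $\eps$ forced by the Fuchs relation $\gamma+\delta+\eps=\ga+\gb+1$, namely $\eps=\tfrac{1-n+s}{2}$, and note the consequence $\eps-\delta=s$. Since $\gamma=\tfrac12\notin\Z$, the coefficients $R_k=(k+1)(k+\gamma)a$ never vanish, so \eqref{eqn:PQR2} can be solved for $c_{k+1}$.

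Rearranging $P_kc_{k-1}-(Q_k+q)c_k+R_kc_{k+1}=0$ gives $c_{k+1}=\frac{Q_k+q}{R_k}c_k-\frac{P_k}{R_k}c_{k-1}$, so the lemma follows once one checks that $E^u_k=(Q_k+q)/R_k$ and $F^u_k=-P_k/R_k$ agree with \eqref{eqn:uk0}, together with the base case $c_1=q/(a\gamma)=ns/2$ coming from \eqref{eqn:PQR1}. The computation is short precisely because $a=-1$: in $Q_k=k[(k-1+\gamma)(1+a)+a\delta+\eps]$ the factor $1+a$ vanishes, collapsing $Q_k$ to $k(\eps-\delta)=ks$, whence $Q_k+q=\tfrac14 s(4k-n)$; and $R_k=-(k+1)(k+\tfrac12)=-\tfrac14(2k+1)(2k+2)$, while $P_k=(k-1+\ga)(k-1+\gb)=\tfrac14(n-2k+1)(n-2k+2)$. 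Dividing yields exactly $E^u_k=\frac{(n-4k)s}{(2k+1)(2k+2)}$ and $F^u_k=\frac{(n-2k+1)(n-2k+2)}{(2k+1)(2k+2)}$, and $E^u_0=\tfrac{ns}{2}$, as claimed.

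There is no conceptual obstacle here; the only point requiring care is fixing the parameter dictionary and normalization for $Hl$ consistently with the form of \eqref{eqn:PQR1}--\eqref{eqn:PQR2}, in particular computing $\eps$ from the Fuchs relation exactly as in that source, and applying $z=t^2$ consistently so that it is the coefficients of $t^{2k}$ (not of $t^k$) that enter. Once that is settled, the proof is the elementary algebra above, which I would present simply by displaying the two quotients $E^u_k$ and $F^u_k$. A cross-check against the determinant representation of Lemma \ref{lem:Kris}, used later in \eqref{eqn:uk1}, is then automatic and need not be written out.
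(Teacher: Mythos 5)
Your proof is correct and follows exactly the route the paper takes: the paper's proof of this lemma is the one-line statement that it follows from the recurrence relations \eqref{eqn:PQR1}--\eqref{eqn:PQR2} specialized to the parameters in \eqref{eqn:Uapp}, and your computation of $\eps=\tfrac{1-n+s}{2}$, the collapse of $Q_k$ to $ks$ because $1+a=0$, and the quotients $E^u_k=(Q_k+q)/R_k$, $F^u_k=-P_k/R_k$ supplies precisely the omitted details, all of which check out.
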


\begin{proof}
This follows from \eqref{eqn:PQR1} and \eqref{eqn:PQR2} 
for the specific parameters in \eqref{eqn:Uapp}.
\end{proof}

\begin{lem}\label{lem:v}
Let $v_{[s;n]}(t)=\sum_{k=0}^\infty V_k(s;n) t^{2k+1}$ be the power series expansion of 
$v_{[s;n]}(t)$ at $t=0$. 
Then the coefficients $V_k(s;n)$ $(k\geq 1)$
satisfy the following recurrence relations.
\begin{align*}
V_1(s;n)&=E^v_0,\\[2pt]
V_{k+1}(s;n)&=E^v_k V_k(s;n) +F^v_k V_{k-1}(s;n),
\end{align*}
where
\begin{equation}\label{eqn:vk0}
E^v_0=\frac{(n-2)s}{6}, \quad
E^v_k=\frac{(n-4k-2)s}{(2k+2)(2k+3)}
\quad \text{and} \quad
F^v_k=\frac{(n-2k)(n-2k+1)}{(2k+2)(2k+3)}.
\end{equation}
\end{lem}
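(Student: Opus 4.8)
The plan is to derive Lemma~\ref{lem:v} directly from the recurrence relations \eqref{eqn:PQR1}--\eqref{eqn:PQR2} for the Taylor coefficients of a local Heun function, exactly as in the proof of Lemma~\ref{lem:u}, now specialized to the parameters occurring in $v_{[s;n]}(t)$ from \eqref{eqn:Vapp}. Writing $Hl(-1, -\tfrac{(n-2)s}{4}; -\tfrac{n-1}{2}, -\tfrac{n-2}{2}, \tfrac{3}{2}, \tfrac{1-n-s}{2};z) = \sum_{k\geq 0} c_k z^k$ with $c_0 = 1$ and then setting $z = t^2$, we get $v_{[s;n]}(t) = t\sum_{k\geq 0} c_k t^{2k} = \sum_{k\geq 0} c_k t^{2k+1}$, so that $V_k(s;n) = c_k$; hence it suffices to establish the asserted recurrence for the $c_k$.

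First I would record the parameters $a = -1$, $q = -\tfrac{(n-2)s}{4}$, $\ga = -\tfrac{n-1}{2}$, $\gb = -\tfrac{n-2}{2}$, $\gamma = \tfrac{3}{2}$, $\delta = \tfrac{1-n-s}{2}$, together with the value $\eps = \ga+\gb+1-\gamma-\delta = \tfrac{1-n+s}{2}$ forced by the Fuchs relation $\gamma+\delta+\eps=\ga+\gb+1$. Then \eqref{eqn:PQR1} gives $c_1 = q/(a\gamma) = \tfrac{(n-2)s}{6} = E^v_0$, which is the first claimed identity. For $k\geq 1$ I would evaluate $P_k$, $Q_k$, $R_k$ at these parameters: one finds $P_k = (k-1+\ga)(k-1+\gb) = \tfrac{(n-2k)(n-2k+1)}{4}$ and $R_k = (k+1)(k+\gamma)a = -\tfrac{(k+1)(2k+3)}{2}$, while the key simplification is that $1+a = 0$, which collapses $Q_k = k[(k-1+\gamma)(1+a)+a\delta+\eps]$ to $Q_k = k(\eps-\delta) = ks$. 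Solving \eqref{eqn:PQR2} for $c_{k+1}$ gives $c_{k+1} = \tfrac{(Q_k+q)c_k - P_k c_{k-1}}{R_k}$, and substituting the above yields $\tfrac{Q_k+q}{R_k} = \tfrac{(n-4k-2)s}{(2k+2)(2k+3)} = E^v_k$ and $-\tfrac{P_k}{R_k} = \tfrac{(n-2k)(n-2k+1)}{(2k+2)(2k+3)} = F^v_k$, which is precisely the claimed recursion.

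There is no genuine obstacle here: the argument is a finite sequence of elementary manipulations, and the only points needing a little care are the change of variable $z = t^2$ (so that the odd-power expansion of $v_{[s;n]}(t)$ is indexed by the same $k$ as the $c_k$) and the sign bookkeeping in $P_k$, $Q_k$, $R_k$. In fact the whole verification is formally parallel to that of Lemma~\ref{lem:u}, the only changes being the shift of $(\ga,\gb,\gamma)$ from $(-\tfrac{n}{2}, -\tfrac{n-1}{2}, \tfrac{1}{2})$ to $(-\tfrac{n-1}{2}, -\tfrac{n-2}{2}, \tfrac{3}{2})$ and the rescaling of $q$ by the factor $\tfrac{n-2}{n}$; so in the write-up I would simply exhibit the three evaluations of $P_k$, $Q_k$, $R_k$ and conclude, as in Lemma~\ref{lem:u}.
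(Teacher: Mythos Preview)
Your proposal is correct and follows exactly the same approach as the paper, which simply states that the result follows from \eqref{eqn:PQR1} and \eqref{eqn:PQR2} specialized to the parameters in \eqref{eqn:Vapp}. Your write-up just makes the intermediate computations explicit, and all of them check out.
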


\begin{proof}
This follows from \eqref{eqn:PQR1} and \eqref{eqn:PQR2} 
for the specific parameters in \eqref{eqn:Vapp}.
\end{proof}

It follows from Lemmas \ref{lem:u}, \ref{lem:v}, and \ref{lem:Kris} that 
$U_k(s;n)$ and $V_k(s;n)$ have determinant representations
\begin{equation}\label{eqn:uk}
U_k(s;n) = 
\begin{small}
\begin{vmatrix}
E^u_0& -1    &              &            &             &      \\
F^u_1& E^u_1    &     -1       &            &             &    \\
  & F^u_2 &     E^u_2       &  -1        &             &       &\\
  &       &  \dots  &  \dots & \dots  &       &\\
  &       &              &F^u_{k-2}          &E^u_{k-2}            & -1 \\
  &       &              &            &F^u_{k-1}            & E^u_{k-1} \\
\end{vmatrix}
\end{small}
\end{equation}
and
\begin{equation}\label{eqn:vk}
V_k(s;n) = 
\begin{small}
\begin{vmatrix}
E^v_0& -1    &              &            &             &      \\
F^v_1& E^v_1    &     -1       &            &             &    \\
  & F^v_2 &     E^v_2       &  -1        &             &       &\\
  &       &  \dots  &  \dots & \dots  &       &\\
  &       &              &F^v_{k-2}          &E^v_{k-2}            & -1 \\
  &       &              &            &F^v_{k-1}            & E^v_{k-1} \\
\end{vmatrix}.
\end{small}
\end{equation}
\vskip 0.1in


\section{Appendix B: A proof of Sylvester's formula}
\label{sec:appendixB}

In this short appendix
we provide a proof of Sylvester's factorization formula
\begin{equation}\label{eqn:appSylv} 
\Sylv(x;n)=\prod_{\ell=0}^n(x-n+2\ell),
\end{equation}
based on a general theory of 
$\f{sl}(2,\C)$ representations.
For other proofs and related topics, 
see the remark after Theorem \ref{thm:factorP} and 
Proposition \ref{prop:genCay8}.
For possible wide audience of this appendix,
we shall discuss a proof in two ways;
one of which is abstract in nature
(Section \ref{subsec:Sylv1})
and the other uses a concrete realization 
of irreducible represenations
(Section \ref{subsec:Sylv2}).
We remark that the arguments in this appendix can be thought of as
a baby case of the one discussed in 
Section \ref{subsec:Omega12}.

\subsection{Sylvester determinant $\Sylv(x;n)$ and $\f{sl}(2,\C)$ representations I}
\label{subsec:Sylv1}

Let $E_+$, $E_-$, and $E_0$
be the elements of $\f{sl}(2,\C)$ defined in \eqref{eqn:sl2E},
and let $(d\rho_n, V_n)$ be an irreducible representation of 
$\f{sl}(2,\C)$ with $\dim_\C V_n = n+1$. 
Then there exists an ordered basis 
$\Cal{B}$ of $V_n$ such that 
the matrix $d\rho_n(E_++E_-)_{\Cal{B}}$ 
with respect to $\Cal{B}$ is given as
\begin{equation}\label{eqn:SylvE}
d\rho_n(E_++E_-)_{\Cal{B}} =
\begin{small}
\begin{pmatrix}
0& -1    &              &            &             &       &\\
-n& 0    &     -2       &            &             &       &\\
  & -(n-1) &     0       &  -3        &             &       &\\
  &       &  \dots  &  \dots & \dots  &       &\\
  &       &              &-3          &0           & -(n-1) &\\
  &       &              &            &-2            & 0   & -n\\
  &       &              &            &              &  -1   & 0
\end{pmatrix}\\[5pt]
\end{small}
\end{equation}
(cf.\ \cite[Sect.\ 7.2]{Hum72}).
Then the Sylvester determinant
$\Sylv(x;n)$ (\eqref{eqn:factorSylv}) is the characteristic polynomial of 
$d\rho_n(E_++E_-)_{\Cal{B}}$. As the leading coefficient of $\Sylv(x;n)$ is one,
this implies that 
\begin{equation}\label{eqn:SylvSpec}
\Sylv(x;n)=\prod_{\lambda\in\Spec(d\rho_n(E_++E_-))}(x-\lambda),
\end{equation}
where $\Spec(T)$ denotes the set of eigenvalues of 
a linear map $T$. Thus, to show \eqref{eqn:appSylv},
 it suffices to find the eigenvalues of 
$d\rho_n(E_++E_-)$. 
Since $E_++E_-$ is conjugate $E_0$ via an element of $SU(2)$
(cf.\ \cite[Thm.\ 4.34]{Knapp02}),
it is further sufficient to determine $\Spec(d\rho_n(E_0))$.
It is well-known that 
the eigenvalues of $d\rho_n(E_0)$ are 
$n-2j$ for $j=0, 1, 2 , \ldots, n-1, n$
(cf.\ \cite[Sect.\ 7.2]{Hum72}).
Therefore we have
\begin{equation*}
\Spec(d\rho_n(E_++E_-))=\{n-2j : j=0, 1, 2 , \ldots, n-1, n\}.
\end{equation*}
Now the desired factorization formula \eqref{eqn:appSylv} follows
from \eqref{eqn:SylvSpec}.

\subsection{Sylvester determinant $\Sylv(x;n)$ and $\f{sl}(2,\C)$ representations II}
\label{subsec:Sylv2}

We now give some details of the arguments given in Section \ref{subsec:Sylv1}
with a concrete realization of irreducible $\f{sl}(2,\C)$ representations.

Let $(\pi_n, \Pol_n[t])$ be the irreducible representation of $SU(2)$
defined in \eqref{eqn:pin} and $(\dpin, \Pol_n[t])$
the corresponding irreducible representation of $\f{sl}(2,\C)$,
so that $\dpin(E_j)$ for $j= +, -, 0$ are
the differential operators given in \eqref{eqn:Epm}.
It is easily checked that the matrix $\dpin(E_++E_-)_{\Cal{B}}$
with respect to the ordered basis
$\Cal{B}:=\{1,\, t,\,  t^2,\, \ldots,\,  t^n\}$
is given as \eqref{eqn:SylvE}; therefore, we have
\begin{equation}\label{eqn:EES}
\Sylv(x;n)=\prod_{\lambda\in\Spec(\dpin(E_++E_-))}(x-\lambda).
\end{equation}
On the other hand,  the matrix 
 $\dpin(E_0)_{\Cal{B}}$ is given as
\begin{equation*}
\dpin(E_0)_{\Cal{B}}=
\textnormal{diag}(n, \, n-2, \,  n-4, \,  \ldots, \,  -n+2, \, -n),
\end{equation*}
where $\textnormal{diag}(m_0, m_1, \ldots, m_n)$ denotes 
a diagonal matrix. Thus
the eigenvalues of $\dpin(E_0)$ on $\Pol_n[t]$
are $n-2j$ for $j=0, 1, 2 , \ldots, n-1, n$. 

For
\begin{equation*}
g_0 :=\frac{1}{\sqrt{2}}\begin{pmatrix} 1 & 1 \\ -1 & 1 \end{pmatrix}
\in SU(2),
\end{equation*}
we have
\begin{equation*}
\Ad(g_0)(E_++E_-) = E_0.
\end{equation*}
Thus,
\begin{equation*}
\dpin(E_0) = \pi_n(g_0)\dpin(E_++E_-)\pi_n(g_0)^{-1}.
\end{equation*}
Therefore,
\begin{equation}\label{eqn:eigenE2}
\Spec(\dpin(E_++E_-))=
\{n-2j : j=0, 1, 2 , \ldots, n-1, n\}.
\end{equation}
Now \eqref{eqn:EES} and \eqref{eqn:eigenE2} conclude the 
desired formula.

\vskip 0.1in


\textbf{Acknowledgements.}
Part of this research was conducted during a visit of the first author
at the Department of Mathematics of Aarhus University.
He is appreciative of their support and warm hospitality during his stay.

The authors are grateful to Anthony Kable for his suggestion 
to study the $K$-type formulas for the Heisenberg ultrahyperbolic operator.
They would also like to express their gratitude to 
Hiroyuki Ochiai for the helpful discussions on the hypergeometric equation 
and Heun equation, especially for Lemma \ref{lem:Heun}.
Their deep appreciation also goes to Hiroyoshi Tamori for sending his Ph.D. dissertation, from which they thought of an idea for the hypergeometric model.


The first author was partially supported by JSPS
Grant-in-Aid for Young Scientists (JP18K13432).




\bibliographystyle{amsplain}
\bibliography{KuOrsted}

\providecommand{\bysame}{\leavevmode\hbox to3em{\hrulefill}\thinspace}
\providecommand{\MR}{\relax\ifhmode\unskip\space\fi MR }
\providecommand{\MRhref}[2]{%
  \href{http://www.ams.org/mathscinet-getitem?mr=#1}{#2}
}
\providecommand{\href}[2]{#2}
\begin{thebibliography}{10}

\bibitem{Askey05}
R.~Askey, \emph{{Evaluation of Sylvester type determinants using orthogonal
  polynomials}}, Advances in Analysis, World Sci.\ Publ., 2005, pp.~1--16.

\bibitem{BKZ08}
L.~Barchini, A.C. Kable, and R.~Zierau, \emph{Conformally invariant systems of
  differential equations and prehomogeneous vector spaces of \text{H}eisenberg
  parabolic type}, Publ.~Res.~Inst.~Math.~Sci. \textbf{44} (2008), no.~3,
  749--835.

\bibitem{BKZ09}
\bysame, \emph{Conformally invariant systems of differential operators},
  Adv.~Math. \textbf{221} (2009), no.~3, 788--811.

\bibitem{Cayley58}
A.~Cayley, \emph{{On the determination of the value of a certain determinant}},
  Quart.~J.~Math. \textbf{II} (1858), 163--166, The collected mathematical
  papers of Arthur Cayley, VOL.\ III, pp.\ 120--123.

\bibitem{Clement59}
P.A. Clement, \emph{{A class of triple-diagonal matrices for test purposes}},
  SIAM Rev. \textbf{1} (1959), 50--52.

\bibitem{CS90}
D.H. Collingwood and B.~Shelton, \emph{{A duality theorem for extensions of
  induced highest weight modules}}, Pacific J.~Math. \textbf{146} (1990),
  no.~2, 227--237.

\bibitem{FK20}
C.M. da~Fonseca and E.~Kili{\c c}, \emph{{An observation on the determinant of
  a Sylvester-Kac type matrix}}, An.\ Ştiin{\c t}.\ Univ.\ ``Ovidius''
  Constan{\c t}a Ser.\ Mat. \textbf{28} (2020), 111--115.

\bibitem{FMMW13}
C.M. da~Fonseca, D.A. Mazilu, I.~Mazilu, and H.T. Williams, \emph{{The
  eigenpairs of a Sylvester-Kac type matrix associated with a simple model for
  one-dimensional deposition and evaporation}}, Appl.\ Math.\ Lett. \textbf{26}
  (2013), 1206--1211.

\bibitem{Dahl19}
T.T. Dahl, \emph{{Knapp-Stein Operators and Fourier Transformations}}, Ph.D.
  thesis, Arhus University, 2019.

\bibitem{EK94}
A.~Edelman and E.~Kostlan, \emph{{The road from Kac’s matrix to Kac’s
  random polynomials}}, in: J. Lewis (Ed.), Proc. of the Fifth SIAM Conf. on
  Applied Linear Algebra (Philadelphia), SIAM, 1994, pp.~503--507.

\bibitem{Frahm20}
J.~Frahm, \emph{{Conformally invariant differential operators on Heisenberg
  groups and minimal representations}}, preprint (2020), arXiv:2012.05952.

\bibitem{GH16}
N.~Gogin and M.~Hirvensalo, \emph{{On the generating function of discrete
  Chebyshev polynomials}}, Zap.\ Nauchn.\ Sem.\ POMI \textbf{448} (2016),
  124--134.

\bibitem{Holtz05}
O.~Holtz, \emph{{Evaluation of Sylvester type determinants using
  block-triangularization}}, Advances in Analysis, World Sci.\ Publ., 2005,
  pp.~395--405.

\bibitem{Hum72}
J.E. Humphreys, \emph{Introduction to \textit{L}ie algebras and representation
  theory}, Grad.\ Texts in Math., vol.~9, Springer-Verlag, New York, 1972.

\bibitem{Kable11}
A.C. Kable, \emph{\text{$K$}-finite solutions to conformally invariant systems
  of differential equations}, Tohoku Math.~J. \textbf{63} (2011), no.~4,
  539--559, Centennial Issue.

\bibitem{Kable12A}
\bysame, \emph{Conformally invariant systems of differential equations on flag
  manifolds for \text{$G_2$} and their \text{K}-finite solutions}, J.~Lie
  Theory \textbf{22} (2012), no.~1, 93--136.

\bibitem{Kable12B}
\bysame, \emph{The \text{H}eisenberg ultrahyperbolic equation: \text{T}he basic
  solutions as distributions}, Pacific J.~Math \textbf{258} (2012), no.~1,
  165--197.

\bibitem{Kable12C}
\bysame, \emph{{The \text{H}eisenberg ultrahyperbolic equation: $K$-finite and
  polynomial solutions}}, Kyoto J.~Math \textbf{52} (2012), no.~4, 839--894.

\bibitem{Kable15}
\bysame, \emph{{On certain conformally invariant systems of differential
  equations II: Further study of type A systems}}, Tsukuba J.\ Math.\
  \textbf{39} (2015), 39--81.

\bibitem{Kable18}
\bysame, \emph{{The structure of the space of polynomial solutions to the
  canonical central systems of differential equations on the block Heisenberg
  groups: A generalization of a theorem of {K}or\'anyi }}, Tohoku Math.\ J.\
  \textbf{70} (2018), 523--545.

\bibitem{Kac47}
M.~Kac, \emph{{Random walk and the theory of Brownian motion}}, Amer.\ Math.\
  Monthly \textbf{54} (1947), 369--390.

\bibitem{Knapp02}
A.W. Knapp, \emph{Lie groups beyond an introduction, second edition},
  Progr.~Math., vol. 140, Birkh\"{a}user, Boston Press, New York, 2002.

\bibitem{KP1}
T.~Kobayashi and M.~Pevzner, \emph{Differential symmetry breaking operators.
  {I}. {G}eneral theory and {F}-method}, Selecta Math.~(N.S.) \textbf{22}
  (2016), 801--845.

\bibitem{KLS10}
R.~Koekoek, P.A. Lesky, and R.F. Swarttouw, \emph{{Hypergeometric Orthogonal
  Polynomials and Their $q$-Analogues}}, Springer Monographs in Mathematics,
  Springer-Verlag, Berlin Heidelberg, 2010.

\bibitem{Kostant75}
B.~Kostant, \emph{\text{V}erma modules and the existence of quasi-invariant
  differential operators}, Noncommutative harmonic analysis, Lecture Notes in
  Math., vol. 466, Springer, 1975, pp.~101--128.

\bibitem{Kris10}
G.~Kristensson, \emph{Second order differential equations: Special functions
  and their classification}, Springer, New York, 2010.

\bibitem{KuOr19}
T.~Kubo and B.~\O rsted, \emph{On the space of \text{$K$}-finite solutions to
  intertwining differential operators}, Represent.\ Theory \textbf{23} (2019),
  213--248.

\bibitem{MS77}
F.J. MacWilliams and N.J.A. Sloane, \emph{The theory of error-correcting
  codes}, North-Holland Mathematical Library, vol.~16, North-Holland Publishing
  Co., Amsterdam, 1977.

\bibitem{Maier05}
R.S. Maier, \emph{On reducing the \text{H}eun equation to the hypergeometric
  equation}, J.~Differential Equations \textbf{213} (2005), 171--203.

\bibitem{Muir06}
T.~Muir, \emph{{The Theory of Determinants in the Historical Order of
  Development}}, vol.~II, 1911, University of Michigan Library, 2006,
  Reprinted.

\bibitem{MT05}
E.~Munarini and D.~Torri, \emph{{Cayley continuants}}, Theoret.~Comput.~Sci.
  \textbf{347} (2005), 353--369.

\bibitem{NT12}
K.~Nomura and P.~Terwilliger, \emph{{Krawtchouk polynomials, the Lie algebra
  $\mathfrak{sl}_2$, and Leonard pairs}}, Linear Algebra Appl. \textbf{437}
  (2012), 345--375.

\bibitem{RS82}
J.~Rawnsley and S.~Sternberg, \emph{On representations associated to the
  minimal nilpotent coadjoint orbit of ${SL}(3, \mathbf{R})$}, Amer.~J.~Math.
  \textbf{104} (1982), no.~6, 1153--1180.

\bibitem{SL00}
S.Y. Slavyanov and W.~Lay, \emph{Special functions. \text{A} unified theory
  based on singularities}, Oxford Mathematical Monographs, Oxford Science
  Publications, Oxford University Press, Oxford, 2000, With a foreword by
  Alfred Seeger.

\bibitem{Sylv54}
J.J. Sylvester, \emph{{Th\'eor\`eme sur les d\'eterminants de M.~Sylvester}},
  Nouvelles annales de math\'ematiques: journal des candidats aux \'ecoles
  polytechnique et normale, 1e s{\'e}rie \textbf{13} (1854), 305.

\bibitem{Tamori20}
H.~Tamori, \emph{{Classification and Construction of minimal representations}},
  Ph.D. thesis, The University of Tokyo, 2020.

\bibitem{TT91}
O.~Taussky and J.~Todd, \emph{{Another look at a matrix of Mark Kac}}, Linear
  Algebra Appl. \textbf{150} (1991), 341--360.

\bibitem{Ron95}
A.~Ronveaux \text{(ed.)}, \emph{{H}eun's differential equations}, Oxford
  Science Publications, Oxford University Press, Oxford, 1995, With
  contributions by F.M.~Arscott, S.~Yu.~Slavyanov, D.~Schmidt, G.~Wolf,
  P.~Maroni, and A.~Duval.

\bibitem{Torasso83}
P.~Torasso, \emph{Quantication g\'eom\'etrique, op\'erateurs d'entrelacement et
  repr\'esentations unitaires de $(\widetilde{SL})_3(\mathbf{R})$}, Acta Math.
  \textbf{150} (1983), no.~3-4, 153--242.

\bibitem{VF13}
R.~Vidunas and G.~Filipuk, \emph{{Parametric transformations between the Heun
  and Gauss hypergeometric functions}}, Funkcial.~Ekvac. \textbf{56} (2013),
  271--321.

\bibitem{VF14}
\bysame, \emph{{A classification of coverings yielding Heun-to-hypergeometric
  reductions}}, Osaka J.~Math. \textbf{51} (2014), 867--903.

\end{thebibliography}


\end{document}